\DeclareFontFamily{OT1}{rsfs}{}
\DeclareFontShape{OT1}{rsfs}{n}{it}{<-> rsfs10}{}
\DeclareMathAlphabet{\mathscr}{OT1}{rsfs}{n}{it}
\DeclareMathOperator{\supp}{supp}
\DeclareMathOperator{\mo}{\,mod}
\newtheorem{prop}{Proposition}[section]
\newtheorem*{main}{Main Theorem}
\newtheorem{lem}[prop]{Lemma}
\newtheorem*{defn*}{Definition}
\numberwithin{equation}{section}
\title{Minor arcs for Goldbach's problem}
\author{H. A. Helfgott}
\address{Harald Helfgott, 
\'Ecole Normale Sup\'erieure, D\'epartement de Math\'ematiques, 45 rue d'Ulm, F-75230 Paris, France}
\email{harald.helfgott@ens.fr}
\begin{document}
\begin{abstract}
The ternary Goldbach conjecture states that every odd number $n\geq 7$ 
is the sum
of three primes.
The estimation of sums of the form $\sum_{p\leq x} e(\alpha p)$, $\alpha =
a/q + O(1/q^2)$, has been a central part of the main approach to the
conjecture since (Vinogradov, 1937).
Previous work required $q$ or $x$ to be too large to make a 
proof of the conjecture for all $n$ feasible.

The present paper gives new bounds on minor arcs and the tails of major
arcs. This is part of the author's proof of the ternary Goldbach conjecture.


The new bounds are due to several qualitative improvements. In particular,
this paper presents a general method for reducing the cost of Vaughan's
identity, as well as a way to exploit the tails of minor arcs in the context of
the large sieve.
\end{abstract}
\maketitle
\tableofcontents
\section{Introduction}
The ternary Goldbach conjecture (or {\em three-prime conjecture}) 
states that every odd number greater than  $5$ is
the sum of three primes. 
I. M. Vinogradov \cite{Vin} showed in 1937 that every odd integer larger
than a very large constant $C$ is indeed the sum of three primes. His work
was based on the study of exponential sums
\[\sum_{n\leq N} \Lambda(n) e(\alpha n)\]
and their use within the circle method.

Unfortunately, further work has so far reduced $C$ only to 
$e^{3100}$ (\cite{MR1932763}; see also \cite{MR1046491}), which is still much
too large for all odd integers up to $C$ to be checked numerically.
The main problem has been that existing bounds for (\ref{eq:inaug}) in the
{\em minor arc} regime -- namely, $\alpha = a/q + O(1/q^2)$, 
$\gcd(a,q)=1$, $q$ relatively large -- have not been strong enough.

The present paper gives new bounds on smoothed exponential sums
\begin{equation}\label{eq:inaug}
S_\eta(\alpha,x) = \sum_n \Lambda(n) e(\alpha n) \eta(n/x) .
\end{equation}
These bounds are clearly stronger than those on smoothed or unsmoothed
exponential sums in the previous literature, including the bounds of
\cite{Tao}. (See also work by Ramar\'e \cite{MR2607306}.)

In particular, on all arcs around $a/q$, $q>1.5\cdot 10^5$ odd
or $q>3\cdot 10^5$ even, the bounds
are of the strength required for a full solution to the three-prime
conjecture. The same holds on the tails of arcs around $a/q$ for smaller
$q$. 

(The remaining arcs -- namely, those around $a/q$, $q$ small -- are the
{\em major arcs}; they are treated in the companion paper \cite{HelfMaj}.)



The quality of the results here is due to several new ideas of
general applicability. In particular, \S \ref{subs:vaucanc} introduces
a way to obtain cancellation from Vaughan's identity. 
Vaughan's identity is a two-log gambit, in that it introduces two 
convolutions (each of them at a cost of $\log$)
 and offers a great deal of flexibility in compensation. 
One of the ideas in the present paper is that at least one of two $\log$s 
can be successfully recovered after having been given away in the first stage of
the proof. This reduces
the cost of the use of this basic identity in this and, presumably, many
other problems. 

We will also see how to exploit being on the tail of a major arc, whether
in the large sieve (Lemma \ref{lem:ogor}, Prop. \ref{prop:kraken}) or
in other contexts.

There are also several technical improvements that make a qualitative
difference; see the discussions at the beginning of \S \ref{sec:typeI}
and \S \ref{sec:typeII}. Considering smoothed sums 
-- now a common idea -- also helps.
(Smooth sums here go back to
Hardy-Littlewood \cite{MR1555183} --
both in the general context of the circle method and in the context
of Goldbach's ternary problem. In recent work on the problem, they reappear in
\cite{Tao}.)

\subsection{Results}
The main bound we are about to see is essentially proportional to 
$((\log q)/\sqrt{\phi(q)})\cdot x$. The term $\delta_0$ serves to
 improve the bound when we are on the tail of an arc.
\begin{main}
Let $x\geq x_0$, $x_0 = 2.16\cdot 10^{20}$.
Let $S_{\eta}(\alpha,x)$ be as in (\ref{eq:inaug}),
with $\eta$ defined in (\ref{eq:eqeta}).
Let $2 \alpha = a/q + \delta/x$, $q\leq Q$,
$\gcd(a,q)=1$, $|\delta/x|\leq 1/q Q$, where $Q = (3/4) x^{2/3}$.
If $q\leq x^{1/3}/6$, then
\begin{equation}\label{eq:kraw}
\begin{aligned}
&|S_{\eta}(\alpha,x)| \leq 
 \frac{R_{x,\delta_0 q} \log \delta_0 q + 0.5}{\sqrt{\delta_0 \phi(q)}} \cdot x
+ \frac{2.5 x}{\sqrt{\delta_0 q}} + 
 \frac{2x}{\delta_0 q} \cdot L_{x,\delta_0,q}
+ 3.2 x^{5/6},\end{aligned}\end{equation} where
\begin{equation}\label{eq:tosca}\begin{aligned}
\delta_0 &= \max(2,|\delta|/4),\;\;\;\;\;
R_{x,t} = 0.27125 \log 
\left(1 + \frac{\log 4 t}{2 \log \frac{9 x^{1/3}}{2.004 t}}\right)
 + 0.41415 \\
L_{x,\delta,q} &=
\min\left(\frac{\log \delta^{\frac{7}{4}} q^{\frac{13}{4}} + \frac{80}{9}}{\phi(q)/q},
\frac{5}{6} \log x + \frac{50}{9}\right) +
\log q^{\frac{80}{9}} \delta^{\frac{16}{9}} + \frac{111}{5}.
\end{aligned}\end{equation}
If $q > x^{1/3}/6$, then
\[|S_{\eta}(\alpha,x)|\leq 
0.2727 x^{5/6} (\log x)^{3/2} + 1218 x^{2/3} \log x.\]
\end{main}
The factor $R_{x,t}$ is small in practice; for instance, for
$x = 10^{25}$ and $\delta_0 q = 5\cdot 10^5$ 
(typical ``difficult'' values), $R_{x,\delta_0 q}$ equals $0.59648\dotsc$.

The classical choice\footnote{Or, more precisely,
the choice made by Vinogradov and followed by most of the literature since him.
 Hardy and Littlewood
\cite{MR1555183} worked with $\eta(t) = e^{-t}$.} for $\eta$ in (\ref{eq:inaug}) 
is $\eta(t) = 1$ for $t\leq 1$, 
$\eta(t) = 0$ for $t>1$, which, of course, is not smooth, or even
continuous.
We use
\begin{equation}\label{eq:eqeta}
\eta(t) = \eta_2(t) = 4 \max(\log 2 - |\log 2 t|,0),\end{equation}
as in Tao \cite{Tao}, in part for purposes of comparison. (This is
the multiplicative convolution of the characteristic function of an
interval with itself.) Nearly all work
should be applicable to any other sufficiently smooth function $\eta$ 
of fast decay. It is important that $\widehat{\eta}$ decay at least
quadratically.

\subsection{History}
The following notes are here to provide some background; no claim to
completeness is made.

Vinogradov's proof \cite{Vin} was based on his 
novel estimates for exponential sums over primes. Most work on the problem
since then, 
including essentially all work with explicit constants, has been based
on estimates for exponential sums; there are some elegant proofs based on 
cancellation in other kinds of sums 
(\cite{MR834356}, \cite[\S 19]{MR2061214}), but they have
not been made to yield practical estimates.

The earliest explicit result is that of Vinogradov's student Borodzin 
(1939). 
Vaughan \cite{MR0498434} greatly simplified the proof by
introducing what is now called Vaughan's identity. 

The current record is that of Liu and Wang \cite{MR1932763}: the best
previous result was that of \cite{MR1046491}. Other recent work falls into
the following categories.

{\em Conditional results.} The ternary Goldbach conjecture has been proven
under the assumption of the generalized Riemann hypothesis \cite{MR1469323}.

{\em Ineffective results.} An example is the bound given by Buttkewitz
\cite{MR2776653}. The issue is usually a reliance on the Siegel-Walfisz 
theorem. In general, to obtain effective bounds with good constants, 
it is best to avoid 
analytic results on $L$-functions with large conductor. (The present paper
implicitly uses known 
results on the Riemann $\zeta$ function, but uses nothing at all
about other $L$-functions.) 

{\em Results based on Vaughan's identity.} Vaughan's identity
 \cite{MR0498434} greatly 
simplified matters; most textbook treatments are by now based on it.
The minor-arc treatment in \cite{Tao} updates this approach to current
technical standards (smoothing), while taking advantage of its flexibility
(letting variable ranges depend on $q$). 

{\em Results based on log-free identities.}
Using Vaughan's identity implies losing a factor of $(\log x)^2$
(or $(\log q)^2$, as in \cite{Tao}) in the first step. It thus makes sense
to consider other identities that do not involve such a loss.
Most approaches before Vaughan's identity involved larger losses, but already 
\cite[\S 9]{Vin} is relatively economical, at least for very large $x$.
The work of Daboussi \cite{MR1399341} and Daboussi and Rivat \cite{MR1803131}
explores other identities. (A reading of \cite{MR1803131} gave part of
the initial inspiration for the present work.)
Ramar\'e's work \cite{MR2607306} --
asymptotically the best to date -- is based on the Diamond-Steinig
inequality
(for $k$ large). 

\begin{center}
* * *
\end{center}

The author's work on the subject leading to the present
paper was at first based on the (log-free) Bombieri-Selberg identity
($k=3$), but has now been redone with Vaughan's identity in its
foundations. This is feasible thanks to the factor of $\log$ regained in
\S \ref{subs:vaucanc}.

\subsection{Comparison to earlier work}
Table \ref{tab:bloid} compares the bounds for the ratio
$|S_{\eta}(a/q,x)|/x$
given by this paper and by \cite{Tao} for $x= 10^{27}$ and different
values of $q$.
We are comparing worst cases: $\phi(q)$ as small
as possible ($q$ divisible by $2\cdot 3\cdot 5\dotsb$) in the result here,
and $q$ divisible by $4$ (implying $4\alpha \sim a/(q/4)$) in Tao's result.
The main term in the result in this paper improves
slowly with increasing $x$; the results in \cite{Tao} worsen slowly
with increasing $x$.

The qualitative gain with respect to \cite{Tao} is about
$\log(q) \sqrt{\phi(q)/q}$, which is $\sim \log(q)/\sqrt{e^{\gamma}
 (\log \log q)}$
in the worst case. 

The results in \cite{MR1803131} are unfortunately
worse than the trivial bound in this range.
Ramar\'e's results (\cite[Thm. 3]{MR2607306}, \cite[Thm. 6]{Ramlater}) 
are not applicable within the range, since
neither of the conditions $\log q \leq (1/50) (\log x)^{1/3}$,
$q\leq x^{1/48}$ is satisfied. Ramar\'e's bound in
\cite[Thm. 6]{Ramlater} is
\begin{equation}\label{eq:jomor}
\left|\sum_{x<n\leq 2x} \Lambda(n) e(an/q)\right|\leq 13000
\frac{\sqrt{q}}{\phi(q)} x\end{equation}
for $20\leq q\leq x^{1/48}$.
We should underline that, while both the constant $13000$ and the
condition $q\leq x^{1/48}$ keep (\ref{eq:jomor}) from being immediately
useful in the present context, (\ref{eq:jomor}) is asymptotically better
than the results here as $q\to \infty$. (Indeed, qualitatively speaking,
the form of (\ref{eq:jomor}) is the best one can expect from results
derived by the family of methods stemming from \cite{Vin}.) There
is also unpublished work by Ramar\'e (ca. 1993) with better constants for 
$q\ll (\log x/\log \log x)^4$.
\begin{table}
\begin{center} \begin{tabular}{l|l|l}
$q_0$ &$\frac{|S_{\eta}(a/q,x)|}{x}$, HH& 
$\frac{|S_{\eta}(a/q,x)|}{x}$, Tao\\ \hline
$10^5$ & $0.04522$ & $0.34475$\\
$1.5\cdot 10^5$ & $0.03821$ & $0.28836$\\
$2.5\cdot 10^5$ & $0.03097$ & $0.23194$\\
$5\cdot 10^5$ & $0.02336$ & $0.17416$\\
$7.5\cdot 10^5$ & $0.01984$ & $0.14775$\\
$10^6$ & $0.01767$ & $0.13159$\\
$10^7$ & $0.00716$ & $0.05251$
\end{tabular} 
\caption{Worst-case upper bounds on $x^{-1} |S_{\eta}(a/2q,x)|$ for
$q\geq q_0$,
$|\delta|\leq 8$, $x= 10^{27}$. The trivial bound is $1$.}\label{tab:bloid}
\end{center}\end{table}

\subsection{Acknowledgments} The author is very thankful to O. Ramar\'e for 
his crucial help and feedback, and to D. Platt for his prompt and helpful
answers. He is also much 
indebted to A. Booker, B. Green, 
H. Kadiri, T. Tao and M. Watkins for many discussions on Goldbach's problem and 
related issues. Thanks are also due to 
B. Bukh, A. Granville and P. Sarnak for their valuable advice.

Travel and other expenses 
were funded in part by the Adams Prize and the Philip Leverhulme Prize.
The author's work on the problem started at the
Universit\'e de Montr\'eal (CRM) in 2006; he is grateful to both the Universit\'e
de Montr\'eal and the \'Ecole Normale Sup\'erieure for providing pleasant
working environments.

The present work would most likely not have been possible without free and 
publicly available
software: Maxima, PARI, Gnuplot, QEPCAD, SAGE, and, of course, \LaTeX, Emacs,
the gcc compiler and GNU/Linux in general.

\section{Preliminaries}
\subsection{Notation}
Given positive integers $m$, $n$, we say $m|n^\infty$ if every prime
dividing
$m$ also divides $n$. We say a positive integer $n$ is {\em square-full} 
if, for every prime $p$ dividing $n$, the square $p^2$ also divides $n$.
(In particular, $1$ is square-full.) We say $n$ is {\em square-free} if
$p^2\nmid n$ for every prime $p$. For $p$ prime, $n$ a non-zero integer,
we define $v_p(n)$ to be the largest non-negative integer $\alpha$ such
that $p^\alpha|n$.

When we write $\sum_n$, we mean $\sum_{n=1}^{\infty}$, unless the contrary
is stated. 
As usual, $\mu$, $\Lambda$, $\tau$ and $\sigma$ denote the Moebius function,
the von Mangoldt function, the divisor function and the sum-of-divisors 
function, respectively.

As is customary, we write $e(x)$ for $e^{2\pi i x}$.
We write $|f|_r$ for the $L_r$ norm of a function $f$.

We write $O^*(R)$ to mean a quantity at most $R$ in absolute value.

\subsection{Fourier transforms and exponential sums}

The Fourier transform on $\mathbb{R}$ is normalized here as follows:
\[\widehat{f}(t) = \int_{-\infty}^\infty e(-xt) f(x) dx.
\]

If $f$ is compactly supported (or of fast decay) and piecewise continuous,
$\widehat{f}(t) = \widehat{f'}(t)/(2\pi i t)$ by integration by parts.
Iterating, we obtain that, if $f$ is compactly supported,
continuous and piecewise $C^1$, then 
\begin{equation}\label{eq:malcros}
\widehat{f}(t) = 
O^*\left(\frac{|\widehat{f''}|_\infty}{(2\pi t)^2}\right) =
O^*\left(\frac{|f''|_1}{(2\pi t)^2}\right) 
,\end{equation}
and so $\widehat{f}$ decays at least quadratically.

The following bound is standard (see, e.g., \cite[Lemma 3.1]{Tao}):
for $\alpha\in \mathbb{R}/\mathbb{Z}$ and 
$f:\mathbb{R}\to \mathbb{C}$ compactly supported and piecewise continuous,
\begin{equation}\label{eq:ra}
\left|\sum_{n\in \mathbb{Z}} f(n) e(\alpha n)\right| \leq 
\min\left(|f|_1 + \frac{1}{2} |f'|_1,\frac{\frac{1}{2} |f'|_1}{|\sin(\pi
\alpha)|}\right).\end{equation} (The first bound follows from
$\sum_{n\in \mathbb{Z}} |f(n)| \leq |f|_1 + (1/2) |f'|_1$, which, in turn
is a quick consequence of the fundamental theorem of calculus; the second
bound is proven by summation by parts.)
The alternative bound $(1/4) |f''|_1/|\sin(\pi \alpha)|^2$ given
in \cite[Lemma 3.1]{Tao} (for $f$ continuous and piecewise $C^1$) can usually be 
improved by the following estimate.

\begin{lem}\label{lem:areval}
Let $f:\mathbb{R}\to \mathbb{C}$ be compactly supported, continuous and
piecewise $C^1$. Then
\begin{equation}\label{eq:quisquil}
\left|\sum_{n\in \mathbb{Z}} f(n) e(\alpha n)\right| \leq 
\frac{\frac{1}{4} |\widehat{f''}|_{\infty}}{(\sin \alpha \pi)^2} 
\end{equation}
for every $\alpha \in \mathbb{R}$.
\end{lem}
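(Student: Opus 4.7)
The plan is to apply Poisson summation and reduce the problem to the classical partial-fraction expansion of $1/\sin^2$. Concretely, since $f$ is compactly supported, continuous, and piecewise $C^1$, the identity (\ref{eq:malcros}) already proved in the excerpt gives quadratic decay $|\widehat{f}(t)| \leq |\widehat{f''}|_\infty / (2\pi t)^2$, which is more than enough regularity to invoke Poisson summation for $f$ (and for the twist $g(x) = f(x) e(\alpha x)$, whose Fourier transform is $\widehat{g}(t) = \widehat{f}(t-\alpha)$).

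First I would dispose of the trivial case $\alpha \in \mathbb{Z}$, where the right-hand side is $+\infty$ and the inequality is vacuous. So assume $\alpha \notin \mathbb{Z}$. Then Poisson summation applied to $g$ yields
\[
\sum_{n\in \mathbb{Z}} f(n) e(\alpha n) \;=\; \sum_{n\in \mathbb{Z}} g(n) \;=\; \sum_{m\in \mathbb{Z}} \widehat{g}(m) \;=\; \sum_{m\in \mathbb{Z}} \widehat{f}(m-\alpha).
\]
The absolute convergence of the right-hand sum is guaranteed by the quadratic decay from (\ref{eq:malcros}), which also validates the interchange.

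Next I would bound each term on the right using (\ref{eq:malcros}):
\[
|\widehat{f}(m-\alpha)| \;\leq\; \frac{|\widehat{f''}|_\infty}{(2\pi(m-\alpha))^2}.
\]
Summing over $m \in \mathbb{Z}$ and applying the classical identity $\sum_{m\in\mathbb{Z}} (m-\alpha)^{-2} = \pi^2/\sin^2(\pi \alpha)$ then gives
\[
\left|\sum_{n\in \mathbb{Z}} f(n) e(\alpha n)\right| \;\leq\; \frac{|\widehat{f''}|_\infty}{(2\pi)^2} \cdot \frac{\pi^2}{\sin^2(\pi \alpha)} \;=\; \frac{\tfrac{1}{4}|\widehat{f''}|_\infty}{(\sin \pi\alpha)^2},
\]
which is precisely (\ref{eq:quisquil}).

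The only subtle point is justifying Poisson summation under the stated minimal regularity; the main obstacle, if any, is purely technical. Since $f$ is compactly supported and $\widehat{f}$ decays quadratically, both sides of Poisson are absolutely convergent, and standard arguments (for example, approximating $f$ by mollification, applying Poisson to the mollified function where it is elementary, and passing to the limit using dominated convergence on both sides) do the job. Once this is in hand, the rest is the bookkeeping above.
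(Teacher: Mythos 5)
Your proof is correct and follows essentially the same route as the paper's: Poisson summation applied to the twisted function (equivalently, to $f$ shifted by $\alpha$), the quadratic decay bound $|\widehat{f}(t)| \leq |\widehat{f''}|_\infty/(2\pi t)^2$ from (\ref{eq:malcros}), and the partial-fraction identity $\sum_{n\in\mathbb{Z}} (n-\alpha)^{-2} = \pi^2/\sin^2(\pi\alpha)$, which the paper derives by differentiating the cotangent series. The paper pushes the $\widehat{f''}$ factor inside the Poisson sum before taking absolute values rather than bounding each $\widehat{f}(m-\alpha)$ first, but that is purely cosmetic; your remarks on the trivial case $\alpha\in\mathbb{Z}$, absolute convergence, and justification of Poisson summation under the stated regularity (with $f''$ understood distributionally, as the paper itself notes just after the lemma) are all appropriate.
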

As usual, the assumption of compact support could be easily relaxed to 
an assumption of fast decay.
\begin{proof}
By the Poisson summation formula,
\[\sum_{n=-\infty}^\infty f(n) e(\alpha n) = \sum_{n=-\infty}^\infty \widehat{f}(
n-\alpha).\]
Since $\widehat{f}(t) = \widehat{f'}(t)/(2\pi i t)$,
\[
\sum_{n=-\infty}^\infty \widehat{f}(n-\alpha) = 
 \sum_{n=-\infty}^\infty \frac{\widehat{f'}(n-\alpha)}{2\pi i (n-\alpha)} = 
 \sum_{n=-\infty}^\infty \frac{\widehat{f''}(n-\alpha)}{(2\pi i (n-\alpha))^2} .
\]
By Euler's formula $\pi \cot s \pi = 1/s + \sum_{n=1}^{\infty} (1/(n+s) - 
1/(n-s))$,
\begin{equation}\label{eq:euler}
\sum_{n=-\infty}^{\infty} \frac{1}{(n+s)^2} = - (\pi \cot s \pi)' = 
\frac{\pi^2}{(\sin s \pi)^2} .\end{equation}
Hence
\[\left|\sum_{n=-\infty}^\infty \widehat{f}(n-\alpha)\right| \leq
|\widehat{f''}|_\infty \sum_{n=-\infty}^\infty \frac{1}{(2\pi (n-\alpha))^2}
= |\widehat{f''}|_\infty \cdot \frac{1}{(2\pi)^2} \cdot \frac{\pi^2}{(\sin
\alpha \pi)^2} .\]
\end{proof}
The trivial bound $|\widehat{f''}|_\infty \leq |f''|_1$, applied to
(\ref{eq:quisquil}), recovers the bound in \cite[Lemma 3.1]{Tao}. In
order to do better, we will give a tighter bound for $|\widehat{f''}|_\infty$
when $f = \eta_2$ in Appendix \ref{sec:norms}.

Integrals of multiples of $f''$ (in particular,
 $|f''|_1$ and $\widehat{f''}$) can still be made sense of 
when $f''$ is undefined at a finite number
of points, provided $f$ is understood as a distribution (and $f'$ has
finite total variation). This is the case, in particular, for $f=\eta_2$.

\begin{center}
* * *
\end{center}

When we need to estimate $\sum_n f(n)$ precisely, we will use the Poisson 
summation formula: \[\sum_n f(n) = \sum_n \widehat{f}(n).\]
We will not have to worry about convergence here, since we will apply the Poisson summation formula only to compactly supported functions $f$ whose
Fourier transforms decay at least quadratically.

\subsection{Smoothing functions}

For the smoothing function $\eta_2$ in
(\ref{eq:eqeta}), 
\begin{equation}\label{eq:muggle}
|\eta_2|_1 = 1,\;\;\;\;\; |\eta_2'|_1 = 8 \log 2,\;\;\;\;\; |\eta_2''|_1 = 48,
\end{equation}
as per \cite[(5.9)--(5.13)]{Tao}.
Similarly, for $\eta_{2,\rho}(t) = \log(\rho t) \eta_2(t)$, where $\rho\geq 4$,
\begin{equation}\label{eq:cloclo}\begin{aligned}
|\eta_{2,\rho}|_1 &< \log(\rho) |\eta_2|_1 = \log(\rho)\\
|\eta_{2,\rho}'|_1 &= 2 \eta_{2,\rho}(1/2) = 2 \log(\rho/2) \eta_2(1/2) < 
(8 \log 2) \log \rho,\\
|\eta_{2,\rho}''|_1 &= 4 \log(\rho/4) + |2 \log \rho - 4 \log(\rho/4)| +
|4\log 2 - 4 \log \rho| \\ &+ |\log \rho - 4 \log 2| +
|\log \rho| < 48 \log \rho.
\end{aligned}\end{equation}
(In the first inequality, we are using the fact that $\log(\rho t)$
is always positive (and less than $\log(\rho)$) when $t$ is in the support
of $\eta_2$.)

Write $\log^+ x$ for $\max(\log x,0)$.

\subsection{Bounds on sums of $\mu(m)$ and $\Lambda(n)$}
We will need explicit bounds on $\sum_{n\leq N} \mu(n)/n$ and related sums involving $\mu$.
The situation here is less well-developed than for sums involving $\Lambda$. 
 The main reason is that the complex-analytic approach to
estimating $\sum_{n\leq N} \mu(n)$ would involve $1/\zeta(s)$ rather than 
$\zeta'(s)/\zeta(s)$, and thus strong explicit bounds on the residues of $1/\zeta(s)$
would be needed.

Fortunately all we need is a saving of $(\log n)$ or $(\log n)^2$ 
on the trivial bound.
This is provided by the following.
\begin{enumerate}
\item (Granville-Ramar\'e \cite{MR1401709}, Lemma 10.2)
\begin{equation}\label{eq:grara}
\left|\sum_{n\leq x: \gcd(n,q)=1} \frac{\mu(n)}{n}\right|\leq 1\end{equation}
for all $x$, $q\geq 1$,
\item (Ramar\'e \cite{Fromexpliest}; cf. El Marraki \cite{MR1378588}, \cite{ElMarraki})
\begin{equation}\label{eq:marraki}
\left|\sum_{n\leq x} \frac{\mu(n)}{n}\right|\leq \frac{0.03}{\log x}
\end{equation}
for $x\geq 11815$. 
\item (Ramar\'e \cite{Ramsev}) 
\begin{equation}\label{eq:ronsard}
\sum_{n\leq x: \gcd(n,q)=1}
\frac{\mu(n)}{n} = 
O^*\left(\frac{1}{\log x/q} \cdot \frac{4}{5} \frac{q}{\phi(q)}\right)
\end{equation} for all $x$ and all $q\leq x$;
\begin{equation}\label{eq:meproz}
\sum_{n\leq x: \gcd(n,q)=1}
\frac{\mu(n)}{n} \log \frac{x}{n} = O^*\left(1.00303 \frac{q}{\phi(q)}\right) 
\end{equation}
for all $x$ and all $q$.
\end{enumerate}
Improvements on these bounds would lead to improvements on type I estimates, but not in what are the worst terms overall at this point.

A computation carried out by the author has proven 
the following inequality for all real $x\leq 10^{12}$:
\begin{equation}\label{eq:ramare}
\left|\sum_{n\leq x} \frac{\mu(n)}{n}\right|\leq \sqrt{\frac{2}{x}}
\end{equation}
The computation was rigorous, in that it used D. Platt's implementation
\cite{Platt} of double-precision interval arithmetic based on Lambov's 
\cite{Lamb} ideas. For the sake of verification, we record that
\[5.42625\cdot 10^{-8}
\leq \sum_{n\leq 10^{12}} \frac{\mu(n)}{n}\leq
5.42898\cdot 10^{-8}.
\]

Computations also show that the stronger bound 
\[\left|\sum_{n\leq x} \frac{\mu(n)}{n}\right|\leq \frac{1}{2\sqrt{x}}\]
holds for all $3\leq x\leq 7727068587$, but not for $x=7727068588-\epsilon$.

Earlier, numerical work carried out by Olivier Ramar\'e 
\cite{Raexpliest}
had shown 
that (\ref{eq:ramare}) holds for all $x\leq 10^{10}$.

We will make reference to various bounds on $\Lambda(n)$ in the literature. The following bound can be easily derived from \cite[(3.23)]{MR0137689},
supplemented by a quick calculation of the contribution of powers of primes
$p<32$:
\begin{equation}\label{eq:rala}
\sum_{n\leq x} \frac{\Lambda(n)}{n} \leq \log x.
\end{equation}
We can derive a bound in the other direction from \cite[(3.21)]{MR0137689}
(for $x>1000$, adding the contribution of all prime powers $\leq 1000$)
and a numerical verification for $x\leq 1000$:
\begin{equation}\label{eq:ralobio}
\sum_{n\leq x} \frac{\Lambda(n)}{n} \geq \log x - \log \frac{3}{\sqrt{2}} .
\end{equation}

We also use the following older bounds:
\begin{enumerate}
\item By the second table in \cite[p. 423]{MR1320898}, supplemented by a computation for $2\cdot 10^6\leq V\leq 4\cdot 10^6$,
\begin{equation}\label{eq:trado1}\sum_{n\leq y} \Lambda(n)\leq 1.0004 y
\end{equation}
for $y\geq 2\cdot 10^6$.
\item \begin{equation}\label{eq:trado2}\sum_{n\leq y} \Lambda(n) < 1.03883 y\end{equation}
 for every $y>0$ \cite[Thm. 12]{MR0137689}.
\end{enumerate}

For all $y>663$,
\begin{equation}\label{eq:chronop}
\sum_{n\leq y} \Lambda(n) n < 1.03884 \frac{y^2}{2} ,\end{equation}
where we use (\ref{eq:trado2}) and partial summation for $y>200000$, and
a computation for $663 < y\leq 200000$. Using instead the second table in 
\cite[p. 423]{MR1320898}, together with computations for small $y<10^7$ and
partial summation, we get that
\begin{equation}\label{eq:nicro}
\sum_{n\leq y} \Lambda(n) n < 1.0008 \frac{y^2}{2} \end{equation}
for $y>1.6\cdot 10^6$.

Similarly,
\begin{equation}\label{eq:charol}
\sum_{n\leq y} \Lambda(n) < 2\cdot 1.0004 \sqrt{y}\end{equation}
for all $y\geq 1$.

It is also true that
\begin{equation}\label{eq:kast}
\sum_{y/2<p\leq y} (\log p)^2 \leq \frac{1}{2} y (\log y)
\end{equation}
for $y\geq 117$: this holds for $y\geq 2\cdot 758699$ by \cite[Cor. 2]{MR0457373}
(applied to $x = y$, $x = y/2$ and $x=2 y/3$) and for $117\leq y< 2\cdot 758699$
by direct computation.

\subsection{Basic setup}

We begin by applying Vaughan's identity \cite{MR0498434}: 
for any function 
$\eta:\mathbb{R}\to \mathbb{R}$, any completely multiplicative function
$f:\mathbb{Z}^+\to \mathbb{C}$ and any $x>0$, $U,V\geq 0$,
\begin{equation}\label{eq:bob}
\sum_n \Lambda(n) f(n) e(\alpha n) \eta(n/x) = S_{I,1} - S_{I,2} + S_{II} + 
S_{0,\infty},
\end{equation}
where
\begin{equation}\label{eq:nielsen}\begin{aligned}
S_{I,1} &= \sum_{m\leq U} \mu(m) f(m)\sum_n (\log n) e(\alpha m n) f(n) \eta(m n/x),\\
S_{I,2} &= \sum_{d\leq V} \Lambda(d) f(d) \sum_{m\leq U} \mu(m) f(m) \sum_n e(\alpha d m n) f(n)
 \eta(d m n/x),\\
S_{II} &= \sum_{m>U} f(m) \left(\mathop{\sum_{d>U}}_{d|m} \mu(d)\right) \sum_{n>V}
\Lambda(n) e(\alpha m n) f(n) \eta(m n/x),\\
S_{0,\infty} &= \sum_{n\leq V} \Lambda(n) e(\alpha n) f(n) \eta(n/x) .
\end{aligned}\end{equation}
The proof is essentially an application of the M\"obius inversion formula;
see, e.g., \cite[\S 13.4]{MR2061214}. In practice, we will use the function
\begin{equation}\label{eq:joroy}
f(n) = \begin{cases} 1 &\text{if $\gcd(n,v)=1$,}\\
0 &\text{otherwise,}\end{cases}\end{equation}
where $v$ is a small, positive, square-free integer.
(Our final choice will be $v=2$.)
 Then
\begin{equation}\label{eq:sofot}
S_\eta(x,\alpha) = S_{I,1} - S_{I,2} + S_{II} + S_{0,\infty} + S_{0,w},\end{equation}
where $S_{\eta}(x,\alpha)$ is as in (\ref{eq:inaug}) and
\[S_{0,v} = \sum_{n|v} \Lambda(n) e(\alpha n) \eta(n/x).\]

The sums $S_{I,1}$, $S_{I,2}$ are called ``of type I'' (or linear),
the sum $S_{II}$ is called ``of type II'' (or bilinear).
 The sum $S_0$ is in general negligible; for our later choice of $V$ and
$\eta$, it will be in fact $0$. The sum $S_{0,v}$ will be negligible as well.

\section{Type I}\label{sec:typeI}
There are here three main improvements
in comparison to standard treatments: 
\begin{enumerate}
\item\label{it:sept} The terms with $m$ divisible by $q$ get taken out and treated separately by analytic means. This all but eliminates what would otherwise be the main term.
\item For large $m$, the other terms get handled by improved estimates on
trigonometric sums.
\item 
The ``error'' term $\delta/x = \alpha - a/q$ is used to our advantage. This happens both
through the Poisson summation formula and through the use of two successive
approximations.
\end{enumerate}

\subsection{Trigonometric sums}

The following lemmas on trigonometric sums improve on the best
Vinogradov-type lemmas in the literature. (By this, we mean results
 of the type of 
Lemma 8a and Lemma 8b in \cite[Ch. I]{MR2104806}. See, in particular,
the work of Daboussi and Rivat \cite[Lemma 1]{MR1803131}.) The main
idea is to switch between different types of approximation within the sum,
rather than just choosing between bounding all terms either trivially 
(by $A$) or non-trivially (by $C/|\sin(\pi \alpha n)|^2$). There will
also\footnote{This is a change with respect to the first version of
this paper's preprint \cite{Helf}. The version of Lemma \ref{lem:gotog} 
there has, however, the advantage
of being immediately comparable to results in the literature.}
be improvements in our applications stemming from the fact that Lemmas
\ref{lem:gotog} and Lemma \ref{lem:couscous}  take quadratic 
($|\sin(\pi \alpha n)|^2$) rather than linear ($|\sin(\pi \alpha n)|$)
inputs.

\begin{lem}\label{lem:gotog}
Let $\alpha = a/q + \beta/q Q$, $\gcd(a,q)=1$, $|\beta|\leq 1$, $q\leq Q$.
Then, for any $A, C\geq 0$,
\begin{equation}\label{eq:betblu}
\sum_{y<n\leq y+q} \min\left(A, 
\frac{C}{|\sin (\pi \alpha n)|^2}\right)\leq 
\min\left(2 A + \frac{6 q^2}{\pi^2} C,
3 A + \frac{4q}{\pi} \sqrt{A C}\right)
.\end{equation}
\end{lem}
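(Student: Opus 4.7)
My plan is to exploit the near-lattice structure of the set $\{\alpha n \bmod 1 : n \in (y, y+q]\}$. Since $\gcd(a,q) = 1$, as $n$ runs through the $q$ consecutive integers, $an \bmod q$ takes each value in $\{0, 1, \ldots, q-1\}$ exactly once; combined with $\alpha n \equiv an/q + \beta n/(qQ) \pmod 1$, this shows that the $q$ values $\alpha n \bmod 1$ form a perturbation of a common shift of the uniform lattice $\{k/q : 0 \leq k < q\}$ on $\mathbb{R}/\mathbb{Z}$, with each point shifted by at most $(q-1)/(qQ) < 1/Q$. Sorting the distances $d_r := \|\alpha n(r)\|$ as $d_{(1)} \leq \ldots \leq d_{(q)}$ and using a bijective matching with the lattice (together with the fact that sorted values of a family of reals move by at most $\epsilon$ when each element is perturbed by at most $\epsilon$), one derives the lower bounds $d_{(j)} \geq (j-1)/(2q) - (q-1)/(qQ)$ for $j \geq 2$.

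For the first bound $2A + 6q^2 C/\pi^2$, I would bound the two terms with smallest $d_{(j)}$ trivially by $A$ each, contributing $2A$. For the remaining $q-2$ terms, the crudest approach is to apply Jordan's inequality $|\sin(\pi t)| \geq 2\|t\|$ and the lower bound on $d_{(j)}$, yielding a geometric series comparable to $q^2 \sum_{k\geq 1} 1/k^2 = q^2\pi^2/6$; to sharpen the coefficient to $6/\pi^2$ I would instead invoke the exact identity
\[
\sum_{r=0}^{q-1}\frac{1}{\sin^2(\pi(r/q + c))} = \frac{q^2}{\sin^2(\pi qc)},
\]
which follows from \eqref{eq:euler} by a change of variable, subtracting off the two omitted terms on the lattice and carefully transporting back via the perturbation estimate.

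For the second bound $3A + 4q\sqrt{AC}/\pi$, I would introduce the threshold $\sigma := \sqrt{C/A}$: terms with $|\sin(\pi\alpha n)| \leq \sigma$ have $\min(A, C/\sin^2) = A$, while those with $|\sin(\pi\alpha n)| > \sigma$ satisfy $C/\sin^2 < A$. The count of ``small-sine'' terms is bounded via the near-lattice structure: $|\sin(\pi t)| \leq \sigma$ forces $\|t\| \leq \arcsin(\sigma)/\pi \leq \sigma/2$ (since $\arcsin(x) \leq \pi x/2$ on $[0,1]$), so the count is at most $2q \cdot \sigma/2 + O(1) = q\sqrt{C/A} + O(1)$, producing an $A$-contribution of order $q\sqrt{AC}$. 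The ``large-sine'' contribution is handled by summation against the lattice spacing (again via the exact identity or via a pigeonhole and Jordan bound), yielding the complementary $O(q\sqrt{AC})$ with the clean constant $4/\pi$ after optimization.

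The main obstacle is the perturbation bound: when $Q$ is only mildly larger than $q$, the amplitude $1/Q$ is comparable to the lattice spacing $1/q$, so the sorted-distance lower bounds degrade and naive use of Jordan's inequality does not recover the clean $6/\pi^2$ and $4/\pi$ constants. This is reflected in the $3A$ (rather than $2A$) safety term in the second bound and forces the use of the exact identity for $\sum 1/\sin^2$, together with a careful bookkeeping of how the perturbation affects the two extremal terms that are subtracted off. The two stated bounds then appear naturally as the outcomes of two different trade-offs between bounding by $A$ and bounding by $C/|\sin(\pi\alpha n)|^2$ within the same sum.
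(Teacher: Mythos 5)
Your near-lattice picture is the right starting point, but the route you sketch for the sharp constant $6/\pi^2$ does not go through as stated, and the step you flag as ``the main obstacle'' is exactly where it breaks. The values $\alpha n$ modulo $1$ form an arithmetic progression with step $\alpha = a/q + \beta/(qQ)$, not a shifted lattice, so the exact identity $\sum_{r=0}^{q-1} 1/\sin^2\bigl(\pi(r/q+c)\bigr) = q^2/\sin^2(\pi q c)$ does not apply to them. The pointwise discrepancy of $\alpha n$ from the nearest lattice point is $\delta_1(j) = \beta j/(qQ)$, which varies linearly in $j$ and (after centering $j$ in $(-q/2,\,q/2]$) can be as large as $1/(2Q)$, i.e.\ up to half the lattice spacing $1/q$ when $Q$ is comparable to $q$. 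Near the singularity, $1/\sin^2(\pi t)$ has derivative of order $q^3$, so transporting the lattice sum to the perturbed points term by term costs an error of order $q^2$ per term --- the same order as the whole target --- and your sorted-distance bound $d_{(j)} \ge (j-1)/(2q) - (q-1)/(qQ)$ is zero or negative for the smallest few $j$ precisely in that regime, giving no control on the dominant summands. ``Carefully transporting back via the perturbation estimate'' is thus left as the hard part, and it is the hard part; nothing in the sketch supplies it.

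The paper's device for making the perturbation harmless is a pairing-by-convexity trick that your sketch does not contain, and it avoids the identity entirely. Writing $\alpha n \equiv (aj+c)/q + \delta_1(j) + \delta_2 \pmod 1$ with $|\delta_1(j)|,\,|\delta_2| \le 1/(2q)$ and $\delta_2 \ge 0$, one first absorbs $\delta_1$ pointwise (shrinking each distance by at most $1/(2q)$), and then pairs $r=-k$ with $r=k-1$: the pair sits at arguments $(k-\tfrac12)-q\delta_2$ and $(k-\tfrac32)+q\delta_2$, symmetric about $k-1$, so adding and subtracting $q\delta_2$ moves the pair inward, and convexity of $1/\sin^2$ shows the paired sum can only decrease. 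After that the sum is compared with $\int_1^{q/2}\csc^2(\pi x/q)\,dx = (q/\pi)\cot(\pi/q)$, and the Taylor estimate $t^2/\sin^2 t + t\cot 2t \le 3/2$ on $[0,\pi/4]$ gives $6q^2/\pi^2$. Your outline for the second bound (thresholding at $\sqrt{C/A}$) is closer to the paper's actual argument, but the constants again come from the same centering: $r=0,\pm1$ are charged $A$, the threshold sits near $r' = (q/\pi)\arcsin\sqrt{C/A}$, and $\arcsin x + x\sqrt{1-x^2} \le 2x$ yields $4q/\pi$. Note also that the $3A$ versus $2A$ difference is an artifact of treating $\pm r'$ symmetrically, not a concession to the perturbation being worse in that branch.
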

\begin{proof}
We start by letting
$m_0 = \lfloor y \rfloor + \lfloor (q+1)/2\rfloor$,
$j = n-m_0$, so that $j$ ranges in the interval $(-q/2,q/2\rbrack$.
We write
\[\alpha n = \frac{aj + c}{q} + \delta_1(j) + \delta_2 \mo 1,\]
where $|\delta_1(j)|$ and $|\delta_2|$ are both $\leq 1/2q$; we can assume
$\delta_2\geq 0$. The variable $r = aj+c \mo q$ occupies each residue class
$\mo p$ 
exactly once. 

One option is to bound the terms corresponding to $r=0, -1$ by $A$ each
and all the other terms by $C/|\sin(\pi \alpha n)|^2$. The terms 
corresponding to $r=-k$ and $r=k-1$ ($2\leq k\leq q/2$) contribute at most
\[\frac{1}{\sin^2 \frac{\pi}{q} (k - \frac{1}{2} - q \delta_2)}
+  \frac{1}{\sin^2 \frac{\pi}{q} (k - \frac{3}{2} + q \delta_2)}
\leq \frac{1}{\sin^2 \frac{\pi}{q} \left(k-\frac{1}{2}\right)} +
\frac{1}{\sin^2 \frac{\pi}{q} \left(k-\frac{3}{2}\right)},\]
since $x\mapsto \frac{1}{(\sin x)^2}$ is convex-up on $(0,\infty)$.
Hence the terms with $r\ne 0, 1$ contribute at most
\[\frac{1}{\left(\sin \frac{\pi}{2q}\right)^2} + 
2 \sum_{2\leq r \leq \frac{q}{2}} \frac{1}{\left(\sin \frac{\pi}{q} (r-1/2)
\right)^2} \leq \frac{1}{\left(\sin \frac{\pi}{2q}\right)^2} + 
2 \int_1^{q/2} \frac{1}{\left(\sin \frac{\pi}{q} x \right)^2} ,\]
where we use again the convexity of $x\mapsto 1/(\sin x)^2$. (We can 
assume $q>2$, as otherwise we have no terms other than $r=0,1$.)
Now
\[\int_1^{q/2} \frac{1}{\left(\sin \frac{\pi}{q} x\right)^2} dx = 
\frac{q}{\pi} \int_{\frac{\pi}{q}}^{\frac{\pi}{2}} \frac{1}{(\sin u)^2} du =
\frac{q}{\pi} \cot \frac{\pi}{q} .\] Hence
\[\sum_{y<n\leq y+q} \min\left(A, \frac{C}{(\sin \pi \alpha n)^2}\right)
\leq 2 A + \frac{C}{\left(\sin \frac{\pi}{2q}\right)^2} + 
C \cdot \frac{2 q}{\pi} \cot \frac{\pi}{q} .\]
Now, by \cite[(4.3.68)]{MR0167642} and \cite[(4.3.70)]{MR0167642},
for $t\in (-\pi,\pi)$,
\begin{equation}\label{eq:pordo1}\begin{aligned}
\frac{t}{\sin t} &= 1 + \sum_{k\geq 0} a_{2k+1} t^{2k+2} = 
1 + \frac{t^2}{6} + \dotsc\\
t \cot t &= 1 - \sum_{k\geq 0} b_{2k+1} t^{2k+2} = 1 - \frac{t^2}{3}
- \frac{t^4}{45} - \dotsc,
\end{aligned}\end{equation}
where $a_{2k+1}\geq 0$, $b_{2k+1}\geq 0$. Thus, for $t\in \lbrack 0,t_0\rbrack$,
$t_0<\pi$,
\begin{equation}\label{eq:pordo2}
\left(\frac{t}{\sin t}\right)^2 = 1 + \frac{t^2}{3} + c_0(t) t^4
\leq 1 + \frac{t^2}{3} + 
c_0(t_0) t^4,\end{equation} where
\[c_0(t) = \frac{1}{t^4} \left(\left(\frac{t}{\sin t}\right)^2 - 
\left(1 + \frac{t^2}{3}\right)\right),\]
which is an increasing function because $a_{2k+1}\geq 0$.
For $t_0=\pi/4$, $c_0(t_0) \leq 0.074807$. Hence,
\[\begin{aligned}\frac{t^2}{\sin^2 t} + t \cot 2 t &\leq
\left(1 + \frac{t^2}{3} + c_0\left(\frac{\pi}{4}\right)
 t^4\right) + \left(\frac{1}{2} - 
\frac{2 t^2}{3} - \frac{8 t^4}{45}\right)\\ &= \frac{3}{2} - \frac{t^2}{3} 
+\left(c_0\left(\frac{\pi}{4}\right) - \frac{8}{45}\right) t^4 \leq \frac{3}{2} - \frac{t^2}{3}
\leq \frac{3}{2}\end{aligned}\]
for $t\in \lbrack 0, \pi/4\rbrack$.

Therefore, the left
side of (\ref{eq:betblu}) is at most
\[2 A + C \cdot \left(\frac{2 q}{\pi}\right)^2 \cdot \frac{3}{2} = 2 A + \frac{6}{\pi^2}
C q^2 .\]
 
The following is an alternative approach yielding the other estimate in
(\ref{eq:betblu}).
We bound the terms corresponding to $r=0$, $r=-1$, $r=1$ 
by $A$ each. We let $r = \pm r'$ for $r'$ ranging from $2$ to $q/2$.
We obtain that the sum is at most
\begin{equation}\label{eq:cloison}
\begin{aligned}
3 A &+ \sum_{2\leq r'\leq q/2} 
 \min\left(A,\frac{C}{\left(\sin \frac{\pi}{q} 
\left(r' - \frac{1}{2} - q\delta_2 \right)\right)^2}\right) \\
&+ \sum_{2\leq r'\leq q/2} 
 \min\left(A,\frac{C}{\left(\sin \frac{\pi}{q} 
\left(r' - \frac{1}{2} + q\delta_2 \right)\right)^2}\right).\end{aligned}
\end{equation}

We bound a term $\min(A,C/\sin((\pi/q) (r'-1/2\pm q\delta_2))^2)$ by $A$ if
and only if $C/\sin((\pi/q) (r'-1\pm q \delta_2))^2 \geq A$. The number
of such terms is \[\leq \max(0,\lfloor
(q/\pi) \arcsin(\sqrt{C/A}) \mp q\delta_2\rfloor),\] and thus at most
$(2 q/\pi) \arcsin(\sqrt{C/A})$ in total. (Recall that $q\delta_2\leq 1/2$.)
Each other term gets bounded
by the integral of $C/\sin^2(\pi\alpha/q)$ from $r'-1\pm q\delta_2$
($\geq (q/\pi) \arcsin(\sqrt{C/A})$) to $r'\pm q\delta_2$, by convexity.
Thus (\ref{eq:cloison}) is at most
\[\begin{aligned}
3 A &+ \frac{2q}{\pi} A \arcsin \sqrt{\frac{C}{A}} +
2 \int_{\frac{q}{\pi}  \arcsin \sqrt{\frac{C}{A}}}^{q/2}\; \frac{C}{\sin^2 \frac{\pi t}{q}}
dt\\
&\leq 3 A + \frac{2q}{\pi} A \arcsin \sqrt{\frac{C}{A}} +
\frac{2 q}{\pi} C \sqrt{\frac{A}{C}-1} \end{aligned}\]

We can easily show (taking derivatives) that
$\arcsin x +
x (1-x^2) \leq 2 x$ for $0\leq x\leq 1$. Setting $x = C/A$, we see
that this implies that
\[3 A + \frac{2q}{\pi} A \arcsin \sqrt{\frac{C}{A}} +
\frac{2 q}{\pi} C \sqrt{\frac{A}{C}-1} \leq
 3 A + \frac{4q}{\pi} \sqrt{A C}.\]
(If $C/A>1$, then $3 A + (4q/\pi) \sqrt{A C}$ is greater than
$A q$, which is an obvious upper bound for the left side of (\ref{eq:betblu}).)
\end{proof}

\begin{lem}\label{lem:couscous}
Let $\alpha = a/q + \beta/q Q$, $\gcd(a,q)=1$, $|\beta|\leq 1$, $q\leq Q$.
Let $y_2>y_1\geq 0$. If $y_2-y_1\leq q$ and $y_2\leq Q/2$, then, for any
$A, C \geq 0$,
\begin{equation}\label{eq:dijkre}
\mathop{\sum_{y_1 < n \leq y_2}}_{q\nmid n} 
  \min\left(A,
\frac{C}{|\sin (\pi \alpha n)|^2}\right) \leq 
\min\left(\frac{20}{3 \pi^2} C q^2, 2 A + \frac{4 q}{\pi} \sqrt{A C}\right).
\end{equation}
\end{lem}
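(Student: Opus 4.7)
The proof follows the overall blueprint of Lemma~\ref{lem:gotog}, with two modifications coming from the extra hypotheses $y_2 - y_1 \leq q$ and $y_2 \leq Q/2$. Set $m_0 = \lfloor (y_1 + y_2)/2 \rfloor$ and let $j = n - m_0$, so that $j$ ranges over an interval inside $(-q/2, q/2]$. Writing $\alpha n \equiv (aj + c)/q + \delta_1(j) + \delta_2 \pmod{1}$ with $|\delta_1(j)| \leq |j|/(qQ) \leq 1/(2Q)$ and $|\delta_2| \leq m_0/(qQ) \leq 1/(2q)$ (this last using $y_2 \leq Q/2$), and with $\delta_2 \geq 0$, one has the crucial improvement $|\delta_1(j) + \delta_2| = |\beta n/(qQ)| \leq n/(qQ) \leq 1/(2q)$. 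Let $r = aj + c \pmod q$ denote the signed residue; as $j$ varies, each value of $r$ is hit at most once, and the excluded case $q \mid n$ corresponds exactly to $r = 0$.

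For the second bound $2A + \frac{4q}{\pi}\sqrt{AC}$, we follow the second argument of Lemma~\ref{lem:gotog} verbatim: bound the two terms $r = \pm 1$ by $A$ each, and bound the remaining residues $r = \pm r'$ with $2 \leq r' \leq \lfloor q/2\rfloor$ by $C/\sin^2(\pi(r' - 1/2 \mp q\delta_2)/q)$. The $\arcsin$-split that decides whether to bound each such term by $A$ or by $C/\sin^2$, together with the inequality $\arcsin x + x(1-x^2) \leq 2x$ and the integral of $1/\sin^2(\pi t/q)$, then yields the $\frac{4q}{\pi}\sqrt{AC}$ contribution. The saving of one $A$ relative to Lemma~\ref{lem:gotog}, which $A$-bounded all three of $r \in \{0, \pm 1\}$, is immediate from the absence of $r = 0$.

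For the first bound $\frac{20}{3\pi^2}Cq^2$, the estimate $|\delta_1 + \delta_2| \leq 1/(2q)$ implies that for every surviving residue the distance of $\alpha n$ from $\mathbb{Z}$ is at least $1/(2q)$; hence $|\sin(\pi\alpha n)|$ is bounded away from $0$ and no $A$-term is needed. Group residues in pairs $\{r, -r\}$ for $r \in \{1, \ldots, \lfloor q/2 \rfloor\}$. A short case analysis on $\mathrm{sgn}(\beta)$ shows that within each such pair one member has distance at least $r/q$ while the other has distance at least $(r - 1/2)/q$, so the pair contributes at most $C/\sin^2(\pi r/q) + C/\sin^2(\pi(r - 1/2)/q)$. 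Summing, and using the classical identities
\[
\sum_{k=1}^{q-1}\frac{1}{\sin^2(\pi k/q)} = \frac{q^2 - 1}{3}, \qquad \sum_{k=0}^{q-1}\frac{1}{\sin^2(\pi(k+1/2)/q)} = q^2
\]
(both obtained by two logarithmic differentiations of $\prod_{k=0}^{q-1}\sin(x + k\pi/q) = 2^{1-q}\sin(qx)$), the total is at most $\frac{2q^2 + 1}{3} C$. Elementary arithmetic gives $(2q^2 + 1)/3 \leq (20/(3\pi^2))q^2$ for $q \geq 7$, and the finitely many small cases $q \leq 6$ are disposed of by direct evaluation of the at most $q-1$ surviving terms.

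The main subtle step is the sign-of-$\beta$ case analysis in the first bound: this is what upgrades one of the two distance estimates in each pair from the uniform $(r - 1/2)/q$ to the tighter $r/q$, saving a factor of roughly $3/2$ over the naive bound $Cq^2$ and bringing the sum just under $(20/(3\pi^2))Cq^2$.
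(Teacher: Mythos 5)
Your proposal is correct, and for the first bound $\frac{20}{3\pi^2}Cq^2$ it takes a genuinely different route from the paper. After arriving (as the paper does) at the decomposition into pairs $\{r,q-r\}$ with distances $\geq r/q$ and $\geq (r-\tfrac12)/q$, the paper bounds the resulting sum of cosecant-squares by comparing with the integral $\int_1^{q/2}\csc^2(\pi t/q)\,dt = (q/\pi)\cot(\pi/q)$ and then applies the Taylor expansions (\ref{eq:pordo1}) to the combinations $t^2/\sin^2 t + (t/2)\cot 2t$ and $t^2/\sin^2 t + t\cot(3t/2)$, while you close it with the exact finite identities $\sum_{k=1}^{q-1}\csc^2(\pi k/q)=(q^2-1)/3$ and $\sum_{k=0}^{q-1}\csc^2(\pi(k+\tfrac12)/q)=q^2$. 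Your route is more elementary and in fact sharper: done carefully it gives at most $\frac{2(q^2-1)}{3}C$, which is $\leq \frac{2}{3}q^2 C \leq \frac{20}{3\pi^2}q^2 C$ for \emph{all} $q\geq 1$ with no small-case check. Your stated bound $\frac{2q^2+1}{3}C$ is a mild over-count (for $q$ even you let the self-paired residue $r=q/2$ contribute both $\csc^2(\pi r/q)=1$ and the half-integer term, when only the latter occurs), which is still a valid upper bound but is why you then need to verify $q\leq 6$ separately; if you track the fixed point correctly that verification disappears. For the second bound $2A+\frac{4q}{\pi}\sqrt{AC}$ you proceed exactly as the paper does, by transplanting the $\arcsin$-threshold argument of Lemma~\ref{lem:gotog} with one fewer $A$-term since $r=0$ is excluded.
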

\begin{proof}
Clearly, $\alpha n$ equals $an/q + (n/Q) \beta/q$; since
$y_2\leq Q/2$, this means that $|\alpha n - an/q| \leq 1/2q$
for $n\leq y_2$; moreover, again for $n\leq y_2$, the sign of
$\alpha n - an/q$ remains constant. Hence the left side of
(\ref{eq:dijkre}) is at most
\[\begin{aligned}
\sum_{r=1}^{q/2} \min\left(A, \frac{C}{(\sin \frac{\pi}{q}
    (r-1/2))^2}\right)
+ \sum_{r=1}^{q/2} \min\left(A, \frac{C}{(\sin \frac{\pi}{q} r)^2}\right)
.
\end{aligned}\]
Proceeding as in the
proof of Lemma \ref{lem:gotog}, we obtain a bound of at most
\[
C \left(\frac{1}{(\sin \frac{\pi}{2q})^2}  + \frac{1}{(\sin \frac{\pi}{q})^2} +
\frac{q}{\pi} \cot \frac{\pi}{q} + \frac{q}{\pi} \cot \frac{3 \pi}{2 q}\right)\]
for $q\geq 2$. (If $q=1$, then the left-side of (\ref{eq:dijkre}) is
trivially zero.) Now, by (\ref{eq:pordo1}),
\[\begin{aligned}
\frac{t^2}{(\sin t)^2} + \frac{t}{2} \cot 2t &\leq
\left(1 + \frac{t^2}{3} + c_0\left(\frac{\pi}{4}\right) t^4\right) + 
\frac{1}{4} \left(1 - \frac{4 t^2}{3} - \frac{16 t^4}{45}\right)\\
&\leq \frac{5}{4} + \left(c_0\left(\frac{\pi}{4}\right) - \frac{4}{45}\right)
t^4  \leq \frac{5}{4}
\end{aligned}\]
for $t\in \lbrack 0,\pi/4\rbrack$, and
\[\begin{aligned}
\frac{t^2}{(\sin t)^2} + t \cot \frac{3 t}{2} &\leq
\left(1 + \frac{t^2}{3} + c_0\left(\frac{\pi}{2}\right) t^4\right) + 
\frac{2}{3} \left(1 - \frac{3 t^2}{4} - \frac{81 t^4}{2^4\cdot 45}\right)\\
&\leq \frac{5}{3} + \left(- \frac{1}{6} + \left( c_0\left(\frac{\pi}{2}\right)
- \frac{27}{360}\right) \left(\frac{\pi}{2}\right)^2\right) t^2 \leq \frac{5}{3}
\end{aligned}\]
for $t\in \lbrack 0,\pi/2\rbrack$. Hence,
\[
\left(\frac{1}{(\sin \frac{\pi}{2q})^2}  + \frac{1}{(\sin \frac{\pi}{q})^2} +
\frac{q}{\pi} \cot \frac{\pi}{q} + \frac{q}{\pi} \cot \frac{3 \pi}{2 q}\right)
\leq \left(\frac{2 q}{\pi}\right)^2 \cdot \frac{5}{4} + \left(\frac{q}{\pi}
\right)^2\cdot \frac{5}{3} \leq
\frac{20}{3 \pi^2} q^2.\]
Alternatively, we can follow the second approach in the proof of Lemma
\ref{lem:gotog}, and obtain an upper bound of $2 A + (4q/\pi) \sqrt{AC}$.

\end{proof}

The following bound will be useful when the constant $A$ in an application
of Lemma \ref{lem:couscous} would be too large. (This tends to happen for
$n$ small.)
\begin{lem}\label{lem:thina}
Let $\alpha = a/q + \beta/q Q$, $\gcd(a,q)=1$, $|\beta|\leq 1$, $q\leq Q$.
Let $y_2>y_1\geq 0$. If $y_2-y_1\leq q$ and $y_2\leq Q/2$, then, for
any $B,C\geq 0$,
\begin{equation}\label{eq:shtru}
\mathop{\sum_{y_1<n\leq y_2}}_{q\nmid n} 
\min\left(\frac{B}{|\sin(\pi \alpha n)|},\frac{C}{|\sin(\pi \alpha n)|^2}
\right) \leq 2 B \frac{q}{\pi} \max\left(2, \log \frac{C e^3 q}{B \pi}\right)
.
 \end{equation}
The upper bound $\leq (2 B q/\pi) \log (2 e^2 q/\pi)$ is also valid.
\end{lem}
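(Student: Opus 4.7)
The argument follows the template of Lemmas \ref{lem:gotog} and \ref{lem:couscous}. Since $|\alpha n - an/q| = |\beta| n/(qQ) \leq 1/(2q)$ has constant sign for $y_1 < n \leq y_2 \leq Q/2$, and the residues $an \bmod q$ over that range are distinct and (by $q \nmid n$) non-zero given $y_2 - y_1 \leq q$, the per-residue analysis carried out just before (\ref{eq:cloison}) bounds the left-hand side of (\ref{eq:shtru}) by two sums of the form
\[
\Sigma_s = \sum_{r=1}^{\lfloor q/2 \rfloor} \min\!\left(\frac{B}{\sin(\pi(r-s)/q)},\, \frac{C}{\sin^2(\pi(r-s)/q)}\right),\qquad s \in \{0,\tfrac{1}{2}\}.
\]

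To estimate $\Sigma_s$ in the principal regime, assume $C \leq B$ (otherwise $B/|\sin| \leq C/|\sin|^2$ throughout and a simpler argument applies) and set $r_\ast = (q/\pi)\arcsin(C/B) \in [0, q/2]$. Split $\Sigma_s$ at $r - s = r_\ast$: on $r-s < r_\ast$ the minimum equals $B/\sin(\pi(r-s)/q)$, and on $r-s \geq r_\ast$ it equals $C/\sin^2(\pi(r-s)/q)$. Convexity of $1/\sin$ and $1/\sin^2$ on $(0,\pi)$ bounds each partial sum by the corresponding integral, whose antiderivatives are $(q/\pi)\log\tan(\pi t/(2q))$ and $-(q/\pi)\cot(\pi t/q)$ respectively. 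The tail integral is at most $(Cq/\pi)\cot(\pi r_\ast/q) \leq Bq/\pi$, using $\sin(\pi r_\ast/q) = C/B$ and $\cos \leq 1$. The head integral, after peeling off the singular $r=1$ term (whose contribution is $\leq Bq$), is at most $(Bq/\pi)\log(\tan(\pi r_\ast/(2q))/\tan(\pi/(2q)))$ plus a bounded boundary; the half-angle identity gives $\tan(\pi r_\ast/(2q)) \leq C/B$, and $\tan(\pi/(2q)) \geq \pi/(2q)$, so this is $(Bq/\pi)(\log(2Cq/(B\pi)) + O(1))$. Summing the two pieces and absorbing the additive slack into the $e^3$ factor inside the log yields $\Sigma_s \leq (Bq/\pi)\log(Ce^3 q/(B\pi))$; doubling over $s \in \{0,\tfrac{1}{2}\}$ recovers the main bound whenever that logarithm is at least $2$.

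The alternative bound $\leq (2Bq/\pi) \log(2e^2 q/\pi)$ is obtained by discarding the second term in $\min$: from $\min(\cdot,\cdot) \leq B/|\sin|$, the same integration without any splitting yields $\Sigma_s \leq (Bq/\pi)\log(2e^2 q/\pi)$. The $\max(2,\cdot)$ in the statement reconciles the two bounds: if $\log(Ce^3 q/(B\pi)) < 2$ then $C$ is so small that $r_\ast < 1$, the first branch of the $\min$ is never active, and the full sum reduces to $\sum_{r\geq 1} C/\sin^2(\pi(r-s)/q) \leq (20/(3\pi^2)) Cq^2$ (the bound supplied by the proof of Lemma \ref{lem:couscous}), which the hypothesis $Cq/(B\pi) < e^{-1}$ keeps below $(2Bq/\pi)\cdot 2$. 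The main obstacle is exactly this explicit-constant bookkeeping: one has to verify that the cumulative additive slack from discretization, from the $r=1$ boundary term, and from the $\sqrt{1-(C/B)^2}$ factor dropped from $\cot$ fits inside $e^3$, using the same style of sharp elementary estimates on $\arcsin$, $\cot$ and $\tan$ near $0$ and $\pi/2$ already deployed in the proofs of Lemmas \ref{lem:gotog} and \ref{lem:couscous}.
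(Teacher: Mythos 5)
Your proof follows the same route as the paper's: reduce the left side, via the residue decomposition from the proof of Lemma \ref{lem:couscous} and $\sin(\pi r/q)\geq\sin(\pi(r-\tfrac12)/q)$, to twice a single sum over $r\leq q/2$; assume $B\sin(\pi/q)\leq C\leq B$ and split at $r_\ast=(q/\pi)\arcsin(C/B)$; peel off the $r=1$ term; estimate both tails by integrals via convexity; and collapse the resulting $\tan$/$\cot$/$\arcsin$ expressions into a single logarithm. You explicitly defer the constant bookkeeping (``absorbing the additive slack into the $e^3$''); the paper dispatches it with two auxiliary inequalities, namely $\tfrac{2}{\sin t}+\tfrac1t\log\cot t<\tfrac1t\log(e^2/t)$ for $t\in(0,\pi/2)$ and $t\log(t-\sqrt{t^2-1})+\sqrt{t^2-1}<-t\log 2t+t$ for $t\geq 1$ (the latter is (\ref{eq:somos})), applied at $t=\pi/2q$ and $t=B/C$ respectively, which produce $\log(Ce^3q/B\pi)$ exactly with nothing left over. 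Those two inequalities are the concrete content missing from your sketch, and you would need to supply and verify them to turn the outline into a proof.

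The one genuine divergence is the $\max(2,\cdot)$ endgame. The paper handles $C<B\sin(\pi/q)$ by relaxing $C$ to $B\sin(\pi/q)$ and rerunning the main chain, which produces $\log\bigl(\sin(\pi/q)\,e^3q/\pi\bigr)$ (a quantity in roughly $(2.5,3)$) inside the bracket. Your route is to note that $\log(Ce^3q/B\pi)<2$ forces $C<B\pi/(eq)<B\sin(\pi/q)$, so $r_\ast<1$, every term lies on the $C/\sin^2$ branch, and the first bound in Lemma \ref{lem:couscous} gives $\leq (20/3\pi^2)Cq^2\leq 4Bq/\pi$. This is cleaner and actually lands strictly inside the $\max(2,\cdot)$ claim, whereas the paper's relaxation only reaches the weaker alternative bound $\log(2e^2q/\pi)$ in this range. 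One small correction: the $(20/3\pi^2)Cq^2$ estimate already bounds the \emph{combined} contribution of both residue-class subsums (i.e.\ the full left side of (\ref{eq:shtru})), so you must not double it again as your per-$\Sigma_s$ phrasing suggests.
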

\begin{proof}
As in the proof of Lemma \ref{lem:couscous}, we can bound the left side
of (\ref{eq:shtru}) by
\[
2 \sum_{r=1}^{q/2} \min\left(\frac{B}{
\sin \frac{\pi}{q} \left(r - \frac{1}{2}\right)}, 
\frac{C}{\sin^2 \frac{\pi}{q} \left(r - \frac{1}{2}\right)}\right)
.\]
Assume $B \sin(\pi/q) \leq C\leq B$. 
By the convexity of $1/\sin(t)$ and $1/\sin(t)^2$ for $t\in (0,\pi/2\rbrack$,
\[\begin{aligned}
\sum_{r=1}^{q/2} &\min\left(\frac{B}{
\sin \frac{\pi}{q} \left(r - \frac{1}{2}\right)}, 
\frac{C}{\sin^2 \frac{\pi}{q} \left(r - \frac{1}{2}\right)}\right) \\
&\leq \frac{B}{\sin \frac{\pi}{2 q}} + 
\int_1^{\frac{q}{\pi} \arcsin \frac{C}{B}} \frac{B}{\sin \frac{\pi}{q} t} 
dt + \int_{\frac{q}{\pi} \arcsin \frac{C}{B}}^{q/2} \frac{1}{\sin^2 \frac{\pi}{q} t}
dt \\ &\leq \frac{B}{\sin \frac{\pi}{2 q}} + 
\frac{q}{\pi} \left(B \left(\log \tan \left(\frac{1}{2} \arcsin \frac{C}{B}\right) -
\log \tan \frac{\pi}{2 q}\right) + C \cot \arcsin \frac{C}{B}\right)\\
&\leq \frac{B}{\sin \frac{\pi}{2 q}} + 
\frac{q}{\pi} \left(B \left(\log \cot \frac{\pi}{2 q} - \log \frac{C}{B-
\sqrt{B^2 - C^2}}\right) + \sqrt{B^2 - C^2}\right).
\end{aligned}\]

Now, for all $t\in (0,\pi/2)$,
\[\frac{2}{\sin t} + \frac{1}{t} \log \cot t 
< \frac{1}{t} \log\left(\frac{e^2}{t}\right);\]
we can verify this by comparing series. Thus
\[\frac{B}{\sin \frac{\pi}{2q}} + \frac{q}{\pi} B \log \cot \frac{\pi}{2 q}
\leq B \frac{q}{\pi} \log \frac{2 e^2 q}{\pi}\]
for $q\geq 2$.  (If $q=1$, the sum on the left of (\ref{eq:shtru}) is empty,
and so the bound we are trying to prove is trivial.)
We also have
\begin{equation}\label{eq:somos}
t \log(t - \sqrt{t^2-1}) + \sqrt{t^2 - 1} < -t \log 2t + t\end{equation}
for $t\geq 1$ (as this is equivalent to $\log(2 t^2 (1 - \sqrt{1 - t^{-2}}))
< 1 - \sqrt{1 - t^{-2}}$, which we check easily after changing variables to
$\delta = 1 - \sqrt{1 - t^{-2}}$). Hence
\[\begin{aligned}
\frac{B}{\sin \frac{\pi}{2 q}} &+ 
\frac{q}{\pi} \left(B \left(\log \cot \frac{\pi}{2 q} - \log \frac{C}{B-
\sqrt{B^2 - C^2}}\right) + \sqrt{B^2 - C^2}\right)\\ &\leq
B \frac{q}{\pi} \log \frac{2 e^2 q}{\pi} + \frac{q}{\pi} \left(B - B \log \frac{2 B}{C}
\right)
\leq B \frac{q}{\pi} \log \frac{C e^3 q}{B \pi}\end{aligned}\]
for $q\geq 2$.

Given any $C$, we can apply the above with $C=B$ instead, as, for any
$t>0$, $\min(B/t, C/t^2) \leq B/t \leq \min(B/t, B/t^2)$. 
(We refrain from applying (\ref{eq:somos}) so as to avoid worsening 
a constant.)
If $C < B \sin \pi/q$ (or even if $C < (\pi/q) B$), we relax the input to
$C = B \sin \pi/q$ and go through the above.
\end{proof}

\subsection{Type I estimates}

Our main type I estimate is the following.\footnote{The current version of
Lemma \ref{lem:bosta1} is an improvement over that included in the first
preprint of this paper.} One of the main innovations is the 
manner 
in which the ``main term'' ($m$ divisible by $q$) is separated; we are able
to keep error terms small thanks to the particular way in which we switch 
between two different approximations.

(These are {\em not} necessarily successive approximations in the sense
of continued fractions; we do not want to assume that the approximation
$a/q$ we are given arises from a continued fraction, and at any rate
we need more control on the denominator $q'$ of the new approximation 
$a'/q'$ than continued fractions would furnish.)

\begin{lem}\label{lem:bosta1}
Let $\alpha = a/q + \delta/x$, $\gcd(a,q)=1$, $|\delta/x|\leq 1/q Q_0$,
$q\leq Q_0$, $Q_0\geq 16$.
 Let $\eta$ be continuous, piecewise $C^2$ and compactly supported, with
$|\eta|_1 = 1$ and $\eta''\in L_1$. Let $c_0 \geq |\widehat{\eta''}|_\infty$.

Let $1\leq D\leq x$. Then, if $|\delta|\leq 1/2c_2$, where
$c_2 = (3 \pi/5\sqrt{c_0}) (1+\sqrt{13/3})$, the absolute value of 
\begin{equation}\label{eq:gorio}
\sum_{m\leq D} \mu(m) \sum_n e(\alpha m n) \eta\left(\frac{m n}{x}\right)
\end{equation} is at most
\begin{equation}\label{eq:cupcake}
\frac{x}{q} \min\left(1,\frac{c_0}{(2\pi \delta)^2}\right)
\left|
\mathop{\sum_{m\leq \frac{M}{q}}}_{\gcd(m,q)=1} \frac{\mu(m)}{m} \right| + 
O^*\left(c_0 
\left(\frac{1}{4} - \frac{1}{\pi^2}
\right)
\left(\frac{D^2}{2 x q} + \frac{D}{2x} \right)\right)
\end{equation}
plus
\begin{equation}\label{eq:kuche1}\begin{aligned} 
&\frac{2 \sqrt{c_0 c_1}}{\pi} D +
3 c_1 \frac{x}{q} \log^+ \frac{D}{c_2 x/q} 
+ \frac{\sqrt{c_0 c_1}}{\pi} q \log^+ \frac{D}{q/2}\\
&+ \frac{|\eta'|_1}{\pi} q \cdot \max\left(2, \log \frac{c_0 e^3 q^2}{4 \pi |\eta'|_1 x}\right) +
 \left(\frac{2 \sqrt{3 c_0 c_1}}{\pi} + 
\frac{3 c_1}{c_2}
+ \frac{55 c_0 c_2}{12 \pi^2} \right) q
 ,\end{aligned}\end{equation}
where $c_1 = 1 + |\eta'|_1/(2 x/D)$
 and $M\in \lbrack \min(Q_0/2,D),D\rbrack$.
The same bound holds if $|\delta|\geq 1/2c_2$ but $D\leq Q_0/2$.

In general, if $|\delta|\geq 1/2c_2$, the absolute value of
(\ref{eq:gorio}) is at most (\ref{eq:cupcake}) plus
\begin{equation}\label{eq:kallervo}\begin{aligned}
&\frac{2 \sqrt{c_0 c_1}}{\pi} \left(
D  + (1+\epsilon) \min\left(\left\lfloor \frac{x}{|\delta| q}\right\rfloor + 1, 2 D\right)
 \left(\varpi_\epsilon +
 \frac{1}{2} \log^+ \frac{2 D}{
\frac{x}{|\delta| q}}\right)\right)\\
&+ 3 c_1
 \left(2 + \frac{(1+\epsilon)}{\epsilon} \log^+ \frac{2D}{
\frac{x}{|\delta| q}}\right) \frac{x}{Q_0} +
\frac{35 c_0 c_2}{6 \pi^2} q,
\end{aligned}\end{equation}
for $\epsilon\in (0,1\rbrack$ arbitrary, where $\varpi_\epsilon = 
\sqrt{3+2\epsilon} + ((1+\sqrt{13/3})/4-1)/(2 (1+\epsilon))$.
\end{lem}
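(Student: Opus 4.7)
The plan is to apply Poisson summation to the inner sum over $n$ for each fixed $m$. Writing the inner sum as a sum of $\eta(mn/x)e(\alpha mn)$ and using that the Fourier transform of $t \mapsto \eta(mt/x)e(\alpha mt)$ at $k$ equals $(x/m)\widehat\eta((x/m)(k-\alpha m))$, Poisson gives
\[\sum_n e(\alpha mn)\eta(mn/x) \;=\; \frac{x}{m}\sum_{k\in\mathbb{Z}} \widehat{\eta}\!\left(\tfrac{x}{m}(k - \alpha m)\right),\]
and by (\ref{eq:malcros}) each term satisfies $|\widehat{\eta}(y)| \le \min(|\eta|_1, c_0/(2\pi y)^2)$. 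For each $m$, let $k_m\in\mathbb{Z}$ be the integer closest to $\alpha m$; this term dominates, and the tail $k\neq k_m$ contributes an absolutely convergent series whose sum is $O(c_0 m/x)$.

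The main term is extracted from the contribution of $q\mid m$ with $k=k_m$. Writing $m=qm'$ we have $\alpha m = am' + \delta qm'/x$; provided $|\delta|qm'/x \leq 1/2$ we get $k_m = am'$, and the $k=k_m$ term is $(x/m)\widehat{\eta}(-\delta)$. Since $\mu(m)$ vanishes unless $m$ is squarefree and $\gcd(m',q)=1$, summation yields
\[\mu(q)\,\widehat\eta(-\delta)\,\frac{x}{q}\sum_{\substack{m'\le M/q \\ \gcd(m',q)=1}}\frac{\mu(m')}{m'}\]
for a cutoff $M\in[\min(Q_0/2,D),D]$ to be chosen; combining $|\mu(q)|\le 1$ with $|\widehat\eta(-\delta)|\le\min(1,c_0/(2\pi\delta)^2)$ recovers the first line of (\ref{eq:cupcake}). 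The $O^{*}$-error in (\ref{eq:cupcake}) absorbs the tail $k\neq k_m$ for $q\mid m$, via the estimate $\sum_{k\neq 0}|\widehat\eta(xk/m-\delta)|\ll c_0 m/x$ together with $\sum_{m\le D,\, q\mid m}m/x \le (D^2/2xq)+D/(2x)$.

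For the remaining contributions (either $q\nmid m$, or $M<m\le D$ with $q\mid m$), the dominant term is $(x/m)\widehat{\eta}(x\theta_m/m)$ with $\theta_m = k_m-\alpha m$. Using $|\widehat\eta(x\theta_m/m)| \le \min(|\eta|_1,\,c_0 m^2/(2\pi x\theta_m)^2)$ and $|\theta_m|\asymp |\sin(\pi\alpha m)|/\pi$, the sum over $m$ in arithmetic progressions modulo $q$ takes exactly the form to which Lemmas \ref{lem:gotog}, \ref{lem:couscous}, and \ref{lem:thina} apply, with $A\sim (x/m)|\eta|_1$ and $C\sim c_0 m/(4\pi^2 x)$. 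Partial summation in $m$ is then used to absorb the factor $1/m$, producing the $\log^+(D/(c_2 x/q))$ and $\log^+(D/(q/2))$ terms of (\ref{eq:kuche1}); the $|\eta'|_1$-term in (\ref{eq:kuche1}) comes from the $k\neq k_m$ tail for $q\nmid m$, bounded via Lemma \ref{lem:thina}.

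The main obstacle will be the bookkeeping of constants and the regime transitions at $m\sim x/q$ and $m\sim q$; this is where the switch between the Lemma \ref{lem:gotog}/\ref{lem:couscous} bounds ($A$ vs.\ quadratic) and the Lemma \ref{lem:thina} bound is made, and where the choice of $c_2$ controls the shift of the $\log^+$-thresholds. The second bound (\ref{eq:kallervo}), for large $|\delta|$, is obtained by introducing a \emph{second} Dirichlet approximation $a'/q'$ to $\alpha$ valid on the range $m>x/(|\delta|q)$ where $a/q$ is no longer a good approximation modulo $1$: here $q'\sim x/(|\delta|q)$, the trigonometric lemmas are applied with $q'$ in place of $q$, and the $\log^+(2D/(x/|\delta|q))$ factor together with the parameter $\varepsilon$ comes from optimizing the range on which one switches between the two approximations. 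Checking that all these pieces combine with the prescribed constants (and that $c_1 = 1+|\eta'|_1/(2x/D)$ correctly accounts for the switch between $|\eta|_1$ and $|f|_1+|f'|_1/2$ in the trivial bound) is the most delicate part.
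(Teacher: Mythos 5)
Your high-level plan---Poisson summation to extract the main term from $q\mid m$, trigonometric lemmas \ref{lem:gotog}--\ref{lem:thina} blockwise for the rest, and a second Dirichlet approximation $a'/q'$ when $|\delta|$ is large---is indeed the architecture of the paper's proof. But there are two concrete gaps in the way you propose to bound $T_m(\alpha)=\sum_n e(\alpha mn)\eta(mn/x)$ for $q\nmid m$.

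First, you only give yourself the two bounds $|\widehat\eta(x\theta_m/m)|\le\min(|\eta|_1,\,c_0 m^2/(2\pi x\theta_m)^2)$, i.e.\ a trivial bound $A\sim x/m$ and a quadratic bound $C/|\sin\pi m\alpha|^2$. That is not enough to apply Lemma \ref{lem:thina}, which requires an input of the form $\min(B/|\sin\pi m\alpha|,\,C/|\sin\pi m\alpha|^2)$. The linear bound $B=\tfrac12|\eta'|_1$ in the paper comes from the summation-by-parts inequality in \eqref{eq:ra} (the second option in \eqref{eq:aoro}), which is \emph{not} a consequence of the Poisson expansion; you cannot get a bound that decays like $1/|\sin|$ rather than $1/|\sin|^2$ from the Poisson-dominant-term alone. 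Without it, the range $m\le q/2$ cannot be controlled with the stated constants: the trivial bound gives $\sim x\log q$, and the quadratic bound fed into Lemma \ref{lem:couscous} does not produce the $|\eta'|_1$-dependence seen in \eqref{eq:kuche1}.

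Second, your attribution of the $\frac{|\eta'|_1}{\pi}q\max(2,\log\cdots)$ term to ``the $k\ne k_m$ tail for $q\nmid m$, bounded via Lemma \ref{lem:thina}'' cannot be right. The Poisson tail $\sum_{k\ne k_m}\widehat\eta(\tfrac{x}{m}(k-\alpha m))$ is bounded by a quantity $\ll c_0 m/x$ that is uniform in $\alpha m$ (all the $|k-\alpha m|$ are $\ge\tfrac12$), so it has no $|\sin\pi m\alpha|$ structure and Lemma \ref{lem:thina} does not apply to it. Moreover, if you keep it as a separate error over all $m\le D$ it sums to $O(c_0 D^2/x)$, which is a factor of $q$ larger than the error $O(c_0 D^2/(xq))$ in \eqref{eq:cupcake}. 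The paper avoids both problems for $q\nmid m$ by \emph{not} splitting dominant term from tail: Lemma \ref{lem:areval} sums the entire Poisson series via Euler's identity $\sum_n(n-\alpha)^{-2}=\pi^2/\sin^2\pi\alpha$ to obtain the clean bound $\frac{m}{x}\cdot\frac{c_0/4}{(\sin\pi m\alpha)^2}$, with no leftover tail. The tail error in \eqref{eq:cupcake} arises only from the $q\mid m$, $m\le M$ range, which is why it carries the $1/q$. In short: you need to add the first-derivative (linear) bound from \eqref{eq:ra} to your toolkit, recombine the Poisson tail with the main term for $q\nmid m$ rather than estimating it separately, and locate the $|\eta'|_1$-term as the contribution of $m\le q/2$ handled by Lemma \ref{lem:thina} with $B=\tfrac12|\eta'|_1$.
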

In (\ref{eq:cupcake}), $\min(1,c_0/(2\pi \delta)^2)$ always equals $1$ when
$|\delta|\leq 1/2c_2$ (since $(3/5) (1 + \sqrt{13/3}) > 1$).
\begin{proof}
Let $Q = \lfloor x/|\delta q|\rfloor$. Then
$\alpha = a/q + O^*(1/q Q)$ and $q\leq Q$. (If $\delta=0$, we 
let $Q=\infty$ and ignore the rest of the paragraph, since then we will never
need $Q'$ or the alternative approximation $a'/q'$.) Let
$Q' = \lceil (1+\epsilon) Q\rceil \geq Q+1$. 
Then $\alpha$ is {\em not} $a/q + O^*(1/q Q')$, and so
there must be a different approximation $a'/q'$, $\gcd(a',q')=1$, $q'\leq Q'$
such that $\alpha = a'/q' + O^*(1/q' Q')$ (since such an approximation
always exists).
Obviously, $|a/q - a'/q'|\geq 1/q q'$, yet, at the same time,
$|a/q- a'/q'|\leq 1/q Q + 1/q' Q' \leq 1/q Q + 1/((1+\epsilon) q' Q)$. 
Hence $q'/Q + q/((1+\epsilon)Q) \geq 1$, and so $q'\geq Q-q/(1+\epsilon)
\geq (\epsilon/(1+\epsilon)) Q$.
(Note also that $(\epsilon/(1+\epsilon)) Q \geq (2|\delta q|/x)\cdot
\lfloor x/\delta q\rfloor>1$, and so $q'\geq 2$.)

Lemma \ref{lem:couscous} will enable us to treat separately the
contribution from terms with $m$ divisible by $q$ and $m$ not divisible by
$q$, provided that $m\leq Q/2$. Let $M = \min(Q/2,D)$.
We start by considering all terms with $m\leq M$ divisible by $q$.
Then $e(\alpha m n)$ equals $e((\delta m/x) n)$. By Poisson summation,
\[\sum_n e(\alpha m n) \eta(m n/x) = \sum_n \widehat{f}(n),\]
where $f(u) = e((\delta m/x) u) \eta((m/x) u)$. Now
\[\widehat{f}(n) = \int e(-un) f(u) du = \frac{x}{m} \int
e\left(\left(\delta-\frac{x n}{m}\right) u\right) \eta(u) du = 
\frac{x}{m} \widehat{\eta}\left(\frac{x}{m} n -\delta\right).\]
By assumption, $m\leq M \leq 
Q/2\leq x/2|\delta q|$, and so $|x/m|\geq 2 |\delta q| \geq
2 \delta$. Thus, by (\ref{eq:malcros}),
\begin{equation}\label{eq:crinfo}\begin{aligned}
\sum_n \widehat{f}(n) &= \frac{x}{m} \left(\widehat{\eta}(-\delta) + 
 \sum_{n\neq 0} \widehat{\eta}\left(\frac{n x}{m} - \delta\right)\right)\\
&= \frac{x}{m} \left(\widehat{\eta}(-\delta) + O^*\left(\sum_{n\neq 0}
 \frac{1}{\left(2\pi \left(\frac{n x}{m} - \delta\right)\right)^2}\right)
\cdot \left|\widehat{\eta''}\right|_\infty\right)\\
&= 
\frac{x}{m} \widehat{\eta}(-\delta) + \frac{m}{x} \frac{c_0}{(2\pi)^2}
O^*\left(\max_{|r|\leq \frac{1}{2}} \sum_{n\neq 0} \frac{1}{(n-r)^2}\right) .
\end{aligned}\end{equation}
Since $x\mapsto 1/x^2$ is convex on $\mathbb{R}^+$, 
\[\max_{|r|\leq \frac{1}{2}}
\sum_{n\neq 0} \frac{1}{(n-r)^2} = \sum_{n\neq 0} \frac{1}{
\left(n-\frac{1}{2}\right)^2} = 
\pi^2 - 4.\]

Therefore, the sum of all terms with $m\leq M$
and $q|m$ is
\[\begin{aligned}
&\mathop{\sum_{m\leq M}}_{q|m} \frac{x}{m} \widehat{\eta}(-\delta) +
\mathop{\sum_{m\leq M}}_{q|m} \frac{m}{x} 
\frac{c_0}{(2 \pi)^2} (\pi^2 - 4) \\
&=\frac{x \mu(q)}{q}\cdot
\widehat{\eta}(-\delta) \cdot 
\mathop{\sum_{m\leq \frac{M}{q}}}_{\gcd(m,q)=1} \frac{\mu(m)}{m} \\
&+ 
O^*\left(\mu(q)^2 c_0 
\left(\frac{1}{4} - \frac{1}{\pi^2}
\right)
\left(\frac{D^2}{2 x q} + \frac{D}{2 x} \right)\right).
\end{aligned}.\]
We bound $|\widehat{\eta}(-\delta)|$ by (\ref{eq:malcros}).

Let
\[T_m(\alpha) = \sum_n e(\alpha m n) \eta\left(\frac{m n}{x}\right).\]
Then, by (\ref{eq:ra}) and Lemma \ref{lem:areval},
\begin{equation}\label{eq:aoro}\begin{aligned}
|T_m(\alpha)| \leq \min\left(\frac{x}{m} + \frac{1}{2} |\eta'|_1,
\frac{\frac{1}{2} |\eta'|_1 }{|\sin(\pi m \alpha)|},
\frac{m}{x} \frac{c_0}{4} \frac{1}{(\sin \pi m \alpha)^2}\right).
\end{aligned}\end{equation}

For any $y_2>y_1>0$ with $y_2-y_1\leq q$ and $y_2\leq Q/2$,
(\ref{eq:aoro}) gives us that
\begin{equation}\label{eq:gowo}
\mathop{\sum_{y_1<m\leq y_2}}_{q\nmid m} |T_m(\alpha)|\leq
\mathop{\sum_{y_1<m\leq y_2}}_{q\nmid m} \min\left(A, \frac{C}{
(\sin \pi m \alpha)^2}\right)\end{equation}
for $A = (x/y_1) ( 1 + |\eta'|_1/(2(x/y_1)))$ and $C = (c_0/4) (y_2/x)$.
We must now estimate the sum
\begin{equation}\label{eq:esthel}
\mathop{\sum_{m\leq M}}_{q\nmid m} 
|T_m(\alpha)| + \sum_{\frac{Q}{2} < m \leq D} |T_m(\alpha)|.
\end{equation}


To bound the terms with $m\leq M$, we can use Lemma \ref{lem:couscous}.
The question is then which one is smaller: the first or the second
bound given by Lemma \ref{lem:couscous}? A brief calculation gives that
the second bound is smaller (and hence preferable) exactly when 
$\sqrt{C/A} > (3 \pi/10 q) (1 + \sqrt{13/3})$. Since $\sqrt{C/A} \sim
(\sqrt{c_0}/2) m/x$, this means that it is sensible to prefer the
 second bound in Lemma \ref{lem:couscous} when $m> c_2 x /q$,
where $c_2 = (3 \pi/ 5\sqrt{c_0}) (1+\sqrt{13/3})$.

It thus makes sense to ask: does $Q/2\leq c_2 x/q$ (so that $m\leq M$
implies $m\leq c_2 x/q$)? This question divides our work into two
basic cases.

Case (a). {\em $\delta$ large:  $|\delta|\geq 1/2c_2$,
where $c_2 = (3 \pi/5\sqrt{c_0}) (1+\sqrt{13/3})$.}
Then $Q/2 \leq c_2 x/q$; this will induce us to 
bound the first sum in (\ref{eq:esthel})
by the very first bound in Lemma \ref{lem:couscous}. 

Recall that $M = \min(Q/2,D)$, and so $M\leq c_2 x/q$.
By (\ref{eq:gowo}) and Lemma \ref{lem:couscous}, 
\begin{equation}\label{eq:jenuf}\begin{aligned}
\mathop{\sum_{1 \leq m \leq M}}_{q\nmid m} &|T_m(\alpha)| \leq
\sum_{j=0}^{\infty} \mathop{\sum_{j q < m\leq \min((j+1) q ,
    M)}}_{q\nmid m}
\min\left( 
\frac{x}{jq+1} +\frac{|\eta'|_1}{2}, 
\frac{\frac{c_0}{4} \frac{(j+1) q}{x}}{(\sin \pi m \alpha)^2}\right) \\
&\leq \frac{20}{3 \pi^2} \frac{c_0 q^3}{4 x} 
\sum_{0\leq j\leq \frac{M}{q}} (j+1) \leq \frac{20}{3 \pi^2} \frac{c_0 q^3}{4 x} \cdot
\left(\frac{1}{2} \frac{M^2}{q^2}+ 
\frac{3}{2} \frac{c_2 x}{q^2} + 1\right) \\ &\leq
 \frac{5 c_0 c_2}{6  \pi^2} M + 
\frac{5 c_0 q}{3 \pi^2}  \left(\frac{3}{2} c_2 + 
\frac{q^2}{x}\right) \leq
 \frac{5 c_0 c_2}{6  \pi^2} M + 
\frac{35 c_0 c_2}{6 \pi^2} q,
\end{aligned}\end{equation}
where, to bound the smaller terms, we are using the inequality 
$Q/2\leq c_2 x/q$, and where we are also 
using the observation that, since $|\delta/x| \leq 1/q Q_0$, 
the assumption $|\delta|\geq 1/2c_2$ implies that $q\leq 2 c_2 x/Q_0$;
moreover, since $q\leq Q_0$, this gives us that $q^2\leq 2 c_2 x$.
In the main term, we are bounding
$q M^2/x$ from above by $M \cdot q Q/2 x\leq M/2\delta \leq c_2 M$.

If $D\leq (Q+1)/2$, then $M\geq \lfloor D\rfloor$ 
and so (\ref{eq:jenuf}) is all we need.
Assume from now on that $D> (Q+1)/2$. The first sum in (\ref{eq:esthel}) is then
bounded by (\ref{eq:jenuf}) (with $M=Q/2$).
To bound the second sum in (\ref{eq:esthel}), we use
the approximation $a'/q'$
 instead of $a/q$. 
By (\ref{eq:gowo}) (without the restriction $q\nmid m$) 
and Lemma \ref{lem:gotog},
\[\begin{aligned}
&\sum_{Q/2 < m \leq D} |T_m(\alpha)| 
\leq \sum_{j=0}^\infty \sum_{jq' + \frac{Q}{2}< m \leq 
\min((j+1)q'+Q/2,D)} |T_m(\alpha)|\\
&\leq \sum_{j=0}^{\left\lfloor \frac{D-(Q+1)/2}{q'}\right\rfloor} 
\left(3 c_1 \frac{x}{jq'+ \frac{Q+1}{2}} 
+ \frac{4 q'}{\pi} \sqrt{\frac{c_1 c_0}{4}
 \frac{x}{jq'+ (Q+1)/2}
\frac{(j+1)q'+Q/2}{x}}\right)\\
&\leq \sum_{j=0}^{\left\lfloor \frac{D-(Q+1)/2}{q'}\right\rfloor} 
\left(3 c_1 \frac{x}{jq'+ \frac{Q+1}{2}}
+ \frac{4 q'}{\pi}  \sqrt{\frac{c_1 c_0}{4}
 \left(1 + \frac{q'}{jq'+ (Q+1)/2}\right)}\right),
\end{aligned}\]
where we recall that $c_1 = 1 + |\eta'|_1/(2x/D)$.
Since $q'\geq (\epsilon/(1+\epsilon)) Q$,
\begin{equation}\label{eq:tenda}
\sum_{j=0}^{\left\lfloor \frac{D-(Q+1)/2}{q'}\right\rfloor} 
\frac{x}{jq'+ \frac{Q+1}{2}} 
\leq \frac{x}{Q/2} + \frac{x}{q'} \int_{\frac{Q+1}{2}}^D \frac{1}{t} dt \leq 
\frac{2 x}{Q} + \frac{(1+\epsilon) x}{\epsilon Q} \log^+ \frac{D}{
\frac{Q+1}{2}} .
\end{equation}
Recall now that $q' \leq (1+\epsilon) Q + 1 \leq (1+\epsilon)(Q+1)$.
Therefore,
\begin{equation}\label{eq:beatri}\begin{aligned}
q' &\sum_{j=0}^{\lfloor \frac{D-(Q+1)/2}{q'}\rfloor}
\sqrt{1 + \frac{q'}{jq'+ (Q+1)/2}} \leq 
q' \sqrt{1 + \frac{(1+\epsilon)Q+1}{(Q+1)/2}} + 
\int_{\frac{Q+1}{2}}^{D} \sqrt{1 + \frac{q'}{t}} dt\\
&\leq q'\sqrt{3 + 2\epsilon} + \left(D-\frac{Q+1}{2}\right) 
+ \frac{q'}{2} \log^+ \frac{D}{
\frac{Q+1}{2}} 
.\end{aligned}\end{equation}
We conclude that 
$\sum_{Q/2 < m\leq D} |T_m(\alpha)|$ is at most
\begin{equation}\label{eq:sauna}\begin{aligned}
&\frac{2 \sqrt{c_0 c_1}}{\pi} \left(
D  + \left((1+\epsilon) \sqrt{3+2\epsilon}  - \frac{1}{2} \right) (Q+1) +
 \frac{(1+\epsilon)Q+1}{2} \log^+ \frac{D}{\frac{Q+1}{2}}\right)\\
&+ 3 c_1 \left(2 + \frac{(1+\epsilon)}{\epsilon} \log^+ \frac{D}{\frac{Q+1}{2}} \right)
\frac{x}{Q}
\end{aligned}\end{equation}
 We sum this to
(\ref{eq:jenuf})
(with $M=Q/2$), and obtain that (\ref{eq:esthel}) is at most
\begin{equation}\label{eq:kullervo}\begin{aligned}
&\frac{2 \sqrt{c_0 c_1}}{\pi} \left(
D  + (1+\epsilon) (Q+1) \left(\varpi_\epsilon +
 \frac{1}{2} \log^+ \frac{D}{\frac{Q+1}{2}}\right)\right)\\
&+ 3 c_1
 \left(2 + \frac{(1+\epsilon)}{\epsilon} \log \frac{D}{\frac{Q+1}{2}}\right) 
\frac{x}{Q} + \frac{35 c_0 c_2}{6 \pi^2} q,
\end{aligned}\end{equation}
where we are bounding
\begin{equation}\label{eq:semin1}
\frac{5 c_0 c_2}{6 \pi^2} = 
\frac{5 c_0}{6 \pi^2} \frac{3\pi}{5 \sqrt{c_0}} \left(1 +
  \sqrt{\frac{13}{3}} \right) = \frac{\sqrt{c_0}}{2 \pi}
\left(1 + \sqrt{\frac{13}{3}}\right) \leq \frac{2 \sqrt{c_0 c_1}}{\pi} \cdot 
\frac{1}{4}
\left(1 + \sqrt{\frac{13}{3}}\right)
\end{equation}
and defining
\begin{equation}\label{eq:semin2}
\varpi_\epsilon = \sqrt{3 + 2 \epsilon} + 
\left(\frac{1}{4} \left(1 + \sqrt{\frac{13}{3}}\right) -
  1\right) \frac{1}{2 (1+\epsilon)}.\end{equation}
(Note that $\varpi_\epsilon<\sqrt{3}$ for $\epsilon < 0.1741$).
A quick check against (\ref{eq:jenuf}) shows that (\ref{eq:kullervo})
is valid also when $D\leq Q/2$, even when $Q+1$ is replaced by $\min(Q+1,2D)$. 
We bound $Q$ from above by $x/|\delta| q$
and $\log^+ D/((Q+1)/2)$ by $\log^+ 2 D/(x/|\delta| q+1)$,
and obtain the result. 

Case (b): {\em $|\delta|$ small: $|\delta|\leq 1/2 c_2$ or $D\leq Q_0/2$.}
Then $\min(c_2 x/q,D) \leq Q/2$. We start by bounding the first $q/2$ terms 
in (\ref{eq:esthel}) by (\ref{eq:aoro}) and Lemma \ref{lem:thina}:
\begin{equation}\label{eq:prokof}\begin{aligned}
\sum_{m\leq q/2} |T_m(\alpha)| &\leq
\sum_{m\leq q/2} \min\left(
\frac{\frac{1}{2} |\eta'|_1}{|\sin(\pi m \alpha)|},
\frac{c_0 q/8 x}{|\sin(\pi m \alpha)|^2}\right)\\ &\leq
\frac{|\eta'|_1}{\pi} q \max\left(2, \log \frac{c_0 e^3 q^2}{4 \pi |\eta'|_1 x}\right)
.\end{aligned}\end{equation}

If $q^2 < 2 c_2 x$, we estimate the terms with $q/2 < m \leq c_2 x/q$ by
Lemma \ref{lem:couscous}, which is applicable because $\min(c_2 x/q,D) <
Q/2$:
\begin{equation}\label{eq:martinu}\begin{aligned}
\mathop{\sum_{\frac{q}{2} < m \leq D'}}_{q\nmid m} &|T_m(\alpha)|\;
\leq \; \sum_{j=1}^{\infty} 
\mathop{\mathop{\sum_{\left(j-\frac{1}{2}\right) q < m\leq \left(j + \frac{1}{2}
\right) q}}_{m \leq \min\left(\frac{c_2 x}{q},D\right)}}_{q\nmid m} \min\left(\frac{x}{\left(j - 
\frac{1}{2}\right) q} + \frac{|\eta_1'|}{2}, \frac{\frac{c_0}{4}
\frac{(j+1/2) q}{x}}{(\sin \pi m \alpha)^2}\right)\\
&\leq \frac{20}{3 \pi^2} \frac{c_0 q^3}{4 x}
\sum_{1\leq j\leq \frac{D'}{q} + \frac{1}{2}} \left(j+\frac{1}{2}\right)
\leq \frac{20}{3 \pi^2} \frac{c_0 q^3}{4 x} 
\left(\frac{c_2 x}{2 q^2} \frac{D'}{q} + \frac{3}{2}
\left(\frac{c_2 x}{q^2}\right) + \frac{5}{8}\right)\\
&\leq \frac{5 c_0}{6 \pi^2} \left(c_2 D' + 3 c_2 q + 
\frac{5}{4} \frac{q^3}{x}\right) \leq
\frac{5 c_0 c_2}{6 \pi^2} \left(D' + \frac{11}{2} q \right),
\end{aligned}\end{equation}
where we write $D' = \min(c_2 x/q,D)$.
If $c_2 x/q\geq D$, we stop here. 
Assume that $c_2 x/q < D$. Let $R = \max(c_2 x/q, q/2)$. The terms we
have already estimated are precisely those with $m\leq R$.
We bound the terms $R < m\leq D$ by
the second bound in Lemma \ref{lem:gotog}:
\begin{equation}\label{eq:caron}\begin{aligned}
\sum_{R< m \leq D} &|T_m(\alpha)| \leq
\sum_{j=0}^{\infty} \mathop{\sum_{m > j q + R}}_{m\leq
\min\left((j+1) q + R,D\right)}
\min\left(\frac{c_1 x}{jq + R},
\frac{\frac{c_0}{4} \frac{(j+1) q + R}{x}}{(\sin \pi m \alpha)^2}
\right)\\
&\leq \sum_{j=0}^{\left\lfloor \frac{1}{q} \left(D - R\right)
\right\rfloor} \frac{3 c_1 x}{j q + R} + 
\frac{4q}{\pi} \sqrt{\frac{c_1 c_0}{4}
\left(1 + \frac{q}{j q + R}\right)}
\end{aligned}\end{equation}
(Note there is no need to use two successive approximations 
$a/q$, $a'/q'$ as in case (a). We are also including all terms with $m$
divisible by $q$, as we may, since $|T_m(\alpha)|$ is non-negative.) 
Now, much as before,
\begin{equation}\label{eq:kosto}
\sum_{j=0}^{\left\lfloor \frac{1}{q} \left(D - R\right)
\right\rfloor} \frac{x}{j q + R} \leq
\frac{x}{R} + \frac{x}{q} \int_{R}^{D} \frac{1}{t} dt
\leq \min\left(\frac{q}{c_2}, \frac{2 x}{q}\right) 
+ \frac{x}{q} \log^+ \frac{D}{c_2 x/q} ,\end{equation}
and
\begin{equation}\label{eq:kostas}
\sum_{j=0}^{\left\lfloor \frac{1}{q} \left(D - R\right)
\right\rfloor}  \sqrt{1 + \frac{q}{j q + R}} \leq
\sqrt{1 + \frac{q}{R}} + \frac{1}{q} \int_R^D \sqrt{1 + \frac{q}{t}} dt
\leq \sqrt{3} + \frac{D-R}{q} + \frac{1}{2} \log^+ \frac{D}{q/2}.
\end{equation}
We sum with (\ref{eq:prokof}) and (\ref{eq:martinu}),
and we obtain that (\ref{eq:esthel}) is at most
\begin{equation}\label{eq:kabanova}\begin{aligned}
&\frac{2 \sqrt{c_0 c_1}}{\pi} \left(\sqrt{3} q + D + \frac{q}{2}
\log^+ \frac{D}{q/2}\right) 
+ 
\left(3 c_1 \log^+ \frac{D}{c_2 x/q}\right) \frac{x}{q}\\
&+ 3 c_1 \min\left(\frac{q}{c_2}, \frac{2 x}{q}\right) 
+ \frac{55 c_0 c_2}{12 \pi^2} q
+ \frac{|\eta'|_1}{\pi} q\cdot \max\left(2, \log \frac{c_0 e^3 q^2}{4 \pi |\eta'|_1 x}\right),\end{aligned}\end{equation}
where we are using the fact that $5 c_0 c_2/6\pi^2 < 2 \sqrt{c_0 c_1}/\pi$.
A quick check against (\ref{eq:martinu}) shows that (because of the
fact just stated) (\ref{eq:kabanova})
 is also valid when
$c_2 x/q \geq D$.
\end{proof}

We will need a version of Lemma
\ref{lem:bosta1} with $m$ and $n$ restricted to the
odd numbers. (We will barely be using
the restriction of $m$, whereas the restriction on $n$ is both (a) slightly
harder to deal with, (b) something that can be turned to our advantage.)
\begin{lem}\label{lem:bosta2}
Let $\alpha\in \mathbb{R}/\mathbb{Z}$ with
$2 \alpha = a/q + \delta/x$, $\gcd(a,q)=1$, $|\delta/x|\leq 1/q Q_0$,
$q\leq Q_0$, $Q_0\geq 16$.
 Let $\eta$ be continuous, piecewise $C^2$ and compactly supported, with
$|\eta|_1 = 1$ and $\eta''\in L_1$. Let $c_0 \geq |\widehat{\eta''}|_\infty$.

Let $1\leq D\leq x$. Then, if $|\delta|\leq 1/2c_2$, where
$c_2 = 6 \pi/5 \sqrt{c_0}$, the absolute value of 
\begin{equation}\label{eq:gorio2}
\mathop{\sum_{m\leq D}}_{\text{$m$ odd}} \mu(m) 
\sum_{\text{$n$ odd}} e(\alpha m n) \eta\left(\frac{m n}{x}\right)
\end{equation} is at most
\begin{equation}\label{eq:asparto}
\frac{x}{2 q} \min\left(1,\frac{c_0}{(\pi \delta)^2}\right)
\left|
\mathop{\sum_{m\leq \frac{M}{q}}}_{\gcd(m,2q)=1} \frac{\mu(m)}{m} \right| + 
O^*\left(\frac{c_0 q}{x} 
\left(\frac{1}{8} - \frac{1}{2 \pi^2}\right)
\left(\frac{D}{q} + 1 \right)^2\right)
\end{equation}
plus
\begin{equation}\label{eq:keks}\begin{aligned} 
&\frac{2 \sqrt{c_0 c_1}}{\pi} D +
\frac{3 c_1}{2} \frac{x}{q} 
\log^+ \frac{D}{c_2 x/q} 
+ \frac{\sqrt{c_0 c_1}}{\pi} q \log^+ \frac{D}{q/2}\\
&+ \frac{2 |\eta'|_1}{\pi} q \cdot \max\left(1, \log \frac{c_0 e^3 q^2}{4 \pi |\eta'|_1 x}\right) +
 \left(\frac{2 \sqrt{3 c_0 c_1}}{\pi} + 
\frac{3 c_1}{2 c_2}
+ \frac{55 c_0 c_2}{6 \pi^2} \right) q ,\end{aligned}\end{equation}
where $c_1 = 1 + |\eta'|_1/(x/D)$
 and $M\in \lbrack \min(Q_0/2,D),D\rbrack$.
The same bound holds if $|\delta|\geq 1/2c_2$ but $D\leq Q_0/2$.

In general, if $|\delta|\geq 1/2c_2$, the absolute value of
(\ref{eq:gorio}) is at most (\ref{eq:asparto}) plus
\begin{equation}\label{eq:kallervo2}\begin{aligned}
&\frac{2 \sqrt{c_0 c_1}}{\pi} \left(
D  + (1+\epsilon) \min\left(\left\lfloor \frac{x}{|\delta| q}\right\rfloor + 1, 2 D\right)
 \left(\sqrt{3+2\epsilon} +
 \frac{1}{2} \log^+ \frac{2 D}{\frac{x}{|\delta| q}}\right)\right)\\
&+ \frac{3}{2} c_1
 \left(2 + \frac{(1+\epsilon)}{\epsilon} \log^+ \frac{2D}{
\frac{x}{|\delta| q}}\right) \frac{x}{Q_0} +
\frac{35 c_0 c_2}{3 \pi^2} q,
\end{aligned}\end{equation}
for $\epsilon\in (0,1\rbrack$ arbitrary.
\end{lem}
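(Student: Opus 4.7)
The plan is to mimic the proof of Lemma \ref{lem:bosta1}, replacing the inner sum $T_m(\alpha) = \sum_n e(\alpha mn) \eta(mn/x)$ by its odd restriction
$T_m^{\text{odd}}(\alpha) = \sum_{n\text{ odd}} e(\alpha mn)\eta(mn/x)$. The crucial observation is that the substitution $n = 2k+1$ rewrites this as
\[T_m^{\text{odd}}(\alpha) = e(\alpha m) \sum_{k\in\mathbb{Z}} e(2\alpha m k)\, h_m(k),\quad h_m(u) = \eta\!\left(\tfrac{m(2u+1)}{x}\right),\]
so the natural frequency is $2\alpha = a/q + \delta/x$, whose denominator is $q$ (not $2q$). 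This is exactly what makes Lemmas \ref{lem:gotog}, \ref{lem:couscous}, \ref{lem:thina} applicable with denominator $q$, as in Lemma \ref{lem:bosta1}.

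I would first establish the odd analogue of (\ref{eq:aoro}):
\[|T_m^{\text{odd}}(\alpha)| \leq \min\!\left(\tfrac{x}{2m} + \tfrac{|\eta'|_1}{2},\ \tfrac{|\eta'|_1/2}{|\sin 2\pi\alpha m|},\ \tfrac{mc_0/(2x)}{(\sin 2\pi\alpha m)^2}\right).\]
The first two bounds follow from (\ref{eq:ra}) after checking $|h_m|_1 = x/(2m)$ and $|h_m'|_1 = |\eta'|_1$. The third comes from Lemma \ref{lem:areval} together with the Fourier computation $\widehat{h_m}(t) = (x/(2m))\, e(t/2)\, \widehat{\eta}(tx/(2m))$, whence $|\widehat{h_m''}|_\infty = (2m/x)\cdot \sup_s |\widehat{\eta''}(s)| \leq (2m/x) c_0$.

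For the main term ($q \mid m$, which forces $q$ odd so that $m = qm'$ with $m'$ odd is possible), apply Poisson over the odd integers to get
\[T_m^{\text{odd}}(\alpha) = \frac{x}{2m} \sum_{k\in\mathbb{Z}} (-1)^k\, \widehat{\eta}\!\left(\tfrac{kx}{2m} - \alpha x\right).\]
Using $\alpha x = ax/(2q) + \delta/2$, the argument of $\widehat{\eta}$ equals $(x/(2m))(k - am') - \delta/2$; the hypothesis $m \leq M \leq Q_0/2$ yields $|\delta m/x| \leq 1/(2q) \leq 1/2$, so $k = am'$ is the closest integer to $2m\alpha$ and contributes $(x/(2m))(-1)^{am'}\widehat{\eta}(-\delta/2)$. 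Bounding the tail by $|\widehat{\eta}(t)| \leq c_0/(2\pi t)^2$ and $\sum_{j \neq 0} 1/(j-r)^2 \leq \pi^2 - 4$ gives $|T_m^{\text{odd}}(\alpha)| \leq (x/(2m))|\widehat{\eta}(-\delta/2)| + (mc_0/(2x))(1 - 4/\pi^2)$, with $|\widehat{\eta}(-\delta/2)| \leq \min(1, c_0/(\pi\delta)^2)$ producing the exact minimum appearing in (\ref{eq:asparto}). Summing $\mu(qm')(-1)^{am'}$ over odd squarefree $m' \leq M/q$ coprime to $q$ gives $\pm \mu(q)\sum \mu(m')/m'$, which collapses to the absolute-value expression in (\ref{eq:asparto}) (the combined conditions ``$m$ odd'' and ``$\gcd(m,q)=1$'' becoming $\gcd(m,2q)=1$). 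The tail errors summed over $m\leq D$ are bounded by $(c_0 q/(8x))(1-4/\pi^2)(D/q+1)^2 = (c_0 q/x)(1/8 - 1/(2\pi^2))(D/q+1)^2$, matching (\ref{eq:asparto}).

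The remaining terms ($q \nmid m$ up to $M$, and $M < m \leq D$) are treated by the same large-$|\delta|$/small-$|\delta|$ dichotomy as in Lemma \ref{lem:bosta1}, invoking the trigonometric sum lemmas with the new $A = x/(2m) + |\eta'|_1/2$ and $C = mc_0/(2x)$. Because $\sqrt{C/A} \sim \sqrt{c_0}\, m/x$ is double its value in Lemma \ref{lem:bosta1}, the threshold for preferring the second bound in Lemma \ref{lem:couscous} shifts to $c_2 = 6\pi/(5\sqrt{c_0})$. The main obstacle is bookkeeping: carefully tracking the factors of $2$ and $1/2$ so that the Poisson-tail error is packaged exactly in the form shown, and verifying that the inequality $q^2 \leq 2c_2 x$ (used in the large-$|\delta|$ case of Lemma \ref{lem:bosta1}) still holds with the revised $c_2$, so that the same chain of estimates produces (\ref{eq:keks}) and (\ref{eq:kallervo2}). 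Everything else carries over from Lemma \ref{lem:bosta1} essentially line by line.
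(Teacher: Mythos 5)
Your proposal is correct and follows the paper's proof essentially line by line: the substitution $n=2r+1$, Poisson summation over odds (you express it as $\frac{x}{2m}\sum_j(-1)^j\widehat\eta(jx/2m-\alpha x)$, the paper as $\frac{\kappa'}{2}(\sum_n\widehat f(n)-\sum_n\widehat f(n+1/2))$, which are equivalent), the bound $|\widehat{h_m''}|_\infty\leq(2m/x)c_0$, the extraction of the main term with $\widehat\eta(-\delta/2)$, and the new $A=(x/2m)(1+|\eta'|_1/(x/m))$, $C=(c_0/2)(m/x)$ feeding into Lemmas \ref{lem:gotog}--\ref{lem:thina}. One small caveat: the value $c_2=6\pi/5\sqrt{c_0}$ is not simply the mechanical rescaling of Lemma \ref{lem:bosta1}'s $c_2$ (which would give $(3\pi/5\sqrt{2c_0})(1+\sqrt{13/3})$, a slightly larger constant); the paper deliberately takes the smaller value so that the inequality $5c_0c_2/3\pi^2\leq2\sqrt{c_0}/\pi$ persists, which is needed to absorb certain terms when assembling (\ref{eq:keks}) and (\ref{eq:kallervo2}).
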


If $q$ is even, the sum (\ref{eq:asparto}) can be replaced by $0$.
\begin{proof}
The proof is almost exactly that of Lemma \ref{lem:bosta1}; we go
over the differences. The parameters $Q$, $Q'$, $a'$, $q'$ and $M$
are defined just as before (with $2\alpha$ wherever we had $\alpha$).

Let us first consider $m\leq M$ odd and divisible by $q$.
(Of course, this case arises only if $q$ is odd.) For $n = 2 r + 1$,
\[\begin{aligned}
e(\alpha m n) &= e(\alpha m (2 r + 1)) = e(2\alpha r m) e(\alpha m)
 = e\left(\frac{\delta}{x} r m\right) 
e\left(\left(\frac{a}{2 q} + \frac{\delta}{ 2 x} + \frac{\kappa}{2}\right)
m\right)\\ &= 
e\left(\frac{\delta (2 r + 1)}{2 x} m\right)
e\left(\frac{a + \kappa q}{2} \frac{m}{q}\right) = 
\kappa' e\left(\frac{\delta (2 r + 1)}{2 x} m\right),\end{aligned}\]
where $\kappa \in \{0,1\}$ and $\kappa' = e((a+\kappa q)/2) \in \{-1,1\}$ 
are independent of $m$ and $n$. Hence, by Poisson summation,
\begin{equation}\label{eq:kormo}\begin{aligned}
\sum_{\text{$n$ odd}}
 e(\alpha m n) \eta(m n/x) &= \kappa' \sum_{\text{$n$ odd}}
 e((\delta m/2 x) n) \eta(m n/x) \\&= \frac{\kappa'}{2}
\left(\sum_n \widehat{f}(n) - \sum_n \widehat{f}(n+1/2)\right),\end{aligned}
\end{equation}
where $f(u) = e((\delta m/2 x) u) \eta((m/x) u)$. Now
\[\widehat{f}(t) = \frac{x}{m} \widehat{\eta}\left(\frac{x}{m} t - 
\frac{\delta}{2}\right).\] Just as before, $|x/m|\geq 2 |\delta q|\geq 2 \delta$. Thus
\begin{equation}\label{eq:karma}\begin{aligned}
\frac{1}{2}&\left|\sum_n \widehat{f}(n) - \sum_n \widehat{f}(n+1/2)\right|
\leq \frac{x}{m} \left(\frac{1}{2}\left|
\widehat{\eta}\left(-\frac{\delta}{2}\right) \right|
+ \frac{1}{2} \sum_{n\ne 0} \left|\widehat{\eta}\left(\frac{x}{m} \frac{n}{2} -
\frac{\delta}{2}\right)\right|\right)\\
&= \frac{x}{m} \left(\frac{1}{2}\left|
\widehat{\eta}\left(-\frac{\delta}{2}\right) \right|
+ \frac{1}{2} \cdot  O^*\left(\sum_{n\ne 0} \frac{1}{\left(\pi \left(\frac{n x}{m}
- \delta\right)\right)^2}\right) \cdot  \left|\widehat{\eta''}\right|_\infty
\right)\\
&= \frac{x}{2 m} \left|\widehat{\eta}\left(-\frac{\delta}{2}\right)\right| + \frac{m}{x} \frac{c_0}{2 \pi^2}
 (\pi^2 - 4)x
.\end{aligned}\end{equation}
The contribution of the second term in the last line of (\ref{eq:karma})
is
\[\begin{aligned}
\mathop{\mathop{\sum_{m\leq M}}_{\text{$m$ odd}}}_{q|m} \frac{m}{x} \frac{c_0}{
2\pi^2} (\pi^2-4) = \frac{q}{x} \frac{c_0}{2\pi^2} (\pi^2-4) \cdot
\mathop{\sum_{m\leq M/q}}_{\text{$m$ odd}} m = \frac{q c_0}{x} \left(\frac{1}{8}
- \frac{1}{2\pi^2}\right) \left(\frac{M}{q}+1\right)^2.
\end{aligned}\]
Hence, the absolute value of the sum of all terms with $m\leq M$ and 
$q|m$ is given by (\ref{eq:asparto}).

We define $T_{m,\circ}(\alpha)$ by
\begin{equation}\label{eq:baxter}
T_{m,\circ}(\alpha) = \sum_{\text{$n$ odd}} e(\alpha m n) \eta\left(\frac{m n}{x}\right)
.\end{equation}
Changing variables by $n = 2 r + 1$, we see that
 \[|T_{m,\circ}(\alpha)| = \left|\sum_r e(2\alpha \cdot m r) \eta(m (2 r + 1)/x)
\right|.\]
Hence,
instead of (\ref{eq:aoro}), we get that
\begin{equation}\label{eq:trompais}
|T_{m,\circ}(\alpha)| \leq \min\left(\frac{x}{2 m} + \frac{1}{2} |\eta'|_1,
\frac{\frac{1}{2} |\eta'|_1}{|\sin(2\pi m \alpha)|},
\frac{m}{x} \frac{c_0}{2} \frac{1}{(\sin 2 \pi m \alpha)^2}\right)
.\end{equation}
We obtain (\ref{eq:gowo}), but with $T_{m,\circ}$ instead of $T_m$,
$A = (x/2 y_1) (1 + |\eta'|_1/(x/y_1))$ and $C = (c_0/2) (y_2/x)$,
and so $c_1 = 1 + |\eta'|_1/(x/D)$.

The rest of the proof of Lemma \ref{lem:bosta1} carries almost over 
word-by-word. (For the sake of simplicity, 
we do not really try to take advantage of the odd support
of $m$ here.) Since $C$ has doubled, it would seem to make
sense to reset the value of $c_2$ to be
$c_2 = (3 \pi/5\sqrt{2 c_0}) (1+\sqrt{13/3})$; this would cause
complications related to the fact that $5 c_0 c_2/3 \pi^2$ would become
larger than $2\sqrt{c_0}/\pi$, and so we set $c_2$ to the slightly smaller
value $c_2 = 6\pi/5\sqrt{c_0}$ instead. This implies 
\begin{equation}\label{eq:sosot}
\frac{5 c_0 c_2}{3 \pi^2} = \frac{2\sqrt{c_0}}{\pi}.\end{equation}
 The bound from (\ref{eq:jenuf}) gets multiplied by $2$ 
(but the value of $c_2$ has changed),
the second line in (\ref{eq:sauna}) gets halved,
(\ref{eq:semin1}) gets replaced by (\ref{eq:sosot}),
the second term in the maximum in the second line of (\ref{eq:prokof}) gets
doubled, the bound from (\ref{eq:martinu}) gets doubled,
and the bound from
(\ref{eq:kosto}) gets halved. 
\end{proof}

We will also need a version of Lemma \ref{lem:bosta1} (or
rather Lemma \ref{lem:bosta2}; we will decide to work with the
restriction that $n$ and $m$ be odd)
 with
a factor of $(\log n)$ within the inner sum.
\begin{lem}\label{lem:bostb1}
Let $\alpha\in \mathbb{R}/\mathbb{Z}$ with
$2 \alpha = a/q + \delta/x$, $\gcd(a,q)=1$, $|\delta/x|\leq 1/q Q_0$,
$q\leq Q_0$, $Q_0\geq \max(16,2 \sqrt{x})$.
 Let $\eta$ be continuous, piecewise $C^2$ and compactly supported, with
$|\eta|_1 = 1$ and $\eta''\in L_1$. Let $c_0 \geq |\widehat{\eta''}|_\infty$.
Assume that, 
for any $\rho\geq \rho_0$, $\rho_0$ a constant, the function
$\eta_{(\rho)}(t) = \log(\rho t) \eta(t)$ satisfies
\begin{equation}\label{eq:puella}
|\eta_{(\rho)}|_1 \leq \log(\rho) |\eta|_1,\;\;\;\;
|\eta_{(\rho)}'|_1 \leq \log(\rho) |\eta'|_1,\;\;\;\;
|\widehat{\eta_{(\rho)}''}|_\infty \leq c_0 \log(\rho) 
\end{equation}

Let $\sqrt{3}\leq D\leq \min(x/\rho_0,x/e)$. Then, if $|\delta|\leq
1/2c_2$, where $c_2 = 6 \pi/5 \sqrt{c_0}$,
 the absolute value of 
\begin{equation}\label{eq:bovary}
\mathop{\sum_{m\leq D}}_{\text{$m$ odd}}
 \mu(m) \mathop{\sum_n}_{\text{$n$ odd}}
 (\log n) e(\alpha m n) \eta\left(\frac{m n}{x}\right)
\end{equation} is at most
\begin{equation}\label{eq:cupcake2}\begin{aligned}
&\frac{x}{q} 
 \min\left(1,\frac{c_0/\delta^2}{(2\pi)^2}\right)
\left|\mathop{\sum_{m\leq \frac{M}{q}}}_{\gcd(m,q)=1} \frac{\mu(m)}{m} 
\log \frac{x}{m q}\right| + \frac{x}{q}
|\widehat{\log \cdot \eta}(-\delta)| \left|\mathop{\sum_{m\leq \frac{M}{q}}}_{\gcd(m,q)=1} \frac{\mu(m)}{m}\right|\\
&+ 
O^*\left(c_0 
\left(\frac{1}{2} - \frac{2}{\pi^2}
\right)
\left(\frac{D^2}{4 q x} \log \frac{e^{1/2} x}{
D} + \frac{1}{e} \right) 
\right)
\end{aligned}\end{equation}
plus
\begin{equation}\label{eq:kuche2}\begin{aligned} 
&\frac{2 \sqrt{c_0 c_1}}{\pi}  
D \log \frac{e x}{D} + 
\frac{3 c_1}{2} \frac{x}{q} \log^+ \frac{D}{c_2 x/q} \log \frac{q}{c_2}\\
&+ \left( \frac{2 |\eta'|_1}{\pi} 
\max\left(1, \log \frac{c_0 e^3 q^2}{4 \pi |\eta'|_1 x}\right) 
\log x + \frac{2 \sqrt{c_0 c_1}}{\pi} 
\left(\sqrt{3} + \frac{1}{2} \log^+ \frac{D}{q/2}\right)
\log \frac{q}{c_2}\right) q\\
&+  \frac{3 c_1}{2} \sqrt{\frac{2 x}{c_2}} \log \frac{2 x}{c_2} 
+ \frac{20 c_0 c_2^{3/2}}{3 \pi^2} \sqrt{2 x} \log \frac{2 \sqrt{e} x}{c_2} 
\end{aligned}\end{equation}
for $c_1 = 1 + |\eta'|_1/(x/D)$.
The same bound holds if $|\delta|\geq 1/2c_2$ but $D\leq Q_0/2$.

In general, if $|\delta|\geq 1/2c_2$, the absolute value of (\ref{eq:bovary}) is at most
\begin{equation}\label{eq:zygota}
\begin{aligned}
&\frac{2 \sqrt{c_0 c_1}}{\pi} 
D \log \frac{e x}{D} +\\
&\frac{2 \sqrt{c_0 c_1}}{\pi} 
(1+\epsilon) \left(\frac{x}{|\delta| q}+1\right) \left(\sqrt{3+2\epsilon} 
\cdot \log^+ 2 \sqrt{e} |\delta| q
 + \frac{1}{2} \log^+ \frac{2 D}{\frac{x}{|\delta| q}} \log^+ 2 |\delta| q
\right)\\
&+ \left(
\frac{3 c_1}{4} \left(\frac{2}{\sqrt{5}} + \frac{1+\epsilon}{2\epsilon}
\log x\right) +
\frac{40}{3} \sqrt{2} c_0 c_2^{3/2} \right) \sqrt{x} \log x
\end{aligned}\end{equation}
for $\epsilon\in (0,1\rbrack$.
\end{lem}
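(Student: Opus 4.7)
The approach parallels the proof of Lemma \ref{lem:bosta2}, with the $\log n$ factor handled via the decomposition
\[
(\log n)\,\eta(mn/x) \;=\; \log(x/m)\,\eta(mn/x) \;+\; \tilde\eta(mn/x), \qquad \tilde\eta(t) := (\log t)\,\eta(t),
\]
which splits (\ref{eq:bovary}) as $S_1+S_2$: the sum $S_1$ carries an outer weight $\log(x/m)$ applied to the inner sum of Lemma \ref{lem:bosta2}, while $S_2$ uses the modified smoothing $\tilde\eta$. Since $\tilde\eta = \eta_{(\rho_0)} - (\log\rho_0)\,\eta$, the hypothesis (\ref{eq:puella}) yields $|\widehat{\tilde\eta''}|_\infty \leq 2c_0\log\rho_0$ and analogous bounds for $|\tilde\eta|_1$ and $|\tilde\eta'|_1$.

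For the diagonal contribution ($m\leq M$, $q\mid m$, $m$ odd, forcing $q$ odd) I would apply Poisson summation to the inner $n$-sum on each piece, exactly as in (\ref{eq:kormo})--(\ref{eq:karma}). The zero-frequency parts produce $\mu(q)\kappa'(x/q)\widehat\eta(-\delta/2)\sum_{m'\leq M/q,\,\gcd(m',q)=1}\mu(m')\log(x/(qm'))/m'$ (for $S_1$) and $\mu(q)\kappa'(x/q)\widehat{\tilde\eta}(-\delta/2)\sum_{m'}\mu(m')/m'$ (for $S_2$); bounding $|\widehat\eta(-\delta/2)|$ via (\ref{eq:malcros}) and recognizing $\widehat{\tilde\eta}=\widehat{\log\cdot\eta}$ yields the two main-term sums in (\ref{eq:cupcake2}). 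The Poisson tails are controlled by $|\widehat{\eta''}|_\infty\leq c_0$ and $|\widehat{\tilde\eta''}|_\infty\leq 2c_0\log\rho_0$; weighted respectively by $(m/x)\log(x/m)$ and $(m/x)$ and summed over $q\mid m\leq D$, these are dominated by a constant multiple of $\int_1^{D/q}m'\log(x/(qm'))\,dm' \ll (D^2/(2q^2))\log(e^{1/2}x/D)$, reproducing the $O^*$ error in (\ref{eq:cupcake2}).

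For the off-diagonal terms I would observe that $g(u) := (\log u)\,\eta(mu/x) = \eta_{(x/m)}(mu/x)$, which is well-defined via (\ref{eq:puella}) since $m\leq D\leq x/\rho_0$; hence
\[
|g|_1 \leq (x/m)\log(x/m),\quad |g'|_1 \leq \log(x/m)\,|\eta'|_1,\quad |\widehat{g''}|_\infty \leq (m/x)\,c_0\log(x/m).
\]
Applying (\ref{eq:ra}) and Lemma \ref{lem:areval} to the odd-$n$ sum as in (\ref{eq:baxter})--(\ref{eq:trompais}) shows that $\sum_{n\text{ odd}}(\log n)\,e(\alpha mn)\,\eta(mn/x)$ is bounded by $\log(x/m)$ times the expression in (\ref{eq:trompais}). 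Inserting this factor throughout the proof of Lemma \ref{lem:bosta2}---invoking Lemma \ref{lem:thina} on $m\leq q/2$, Lemma \ref{lem:couscous} on $q/2<m\leq\min(c_2x/q,D)$, and Lemma \ref{lem:gotog} on the remainder (with the auxiliary approximation $a'/q'$ when $|\delta|\geq 1/(2c_2)$, as in (\ref{eq:sauna}))---the integrals $\int dt/t$ of (\ref{eq:tenda}) and (\ref{eq:kosto}) become $\int \log(x/t)/t\,dt = \tfrac12[\log^2(x/a)-\log^2(x/b)]$, producing the products $\log^+(D/(c_2x/q))\log(q/c_2)$ and the $\sqrt{x}\log x$-type contributions visible in (\ref{eq:kuche2}) and (\ref{eq:zygota}). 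The main obstacle is the bookkeeping: tracking $\log(x/m)$ (bounded by $\log x$, $\log(x/q)$, or $\log(q/c_2)$ in the appropriate ranges) through both cases $|\delta|\lessgtr 1/(2c_2)$ and the three $m$-ranges, and verifying that the aggregate constants match (\ref{eq:cupcake2})--(\ref{eq:zygota}); the hypothesis $D\leq x/e$ is used to guarantee $\log(x/m)\geq 1$ throughout, allowing $\log^+$ to be replaced by $\log$ without loss.
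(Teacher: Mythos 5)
Your overall framework is exactly the paper's: absorb the $\log n$ weight into $\eta_{(x/m)}(t) = \log(\tfrac{x}{m}\,t)\,\eta(t)$, apply Poisson summation and the hypothesis (\ref{eq:puella}) with $\rho = x/m$ for the diagonal ($q\mid m$) terms, split the zero-frequency contribution into $\log(x/m)\,\widehat{\eta}(-\delta/2)$ and $\widehat{\log\cdot\eta}(-\delta/2)$ to recover the two main sums in (\ref{eq:cupcake2}), and carry the multiplicative factor $\log(x/m)$ through the three $m$-ranges via Lemmas \ref{lem:thina}, \ref{lem:couscous}, and \ref{lem:gotog}. Your off-diagonal treatment (bounding $T_{m,\circ}$ by $\log(x/m)$ times (\ref{eq:trompais}), then redoing (\ref{eq:kosto}), (\ref{eq:tenda}) with the extra $\log(x/t)$ in the integrand) is precisely what the paper does.

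The one place you deviate is in the Poisson tails for the diagonal part, and it costs you. You bound the tail by splitting $\eta_{(x/m)} = \log(x/m)\,\eta + \tilde\eta$ and applying $|\widehat{\eta''}|_\infty\leq c_0$ to the first piece and $|\widehat{\tilde\eta''}|_\infty \leq 2 c_0 \log\rho_0$ to the second; this gives a tail of size $\frac{m}{x}\cdot c_0\bigl(\log\tfrac{x}{m} + 2\log\rho_0\bigr)\cdot\bigl(\tfrac12-\tfrac{2}{\pi^2}\bigr)$. But the split is only needed for the zero-frequency term (where you must separate the two types of main sums); for the nonzero frequencies you should instead use (\ref{eq:puella}) directly with $\rho=x/m$ to get $|\widehat{\eta_{(x/m)}''}|_\infty \leq c_0\log(x/m)$, which yields the sharper tail $\frac{m}{x}\cdot c_0\log\tfrac{x}{m}\cdot\bigl(\tfrac12-\tfrac{2}{\pi^2}\bigr)$ — exactly what sums to the $O^*$ term in (\ref{eq:cupcake2}). (Your own off-diagonal treatment does precisely this for $|\widehat{g''}|_\infty$, so it is an internal inconsistency rather than a conceptual error.) As written, your diagonal tail carries the extra additive $2\log\rho_0 \cdot \sum_{q\mid m\leq M}m/x$, which overshoots the stated constant $c_0(\tfrac12-\tfrac{2}{\pi^2})$. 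Replace the split by the unified bound in the tail and the rest of your bookkeeping is sound.
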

\begin{proof}
Define $Q$, $Q'$, $M$, $a'$ and $q'$  as in the proof of Lemma
\ref{lem:bosta1}. The same method of proof works 
as for Lemma \ref{lem:bosta1}; we go over the differences.
When applying Poisson summation or (\ref{eq:ra}), use $\eta_{(x/m)}(t) = 
(\log xt/m) \eta(t)$ instead of $\eta(t)$. Then use 
the bounds in (\ref{eq:puella}) with $\rho = x/m$; in particular,
 \[|\widehat{\eta_{(x/m)}''}|_\infty \leq c_0 \log \frac{x}{m}.\]
For $f(u) = e((\delta m/2 x) u) (\log u) \eta((m/x) u)$,
\[\widehat{f}(t) = \frac{x}{m} \widehat{\eta_{(x/m)}}\left(\frac{x}{m} t - 
\frac{\delta}{2}\right)\]
and so
\[\begin{aligned}
\frac{1}{2} &\sum_n \left|\widehat{f}(n/2)\right| \leq
\frac{x}{m} \left(\frac{1}{2}
\left|\widehat{\eta_{(x/m)}}\left(-\frac{\delta}{2}\right) \right|+
\frac{1}{2} \sum_{n\ne 0} \left|\widehat{\eta}\left(\frac{x}{m} \frac{n}{2}
- \frac{\delta}{2}\right) \right|\right)\\
&= \frac{1}{2} \frac{x}{m}
\left(\widehat{\log \cdot \eta}\left(-\frac{\delta}{2}\right) + 
\log\left(\frac{x}{m}\right) \widehat{\eta}\left(-\frac{\delta}{2}\right)
 \right) + \frac{m}{x} \left(\log \frac{x}{m}\right) \frac{c_0}{2\pi^2} 
(\pi^2 - 4) .\end{aligned}\]

The part of
the main term involving $\log(x/m)$ becomes
\[
\frac{x \widehat{\eta}(-\delta)}{2} 
\mathop{\mathop{\sum_{m\leq M}}_{\text{$m$ odd}}}_{q|m}
\frac{\mu(m)}{m} \log \left(\frac{x}{m }\right)
= \frac{x \mu(q)}{q} 
 \widehat{\eta}(-\delta) \cdot
\mathop{\sum_{m\leq M/q}}_{\gcd(m,2q)= 1} \frac{\mu(m)}{m} 
\log\left(\frac{x}{m q}\right) 
\]
for $q$ odd. (We can see that this, like the rest of the main term,
vanishes for $m$ even.)

In the term in front of $\pi^2-4$, we find the sum
\[\mathop{\mathop{\sum_{m\leq M}}_{\text{$m$ odd}}}_{q|m} 
\frac{m}{x} \log\left(\frac{x}{m}\right)
\leq \frac{M}{x} \log \frac{x}{M} + \frac{q}{2}  
\int_0^{M/q} t \log \frac{x/q}{t} dt = 
\frac{M}{x} \log \frac{x}{M} + \frac{M^2}{4 q x} \log \frac{e^{1/2} x}{M},\]
where we use the fact that $t\mapsto t \log(x/t)$ is increasing for
$t\leq x/e$. By the same fact (and by $M\leq D$), 
$(M^2/q) \log(e^{1/2} x/M)\leq (D^2/q) \log(e^{1/2} x/D)$. It is also easy
to see that $(M/x) \log(x/M) \leq 1/e$ (since $M\leq D \leq x$).

The basic estimate for the rest of the proof (replacing (\ref{eq:aoro}))
is
\[\begin{aligned}
&T_{m,\circ}(\alpha) =
\sum_{\text{$n$ odd}} e(\alpha m n) (\log n) \eta\left(\frac{m n}{x}\right) =
\sum_{\text{$n$ odd}} e(\alpha m n) \eta_{(x/m)} \left(\frac{m n}{x}\right)\\
&= O^*\left( \min\left(\frac{x}{2m} |\eta_{(x/m)}|_1 + \frac{|\eta_{(x/m)}'|_1}{2},
\frac{\frac{1}{2} |\eta_{(x/m)}'|_1 }{|\sin(2 \pi m \alpha)|},
\frac{m}{x} 
\frac{\frac{1}{2} |\widehat{\eta_{(x/m)}''}|_\infty}{(\sin 2 \pi m \alpha)^2}\right)\right)\\
&= O^*\left(\log \frac{x}{m} \cdot
\min\left(\frac{x}{2 m}  + \frac{|\eta'|_1}{2},
\frac{\frac{1}{2} |\eta'|_1 }{|\sin(2 \pi m \alpha)|},
\frac{m}{x} \frac{c_0}{2}
\frac{1}{(\sin 2 \pi m \alpha)^2}\right)
\right).
\end{aligned}\]

We wish to bound
\begin{equation}\label{eq:esthel2}
\mathop{\mathop{\sum_{m\leq M}}_{q\nmid m}}_{\text{$m$ odd}} 
|T_{m,\circ}(\alpha)| + \sum_{\frac{Q}{2} < m \leq D} |T_{m,\circ}(\alpha)|.
\end{equation}

Just as in the proofs of Lemmas \ref{lem:bosta1} and \ref{lem:bosta2}, 
we give two bounds,
one valid for $|\delta|$ large ($|\delta| \geq
1/2 c_2$) and the other for $\delta$ small ($|\delta|\leq 1/2 c_2$).
Again as in the proof of Lemma \ref{lem:bosta2}, we ignore
the condition that $m$ is odd in (\ref{eq:esthel}).

Consider the case of
$|\delta|$ large first. Instead of (\ref{eq:jenuf}), we have
\begin{equation}
\mathop{\sum_{1\leq m\leq M}}_{q\nmid m} |T_m(\alpha)|
\leq \frac{40}{3 \pi^2} \frac{c_0 q^3}{2 x}
\sum_{0\leq j\leq \frac{M}{q}} (j+1) \log \frac{x}{j q+1}.
\end{equation}
Since
\[\begin{aligned}
\sum_{0\leq j\leq \frac{M}{q}} &(j+1) \log \frac{x}{j q+1}
\leq \log x + 
\frac{M}{q} \log \frac{x}{M} + \sum_{1\leq j\leq \frac{M}{q}} \log 
\frac{x}{j q} + \sum_{1\leq j \leq \frac{M}{q}-1} j \log \frac{x}{j q}\\
&\leq  \log x +  \frac{M}{q} \log \frac{x}{M} +
\int_0^{\frac{M}{q}} \log \frac{x}{t q} dt + \int_1^{\frac{M}{q}}
t \log \frac{x}{t q} dt\\
&\leq  \log x +  \left(\frac{2 M}{q} +
\frac{M^2}{2 q^2}\right)
 \log \frac{e^{1/2} x}{M},
\end{aligned}\]
this means that
\begin{equation}\label{eq:luiw}\begin{aligned}
\mathop{\sum_{1\leq m\leq M}}_{q\nmid m} |T_m(\alpha)|
&\leq  \frac{40}{3 \pi^2} \frac{c_0 q^3}{4 x} \left(\log x +  \left(\frac{2 M}{q} +
\frac{M^2}{2 q^2}\right)
 \log \frac{e^{1/2} x}{M}\right)\\
&\leq \frac{5 c_0 c_2}{3 \pi^2} M \log \frac{\sqrt{e} x}{M}
+ \frac{40}{3} \sqrt{2} c_0 c_2^{3/2} \sqrt{x} \log x,
\end{aligned}\end{equation}
where we are using the bounds $M\leq Q/2\leq c_2 x/q$ and
$q^2\leq 2c_2 x$ (just as in (\ref{eq:jenuf})).
Instead of (\ref{eq:tenda}), we have
\[\begin{aligned}
\sum_{j=0}^{\left\lfloor \frac{D- (Q+1)/2}{q'}\right\rfloor}
\left(\log \frac{x}{j q' + \frac{Q+1}{2}}\right) 
\frac{x}{j q' + \frac{Q+1}{2}}
&\leq \frac{x}{Q/2} \log \frac{2 x}{Q} + \frac{x}{q'}
\int_{\frac{Q+1}{2}}^D \log \frac{x}{t} \frac{dt}{t}\\
&\leq \frac{2 x}{Q} \log \frac{2 x}{Q} + 
\frac{x}{q'} \log \frac{2x}{Q} \log^+ \frac{2D}{Q};
\end{aligned}\]
recall that the coefficient in front of this sum will be halved by
the condition that $n$ is odd.
 Instead of (\ref{eq:beatri}), we obtain
\[\begin{aligned}
&q' \sum_{j=0}^{\lfloor \frac{D-(Q+1)/2}{q'}\rfloor}
\sqrt{1 + \frac{q'}{jq'+ (Q+1)/2}}
\left(\log \frac{x}{j q' + \frac{Q+1}{2}}\right) \\
 &\leq q' \sqrt{3+2\epsilon} \cdot \log \frac{2 x}{Q+1} + 
\int_{\frac{Q+1}{2}}^D \left(1 + \frac{q'}{2 t}\right) 
\left(\log \frac{x}{t}\right) dt \\ &\leq q' \sqrt{3 + 2\epsilon} \cdot 
\log \frac{2 x}{Q+1} + D \log \frac{e x}{D} - \frac{Q+1}{2} \log
\frac{2 e x}{Q+1} 
+ \frac{q'}{2} \log \frac{2 x}{Q+1} \log \frac{2 D}{Q+1}.
\end{aligned}\]
(The bound $\int_a^b \log(x/t) dt/t \leq \log(x/a) \log(b/a)$
will be more practical than the exact expression for the integral.)
Hence $\sum_{Q/2<m\leq D} |T_m(\alpha)|$ is at most
\[\begin{aligned}
&\frac{2 \sqrt{c_0 c_1}}{\pi} D \log \frac{e x}{D} \\
&+\frac{2 \sqrt{c_0 c_1}}{\pi} \left(
(1+\epsilon) \sqrt{3 + 2\epsilon}  +
\frac{(1+\epsilon)}{2} \log \frac{2 D}{Q+1}
\right) (Q+1) \log \frac{2 x}{Q+1}\\
&- \frac{2 \sqrt{c_0 c_1}}{\pi} \cdot \frac{Q+1}{2} \log
\frac{2 e x}{Q+1}
+ \frac{3 c_1}{2} \left(\frac{2}{\sqrt{5}} + \frac{1+\epsilon}{\epsilon}
\log^+ \frac{D}{Q/2} \right) \sqrt{x} \log \sqrt{x}.
\end{aligned}\]
Summing this to (\ref{eq:luiw}) (with $M = Q/2$), and using
(\ref{eq:semin1}) and (\ref{eq:semin2}) as before,
 we obtain that (\ref{eq:esthel2}) is at most
\[\begin{aligned}
&\frac{2 \sqrt{c_0 c_1}}{\pi} 
D \log \frac{e x}{D} \\ &+ \frac{2 \sqrt{c_0 c_1}}{\pi} 
(1+\epsilon)  (Q+1) 
\left(\sqrt{3+2\epsilon} 
\log^+ \frac{2 \sqrt{e} x}{Q+1}
 + \frac{1}{2} \log^+ \frac{2 D}{Q+1} \log^+ \frac{2 x}{Q+1}
\right)\\
&+ \frac{3 c_1}{2}\left(\frac{2}{\sqrt{5}} + \frac{1+\epsilon}{\epsilon}
\log^+ \frac{D}{Q/2} \right) \sqrt{x} \log \sqrt{x}
+ \frac{40}{3} \sqrt{2} c_0 c_2^{3/2} \sqrt{x} \log x .
\end{aligned}\]

Now we go over the case of $|\delta|$ small
(or $D\leq Q_0/2$).
Instead of (\ref{eq:prokof}),
we have
\begin{equation}\label{eq:bobo}
\sum_{m\leq q/2} |T_{m,\circ}(\alpha)| \leq 
\frac{2 |\eta'|_1}{\pi} q
\max\left(1, \log \frac{c_0 e^3 q^2}{4 \pi |\eta'|_1 x}\right) 
\log x.\end{equation}
Suppose $q^2<2 c_2 x$. 
Instead of (\ref{eq:martinu}), we have
\begin{equation}\label{eq:bocio}\begin{aligned}
\mathop{\sum_{\frac{q}{2} < m\leq D'}}_{q\nmid m} &|T_{m,\circ}(\alpha)|
\leq \frac{40}{3 \pi^2} \frac{c_0 q^3}{6 x} \sum_{1\leq j\leq 
\frac{D'}{q} + \frac{1}{2}} \left(j + \frac{1}{2}\right)
\log \frac{x}{\left(j - \frac{1}{2}\right) q}\\
&\leq \frac{10 c_0 q^3}{3 \pi^2 x} \left(\log \frac{2 x}{q} + 
\frac{1}{q} \int_0^{D'} \log \frac{x}{t} dt + 
\frac{1}{q} \int_0^{D'} t \log \frac{x}{t} dt 
+ \frac{D'}{q} \log \frac{x}{D'}\right)\\
&=
\frac{10 c_0 q^3}{3 \pi^2 x} \left(\log \frac{2 x}{q} + 
\left(\frac{2 D'}{q} + \frac{(D')^2}{2 q^2}\right) 
\log \frac{\sqrt{e} x}{D'}\right)\\
&\leq \frac{5 c_0 c_2}{3 \pi^2} \left(4 \sqrt{2 c_2 x} \log \frac{2 x}{q} +
4 \sqrt{2 c_2 x} \log \frac{\sqrt{e} x}{D'} + D' \log \frac{\sqrt{e} x}{D'}
\right) \\ &\leq
\frac{5 c_0 c_2}{3 \pi^2} \left(D' \log \frac{\sqrt{e} x}{D'} +
4 \sqrt{2 c_2 x} \log \frac{2 \sqrt{e}}{c_2} x
\right)
\end{aligned}\end{equation}
where $D' = \min(c_2 x/q,D)$. (We are using the bounds
$q^3/x \leq (2 c_2)^{3/2}$, $D' q^2 / x \leq c_2 q < c_2^{3/2} \sqrt{2 x}$
and $D' q/x \leq c_2$.)
Instead of (\ref{eq:caron}), we have
\[\begin{aligned}\sum_{R< m \leq D} &|T_{m,\circ}(\alpha)| \leq
\sum_{j=0}^{\left\lfloor \frac{1}{q} \left(D - R\right)
\right\rfloor} \left(\frac{\frac{3 c_1}{2} x}{j q + R} + 
\frac{4q}{\pi} \sqrt{\frac{c_1 c_0}{4}
\left(1 + \frac{q}{j q + R}\right)}\right) \log \frac{x}{jq + R},
\end{aligned}\]
where $R = \max(c_2 x/q,q/2)$. We can simply reuse (\ref{eq:kosto}),
multiplying it by $\log x/R$; we replace (\ref{eq:kostas}) by
\[\begin{aligned}
q &\sum_{j=0}^{\left\lfloor \frac{1}{q} \left(D - R\right)\right\rfloor}
\sqrt{1 + \frac{q}{jq+R}} \log \frac{x}{jq + R}
\leq q \sqrt{1 + \frac{q}{R}} \log \frac{x}{R} 
+  \int_R^D \sqrt{1 + \frac{q}{t}}
\log \frac{x}{t} dt\\
&\leq \sqrt{3} q \log \frac{q}{c_2}
+ \left(D \log \frac{e x}{D} - R \log \frac{e x}{R}\right)
 + \frac{q}{2} \log \frac{q}{c_2} \log^+ 
\frac{D}{R} .
\end{aligned}\]
We sum with (\ref{eq:bobo}) and (\ref{eq:bocio}), and obtain
(\ref{eq:zygota}) as an upper bound for
(\ref{eq:esthel2}).
\end{proof}

We will apply the following only for $q$ relatively large.
\begin{lem}\label{lem:bogus}
Let $\alpha\in \mathbb{R}/\mathbb{Z}$ with
$2 \alpha = a/q + \delta/x$, $\gcd(a,q)=1$, $|\delta/x|\leq 1/q Q_0$,
$q\leq Q_0$, $Q_0\geq \max(2e,2 \sqrt{x})$. 
 Let $\eta$ be continuous, piecewise $C^2$ and compactly supported, with
$|\eta|_1 = 1$ and $\eta''\in L_1$. Let $c_0 \geq |\widehat{\eta''}|_\infty$.
Let $c_2 = 6 \pi/5 \sqrt{c_0}$. Assume that $x \geq e^2 c_2/2$.

Let $U,V\geq 1$ satisfy $UV + (19/18) Q_0\leq x/5.6$. 
Then, if $|\delta| \leq 1/2c_2$,
 the absolute value of 
\begin{equation}\label{eq:gargam}
\left|\mathop{\sum_{v\leq V}}_{\text{$v$ odd}} \Lambda(v) 
\mathop{\sum_{u\leq U}}_{\text{$u$ odd}} \mu(u) \mathop{\sum_n}_{\text{$n$ odd}}
 e(\alpha v u n)  \eta(v u n/x)\right|\end{equation}
is at most
\begin{equation}\label{eq:cupcake3}\begin{aligned}
&\frac{x}{2 q} \min\left(1, \frac{c_0}{(\pi \delta)^2}\right) \log V q \\
&+ 
O^*\left(\frac{1}{4} - \frac{1}{\pi^2}\right) \cdot
c_0 \left(\frac{D^2 \log V}{2 q x} + \frac{3 c_4}{2} \frac{U V^2}{x} 
+ \frac{(U+1)^2 V}{2 x} \log q\right)
\end{aligned}\end{equation}
plus
\begin{equation}\label{eq:piececake}\begin{aligned}
&\frac{2 \sqrt{c_0 c_1}}{\pi} \left(
D \log \frac{D}{\sqrt{e}} +
q \left(\sqrt{3} \log \frac{c_2 x}{q} +
\frac{\log D}{2} 
\log^+ \frac{D}{q/2} \right)\right)\\ 
&+ \frac{3 c_1}{2} \frac{x}{q} \log D \log^+ \frac{D}{c_2 x/q}
+ 
\frac{2 |\eta'|_1}{\pi} q
\max\left(1, \log \frac{c_0 e^3 q^2}{4 \pi |\eta'|_1 x}\right) \log \frac{q}{2}
\\
&+ \frac{3 c_1}{2 \sqrt{2 c_2}} \sqrt{x} \log \frac{c_2 x}{2} + 
\frac{25 c_0}{4 \pi^2} (2 c_2)^{3/2} \sqrt{x} \log x ,
\end{aligned} \end{equation}
where $D = UV$ and $c_1 = 1 + |\eta'|_1/(2x/D)$ and $c_4 = 1.03884$.
The same bound holds if $|\delta| \geq 1/2 c_2$ but
$D\leq Q_0/2$.

In general,
 if $|\delta| \geq 1/2c_2$, the absolute value of (\ref{eq:gargam}) is at most
(\ref{eq:cupcake3}) plus
\begin{equation}\label{eq:tvorog}\begin{aligned}
&\frac{2 \sqrt{c_0 c_1}}{\pi} D \log \frac{D}{e}\\ 
&+ \frac{2 \sqrt{c_0 c_1}}{\pi} (1+\epsilon) \left(\frac{x}{|\delta| q}+1
\right) \left(
(\sqrt{3+2\epsilon}-1) \log \frac{\frac{x}{|\delta| q}+1}{\sqrt{2}}
 + \frac{1}{2} \log D \log^+ \frac{e^2 D}{\frac{x}{|\delta|q}}\right)\\
&+
\left(\frac{3 c_1}{2} \left( \frac{1}{2}  + 
\frac{3(1+\epsilon)}{16 \epsilon} \log x\right) 
+ \frac{20 c_0}{3 \pi^2} (2 c_2)^{3/2}
\right) \sqrt{x} \log x
\end{aligned}\end{equation}
for $\epsilon\in (0,1\rbrack$.
\end{lem}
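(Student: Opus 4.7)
The strategy is to treat the sum in (\ref{eq:gargam}) as a weighted average over $v$ of type-I sums in $(u,n)$, then apply Lemma \ref{lem:bosta2} for each $v$. For each fixed odd $v\leq V$, set $x'=x/v$ and $\alpha' = \alpha v$, and observe that the inner double sum
\[
T_v \;:=\; \mathop{\sum_{u\leq U}}_{u\text{ odd}} \mu(u)\mathop{\sum_n}_{n\text{ odd}} e(\alpha' u n)\,\eta(un/x')
\]
is exactly of the form bounded in Lemma \ref{lem:bosta2}, with $D\mapsto U$. Writing $g = \gcd(v,q)$, the approximation $2\alpha = a/q+\delta/x$ transforms to $2\alpha' = a'/q' + \delta/x'$ with $q' = q/g$, $\gcd(a',q')=1$, and the same $\delta$. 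The scaled error condition $|\delta/x'|\leq 1/(q' Q_0')$ is met by taking $Q_0' = g Q_0/v$. The hypothesis $UV + (19/18) Q_0\leq x/5.6$, combined with $Q_0\geq 2\sqrt{x}$, is tailored so that $Q_0' \geq \max(16,2\sqrt{x'})$ uniformly for $v\leq V$, and so that the regime $|\delta|\lessgtr 1/2c_2$ is preserved.

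The main term of Lemma \ref{lem:bosta2} applied to $T_v$ is, after bounding the Granville--Ramar\'e sum by (\ref{eq:grara}), of size $\frac{g\,x}{2vq}\min\!\bigl(1,\frac{c_0}{(\pi\delta)^2}\bigr)$. Summing over odd $v\leq V$ with weight $\Lambda(v)$ splits by $\gcd(v,q)$: the $\gcd(v,q)=1$ piece contributes at most $\frac{x}{2q}\min(\cdots)\sum_{v\leq V}\Lambda(v)/v \leq \frac{x}{2q}\min(\cdots)\log V$ by (\ref{eq:rala}); the remaining $v = p^k$ with $p\mid q$ telescopes via
\[
\sum_{p\mid q}\sum_{k\geq 1}(\log p)\,\frac{p^{\min(k,v_p(q))}}{p^k} \;\leq\; \sum_{p\mid q}v_p(q)\log p\;+\;O(1)\;=\;\log q\;+\;O(1),
\]
providing the extra $\log q$ factor to yield the $(x/2q)\min(\cdots)\log(Vq)$ main term of (\ref{eq:cupcake3}). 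The $O^*$ remainder in (\ref{eq:cupcake3}) then collects the Poisson-tail residuals of (\ref{eq:asparto}) after $v$-summation, estimated via the Chebyshev-type bounds (\ref{eq:trado1}) and (\ref{eq:nicro}) together with $\sum_{v\leq V}\Lambda(v)\log v\ll V\log V$ (via (\ref{eq:kast})).

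The error terms (\ref{eq:keks}) (or (\ref{eq:kallervo2}) if $|\delta|\geq 1/2c_2$) of Lemma \ref{lem:bosta2} are summed over $v$ weighted by $\Lambda(v)$ term by term: the $(2\sqrt{c_0c_1}/\pi)\,U$ headline becomes the $D\log(D/\sqrt{e})$ contribution in (\ref{eq:piececake}) via partial summation against (\ref{eq:trado1}); the $(x/q)\log^+$ tail factors acquire the $\log D$ factor from $\sum_v\Lambda(v)/v\leq \log V$; the $\sqrt{x}$-scale residuals are picked up via (\ref{eq:charol}). The main obstacle is the constant-level bookkeeping, in two pieces. First, one must verify that $c_1 = 1 + |\eta'|_1/(2x/D)$ with $D=UV$ is a valid uniform choice (the inner constant $c_1(v) = 1+|\eta'|_1\,vU/x$ is maximized at $v=V$ and is controlled by $2c_1$ under the hypothesis on $UV$). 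Second, the case split $|\delta|\lessgtr 1/2c_2$ must be aligned with the successive-approximation $a/q$, $a'/q'$ device of Lemma \ref{lem:bosta1} so that a single denominator $q' \geq \epsilon Q/(1+\epsilon)$ governs the tails uniformly across $v$, which is exactly why the hypothesis gives $Q_0 \geq 2\sqrt{x}$ rather than merely $Q_0 \geq 16$.
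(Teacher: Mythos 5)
Your approach is to fix $v$, scale to $\alpha' = v\alpha$, $x' = x/v$, and apply Lemma~\ref{lem:bosta2} to the inner $(u,n)$ double sum, then sum the resulting bounds over $v$. This is exactly the strategy the paper uses for $S_{I,2}$ in the regime $q \leq Q/V$ (see \S\ref{subs:putmal}, ``The case $q\leq Q/V$''), but it is not the proof of Lemma~\ref{lem:bogus}, and it cannot be: Lemma~\ref{lem:bogus} is stated and used precisely for the complementary regime (``We will apply the following only for $q$ relatively large''; the paper invokes it when $q > Q/V$).

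The obstruction is concrete. To apply Lemma~\ref{lem:bosta2} to $\alpha' = v\alpha$ with the approximation $2v\alpha = va/q + v\delta/x$, you need a denominator bound $q' = q/\gcd(v,q) \leq Q_0'$ where $Q_0'$ satisfies the scaled error inequality $|v\delta/x| \leq 1/(q' Q_0')$. Taking $Q_0' = \gcd(v,q)\,Q_0/v$ (as you propose) makes the error inequality hold, but then $q' \leq Q_0'$ becomes, for $\gcd(v,q)=1$, $qv \leq Q_0$ — i.e.\ you need $q \leq Q_0/V$. That hypothesis is \emph{not} part of Lemma~\ref{lem:bogus}, and in the application in \S\ref{subs:espan} it is badly violated ($V = x^{1/3}/3$, $Q_0 = x^{2/3}/(500\sqrt{6})$, and $q$ can be as large as $Q_0$). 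Your claim that the hypothesis $UV + (19/18)Q_0 \leq x/5.6$ together with $Q_0 \geq 2\sqrt{x}$ forces $Q_0' \geq \max(16, 2\sqrt{x'})$ uniformly for $v\leq V$ is also false: $Q_0' \geq Q_0/v \geq 2\sqrt{x/v}$ requires $Q_0 \geq 2\sqrt{xv}$, hence $Q_0 \geq 2\sqrt{xV}$, which fails in the application above. Once $qv > Q_0$, the fraction $va/q$ is no longer a usable approximation to $2v\alpha$; one would have to introduce a fresh approximation $a_v/q_v$ with an uncontrolled denominator $q_v$, and the would-be main term $\asymp x/(v q_v)$ then cannot be assembled into the shape $(x/2q)\log(Vq)$.

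What the paper does instead is collapse the double sum over $(u,v)$ into a single sum over $m = uv$ with the convolution weight $\sum_{v|m,\,m/U\leq v\leq V}\Lambda(v)\mu(m/v)$. The terms $uv\leq M := \min(UV, Q/2)$ with $q\mid uv$ are extracted and treated exactly via Poisson summation and (\ref{eq:grara})/(\ref{eq:rala}), producing $(x/2q)\min(1, c_0/(\pi\delta)^2)\log(Vq)$ directly and the $O^*$ remainder in (\ref{eq:cupcake3}) via (\ref{eq:trado2}), (\ref{eq:chronop}). For the remaining $m$, the crude bound $\bigl|\sum_{v|m}\Lambda(v)\mu(m/v)\bigr| \leq \log m$ is used, and the resulting $\sum_m (\log m)\, T_{m,\circ}(\alpha)$ is estimated by the same two-case ($\delta$ small / $\delta$ large) machinery as in Lemmas~\ref{lem:bosta1}--\ref{lem:bostb1}, the only change being the factor $\log m$ in place of $\log(x/m)$. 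The whole point of this manoeuvre is that it only ever needs to control the Diophantine behaviour of $\alpha$ relative to a single denominator $q$ (plus a single secondary $q'$ in case (a)) — never the arithmetic of $v\alpha$ for each $v$ individually. Your per-$v$ reduction discards exactly this advantage, and that is why it stalls at the denominator condition.
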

\begin{proof}
We proceed essentially as in Lemma \ref{lem:bosta1} and Lemma \ref{lem:bosta2}.
Let $Q$, $q'$ and $Q'$ be as in the proof of Lemma \ref{lem:bosta2}, that
is, with $2 \alpha$ where Lemma \ref{lem:bosta1} uses $\alpha$.


 Let $M = \min(UV, Q/2)$. We first consider the terms with $uv\leq M$, 
$u$ and $v$ odd, $uv$ divisible by $q$. If $q$ is even, there are no such terms.
Assume $q$ is odd. Then, by (\ref{eq:kormo}) and (\ref{eq:karma}),
the absolute value of the contribution of these terms is at most 
\begin{equation}\label{eq:hoho}
\mathop{\mathop{\sum_{a\leq M}}_{\text{$a$ odd}}}_{q|a} 
\left(\mathop{\sum_{v|a}}_{a/U\leq v\leq V}
\Lambda(v) \mu(a/v)\right)
\left(
\frac{x \widehat{\eta}(-\delta/2)}{2a} +
O\left(\frac{a}{x} \frac{|\widehat{\eta''}|_\infty}{2 \pi^2} \cdot
(\pi^2 - 4)\right)\right).
\end{equation}

Now
\[\begin{aligned}
\mathop{\mathop{\sum_{a\leq M}}_{\text{$a$ odd}}}_{q|a} \;
&\mathop{\sum_{v|a}}_{a/U\leq v\leq V} \frac{\Lambda(v) \mu(a/v)}{a}\\
&= \mathop{\mathop{\sum_{v\leq V}}_{\text{$v$ odd}}}_{\gcd(v,q)=1} 
 \frac{\Lambda(v)}{v} 
\mathop{\mathop{\sum_{u\leq \min(U,M/V)}}_{\text{$u$ odd}}}_{q|u}
\frac{\mu(u)}{u} + 
\mathop{\mathop{\sum_{p^\alpha\leq V}}_{\text{$p$ odd}}}_{p|q}
 \frac{\Lambda(p^{\alpha})}{p^{\alpha}}
\mathop{\mathop{\sum_{u\leq \min(U,M/V)}}_{\text{$u$ odd}}}_{\frac{q}{\gcd(q,p^{\alpha})}|u} \frac{\mu(u)}{u} \\
&= \frac{\mu(q)}{q}
\mathop{\mathop{\sum_{v\leq V}}_{\text{$v$ odd}}}_{\gcd(v,q)=1} 
 \frac{\Lambda(v)}{v} 
\mathop{\sum_{u\leq \min(U/q,M/Vq)}}_{\gcd(u,2q)=1}
\frac{\mu(u)}{u} \\ &+ \frac{\mu\left(\frac{q}{\gcd(q,p^\alpha)}\right)}{q}
\mathop{\mathop{\sum_{p^\alpha\leq V}}_{\text{$p$ odd}}}_{p|q}
\frac{\Lambda(p^{\alpha})}{p^{\alpha}/\gcd(q,p^{\alpha})}
\mathop{\mathop{\sum_{u\leq \min\left(\frac{U}{q/\gcd(q,p^\alpha)},\frac{M/V}{
q/\gcd(q,p^{\alpha})}\right)}}_{\text{$u$ odd}}}_{
\gcd\left(u,\frac{q}{\gcd(q,p^{\alpha})}\right) = 1} \frac{\mu(u)}{u}\\
&= \frac{1}{q} \cdot O^*\left(\mathop{\sum_{v\leq V}}_{\gcd(v,2q)=1}
\frac{\Lambda(v)}{v} + \mathop{\mathop{\sum_{p^\alpha\leq V}}_{\text{$p$ odd}}}_{p|q}
\frac{\log p}{p^{\alpha}/\gcd(q,p^{\alpha})}\right),
\end{aligned}\]
where we are using (\ref{eq:grara}) to bound the sums on $u$ by $1$.
We notice that
\[\begin{aligned}
\mathop{\mathop{\sum_{p^\alpha\leq V}}_{\text{$p$ odd}}}_{p|q}
\frac{\log p}{p^{\alpha}/\gcd(q,p^{\alpha})} &\leq
\mathop{\sum_{\text{$p$ odd}}}_{p|q} (\log p) \left(v_p(q) + 
\mathop{\sum_{\alpha>v_p(q)}}_{p^\alpha\leq V} \frac{1}{p^{\alpha-v_p(q)}}\right)\\
&\leq \log q + 
\mathop{\sum_{\text{$p$ odd}}}_{p|q} (\log p) 
\mathop{\sum_{\beta>0}}_{p^\beta \leq \frac{V}{p^{v_p(q)}}} \frac{\log p}{p^\beta}
\leq \log q + \mathop{\mathop{\sum_{v\leq V}}_{\text{$v$ odd}}}_{\gcd(v,q)=1}
\frac{\Lambda(v)}{v},\end{aligned}\]
and so
\[\begin{aligned}
\mathop{\mathop{\sum_{a\leq M}}_{\text{$a$ odd}}}_{q|a} \;
\mathop{\sum_{v|a}}_{a/U\leq v\leq V} \frac{\Lambda(v) \mu(a/v)}{a} &=
\frac{1}{q} \cdot O^*\left(\log q + 
\mathop{\sum_{v\leq V}}_{\gcd(v,2)=1} \frac{\Lambda(v)}{v}\right)\\
&= \frac{1}{q} \cdot O^*(\log q + \log V)\end{aligned}\]
by (\ref{eq:rala}). The absolute value of the sum of the terms with
$\widehat{\eta}(-\delta/2)$ in (\ref{eq:hoho}) is thus at most
\[\frac{x}{q} \frac{\widehat{\eta}(-\delta/2)}{2} (\log q + \log V)
\leq
\frac{x}{2 q} \min\left(1, \frac{c_0}{(\pi \delta)^2}\right) \log Vq,\]
where we are bounding $\widehat{\eta}(-\delta/2)$ by
(\ref{eq:malcros}).

The other terms in (\ref{eq:hoho}) contribute at most
\begin{equation}\label{eq:billy}
(\pi^2 - 4) \frac{|\widehat{\eta''}|_\infty}{2 \pi^2} \frac{1}{x} 
\mathop{\mathop{\mathop{\sum_{u\leq U} \sum_{v\leq V}}_{\text{$uv$ odd}}}_{
uv \leq M,\; q|uv}}_{\text{$u$ sq-free}}
\Lambda(v) u v.\end{equation}

For any $R$, $\sum_{u\leq R, \text{$u$ odd}, q|u} \leq R^2/4q + 3 R/4$.
Using the estimates (\ref{eq:rala}), (\ref{eq:trado2}) and (\ref{eq:chronop}),
we obtain that the double sum in (\ref{eq:billy}) is at most
\begin{equation}\label{eq:etoile}
\begin{aligned}
\mathop{\sum_{v\leq V}}_{\gcd(v,2q)=1} &\Lambda(v) v 
\mathop{\mathop{\sum_{u\leq \min(U,M/v)}}_{\text{$u$ odd}}}_{q|u} u +
\mathop{\mathop{\sum_{p^\alpha\leq V}}_{\text{$p$ odd}}}_{p|q} (\log p) p^\alpha 
\mathop{\mathop{\sum_{u\leq U}}_{\text{$u$ odd}}}_{\frac{q}{\gcd(q,p^\alpha)} | u} u\\
&\leq \mathop{\sum_{v\leq V}}_{\gcd(v,2q)=1} \Lambda(v) v \cdot
\left(\frac{(M/v)^2}{4 q} + \frac{3 M}{4 v}\right) +
\mathop{\mathop{\sum_{p^\alpha\leq V}}_{\text{$p$ odd}}}_{p|q} (\log p) p^\alpha \cdot
\frac{(U+1)^2}{4}\\
&\leq \frac{M^2 \log V}{4 q} + \frac{3 c_4}{4} M V 
+ \frac{(U+1)^2}{4} V \log q,
\end{aligned}\end{equation}
where $c_4 = 1.03884$.

From this point onwards, we use the easy bound
\[
\left|\mathop{\sum_{v|a}}_{a/U\leq v\leq V}
\Lambda(v) \mu(a/v)\right| \leq \log a .
\]
What we must bound now is
\begin{equation}\label{eq:esthel3}
\mathop{\mathop{\sum_{m\leq UV}}_{\text{$m$ odd}}}_{\text{$q\nmid m$ or 
$m>M$}} (\log m) \sum_{\text{$n$ odd}} e(\alpha m n) \eta(m n/x).\end{equation}
The inner sum is the same as the sum $T_{m,\circ}(\alpha)$ in (\ref{eq:baxter});
we will be using the bound (\ref{eq:trompais}). Much as before, we will be able
to ignore the condition that $m$ is odd.

Let $D= UV$. What remains to do is similar to what we did
 in the proof of Lemma \ref{lem:bosta1} (or Lemma \ref{lem:bosta2}).

Case (a). {\em $\delta$ large: $|\delta|\geq 1/2c_2$.}
 Instead of (\ref{eq:jenuf}),
we have
\[\mathop{\sum_{1\leq m\leq M}}_{q\nmid m} (\log m) |T_{m,\circ}(\alpha)| \leq
 \frac{40}{3 \pi^2} \frac{c_0 q^3}{4 x} \sum_{0\leq j\leq \frac{M}{q}}
(j+1) \log (j+1)q,\]
and, since $M\leq \min(c_2 x/q,D)$,
$q\leq \sqrt{2 c_2 x}$ (just as in the proof of Lemma \ref{lem:bosta1}) and
\[\begin{aligned}
\sum_{0\leq j\leq \frac{M}{q}}
(j+1) \log (j+1)q &\leq \frac{M}{q} \log M + \left(\frac{M}{q} + 1\right) 
\log (M+1) + \frac{1}{q^2} \int_0^M t \log t\; dt\\
&\leq \left(2 \frac{M}{q} + 1\right) \log x 
 + \frac{M^2}{2 q^2} \log \frac{M}{\sqrt{e}},
\end{aligned}\]
we conclude that 
\begin{equation}\label{eq:cocolo}\begin{aligned}
\mathop{\sum_{1\leq m\leq M}}_{q\nmid m} |T_{m,\circ}(\alpha)| \leq
 \frac{5 c_0 c_2}{3 \pi^2} M \log \frac{M}{\sqrt{e}} +
\frac{20 c_0}{3 \pi^2} (2 c_2)^{3/2} 
\sqrt{x} \log x
.\end{aligned}\end{equation}

Instead of (\ref{eq:tenda}), we have
\[\begin{aligned}
\sum_{j=0}^{\lfloor \frac{D - (Q+1)/2}{q'}\rfloor} \frac{x}{j q' + \frac{Q+1}{2}}
&\log \left(j q' + \frac{Q+1}{2}\right)
\leq \frac{x}{\frac{Q+1}{2}} \log \frac{Q+1}{2}  + \frac{x}{q'} \int_{\frac{Q+1}{2}}^D \frac{\log t}{t} dt\\
&\leq \frac{2 x}{Q} \log \frac{Q}{2} + \frac{(1+\epsilon) x}{2 \epsilon Q} \left((\log D)^2 - 
 \left(\log \frac{Q}{2}\right)^2\right)
.\end{aligned}\]
Instead of (\ref{eq:beatri}), we estimate
\[\begin{aligned}
q' &\sum_{j=0}^{\left\lfloor \frac{D - \frac{Q+1}{2}}{q'}\right\rfloor} 
\left(\log \left(\frac{Q+1}{2} + j q'\right)\right)
\sqrt{1 + \frac{q'}{j q' + \frac{Q+1}{2}}}\\ &\leq
q' \left(\log D + (\sqrt{3+2\epsilon} - 1) \log \frac{Q+1}{2} \right) + 
\int_{\frac{Q+1}{2}}^D \log t\; dt + 
\int_{\frac{Q+1}{2}}^D \frac{q' \log t}{2 t} dt\\
&\leq q' \left(\log D + \left(\sqrt{3+2\epsilon} - 1\right) \log \frac{Q+1}{2} \right) + 
 \left(D \log \frac{D}{e} - 
\frac{Q+1}{2} \log \frac{Q+1}{2 e}\right) \\ &+ 
\frac{q'}{2} \log D \log^+ \frac{D}{\frac{Q+1}{2}} .\end{aligned}\]
We conclude that, when $D\geq Q/2$, the sum 
 $\sum_{Q/2 < m\leq D} (\log m) |T_m(\alpha)|$ is at most
\[\begin{aligned}
&\frac{2 \sqrt{c_0 c_1}}{\pi} \left(D \log \frac{D}{e}
+ (Q+1)
\left((1+\epsilon) (\sqrt{3 + 2 \epsilon} - 1) \log \frac{Q+1}{2} -
\frac{1}{2} \log \frac{Q+1}{2 e}\right)\right)\\
&+ \frac{\sqrt{c_0 c_1}}{\pi} (Q+1) 
(1+\epsilon) \log D \log^+ \frac{e^2 D}{\frac{Q+1}{2}} \\
&+
\frac{3 c_1}{2} \left( \frac{2 x}{Q} \log \frac{Q}{2} + \frac{(1+\epsilon) x}{2 \epsilon Q} \left((\log D)^2 - 
 \left(\log \frac{Q}{2}\right)^2\right)\right).
\end{aligned}\]
We must now add this to (\ref{eq:cocolo}). Since
\[
 (1+\epsilon) (\sqrt{3 + 2\epsilon} - 1) \log \sqrt{2}
- \frac{1}{2} \log 2 e + \frac{1 + \sqrt{13/3}}{2} \log 2 \sqrt{e}
> 0\]
and $Q\geq 2 \sqrt{x}$,
 we conclude that (\ref{eq:esthel3}) is at most
\begin{equation}\label{eq:iulia}\begin{aligned}
&\frac{2 \sqrt{c_0 c_1}}{\pi} D \log \frac{D}{e}\\ 
&+ \frac{2 \sqrt{c_0 c_1}}{\pi} (1+\epsilon) (Q+1) \left(
(\sqrt{3+2\epsilon}-1) \log \frac{Q+1}{\sqrt{2}}
 + \frac{1}{2} \log D \log^+ \frac{e^2 D}{\frac{Q+1}{2}}\right)\\
&+
\left(\frac{3 c_1}{2} \left( \frac{1}{2}  + 
\frac{3(1+\epsilon)}{16 \epsilon} \log x\right) 
+ \frac{20 c_0}{3 \pi^2} (2 c_2)^{3/2}
\right) \sqrt{x} \log x.
\end{aligned}\end{equation}

{\em Case (b). $\delta$ small: $|\delta|\leq 1/2 c_2$
or $D\leq Q_0/2$.}
The analogue of (\ref{eq:prokof}) is a bound of
\[\leq \frac{2 |\eta'|_1}{\pi} q
\max\left(1, \log \frac{c_0 e^3 q^2}{4 \pi |\eta'|_1 x}\right) \log \frac{q}{2}
\] 
for the terms with $m\leq q/2$. 
If $q^2 < 2 c_2 x$, then, much
as in (\ref{eq:martinu}), we have
\begin{equation}\label{eq:jotoy}\begin{aligned}
\mathop{\sum_{\frac{q}{2} < m \leq D'}}_{q\nmid m} |T_{m,\circ}(\alpha)|
(\log m)
&\leq \frac{10}{\pi^2} \frac{c_0 q^3}{3 x} \sum_{1\leq j\leq \frac{D'}{q} + 
\frac{1}{2}} \left(j + \frac{1}{2}\right) \log (j+1/2) q\\
&\leq \frac{10}{\pi^2} \frac{c_0 q}{3 x} \int_q^{D' + \frac{3}{2} q}
x \log x \; dx.\end{aligned}\end{equation}
Since
\[\begin{aligned}
&\int_q^{D' + \frac{3}{2} q} x \log x \; dx = 
\frac{1}{2} \left(D' +
\frac{3}{2} q \right)^2 \log \frac{D'+\frac{3}{2} q}{\sqrt{e}} 
- \frac{1}{2} q^2 \log \frac{q}{\sqrt{e}}\\
&= \left(\frac{1}{2} D'^2 + \frac{3}{2} D' q\right)
\left(\log \frac{D'}{\sqrt{e}} + \frac{3}{2} \frac{q}{D'}\right) + 
\frac{9}{8} q^2 \log \frac{D' + \frac{3}{2} q}{\sqrt{e}}
 - \frac{1}{2} q^2 \log \frac{q}{\sqrt{e}}\\
&= \frac{1}{2} D'^2 \log \frac{D'}{\sqrt{e}} + \frac{3}{2} D' q \log D' +
\frac{9}{8} q^2 \left(\frac{2}{9} + \frac{3}{2} + 
\log \left(D' + \frac{19}{18} q\right)\right),
\end{aligned}\]
where $D' = \min(c_2 x/q,D)$, and since the assumption
$(UV + (19/18) Q_0)\leq x/5.6$ implies that $(2/9 + 3/2 + 
\log(D' + (19/18) q)) \leq x$, 
 we conclude that
\begin{equation}\label{eq:prok}\begin{aligned}
&\mathop{\sum_{\frac{q}{2} < m \leq D'}}_{q\nmid m} |T_{m,\circ}(\alpha)| (\log m)
\\ &\leq \frac{5 c_0 c_2}{3 \pi^2} D' \log \frac{D'}{\sqrt{e}} +
\frac{10 c_0}{3 \pi^2} \left( \frac{3}{4} (2 c_2)^{3/2} \sqrt{x} \log x
+ \frac{9}{8} (2 c_2)^{3/2} \sqrt{x} \log x\right)\\
&\leq \frac{5 c_0 c_2}{3 \pi^2} D' \log \frac{D'}{\sqrt{e}} +
\frac{25 c_0}{4 \pi^2} (2 c_2)^{3/2} \sqrt{x} \log x 
.\end{aligned}\end{equation}
Let $R = \max(c_2 x/q,q/2)$.
We bound the terms $R<m\leq D$ as in (\ref{eq:caron}), with a factor of
$\log (j q + R)$ inside the sum. The analogues of (\ref{eq:kosto}) and
(\ref{eq:kostas}) are
\begin{equation}\label{eq:gator1}\begin{aligned}
\sum_{j=0}^{\left\lfloor \frac{1}{q} (D- R)\right\rfloor} &\frac{x}{jq + R} \log(jq + R) \leq
\frac{x}{R} \log R + \frac{x}{q} \int_R^D \frac{\log t}{t} dt\\
&\leq \sqrt{ \frac{ 2 x}{c_2}} \log \sqrt{\frac{c_2 x}{2}} + \frac{x}{q}
\log D \log^+ \frac{D}{R}, 
\end{aligned}\end{equation} where we use the assumption that
$x\geq e^2 c/2$, and
\begin{equation}\label{eq:gator2}\begin{aligned}
\sum_{j=0}^{\left\lfloor \frac{1}{q} (D- R)\right\rfloor} &\log(jq + R) 
\sqrt{1 + \frac{q}{jq + R}}
\leq \sqrt{3} \log R\\
&+ \frac{1}{q} \left(D \log \frac{D}{e} - R \log \frac{R}{e}\right)
+ \frac{1}{2} \log D \log \frac{D}{R} 
\end{aligned}\end{equation}
(or $0$ if $D<R$).
We sum with (\ref{eq:prok}) and the terms with $m\leq q/2$, and 
obtain, for $D' = c_2 x/q = R$,
\[\begin{aligned}
&\frac{2 \sqrt{c_0 c_1}}{\pi} \left(
D \log \frac{D}{\sqrt{e}} +
q \left(\sqrt{3} \log \frac{c_2 x}{q} +
\frac{\log D}{2} 
\log^+ \frac{D}{q/2} \right)\right)\\ 
&+ \frac{3 c_1}{2} \frac{x}{q} \log D \log^+ \frac{D}{c_2 x/q}
+ 
\frac{2 |\eta'|_1}{\pi} q
\max\left(1, \log \frac{c_0 e^3 q^2}{4 \pi |\eta'|_1 x}\right) \log \frac{q}{2}
\\
&+ \frac{3 c_1}{2 \sqrt{2 c_2}} \sqrt{x} \log \frac{c_2 x}{2} + 
\frac{25 c_0}{4 \pi^2} (2 c_2)^{3/2} \sqrt{x} \log x 
,\end{aligned}\]
which, it is easy to check, is also valid even if $D'=D$ (in which case
(\ref{eq:gator1}) and (\ref{eq:gator2}) do not appear) or $R=q/2$ (in which
case (\ref{eq:prok}) does not appear). 
\end{proof}

\section{Type II}\label{sec:typeII}
We must now consider the sum
\begin{equation}\label{eq:adoucit}
S_{II} = \mathop{\sum_{m>U}}_{\gcd(m,v)=1}
 \left(\mathop{\sum_{d>U}}_{d|m} \mu(d)\right) \mathop{\sum_{n>V}}_{\gcd(n,v)=1}
 \Lambda(n)
 e(\alpha m n) \eta(m n/x).\end{equation}

Here the main improvements over classical treatments are as follows:
\begin{enumerate}
\item\label{it:bbc}
 obtaining cancellation in the term 
\[
\mathop{\sum_{d>U}}_{d|m} \mu(d)
\] leading to
a gain of a factor of $\log$; 
\item using a large sieve for primes, getting
rid of a further $\log$;
\item exploiting, via a non-conventional application of the principle of the
 large sieve (Lemma \ref{lem:ogor}),
the fact that $\alpha$ is in the tail of an interval (when that is the case).
\end{enumerate}
Some of the techniques developed for (\ref{it:bbc}) should be applicable 
to other instances of Vaughan's identity in the literature.

It is technically helpful to express $\eta$ as the 
(multiplicative) convolution of two functions of compact support --
preferrably the same function:
\begin{equation}\label{eq:conque}
\eta(x) = \int_0^\infty \eta_1(t) \eta_1(x/t) \frac{dt}{t}.\end{equation}
For the smoothing function $\eta(t) = \eta_2(t) = 4 \max(\log 2 - |\log 2 t|, 
0)$, (\ref{eq:conque}) holds with
\begin{equation}\label{eq:tindot}
\eta_1(t) = \begin{cases} 2
 &\text{if $t\in (1/2,1\rbrack$}\\ 0 &\text{otherwise.}\end{cases}
\end{equation}
We will work with $\eta_1(t)$ as in (\ref{eq:tindot}) for
convenience, yet what follows should carry over to other (non-negative)
choices of $\eta_1$. 

By (\ref{eq:conque}), the sum (\ref{eq:adoucit}) equals
\begin{equation}\label{eq:bycaus}\begin{aligned}
4 \int_0^{\infty}
&\mathop{\sum_{m>U}}_{\gcd(m,v)=1}
 \left(\mathop{\sum_{d>U}}_{d|m} \mu(d)\right)
 \mathop{\sum_{n>V}}_{\gcd(n,v)=1} \Lambda(n)
 e(\alpha m n) \eta_1(t) \eta_1\left(\frac{m n/x}{t}\right) \frac{dt}{t}\\
&= 4 \int_V^{x/U} 
\mathop{\sum_{\max\left(\frac{x}{2W},U\right)<m\leq \frac{x}{W}}}_{\gcd(m,v)=1} 
\left(\mathop{\sum_{d>U}}_{d|m} \mu(d)\right)
 \mathop{\sum_{\max\left(V,\frac{W}{2}\right)<n\leq W}}_{\gcd(n,v)=1} \Lambda(n)
 e(\alpha m n)  \frac{dW}{W}
\end{aligned}\end{equation}
by the substitution $t = (m/x) W$.
(We can assume $V \leq W \leq x/U$ because otherwise one of the sums in
(\ref{eq:costo}) is empty.)

We separate $n$ prime and $n$ non-prime.
By Cauchy-Schwarz,
the expression within the integral in (\ref{eq:bycaus})
is then at most $\sqrt{S_1(U,W)\cdot S_2(U,V,W)} +
\sqrt{S_1(U,W) \cdot S_3(W)}$, where
\begin{equation}\label{eq:costo}\begin{aligned}
S_1(U,W) &= \mathop{\sum_{\max\left(\frac{x}{2W},U\right)<m\leq \frac{x}{W}}}_{
\gcd(m,v)=1} 
\left(\mathop{\sum_{d>U}}_{d|m} \mu(d)\right)^2 ,\\
S_2(U,V,W) &= \mathop{\sum_{\max\left(\frac{x}{2W},U\right)<m\leq \frac{x}{W}}}_{
\gcd(m,v)=1} 
\left|
 \mathop{\sum_{\max\left(V,\frac{W}{2}\right)<p\leq W}}_{\gcd(p,v)=1} (\log p)
 e(\alpha m p)\right|^2.\end{aligned}
\end{equation}
and
\begin{equation}\label{eq:negli}\begin{aligned}
S_3(W) &= 
 \mathop{\sum_{\frac{x}{2 W} < m\leq \frac{x}{W}}}_{\gcd(m,v)=1}
\left| \mathop{\sum_{n\leq W}}_{\text{$n$ non-prime}} \Lambda(n)\right|^2\\
&= \mathop{\sum_{\frac{x}{2 W} < m\leq \frac{x}{W}}}_{\gcd(m,v)=1}
 \left(1.42620 W^{1/2}\right)^2
\leq 1.0171 x + 2.0341 W\end{aligned}\end{equation}
(by \cite[Thm. 13]{MR0137689}). We will assume $V\leq w$; thus the condition
$\gcd(p,v)=1$ will be fulfilled automatically and can be removed.

The contribution of $S_3(W)$ will be negligible.
We must bound $S_1(U,W)$ and $S_2(U,V,W)$ from above.

\subsection{The sum $S_1$: cancellation}\label{subs:vaucanc}
We shall bound
\[S_1(U,W) = \mathop{\sum_{\max(U,x/2W)<m\leq x/W}}_{\gcd(m,v)=1}
 \left(\mathop{\sum_{d>U}}_{d|m} \mu(d)\right)^2.\]

There will be
what is perhaps a surprising amount of cancellation: the expression within
the sum will be bounded by a constant on average.

\subsubsection{Reduction to a sum with $\mu$}

We can write
\begin{equation}\label{eq:crusto}\begin{aligned}
\mathop{\sum_{\max(U,x/2W)<m\leq x/W}}_{\gcd(m,v)=1}
 &\left(\mathop{\sum_{d>U}}_{d|m} \mu(d)\right)^2
= \mathop{\sum_{\frac{x}{2W} < m \leq \frac{x}{W}}}_{\gcd(m,v)=1} \sum_{d_1,d_2|m} \mu(d_1>U) \mu(d_2>U)\\
&= \mathop{\mathop{\sum_{r_1<x/WU} \sum_{r_2<x/WU}}_{\gcd(r_1,r_2)=1}}_{\gcd(r_1 r_2,
v)=1} 
\mathop{
\mathop{\mathop{\sum_l}_{\gcd(l,r_1 r_2)=1}}_{r_1 l, r_2 l>U}}_{\gcd(\ell,v)=1} \mu(r_1 l) \mu(r_2 l)
 \mathop{\mathop{\sum_{\frac{x}{2W} < m\leq \frac{x}{W}}}_{r_1 r_2 l|m}}_{\gcd(m,v)=1} 1,\end{aligned}
\end{equation}
where we write $d_1 = r_1 l$, $d_2 = r_2 l$, $l=\gcd(d_1,d_2)$.
(The inequality $r_1<x/WU$ comes from $r_1 r_2 l|m$, $m\leq x/W$, $r_2 l>U$;
$r_2<x/WU$ is proven in the same way.)
Now (\ref{eq:crusto}) equals
\begin{equation}\label{eq:cudo}\mathop{\sum_{s<\frac{x}{WU}}}_{\gcd(s,v)=1}
 \mathop{\mathop{\sum_{r_1< \frac{x}{WUs}} \sum_{r_2<\frac{x}{WUs}}}_{\gcd(r_1,r_2)=1}}_{
\gcd(r_1 r_2,v)=1}
\mu(r_1) \mu(r_2) \mathop{\mathop{\sum_{
\max\left(\frac{U}{\min(r_1,r_2)},\frac{x/W}{2 r_1 r_2 s}
\right) < l \leq \frac{x/W}{r_1 r_2 s}}}_{
\gcd(l,r_1 r_2)=1, (\mu(l))^2 = 1}}_{\gcd(\ell,v)=1} 1,\end{equation}
where we have set $s = m/(r_1 r_2 l)$. 

\begin{lem}\label{lem:monro}
Let $z,y>0$. Then
\begin{equation}\label{eq:srto}
\mathop{\mathop{\sum_{r_1<y} \sum_{r_2<y}}_{\gcd(r_1,r_2)=1}}_{\gcd(r_1 r_2,v)=1}
 \mu(r_1) \mu(r_2) 
 \mathop{
\mathop{\sum_{\min\left(\frac{z/y}{\min(r_1,r_2)}, \frac{z}{2 r_1 r_2}\right)
    < l \leq \frac{z}{r_1 r_2}}}_{\gcd(l,r_1r_2)=1,
(\mu(l))^2=1}}_{\gcd(\ell,v)=1} 1
\end{equation} equals
\begin{equation}\label{eq:muted}\begin{aligned}
\frac{6 z}{\pi^2}  &\frac{v}{\sigma(v)}
\mathop{\mathop{\sum_{r_1<y}\; \sum_{r_2<y}}_{\gcd(r_1,r_2)=1}}_{\gcd(r_1 r_2,v)=1} 
\frac{\mu(r_1) \mu(r_2)}{\sigma(r_1) \sigma(r_2)} \left(1 - \max\left(
\frac{1}{2}, \frac{r_1}{y}, \frac{r_2}{y}\right)\right) \\ &+ 
O^*\left(5.08\; \zeta\left(\frac{3}{2}\right)^2 y \sqrt{z} \cdot
\prod_{p|v} \left(1 + \frac{1}{\sqrt{p}}\right) \left(1 - \frac{1}{p^{3/2}}\right)^2\right).\end{aligned}\end{equation}
If $v=2$, the error term in (\ref{eq:muted}) can be replaced by
\begin{equation}\label{eq:mudo}
O^*\left( 1.27 \zeta\left(\frac{3}{2}\right)^2 y \sqrt{z} \cdot
\left(1 + \frac{1}{\sqrt{2}}\right)
\left(1 - \frac{1}{2^{3/2}}\right)^2\right).   
\end{equation}
\end{lem}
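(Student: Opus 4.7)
The strategy is to evaluate the inner sum over $l$ with $r_1, r_2$ fixed, then sum the results over $r_1, r_2$.

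Since $\mu(r_1)\mu(r_2)\neq 0$ forces $r_1, r_2$ to be squarefree, and we have $\gcd(r_1,r_2)=\gcd(r_1 r_2,v)=1$ with $v$ squarefree, the modulus $n:=r_1 r_2 v$ is squarefree. The inner sum counts squarefree integers coprime to $n$ in an interval $(L_1,L_2]$ with $L_2=z/(r_1 r_2)$; a short case-split on which of the two candidate lower endpoints dominates shows
\[
L_2 - L_1 = \frac{z}{r_1 r_2}\Bigl(1 - \max\Bigl(\tfrac{1}{2}, \tfrac{r_1}{y}, \tfrac{r_2}{y}\Bigr)\Bigr),
\]
already producing the shape of the main term in \eqref{eq:muted}.

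I would then use the identity $\mu(l)^2 = \sum_{d^2\mid l}\mu(d)$ together with the Möbius expansion of the coprimality condition $[\gcd(l,n)=1]=\sum_{c\mid \gcd(l,n)}\mu(c)$. Since $d$ must be coprime to $n$ for its term to be nonzero, and $c\mid n$ with $n$ squarefree forces $\gcd(d,c)=1$, we have $\lcm(d^2,c)=d^2c$, and the inner sum becomes a double sum over such $(d,c)$. Separating the main term from the floor errors and using $\phi(n)/n\cdot\prod_{p\mid n}(1-1/p^2)^{-1}=n/\sigma(n)$ (valid for squarefree $n$) produces the main term
\[
\frac{6}{\pi^2}\cdot\frac{n}{\sigma(n)}(L_2-L_1).
\]
Multiplicativity of $n/\sigma(n)$ factors this as $(v/\sigma(v))(r_1/\sigma(r_1))(r_2/\sigma(r_2))$; paired with $(L_2-L_1)$ and $\mu(r_1)\mu(r_2)$ and summed over $r_1,r_2$, this is exactly the first summand of \eqref{eq:muted}.

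For the error, the contribution of the floor terms in the $(d,c)$-expansion per fixed $(r_1,r_2)$ is bounded by the number of pairs with $d^2 c\leq L_2$, and a brief calculation gives $\sqrt{L_2}\prod_{p\mid n}(1+1/\sqrt{p})$. Writing $\sqrt{L_2}=\sqrt{z/(r_1 r_2)}$, multiplicativity factors the sum over $r_1,r_2<y$ squarefree (pairwise coprime and coprime to $v$) as
\[
\sqrt{z}\prod_{p\mid v}(1+1/\sqrt{p})\cdot \Biggl(\mathop{\sum_{r<y}}_{\gcd(r,v)=1,\ \mu(r)^2=1}\frac{\prod_{p\mid r}(1+1/\sqrt{p})}{\sqrt{r}}\Biggr)^{\!2}.
\]
The associated Dirichlet series has a simple pole at $s=1$, so by partial summation the inner sum is $O(\sqrt{y})$ with a leading constant given by an Euler product over $p\nmid v$; the local bound $(1+1/p^{3/2}-1/p^{2}-1/p^{5/2})\leq (1-1/p^{3/2})^{-1}$ produces $\zeta(3/2)\prod_{p\mid v}(1-1/p^{3/2})$. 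Squaring and combining with $\prod_{p\mid v}(1+1/\sqrt{p})$ yields the bound claimed in \eqref{eq:muted}. For $v=2$ the Euler factor at $p=2$ can be computed exactly and, together with a numerical value of $\zeta(3/2)^2$, gives the sharper constant of \eqref{eq:mudo}.

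\textbf{Main obstacle.} Producing the explicit constants ($5.08$ and $1.27$) is the delicate point: it requires the sharpest forms of partial summation and of the local Euler-factor bound, and in particular one must resist the temptation to bound $\prod_{p\mid n}(1+1/\sqrt{p})$ by a coarser multiplicative majorant such as $2^{\omega(n)}$, which would give the Dirichlet series a double pole and cost a spurious $\log^2 y$ factor. Isolating the $v$-dependence cleanly as $\prod_{p\mid v}(1+1/\sqrt{p})(1-1/p^{3/2})^2$ rather than an opaque $O(1)$ is the key structural requirement that dictates the order of summation throughout.
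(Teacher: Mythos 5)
Your overall strategy coincides with the paper's: Möbius-invert the coprimality and squarefreeness conditions on $l$, count multiples of the resulting moduli in the interval $(L_1,L_2]$, extract a completed-sum main term (identified via $n/\sigma(n)$ with $n=r_1r_2v$ squarefree), and factor the error multiplicatively. The paper differs only in bookkeeping: it unfolds $[\gcd(l,r_i)=1]$ with $d_i\mid r_i$ and then changes variables $s_i=r_i/d_i$ in the outer $r_i$-sums, whereas you keep $r_1,r_2$ fixed and unfold $[\gcd(l,n)=1]$ in a single divisor sum $c\mid n$; after re-expansion these are the same parameterization.

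There is, however, a genuine gap in your error accounting. You bound ``the contribution of the floor terms'' by the number of pairs $(d,c)$ with $d^2c\le L_2$, arriving at $\sqrt{L_2}\prod_{p\mid n}(1+p^{-1/2})$. But the main term $\tfrac{6}{\pi^2}\tfrac{n}{\sigma(n)}(L_2-L_1)$ you display is the \emph{completed} Euler product, obtained only after extending the $d$-sum past $\sqrt{L_2/c}$. That extension produces a tail $\sum_{c\mid n}\sum_{d>\sqrt{L_2/c}}\tfrac{L_2-L_1}{d^2c}$ of exactly the same order $\sqrt{L_2}\prod_{p\mid n}(1+p^{-1/2})$ (roughly half of it, since $L_2-L_1\le L_2/2$), which you never bound. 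Added naively to your floor count it would give a coefficient around $3/2$, not $1.27$. The paper handles both error sources simultaneously by introducing
\[
f(x)=\mathop{\sum_{m\le x}}_{m\text{ sq-free}}1+\frac12\mathop{\sum_{m>x}}_{m\text{ sq-free}}\frac{x^2}{m^2},
\]
observing that $f(x)/x$ has its local maxima exactly at squarefree integers, and evaluating directly to get $f(x)\le 1.26981\ldots x$ (the maximum is near $x=2$). This is the source of the constant $1.27$; it is a small combinatorial observation, not ``a sharper form of partial summation'' as your obstacle paragraph suggests, and without it (or the tail) the lemma's stated numerical bounds are not reached. The remaining factor of $4$ (whence $5.08$) and the improved $v=2$ constant come from whether the condition $\gcd(s,v)=1$ is dropped or retained in the final $(d,s)$-sum, which matches the shape of your Dirichlet-series factorization.
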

\begin{proof}
By M\"obius inversion, (\ref{eq:srto}) equals
\begin{equation}\label{eq:etze}\begin{aligned}
\mathop{\mathop{\sum_{r_1<y}\; \sum_{r_2<y}}_{\gcd(r_1,r_2)=1}}_{\gcd(r_1 r_2,v)=1}
 \mu(r_1) \mu(r_2) \mathop{\mathop{\sum_{l \leq \frac{z}{r_1 r_2}}}_{l>
\min\left(\frac{z/y}{\min(r_1,r_2)}, \frac{z}{2 r_1 r_2}\right)}}_{\gcd(\ell,v)=1}
  &\mathop{\sum_{d_1|r_1, d_2|r_2}}_{d_1 d_2|l} \mu(d_1) \mu(d_2) \\
&\mathop{\sum_{d_3|v}}_{d_3|l} \mu(d_3)
  \mathop{\sum_{m^2|l}}_{\gcd(m,r_1 r_2 v)=1} \mu(m).
\end{aligned}\end{equation}
We can change the order of summation of $r_i$ and $d_i$ by defining
$s_i=r_i/d_i$, and we can also use the obvious fact that the number
of integers in an interval $(a,b\rbrack$ divisible by $d$ 
is $(b-a)/d + O^*(1)$. Thus (\ref{eq:etze}) equals
\begin{equation}\label{eq:dbamain}\begin{aligned}
&\mathop{\mathop{\sum_{d_1, d_2 <y}}_{\gcd(d_1, d_2)=1}}_{\gcd(d_1 d_2,v)=1}
 \mu(d_1) \mu(d_2)
 \mathop{\mathop{\mathop{\sum_{s_1<y/d_1}}_{s_2<y/d_2}}_{\gcd(d_1 s_1, d_2 s_2)=1}}_{
\gcd(s_1 s_2,v)=1}
\mu(d_1 s_1) \mu(d_2 s_2)\\
&\sum_{d_3|v} \mu(d_3) 
\mathop{\sum_{m\leq \sqrt{\frac{z}{d_1^2 s_1 d_2^2 s_2 d_3}}}}_{\gcd(m,d_1 s_1
  d_2 s_2 v)=1} \frac{\mu(m)}{d_1 d_2 d_3 m^2} \frac{z}{s_1 d_1 s_2 d_2} 
\left(1-\max\left(\frac{1}{2}, \frac{s_1 d_1}{y}, \frac{s_2 d_2}{y}\right)\right)
\end{aligned}\end{equation} 
plus
\begin{equation}\label{eq:dbaerr}
O^*\left(
\mathop{\sum_{d_1, d_2 <y}}_{\gcd(d_1 d_2,v)=1}
\mathop{\mathop{\sum_{s_1<y/d_1}}_{s_2<y/d_2}}_{\gcd(s_1 s_2,v)=1} \sum_{d_3|v}
\mathop{\sum_{m\leq \sqrt{\frac{z}{d_1^2 s_1 d_2^2 s_2 d_3}}}}_{\text{$m$
sq-free}} 1\right)
.\end{equation} 
If we complete the innermost sum in (\ref{eq:dbamain}) by removing the condition
$m\leq \sqrt{z/(d_1^2 s d_2^2 s_2)}$, we obtain 
(reintroducing the
variables $r_i = d_i s_i$)
\begin{equation}\label{eq:johnny}\begin{aligned}z\cdot
\mathop{\mathop{\sum_{r_1, r_2<y}}_{\gcd(r_1,r_2)=1}}_{\gcd(r_1 r_2,v)=1} 
\frac{\mu(r_1) \mu(r_2)}{r_1 r_2} 
&\left(1 - \max\left(
\frac{1}{2}, \frac{r_1}{y}, \frac{r_2}{y}\right)\right) \\
&\mathop{\sum_{d_1|r_1}}_{d_2|r_2} \sum_{d_3|v} 
\mathop{\sum_{m}}_{\gcd(m,r_1 r_2 v)=1} \frac{\mu(d_1) \mu(d_2) \mu(m) \mu(d_3)}{d_1
  d_2 d_3 m^2}\end{aligned}\end{equation} times $z$. Now (\ref{eq:johnny}) equals
\[\begin{aligned}
\mathop{\mathop{\sum_{r_1, r_2<y}}_{\gcd(r_1,r_2)=1}}_{\gcd(r_1 r_2,v)=1} 
&\frac{\mu(r_1) \mu(r_2) z}{r_1 r_2} 
\left(1 - \max\left(
\frac{1}{2}, \frac{r_1}{y}, \frac{r_2}{y}\right)\right) 
\prod_{p|r_1 r_2 v} \left(1 - \frac{1}{p}\right)
\mathop{\prod_{p\nmid r_1 r_2}}_{p\nmid v} \left(1 - \frac{1}{p^2}\right)\\
&= \frac{6 z}{\pi^2} \frac{v}{\sigma(v)} 
\mathop{\mathop{\sum_{r_1, r_2<y}}_{\gcd(r_1,r_2)=1}}_{\gcd(r_1 r_2,v)=1} 
 \frac{\mu(r_1) \mu(r_2)}{\sigma(r_1) \sigma(r_2)} 
\left(1 - \max\left(
\frac{1}{2}, \frac{r_1}{y}, \frac{r_2}{y}\right)\right),
\end{aligned}\]
i.e., the main term in (\ref{eq:muted}). It remains to estimate
the terms used to complete the sum; their total is, by definition,
 given exactly by (\ref{eq:dbamain}) with the inequality
$m\leq \sqrt{z/(d_1^2 s d_2^2 s_2 d_3)}$ changed to
$m>\sqrt{z/(d_1^2 s d_2^2 s_2 d_3)}$. This is a total of size at most
\begin{equation}\label{eq:shalco}
\frac{1}{2}
\mathop{\sum_{d_1, d_2 <y}}_{\gcd(d_1 d_2,v)=1}
 \mathop{\mathop{\sum_{s_1<y/d_1}}_{s_2<y/d_2}}_{\gcd(s_1 s_2,v)=1} \sum_{d_3|v}
\mathop{\sum_{m>\sqrt{\frac{z}{d_1^2 s_1 d_2^2 s_2 d_3}}}}_{\text{$m$ sq-free}}
\frac{1}{d_1 d_2 d_3 m^2} \frac{z}{s_1 d_1 s_2 d_2} .\end{equation}
Adding this to (\ref{eq:dbaerr}), we obtain, as our total error term,
\begin{equation}\label{eq:totor}
\mathop{\sum_{d_1, d_2 <y}}_{\gcd(d_1 d_2,v)=1}
 \mathop{\mathop{\sum_{s_1<y/d_1}}_{s_2<y/d_2}}_{\gcd(s_1 s_2,v)=1} \sum_{d_3|v}
f\left(\sqrt{\frac{z}{d_1^2 s_1 d_2^2 s_2 d_3}}\right),\end{equation}
where \[f(x) := \mathop{\sum_{m\leq x}}_{\text{$m$ sq-free}} 1 +
\frac{1}{2} \mathop{\sum_{m>x}}_{\text{$m$ sq-free}} \frac{x^2}{m^2} .\]
It is easy to see that $f(x)/x$ has a local maximum exactly when $x$
is a square-free (positive) integer. We can hence check that
\[f(x) \leq \frac{1}{2} \left(2 + 2 \left(\frac{\zeta(2)}{\zeta(4)}-1.25\right)
\right) x = 1.26981\dotsc x \]
for all $x\geq 0$ by checking all integers smaller than a constant and using
$\{m: \text{$m$ sq-free}\}\subset \{m: 4\nmid m\}$ and $1.5\cdot (3/4) <
1.26981$ to bound $f$ from below for $x$ larger than a constant.
Therefore, (\ref{eq:totor}) is at most
\[\begin{aligned}
1.27 &\mathop{\sum_{d_1, d_2 <y}}_{\gcd(d_1 d_2,v)=1}
 \mathop{\mathop{\sum_{s_1<y/d_1}}_{s_2<y/d_2}}_{\gcd(s_1 s_2,v)=1} \sum_{d_3|v}
\sqrt{\frac{z}{d_1^2 s_1 d_2^2 s_2 d_3}}\\
&= 1.27 \sqrt{z}
\prod_{p|v} \left(1 + \frac{1}{\sqrt{p}}\right) \cdot 
\left(\mathop{\sum_{d<y}}_{\gcd(d,v)=1} \mathop{\sum_{s<y/d}}_{\gcd(s,v)=1}
\frac{1}{d \sqrt{s}}\right)^2.\end{aligned}\]
We can bound the double sum simply by
\[\mathop{\sum_{d<y}}_{\gcd(d,v)=1} \sum_{s<y/d} \frac{1}{\sqrt{s} d} \leq
2 \sum_{d<y} \frac{\sqrt{y/d}}{d} \leq
2 \sqrt{y} \cdot \zeta\left(\frac{3}{2}\right) \prod_{p|v} \left(1 - \frac{1}{p^{3/2}}\right).\]
Alternatively, if $v=2$, we bound
\[\mathop{\sum_{s<y/d}}_{\gcd(s,v)=1} \frac{1}{\sqrt{s}} = 
\mathop{\sum_{s<y/d}}_{\text{$s$ odd}} \frac{1}{\sqrt{s}} \leq
1 + \frac{1}{2} \int_1^{y/d} \frac{1}{\sqrt{s}} ds = \sqrt{y/d}\]
and thus
\[\mathop{\sum_{d<y}}_{\gcd(d,v)=1} 
\mathop{\sum_{s<y/d}}_{\gcd(s,v)=1} \frac{1}{\sqrt{s} d} \leq
\mathop{\sum_{d<y}}_{\gcd(d,2)=1} \frac{\sqrt{y/d}}{d} \leq \sqrt{y} \left(1 - \frac{1}{2^{3/2}}\right) \zeta\left(\frac{3}{2}\right)
.\]

\end{proof}

Applying Lemma \ref{lem:monro} with $y=S/s$ and $z=x/Ws$, where $S = x/WU$,
we obtain that
(\ref{eq:cudo}) equals
\begin{equation}\label{eq:flatow}\begin{aligned}
&\frac{6 x}{\pi^2 W} \frac{v}{\sigma(v)} \mathop{\sum_{s<S}}_{\gcd(s,v)=1}
 \frac{1}{s}
\mathop{\mathop{\sum_{r_1<S/s} \sum_{r_2<S/s}}_{\gcd(r_1,r_2)=1}}_{\gcd(r_1 r_2,v)=1} 
\frac{\mu(r_1) \mu(r_2)}{\sigma(r_1) \sigma(r_2)} \left(1 - \max\left(
\frac{1}{2}, \frac{r_1}{S/s}, \frac{r_2}{S/s}\right)\right) \\ &+ 
O^*\left(5.04 \zeta\left(\frac{3}{2}\right)^3 S \sqrt{\frac{x}{W}}
\prod_{p|v} \left(1 + \frac{1}{\sqrt{p}}\right)
\left(1 - \frac{1}{p^{3/2}}\right)^3
\right),
\end{aligned}\end{equation}
with $5.04$ replaced by $1.27$ if $v=2$.
The main term in (\ref{eq:flatow}) can be written as 
\begin{equation}\label{eq:fleming}
\frac{6 x}{\pi^2 W} \frac{v}{\sigma(v)} \mathop{\sum_{s\leq S}}_{\gcd(s,v)=1}
 \frac{1}{s} \int_{1/2}^1 
\mathop{\mathop{\sum_{r_1\leq \frac{uS}{s}} \sum_{r_2\leq \frac{uS}{s}}}_{\gcd(r_1,r_2)=1}}_{\gcd(r_1 r_2,v)=1}
\frac{\mu(r_1) \mu(r_2)}{\sigma(r_1) \sigma(r_2)} du .
\end{equation}
From now on, we will focus on the cases $v=1$ and $v=2$ for simplicity. (Higher
values of $v$ do not seem to be really profitable in the last analysis.)

\subsubsection{Explicit bounds for a sum with $\mu$}

We must estimate the expression within parentheses in (\ref{eq:fleming}).
It is not too hard to show that it tends to $0$; the first part of the proof
of Lemma \ref{lem:yutto} will reduce this to the fact that
$\sum_n \mu(n)/n = 0$. Obtaining good bounds is a more delicate matter.
For our purposes, we will need the expression to converge to $0$ at least
as fast as $1/(\log)^2$, with a good constant in front. For this task, the bound
(\ref{eq:marraki}) on $\sum_{n\leq x} \mu(n)/n$ is enough.

\begin{lem}\label{lem:yutto}
Let
\[g_v(x) := \mathop{\mathop{\sum_{r_1\leq x} \sum_{r_2\leq x}}_{\gcd(r_1,r_2)=1}}_{
\gcd(r_1 r_2,v)=1} 
\frac{\mu(r_1) \mu(r_2)}{\sigma(r_1) \sigma(r_2)},\]
where $v=1$ or $v=2$.
Then
\[|g_1(x)| \leq \begin{cases}
1/x &\text{if $33\leq x\leq 10^6$,}\\
\frac{1}{x} (111.536 + 55.768 \log x) &\text{if $10^6\leq x< 10^{10}$,}\\
\frac{0.0044325}{(\log x)^2} + \frac{0.1079}{\sqrt{x}} 
&\text{if $x\geq 10^{10}$,}
\end{cases}\]
\[|g_2(x)| \leq \begin{cases}
2.1/x &\text{if $33\leq x\leq 10^6$,}\\
\frac{1}{x} (1634.34 + 817.168\log x) 
 &\text{if $10^6\leq x< 10^{10}$,}\\
\frac{0.038128}{(\log x)^2}  + \frac{0.2046}{\sqrt{x}}.
 &\text{if $x\geq 10^{10}$.}
\end{cases}\]
\end{lem}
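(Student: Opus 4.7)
The plan is to reduce $g_v(x)$ in two stages to partial sums of the form $S_u(z) := \sum_{m\le z,\,\gcd(m,u)=1}\mu(m)/m$, for which explicit bounds are available from (\ref{eq:grara}), (\ref{eq:marraki}), (\ref{eq:ronsard}), and (\ref{eq:ramare}). The first reduction removes the coprimality condition on $r_1,r_2$: writing $[\gcd(r_1,r_2)=1]=\sum_{d\mid\gcd(r_1,r_2)}\mu(d)$ and setting $r_i = d\,s_i$ (with $\gcd(d,s_i)=1$, which is automatic whenever $\mu(r_i)\ne 0$), the identities $\mu(ds)=\mu(d)\mu(s)$ and $\sigma(ds)=\sigma(d)\sigma(s)$ give
\[g_v(x) \;=\; \mathop{\sum_{d}}_{\gcd(d,v)=1} \frac{\mu(d)}{\sigma(d)^2}\,A_{dv}(x/d)^2, \qquad A_w(y) := \mathop{\sum_{s\le y}}_{\gcd(s,w)=1} \frac{\mu(s)}{\sigma(s)}.\]
The second reduction uses the elementary identity $n/\sigma(n)=\prod_{p\mid n}(1-1/(p+1))=\sum_{e\mid n}\mu(e)/\sigma(e)$ (valid for squarefree $n$, by inclusion--exclusion on prime divisors); writing $\mu(n)/\sigma(n)=(\mu(n)/n)\cdot(n/\sigma(n))$ and unfolding the divisor sum as $n=em$ with $\gcd(e,m)=1$ yields
\[A_w(y) \;=\; \mathop{\sum_{e}}_{\gcd(e,w)=1} \frac{\mu^2(e)}{e\,\sigma(e)}\,S_{ew}(y/e).\]

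With these two identities in hand, I would bound $A_w(y)$ by splitting its sum at a threshold $E_0$: for $e\le E_0$ I would apply the strongest available bound on $|S_{ew}(y/e)|$ --- either (\ref{eq:marraki})/(\ref{eq:ronsard}), giving a factor of $O(1/\log(y/e))$ in the large-$y$ regime, or (\ref{eq:ramare}), giving $\sqrt{2e/y}$ in the medium regime --- while for $e>E_0$ I would use the trivial bound $|S_u|\le 1$ from (\ref{eq:grara}) together with the tail estimate $\sum_{e>E_0}\mu^2(e)/(e\sigma(e))=O(1/E_0)$ (which follows from $\sigma(e)\ge e$). Optimizing $E_0$ gives $|A_w(y)|\ll 1/\log y+1/\sqrt y$ for large $y$ and $|A_w(y)|\ll 1/\sqrt y$ for medium $y$, with fully explicit constants. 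Inserting these into the first identity and bounding termwise by $|g_v(x)|\le \sum_d \mu^2(d)/\sigma(d)^2\cdot A_{dv}(x/d)^2$, I split the outer sum again at some threshold $D_0$, controlling the tail by $\sum_{d>D_0}\mu^2(d)/\sigma(d)^2=O(1/D_0)$. Balancing the resulting error terms yields the claimed bounds in the two ranges $x\ge 10^{10}$ and $10^6\le x\le 10^{10}$. For $33\le x\le 10^6$, the sharper bound $|g_v(x)|\le c_v/x$ is simply a matter of direct numerical computation.

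The main obstacle is bookkeeping the explicit numerical constants through both splittings to obtain the sharp leading coefficients $0.0044325/(\log x)^2$ and $0.038128/(\log x)^2$; in particular, the squared form of $A_{dv}(x/d)$ appearing in $g_v$ means that the $0.03$ from (\ref{eq:marraki}) contributes squared, so the constant from (\ref{eq:marraki}) alone accounts for $9\cdot 10^{-4}$ and one must keep careful track of the Euler-product constant $\sum_{e}\mu^2(e)/(e\sigma(e))$ that multiplies it. The case $v=2$ takes extra care because the restrictions $\gcd(\cdot,2)=1$ modify the Euler products: the local factor at $2$ in $A_2(y)$ accounts for the roughly nine-fold increase in the leading constant from $v=1$ to $v=2$. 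The secondary terms $0.1079/\sqrt x$ and $0.2046/\sqrt x$ arise from summands where $y/e$ or $x/d$ is not quite large enough for the logarithmic bound to be sharp, forcing use of (\ref{eq:ramare}) instead.
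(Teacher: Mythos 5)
Your two opening reductions are exactly what the paper does: it removes $\gcd(r_1,r_2)=1$ by M\"obius inversion on $d\mid\gcd(r_1,r_2)$, then expands $1/\sigma(r)$ via $r/\sigma(r)=\sum_{d'\mid r}\mu(d')/\sigma(d')$, arriving at the same shape
\[|g_v(x)|\leq \mathop{\sum_{d\leq x}}_{\gcd(d,v)=1} \frac{\mu(d)^2}{\sigma(d)^2}
\left(\mathop{\mathop{\sum_{d'\leq x/d}}_{\mu(d')^2=1}}_{\gcd(d',dv)=1}
\frac{1}{d'\sigma(d')}\,S_{dd'v}(x/dd')\right)^2.\]
But here you hit a genuine gap. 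You intend to bound $S_{ew}(y/e)$ by (\ref{eq:marraki}) or (\ref{eq:ramare}), yet both of those apply only to the \emph{unrestricted} Mertens sum $\sum_{n\le t}\mu(n)/n$, not to sums carrying a coprimality condition. The only bound you cite that does accept a coprimality condition is (\ref{eq:ronsard}), and it costs a factor $\tfrac{4}{5}\cdot q/\phi(q)$ with $q=ew$; since $e$ ranges over all squarefree integers, that factor is unbounded, and its constant $4/5$ is in any case two orders of magnitude worse than the $0.03$ in (\ref{eq:marraki}), so you cannot recover the leading constants $0.0044325$ and $0.038128$ this way. The paper's fix is an additional unfolding you omit: writing
\[\mathop{\sum_{r\leq z}}_{\gcd(r,u)=1}\frac{\mu(r)}{r}
= \sum_{\substack{d''\leq z\\ d''\mid u^\infty}}\frac{1}{d''}\sum_{r\leq z/d''}\frac{\mu(r)}{r},\]
which converts the coprime sum into an average of unrestricted Mertens sums and lets (\ref{eq:marraki}) and (\ref{eq:ramare}) be used at full strength.

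There is also a methodological divergence downstream. Rather than splitting the three nested sums at thresholds $E_0$, $D_0$ and chasing tails, the paper bounds $f(t)=|\sum_{r\le t}\mu(r)/r|$ by a small linear combination of powers $t^{-\delta_i}$ with $\delta_i\in[0,\tfrac12)$ (a Rankin-type device), and then the triple sum in $d$, $d'$, $d''$ becomes a product of Dirichlet series $F_v(s_1,s_2,x)$ that factors as an explicit Euler product; the local factors are controlled by polynomial inequalities such as (\ref{eq:odmalicka}), verified by quantifier elimination. This is what makes the cross-range bookkeeping tractable and the constants tight. Your threshold-splitting plan is not unreasonable in principle, but without the extra unfolding above it simply has no access to (\ref{eq:marraki}), and even with it the constant-chasing would be considerably more painful than the Euler-product route, so I would regard the gap in item one as the decisive one.
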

Tbe proof involves what may be called a version of
 Rankin's trick, using Dirichlet series and the behavior of $\zeta(s)$ near
$s=1$. The statements for $x\leq 10^6$ are proven by direct computation.\footnote{Using D. Platt's implementation \cite{Platt} of double-precision interval arithmetic. (In fact, one gets $2.0895071/x$ instead of $2.1/x$.)}
\begin{proof}
Clearly
\begin{equation}\label{eq:anna}\begin{aligned} 
g(x) &= \mathop{\sum_{r_1\leq x} \sum_{r_2\leq x}}_{\gcd(r_1 r_2,v)=1} 
\left(\sum_{d|\gcd(r_1,r_2)} \mu(d)\right)
\frac{\mu(r_1) \mu(r_2)}{\sigma(r_1) \sigma(r_2)}
\\ &= \mathop{\sum_{d\leq x}}_{\gcd(d,v)=1} 
\mu(d) \mathop{\mathop{\sum_{r_1\leq x} \sum_{r_2\leq x}}_{d|\gcd(r_1,r_2)}}_{
\gcd(r_1 r_2, v)=1}
\frac{\mu(r_1) \mu(r_2)}{\sigma(r_1) \sigma(r_2)}\\
&= \mathop{\sum_{d\leq x}}_{\gcd(d,v)=1} \frac{\mu(d)}{(\sigma(d))^2}
 \mathop{\sum_{u_1\leq x/d}}_{\gcd(u_1,d v)=1} \mathop{\sum_{u_2\leq x/d}}_{\gcd(u_2,d v)=1}
\frac{\mu(u_1) \mu(u_2)}{\sigma(u_1) \sigma(u_2)} \\ 
&= \mathop{\sum_{d\leq x}}_{\gcd(d,v)=1} \frac{\mu(d)}{(\sigma(d))^2}
\left(\mathop{\sum_{r\leq x/d}}_{\gcd(r,d v)=1} 
\frac{\mu(r)}{\sigma(r)}\right)^2.\end{aligned}\end{equation}
Moreover,
\[\begin{aligned}
\mathop{\sum_{r\leq x/d}}_{\gcd(r,d v)=1} 
\frac{\mu(r)}{\sigma(r)} &=
\mathop{\sum_{r\leq x/d}}_{\gcd(r,d v)=1} 
\frac{\mu(r)}{r} \sum_{d'|r} \prod_{p|d'} 
  \left(\frac{p}{p+1}-1\right)\\
&= \mathop{\mathop{\sum_{d'\leq x/d}}_{\mu(d')^2=1}}_{\gcd(d',d v)=1}
 \left(\prod_{p|d'} \frac{-1}{p+1}\right)
  \mathop{\mathop{\sum_{r\leq x/d}}_{\gcd(r,d v)=1}}_{d'|r} 
\frac{\mu(r)}{r}\\
&= \mathop{\mathop{\sum_{d'\leq x/d}}_{\mu(d')^2=1}}_{\gcd(d',d v)=1} 
\frac{1}{d' \sigma(d')}
  \mathop{\sum_{r\leq x/dd'}}_{\gcd(r,dd' v)=1} \frac{\mu(r)}{r}\end{aligned}\]
and
\[
\mathop{\sum_{r\leq x/dd'}}_{\gcd(r,dd'v)=1} \frac{\mu(r)}{r} =
\mathop{\sum_{d''\leq x/dd'}}_{d''|(d d'v)^{\infty}}
 \frac{1}{d''} \sum_{r\leq x/d d' d''} \frac{\mu(r)}{r}.\]
Hence
\begin{equation}\label{eq:onno}|g(x)|\leq \mathop{\sum_{d\leq x}}_{\gcd(d,v)=1}
 \frac{(\mu(d))^2}{(\sigma(d))^2}
 \left(\mathop{\mathop{\sum_{d'\leq x/d}}_{\mu(d')^2=1}}_{\gcd(d',d v)=1}
\frac{1}{d' \sigma(d')} \mathop{\sum_{d''\leq x/d d'}}_{d''|(d d' v)^{\infty}}
 \frac{1}{d''} f(x/d d' d'')\right)^2,\end{equation}
where $f(t) = \left|\sum_{r\leq t} \mu(r)/r\right|$.

We intend to bound the function $f(t)$ by 
a linear combination of terms of the form $t^{-\delta}$, 
$\delta\in \lbrack 0,1/2)$. Thus it makes sense now to estimate
$F_v(s_1,s_2,x)$, defined to be
the quantity
\[\begin{aligned}
\mathop{\sum_d}_{\gcd(d,v)=1} \frac{(\mu(d))^2}{(\sigma(d))^2} 
&\left(\mathop{\sum_{d_1'}}_{\gcd(d_1',d v)=1} 
\frac{\mu(d_1')^2}{d_1' \sigma(d_1')}
\sum_{d_1''|(d d_1' v)^{\infty}}
 \frac{1}{d_1''} \cdot (d d_1' d_1'')^{1-s_1}\right)\\
 &\left(\mathop{\sum_{d_2'}}_{\gcd(d_2',d v)=1} 
\frac{\mu(d_2')^2}{d_2' \sigma(d_2')}
\sum_{d_2''|(d d_2' v)^{\infty}}
 \frac{1}{d_2''} \cdot (d d_2' d_2'')^{1-s_2}\right) .
\end{aligned}\]
for $s_1,s_2\in \lbrack 1/2,1\rbrack$. This is equal to
\[\begin{aligned}
\mathop{\sum_d}_{\gcd(d,v)=1} &\frac{\mu(d)^2}{d^{s_1+s_2}} \prod_{p|d}
\frac{1}{\left(1+p^{-1}\right)^2 \left(1-p^{-s_1}\right) \prod_{p|v}
\frac{1}{(1-p^{-{s_1}}) (1 - p^{-s_2})}
\left(1-p^{-s_2}\right)}\\
&\cdot \left(\mathop{\sum_{d'}}_{\gcd(d',d v)=1} 
\frac{\mu(d')^2}{(d')^{s_1+1}} \prod_{p'|d'}
\frac{1}{\left(1+p'^{-1}\right) \left(1-p'^{-s_1}\right)}\right)\\
&\cdot \left(\mathop{\sum_{d'}}_{\gcd(d',d v)=1} 
\frac{\mu(d')^2}{(d')^{s_2+1}} \prod_{p'|d'}
\frac{1}{\left(1+p'^{-1}\right) \left(1-p'^{-s_2}\right)}\right) ,
\end{aligned}\]
which in turn can easily be seen to equal
\begin{equation}\label{eq:prof}\begin{aligned}
\prod_{p\nmid v} &\left(1 + 
\frac{p^{-s_1} p^{-s_2}}{(1 - p^{-s_1} +p^{-1})
(1 - p^{-s_2} +p^{-1})}\right)
\prod_{p|v} \frac{1}{(1 - p^{-s_1}) (1 - p^{-s_2})}
\\
&\cdot \prod_{p\nmid v} \left(1 + \frac{p^{-1} p^{-s_1}}{(1+p^{-1}) 
(1-p^{-s_1})}\right)
\cdot \prod_{p\nmid v} \left(1 + \frac{p^{-1} p^{-s_2}}{(1+p^{-1}) 
(1-p^{-s_2})}\right)
\end{aligned}\end{equation}
Now, for any $0<x\leq y\leq x^{1/2}<1$, 
\[(1+x-y) (1-xy) (1-xy^2) - (1+x) (1-y) (1- x^3) = (x-y) ( y^2-x) ( xy - x
-1) x \leq 0,\] and so
\begin{equation}1 + \frac{xy}{(1+x)(1-y)} = 
\frac{(1+x-y) (1-xy) (1-x y^2)}{(1+x) (1-y) (1-xy) (1-xy^2)} \leq
\frac{(1-x^3)}{(1-xy) (1-xy^2)}.\end{equation}

For any $x\leq y_1,y_2<1$ with $y_1^2\leq x$, $y_2^2\leq x$,
\begin{equation}\label{eq:odmalicka}
1 + \frac{y_1 y_2}{(1-y_1+x) (1-y_2+x)} \leq
 \frac{(1-x^3)^2 (1-x^4)}{(1- y_1 y_2) (1 - y_1 y_2^2) (1 - y_1^2 y_2)} .
\end{equation}
This can be checked as follows: multiplying by the denominators and changing
variables to $x$, $s=y_1+y_2$ and $r=y_1 y_2$, we obtain an inequality where
the left side, quadratic on $s$ with positive leading coefficient, must be
less than or equal to the right side, which is linear on $s$. The left side
minus the right side can be maximal for given $x$, $r$ only when $s$ is
maximal or minimal. This happens when $y_1=y_2$ or when either
 $y_i = \sqrt{x}$ or $y_i = x$ for at least one of $i=1,2$. In each of these
cases, we have reduced (\ref{eq:odmalicka}) to an inequality in two variables
that can be proven automatically\footnote{In practice, the case $y_i = \sqrt{x}$
leads to a polynomial of high degree, and quantifier elimination increases
sharply in complexity as the degree increases; a stronger inequality of
lower degree (with $(1-3 x^3)$ instead of $(1-x^3)^2 (1-x^4)$) 
was given to QEPCAD to prove in this case.} 
by a quantifier-elimination program; the 
author has used QEPCAD \cite{QEPCAD} to do this.

Hence $F_v(s_1,s_2,x)$ is at most
\begin{equation}\label{eq:tausend}\begin{aligned}
\prod_{p\nmid v} &\frac{(1-p^{-3})^2 (1-p^{-4})}{(1-p^{-s_1-s_2}) (1-p^{-2s_1-s_2})
(1-p^{-s_1-2s_2})} \cdot \prod_{p|v} \frac{1}{(1-p^{-s_1}) (1-p^{-s_2})}
 \\ &\cdot \prod_{p\nmid v}
\frac{1-p^{-3}}{(1+p^{-s_1-1}) (1 + p^{-2s_1-1})}
\prod_{p\nmid v}
\frac{1-p^{-3}}{(1+p^{-s_2 -1}) (1 + p^{-2 s_2 - 1})}\\
&= C_{v,s_1,s_2} \cdot \frac{
\zeta(s_1+1) \zeta(s_2+1)
\zeta(2s_1+1) \zeta(2s_2 +1 ) 
}{\zeta(3)^4 \zeta(4)
(\zeta(s_1+s_2)  \zeta(2 s_1 + s_2) \zeta(s_1 + 2 s_2))^{-1}
},
\end{aligned}\end{equation}
where \[C_{v,s_1,s_2} = \begin{cases} 1 &\text{if $v=1$},\\
\frac{
 (1-2^{-s_1-2s_2}) 
(1+2^{-s_1-1}) 
(1+2^{-2s_1-1}) (1+ 2^{-s_2-1}) (1+2^{-2s_2-1})}{(1-2^{-{s_1+s_2}})^{-1} (1-2^{-2s_1-s_2})^{-1}
(1-2^{-s_1})(1-2^{-s_2}) (1-2^{-3})^4
(1-2^{-4})}
&\text{if $v=2$.}\end{cases}\]

For $1\leq t\leq x$,
(\ref{eq:marraki}) and (\ref{eq:ramare}) imply
\begin{equation}\label{eq:kustor}
f(t) \leq \begin{cases} \sqrt{\frac{2}{t}} & \text{if $x\leq 10^{10}$}\\
\sqrt{\frac{2}{t}} + \frac{0.03}{\log x} \left(\frac{x}{t}\right)^{ 
  \frac{\log \log 10^{10}}{\log x - \log 10^{10}}} &\text{if $x>10^{10}$},
\end{cases}
\end{equation}
where we are using the fact that $\log x$ is convex-down. Note that,
again by convexity,
\[\frac{\log \log x - \log \log 10^{10}}{\log x - \log 10^{10}}
< (\log t)'|_{t=\log 10^{10}} = \frac{1}{\log 10^{10}} = 0.0434294\dots
\]
Obviously, $\sqrt{2/t}$ in (\ref{eq:kustor}) can be replaced by $(2/t)^{1/2-
\epsilon}$ for any $\epsilon\geq 0$. 

By (\ref{eq:onno}) and (\ref{eq:kustor}),
\[|g_v(x)|\leq \left(\frac{2}{x}\right)^{1-2\epsilon}
 F_v(1/2+\epsilon,1/2+\epsilon,x)\]
for $x\leq 10^{10}$. We set $\epsilon=1/\log x$ and obtain
from (\ref{eq:tausend}) that
\begin{equation}\label{eq:rot}\begin{aligned}
F_v(1/2+\epsilon,1/2+\epsilon,x) &\leq
C_{v,\frac{1}{2} + \epsilon,\frac{1}{2} + \epsilon}
\frac{\zeta(1+2\epsilon) \zeta(3/2)^4 \zeta(2)^2}{\zeta(3)^4
\zeta(4)}\\
&\leq 55.768\cdot C_{v,\frac{1}{2} + \epsilon,\frac{1}{2} + \epsilon}
\cdot \left(1 + \frac{\log x}{2}\right) 
,\end{aligned}\end{equation}
where we use the easy bound $\zeta(s)< 1+ 1/(s-1)$ obtained by
\[\sum n^s < 1 + \int_1^{\infty} t^s dt.\] (For sharper bounds, see
\cite{MR1924708}.) Now
\[\begin{aligned}
C_{2,\frac{1}{2}+\epsilon,\frac{1}{2} + \epsilon} &\leq
\frac{
 (1-2^{-3/2-\epsilon})^2 
(1+2^{-3/2})^2  (1+2^{-2})^2 (1-2^{-1-2\epsilon})}{
(1-2^{-1/2})^2 (1-2^{-3})^4 (1-2^{-4})}
&\leq 14.652983
,\end{aligned}\]
whereas $C_{1,\frac{1}{2}+\epsilon,\frac{1}{2}+\epsilon}=1$.
(We are assuming $x\geq 10^6$, and so $\epsilon\leq 1/(\log 10^6)$.)
Hence
\[|g_v(x)|\leq
\begin{cases} 
\frac{1}{x} (111.536 + 55.768\log x) &\text{if $v=1$,}\\
\frac{1}{x} (1634.34 + 817.168\log x) 
& \text{if $v=2$.}\end{cases}\] 
for $10^6\leq x< 10^{10}$.

For general $x$, we must use the second bound in (\ref{eq:kustor}).
Define $c = 1/(\log 10^{10})$. We see that, if $x>10^{10}$,
\[\begin{aligned}
|g_v(x)|&\leq \frac{0.03^2}{(\log x)^2} F_1(1-c,1-c) \cdot C_{v,1-c,1-c}\\&+ 2
\cdot \frac{\sqrt{2}}{\sqrt{x}} \frac{0.03}{\log x} F(1-c,1/2) 
\cdot C_{v,1-c,1/2}\\ &+
\frac{1}{x} (111.536 + 55.768 \log x)\cdot C_{v,\frac{1}{2}+\epsilon,
\frac{1}{2}+\epsilon}.\end{aligned}\]
For $v=1$, this gives
\[\begin{aligned}
|g_1(x)| 
&\leq \frac{0.0044325}{(\log x)^2} + \frac{2.1626}{\sqrt{x} \log x} +
\frac{1}{x} (111.536 + 55.768 \log x)\\
&\leq \frac{0.0044325}{(\log x)^2} + \frac{0.1079}{\sqrt{x}} ;
\end{aligned}\]
for $v=2$, we obtain
\[\begin{aligned}
|g_2(x)| &\leq
\frac{0.038128}{(\log x)^2} 
+ \frac{25.607}{\sqrt{x} \log x} + 
\frac{1}{x} (1634.34 + 817.168\log x) \\
&\leq \frac{0.038128}{(\log x)^2}  + \frac{0.2046}{\sqrt{x}}.
\end{aligned}\]
\end{proof}

\subsubsection{Estimating the triple sum}
We will now be able to bound the triple sum in (\ref{eq:fleming}), viz.,
\begin{equation}\label{eq:grotto}
\mathop{\sum_{s\leq S}}_{\gcd(s,v)=1} \frac{1}{s} \int_{1/2}^{1} g_v(uS/s) du,\end{equation}
where $g_v$ is as in Lemma \ref{lem:yutto}.

As we will soon see, Lemma \ref{lem:yutto} that (\ref{eq:grotto}) is bounded
by a constant (essentially because the integral $\int_0^{1/2} 1/t(\log t)^2$
converges).
We must give as good a constant as we can, since it will affect the largest
term in the final result.

Clearly $g_v(R) = g_v(\lfloor R\rfloor)$. The contribution of each $g_v(m)$,
$1\leq m\leq S$, to (\ref{eq:grotto}) is exactly $g_v(m)$ times
\begin{equation}\label{eq:greco}\begin{aligned}
&\mathop{\sum_{\frac{S}{m+1} < s \leq \frac{S}{m}} \frac{1}{s}}_{\gcd(s,v)=1} 
\int_{ms/S}^1 du + 
\mathop{\sum_{\frac{S}{2 m} < s \leq \frac{S}{m+1}} \frac{1}{s}}_{\gcd(s,v)=1}  
\int_{ms/S}^{(m+1)s/S} du 
+ \mathop{\sum_{\frac{S}{2 (m+1)} < s \leq \frac{S}{2 m}} \frac{1}{s}}_{\gcd(s,v)=1}  
\int_{1/2}^{(m+1)s/S} du\\
&= 
\mathop{
\sum_{\frac{S}{m+1} < s \leq \frac{S}{m}}}_{\gcd(s,v)=1} 
 \left(\frac{1}{s} - \frac{m}{S}\right) + 
\mathop{\sum_{\frac{S}{2 m} < s \leq \frac{S}{m+1}}}_{\gcd(s,v)=1}  \frac{1}{S} +
\mathop{\sum_{\frac{S}{2 (m+1)} < s \leq \frac{S}{2 m}}}_{\gcd(s,v)=1} 
 \left(\frac{m+1}{S} - \frac{1}{2s}\right).
\end{aligned}\end{equation}
Write $f(t) = 1/S$ for $S/2m < t\leq S/(m+1)$, $f(t)=0$ for $t>S/m$ or
$t<S/2(m+1)$, $f(t) = 1/t - m/S$ for $S/(m+1) <t\leq S/m$ and 
$f(t) = (m+1)/S - 1/2t$ for $S/2(m+1) < t\leq S/2m$; then (\ref{eq:greco})
equals $\sum_{n: \gcd(n,v)=1} f(n)$. By Euler-Maclaurin (second order),
\begin{equation}\label{eq:etex}\begin{aligned}
\sum_n f(n) &= \int_{-\infty}^{\infty} f(x) - \frac{1}{2} B_2(\{ x\}) f''(x) dx
= \int_{-\infty}^{\infty} f(x) + O^*\left(\frac{1}{12} |f''(x)|\right) dx\\
&= \int_{-\infty}^{\infty} f(x) dx + \frac{1}{6} \cdot O^*\left(\left|f'\left(
\frac{3}{2m}\right)\right| + \left|f'\left(\frac{s}{m+1}\right)\right|\right)\\
&= \frac{1}{2} \log\left(1 + \frac{1}{m}\right) +\frac{1}{6}\cdot
 O^*\left(\left(\frac{2 m}{s}\right)^2 + \left(\frac{m+1}{s}\right)^2\right).\end{aligned}\end{equation}
Similarly,
\[\begin{aligned}
\sum_{\text{$n$ odd}} f(n) &= \int_{-\infty}^{\infty} f(2x+1) - \frac{1}{2} B_2(\{ x\}) 
\frac{d^2 f(2x+1)}{d x^2} dx \\ &= \frac{1}{2} \int_{-\infty}^{\infty} f(x) dx -
2\int_{-\infty}^{\infty} \frac{1}{2} B_2\left(\left\{\frac{x-1}{2}\right\}\right)
f''(x) dx\\ &= \frac{1}{2} \int_{-\infty}^{\infty} f(x) dx + 
\frac{1}{6}  \int_{-\infty}^\infty
O^*\left(
|f''(x)|\right) dx \\ &=
\frac{1}{4} \log\left(1 + \frac{1}{m}\right) + \frac{1}{3} 
\cdot O^*\left(\left(\frac{2 m}{s}\right)^2 + \left(\frac{m+1}{s}\right)^2\right).
\end{aligned}\]

We use these expressions for $m\leq C_0$, where $C_0\geq 33$ is a
constant to be computed later; they will give us the main term. For $m> C_0$,
we use the bounds on $|g(m)|$ that Lemma \ref{lem:yutto} gives us.

(Starting now and for the rest of the paper, we will focus on the cases
$v=1$, $v=2$ when giving explicit computational estimates. All of our 
procedures would allow higher values of $v$ as well, but, as will become clear
much later, the gains 
from higher values of $v$ are offset by losses and complications elsewhere.)

Let us estimate (\ref{eq:grotto}).
Let \[c_{v,0} = \begin{cases} 1/6 &\text{if $v=1$,}\\ 1/3 &\text{if
    $v=2$,}\end{cases}
\;\;\;\;c_{v,1} = \begin{cases} 1 &\text{if $v=1$,}\\ 2.5 &\text{if
    $v=2$,}\end{cases}\]
\[c_{v,2} = \begin{cases} 55.768\dotsc &\text{if $v=1$,}\\ 817.168\dotsc &\text{if
    $v=2$,}\end{cases}\;\;\;\;
c_{v,3} = \begin{cases} 111.536\dotsc &\text{if $v=1$,}\\ 1634.34\dotsc &\text{if
    $v=2$,}\end{cases}\]
\[c_{v,4} = \begin{cases} 0.0044325\dotsc &\text{if $v=1$,}\\ 0.038128\dotsc&\text{if
    $v=2$,}\end{cases}\;\;\;\;
c_{v,5} = \begin{cases}  0.1079\dotsc&\text{if $v=1$,}\\ 0.2046\dotsc &\text{if
    $v=2$.}\end{cases}\]
Then (\ref{eq:grotto}) equals
\[\begin{aligned}
\sum_{m\leq C_0} &g_v(m) \cdot\left(\frac{\phi(v)}{2 v} \log\left(1+ \frac{1}{m}\right)
+ O^*\left(c_{v,0} \frac{5 m^2 + 2 m+1}{S^2}\right)\right)\\
&+ \sum_{S/10^6 \leq s < S/C_0} \frac{1}{s} \int_{1/2}^1 
O^*\left(\frac{c_{v,1}}{uS/s}\right) du \\ &+ 
\sum_{S/10^{10}\leq s<S/10^6} \frac{1}{s} 
\int_{1/2}^1 
O^*\left(
\frac{c_{v,2} \log(uS/s) + c_{v,3}}{uS/s}\right) 
du  \\ &+
\sum_{s < S/10^{10}} \frac{1}{s} \int_{1/2}^1 O^*\left(\frac{c_{v,4}}{(\log uS/s)^2}
+ \frac{c_{v,5}}{\sqrt{uS/s}}\right) du,\end{aligned}\]
which is
\[\begin{aligned}
\sum_{m\leq C_0} &g_v(m) \cdot \frac{\phi(v)}{2 v} \log\left(1+ \frac{1}{m}\right)
+ \sum_{m\leq C_0} |g(m)|\cdot O^*\left(
c_{v,0} \frac{5 m^2 + 2 m+1}{S^2}\right)\\
&+ O^*\left(c_{v,1} \frac{\log 2}{C_0}
    + \frac{\log 2}{10^6} \left( c_{v,3} + c_{v,2} (1 + \log 10^6)\right)
+ \frac{2-\sqrt{2}}{10^{10/2}} c_{v,5}\right)
\\ &+ O^*\left(\sum_{s<S/10^{10}} \frac{c_{v,4}/2}{s (\log
    S/2s)^2}\right)
\end{aligned}\]
for $S\geq (C_0+1)$. 
Note that $\sum_{s<S/10^{10}} \frac{1}{s (\log S/2s)^2} = 
\int_0^{2/10^{10}} \frac{1}{t (\log t)^2} dt$.


Now 
\[\frac{c_{v,4}}{2} \int_0^{2/10^{10}} \frac{1}{t (\log t)^2} dt =
\frac{c_{v,4}/2}{\log(10^{10}/2)} = 
\begin{cases} 0.00009923\dotsc &\text{if $v=1$}\\
0.000853636\dotsc &\text{if $v=2$.}\end{cases}\]
and
\[\frac{\log 2}{10^6} \left( c_{v,3} + c_{v,2} (1 + \log 10^6)\right)
+ \frac{2-\sqrt{2}}{10^{5}} c_{v,5} = 
\begin{cases}
0.0006506\dotsc &\text{if $v=1$}\\
0.009525\dotsc &\text{if $v=2$.}\end{cases}\]
For $C_0 = 10000$,
\[\begin{aligned}
\frac{\phi(v)}{v} \frac{1}{2} \sum_{m\leq C_0} g_v(m) \cdot
               \log\left(1+ \frac{1}{m}\right)
&= \begin{cases} 0.362482\dotsc &\text{if $v=1$,}\\
0.360576\dotsc &\text{if $v=2$,}\end{cases}\\
c_{v,0} \sum_{m\leq C_0} |g_v(m)| (5 m^2 + 2 m + 1) &\leq 
\begin{cases} 6204066.5\dotsc &\text{if $v=1$,}\\
15911340.1\dotsc &\text{if $v=2$,}\end{cases}
\end{aligned}\]
and 
\[c_{v,1}\cdot (\log 2)/C_0 = \begin{cases} 0.00006931\dotsc
&\text{if $v=1$,}\\ 0.00017328\dotsc &\text{if $v=2$.}\end{cases}\]

Thus, for $S\geq 100000$, 
\begin{equation}\label{eq:corto}
\mathop{\sum_{s\leq S}}_{\gcd(s,v)=1} \frac{1}{s} \int_{1/2}^{1} g_v(uS/s) du
\leq \begin{cases}0.36393
&\text{if $v=1$,}\\ 
0.37273 &\text{if $v=2$.}\end{cases}
\end{equation}

For $S < 100000$, we proceed as above, but using the exact expression
(\ref{eq:greco}) instead of (\ref{eq:etex}). Note (\ref{eq:greco}) is
of the form $f_{s,m,1}(S) + f_{s,m,2}(S)/S$, where both $f_{s,m,1}(S)$ and
$f_{s,m,2}(S)$ depend only on $\lfloor S\rfloor$ (and on $s$ and $m$).
Summing over $m\leq S$, we obtain a bound of the form
\[\mathop{\sum_{s\leq S}}_{\gcd(s,v)=1} \frac{1}{s} \int_{1/2}^{1} g_v(uS/s) du \leq
G_v(S)\]
with \[G_v(S) = K_{v,1}(|S|) + K_{v,2}(|S|)/S,\]
where $K_{v,1}(n)$ and $K_{v,2}(n)$ can be computed explicitly for 
each integer $n$. (For example, $G_v(S) = 1 - 1/S$ for
$1\leq S < 2$ and $G_v(S) = 0$ for $S<1$.)

It is easy to check numerically
 that this implies that (\ref{eq:corto}) holds not just
for $S\geq 100000$ but also for $40\leq S<100000$ (if $v=1$) or
$16\leq S < 100000$ (if $v=2$). Using the fact that $G_v(S)$ is non-negative,
we can compare $\int_1^T G_v(S) dS/S$ with $\log(T+1/N)$ for each
$T\in \lbrack 2,40\rbrack \cap \frac{1}{N} \mathbb{Z}$ ($N$ a large integer)
to show, again numerically, that
\begin{equation}\label{eq:passi}
\int_1^T G_v(S) \frac{dS}{S} \leq \begin{cases} 0.3698 \log T
&\text{if $v=1$,}\\
0.37273 \log T &\text{if $v=2$.}\end{cases}\end{equation}
(We use $N=100000$ for $v=1$; already $N=10$ gives us the answer above for
$v=2$. Indeed, computations suggest the better bound $0.358$ instead of
$0.37273$; we are committed to using 
$0.37273$ because of (\ref{eq:corto}).)

Multiplying by $6 v/\pi^2\sigma(v)$, we conclude that
\begin{equation}\label{eq:menson1}S_1(U,W) = 
\frac{x}{W} \cdot H_1\left(\frac{x}{W U}\right)
+ O^*\left(5.08 \zeta(3/2)^3 \frac{x^{3/2}}{W^{3/2} U} \right)
\end{equation}
if $v=1$,
\begin{equation}\label{eq:menson2}S_1(U,W) = 
\frac{x}{W} \cdot H_2\left(\frac{x}{W U}\right)
+O^*\left(1.27 \zeta(3/2)^3 \frac{x^{3/2}}{W^{3/2} U} \right)
\end{equation}
if $v=2$, where 
\begin{equation}\label{eq:palmiped}
H_1(S) = \begin{cases} \frac{6}{\pi^2} G_1(S)
 &\text{if $1\leq S < 40$,}\\ 0.22125 &\text{if $S\geq 40$,}
\end{cases}\;\;\;\;\;\;\;\;
H_2(s) = \begin{cases} \frac{4}{\pi^2} G_2(S) 
&\text{if $1\leq S < 16$,}\\ 0.15107 &\text{if $S\geq 16$.}
\end{cases}\end{equation}
Hence (by (\ref{eq:passi}))
\begin{equation}\label{eq:velib}\begin{aligned}
\int_1^{T} H_v(S) \frac{dS}{S} &\leq
\begin{cases} 0.22482 \log T &\text{if $v=1$,}\\
0.15107 \log T &\text{if $v=2$;}\end{cases}
\end{aligned}\end{equation}
 moreover,
$H_1(S)\leq 3/\pi^2$, $H_2(S)\leq 2/\pi^2$ for all $S$.
\begin{center}
* * *
\end{center}

{\em Note.} There is another way to obtain cancellation on $\mu$, applicable
when $(x/W)> Uq$ (as is unfortunately never the case in our main
application). For this alternative
to be taken, one must either apply Cauchy-Schwarz on $n$ rather than $m$
(resulting in exponential sums over $m$) or lump together all $m$ near each 
other and in the same
congruence class modulo $q$ before applying Cauchy-Schwarz on $m$ (one can
indeed do this if $\delta$ is small). We could then write
\[\mathop{\sum_{m\sim W}}_{m\equiv r \mo q} \mathop{\sum_{d|m}}_{d>U} \mu(d) =
 - \mathop{\sum_{m\sim W}}_{m\equiv r \mo q} \mathop{\sum_{d|m}}_{d\leq U} \mu(d) =
 -\sum_{d\leq U} \mu(d) (W/qd + O(1))\]
and obtain cancellation on $d$. If $Uq\geq (x/W)$, however,
the error term dominates.

\subsection{The sum $S_2$: the large sieve, primes and tails}
We must now bound
\begin{equation}\label{eq:honi}
S_2(U',W',W) = 
\mathop{\sum_{U'<m\leq \frac{x}{W}}}_{\gcd(m,v)=1} 
\left|\sum_{W' < p\leq W} (\log p)
 e(\alpha m p)\right|^2.\end{equation}
for $U' = \max(U,x/2W)$, $W' = \max(V,W/2)$.
(The condition $\gcd(p,v)=1$ will be fulfilled automatically by the
assumption $V>v$.)

From a modern perspective, this is clearly a case for a large sieve. 
It is also clear that we ought to try to apply
a large sieve for sequences of prime support. What is subtler here is how to do
things well for very large $q$ (i.e., $x/q$ small). This is in some sense
a dual problem to that of $q$ small, but it poses additional complications;
for example, it is not obvious how to take advantage of prime support
for very large $q$.

As in type I, we avoid this entire issue by forbidding $q$ large and then taking
advantage of the error term $\delta/x$ in the approximation 
$\alpha = \frac{a}{q} + \frac{\delta}{x}$. This is one of the main innovations
here. Note this alternative method will allow us to take advantage of prime
support.

A key situation to study is that of
frequencies $\alpha_i$ clustering around given rationals $a/q$ while
nevertheless keeping at a certain small distance from each other.

\begin{lem}\label{lem:ogor}
Let $q\geq 1$.
Let $\alpha_1,\alpha_2,\dotsc,\alpha_k\in \mathbb{R}/\mathbb{Z}$ be of
the form $\alpha_i = a_i/q + \upsilon_i$, $0\leq a_i<q$, 
where the elements $\upsilon_i\in \mathbb{R}$
all lie in an interval of length $\upsilon>0$, and where $a_i=a_j$ implies
$|\upsilon_i - \upsilon_j|>\nu>0$. Assume $\nu+\upsilon\leq 1/q$.
Then, for any $W, W'\geq 1$, $W'\geq W/2$,
\begin{equation}\label{eq:crut}\begin{aligned}
\sum_{i=1}^{k} \left|\sum_{W'<p\leq W} (\log p) e(\alpha_i p)\right|^2
&\leq \min\left(1, \frac{2 q}{\phi(q)} \frac{1}{\log\left((q (\nu+\upsilon))^{-1}
\right)}\right)\\ &\cdot \left(W-W'+\nu^{-1}\right) \sum_{W'<p\leq W} (\log p)^2.
\end{aligned}
\end{equation}
\end{lem}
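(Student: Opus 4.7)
The right-hand side of~(\ref{eq:crut}) is a minimum of two expressions, so I prove each separately. The trivial factor $1$ will come from a direct application of the Montgomery--Vaughan large sieve once I verify the $\alpha_i$'s are $\nu$-spaced in $\mathbb{R}/\mathbb{Z}$; the refined factor $\frac{2q/\phi(q)}{\log((q(\nu+\upsilon))^{-1})}$ requires in addition exploiting the prime support and the clustering of the $\alpha_i$'s near rationals with denominator $q$.

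I first check spacing. When $a_i=a_j$, the hypothesis gives $|\alpha_i-\alpha_j|=|\upsilon_i-\upsilon_j|>\nu$ directly. When $a_i\ne a_j$, the fractional-part difference $|(a_i-a_j)/q|$ is at least $1/q$ in $\mathbb{R}/\mathbb{Z}$ (the condition $0\le a_i<q$ rules out wraparound collisions among the $a_i/q$'s), and perturbing by $|\upsilon_i-\upsilon_j|\le\upsilon$ leaves at least $1/q-\upsilon\ge\nu$ of separation, thanks to $\nu+\upsilon\le 1/q$. Applying the Montgomery--Vaughan large sieve to $a_n=(\log n)\,\mathbf{1}_{\{n\text{ prime},\ W'<n\le W\}}$ with $N=W-W'$ then yields
\[\sum_i|S(\alpha_i)|^2\le (W-W'+\nu^{-1})\sum_{W'<p\le W}(\log p)^2,\]
which is~(\ref{eq:crut}) with the factor $1$.

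For the refinement, I use that, since $\nu+\upsilon\le 1/q$, the $\alpha_i$'s fit inside $q$ disjoint arcs of length $\upsilon$ centred at the rationals $a/q$, $0\le a<q$, with the clusters themselves $\nu$-spaced within each arc. Because every prime $p>q$ is coprime to $q$, I insert an upper-bound sieve majorant $\mathbf{1}_{\gcd(n,q)=1}\le\bigl(\sum_{d\mid\gcd(n,q)}\lambda_d\bigr)^2$ of Selberg/Ramar\'e type, with sifting level of order $1/(q(\nu+\upsilon))$. Expanding the square and applying the standard large sieve cluster by cluster (using the $\nu$-spacing within each arc around $a/q$) converts this into a bilinear form whose Selberg optimisation produces simultaneously the density gain $q/\phi(q)$ and the logarithmic saving $1/\log((q(\nu+\upsilon))^{-1})$.

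The main obstacle is precisely this refinement: extracting the explicit prefactor $2$ and the specific denominator $\log((q(\nu+\upsilon))^{-1})$, rather than a vague log-free bound with an absolute constant. This requires a fully explicit choice of the $\lambda_d$'s. The hypothesis $\nu+\upsilon\le 1/q$ helps in two ways: it forces the sifting level $1/(q(\nu+\upsilon))\ge 1$, so the sieve has room to operate, and it keeps the $q$ arcs around $a/q$ disjoint, so that the cluster-by-cluster application of the large sieve (together with the verified $\nu$-spacing inside each cluster) is clean and loses no constants beyond the explicit $2$.
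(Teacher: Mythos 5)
Your first half is right and matches the paper: the $\alpha_i$ are pairwise $\nu$-spaced in $\mathbb{R}/\mathbb{Z}$ (either directly when $a_i=a_j$, or via $1/q - \upsilon \geq \nu$ when $a_i\neq a_j$), and the Montgomery--Vaughan large sieve gives the bound with the factor $1$.

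The second half has a genuine gap, and it lies precisely where you flagged the obstacle. The paper does \emph{not} insert a Selberg upper-bound majorant for $\mathbf{1}_{\gcd(n,q)=1}$. That majorant would be useless here: the summation is over primes $p\in (W',W]$, which are already coprime to $q$ once $p>q$, and a Selberg sieve for coprimality to a fixed $q$ involves divisors $d\mid q$, not a free sifting level $1/(q(\nu+\upsilon))$. More importantly, you are sieving the \emph{primes}, but what has to be manipulated is the set of \emph{frequencies}: the gain $q/\phi(q)$ and the logarithm both come from enlarging the set of points at which the exponential sum is evaluated, not from thinning the prime sum. The correct tool is Montgomery's inequality \cite[Lemma 7.15]{MR2061214}: for a sequence supported on primes and each squarefree $r$ coprime to $q$, one has
\[
\frac{\mu(r)^2}{\phi(r)}\Bigl|\sum_{W'<p\le W}(\log p)\,e(\alpha_i p)\Bigr|^2
\le \sum_{\substack{a'\bmod r\\ \gcd(a',r)=1}}\Bigl|\sum_{W'<p\le W}(\log p)\,e((\alpha_i + a'/r)p)\Bigr|^2 .
\]
Summing this over $i$ and over squarefree $r\le R$ with $\gcd(r,q)=1$, one observes that the enlarged family $\{\alpha_i + a'/r\}$ is still $\nu$-spaced provided $\nu+\upsilon\ge 1/(qR^2)$, i.e.\ $R=1/\sqrt{(\nu+\upsilon)q}$ — the coprimality of $r$ to $q$ is exactly what keeps the new rationals $a_i/q + a'/r$ from colliding. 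A single application of the large sieve to the enlarged set, together with the lower bound
\[
\sum_{\substack{r\le R\\ \gcd(r,q)=1}}\frac{\mu(r)^2}{\phi(r)} \ge \frac{\phi(q)}{q}\sum_{r\le R}\frac{\mu(r)^2}{\phi(r)} \ge \frac{\phi(q)}{q}\sum_{d\le R}\frac{1}{d}\ge \frac{\phi(q)}{2q}\log\bigl((q(\nu+\upsilon))^{-1}\bigr),
\]
produces the stated factor with the explicit constant $2$. Your ``bilinear form whose Selberg optimisation produces simultaneously the density gain and the logarithmic saving'' does not specify a workable mechanism and, as phrased, could not be made to yield the required inequality.
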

\begin{proof}
For any distinct $i$, $j$, the angles $\alpha_i$, $\alpha_j$ are separated
by at least $\nu$ (if $a_i=a_j$) or at least $1/q - |\upsilon_i-\upsilon_j|\geq
1/q-\upsilon\geq \nu$ (if $a_i\ne a_j$). Hence we can apply the large sieve
(in the optimal $N+\delta^{-1}-1$
form due to Selberg \cite{Sellec} and Montgomery-Vaughan
\cite{MR0337775})
and obtain the bound in (\ref{eq:crut}) with $1$ instead of $\min(1,\dotsc)$
immediately.

We can also apply Montgomery's inequality
(\cite{MR0224585}, \cite{MR0311618}; see the expositions in
\cite[pp. 27--29]{MR0337847} and  \cite[\S 7.4]{MR2061214}). 
This gives us that the left side of (\ref{eq:crut}) is at most
\begin{equation}\label{eq:toyor}
\left(\mathop{\sum_{r\leq R}}_{\gcd(r,q)=1} \frac{(\mu(r))^2}{\phi(r)}
\right)^{-1} 
\mathop{\sum_{r\leq R}}_{\gcd(r,q)=1} \mathop{\sum_{a' \mo r}}_{\gcd(a',r)=1} 
 \sum_{i=1}^{k} \left|\sum_{W'<p\leq W} (\log p) e((\alpha_i+a'/r) p)\right|^2
\end{equation}
If we add all possible fractions
of the form $a'/r$, $r\leq R$, $\gcd(r,q)=1$, to the fractions $a_i/q$,
we obtain fractions that are
separated by at least $1/qR^2$. If $\nu+\upsilon\geq 1/qR^2$, then the resulting
angles $\alpha_i + a'/r$ are still separated by at least $\nu$. Thus
we can apply the large sieve to (\ref{eq:toyor}); setting $R = 
1/\sqrt{(\nu+\upsilon) q}$, we see that we gain a factor of
\begin{equation}\label{eq:werst}
\mathop{\sum_{r\leq R}}_{\gcd(r,q)=1} \frac{(\mu(r))^2}{\phi(r)} \geq
\frac{\phi(q)}{q} \sum_{r\leq R} \frac{(\mu(r))^2}{\phi(r)} \geq
\frac{\phi(q)}{q} \sum_{d\leq R} \frac{1}{d} \geq
\frac{\phi(q)}{2 q} \log\left((q (\nu+\upsilon))^{-1}\right),\end{equation}
since $\sum_{d\leq R} 1/d \geq \log(R)$ for all $R\geq 1$ (integer or not).
\end{proof}

Let us first give a bound on sums of the type of 
$S_2(U,V,W)$ using prime support
but not the error terms (or Lemma \ref{lem:ogor}).
\begin{lem}\label{lem:kastor1}
Let $W\geq 1$, $W'\geq W/2$. Let 
$\alpha = a/q + O^*(1/q Q)$, $q\leq Q$.  Then
\begin{equation}\label{eq:pokor1}\begin{aligned}
\sum_{A_0<m\leq A_1} &\left|\sum_{W'<p\leq W} (\log p) e(\alpha m p)\right|^2
\\ &\leq 
\left\lceil \frac{A_1-A_0}{\min(q,\lceil Q/2\rceil)} \right\rceil\cdot
(W-W'+2q) \sum_{W'<p\leq W} (\log p)^2.\end{aligned}\end{equation}
If $q<W/2$ and $Q\geq 3.5 W$, the following bound also holds:
\begin{equation}\label{eq:pokor2}\begin{aligned}
\sum_{A_0<m\leq A_1} &\left|\sum_{W'<p\leq W} (\log p) e(\alpha m p)\right|^2\\
&\leq \left\lceil \frac{A_1-A_0}{q}\right\rceil\cdot
\frac{q}{\phi(q)} \frac{W}{\log(W/2q)}
\cdot \sum_{W'<p\leq W} (\log p)^2.\end{aligned}\end{equation}
If $A_1-A_0\leq \varrho q$ and $q\leq \rho Q$, $\varrho,\rho \in \lbrack 0,1
\rbrack$, the following bound also holds:
\begin{equation}\label{eq:zerom}\begin{aligned}
\sum_{A_0<m\leq A_1} &\left|\sum_{W'<p\leq W} (\log p) e(\alpha m p)\right|^2\\
&\leq (W-W'+q/(1-\varrho \rho)) \sum_{W'<p\leq W} (\log p)^2.\end{aligned}
\end{equation}
\end{lem}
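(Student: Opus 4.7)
The plan is to treat all three inequalities uniformly by partitioning the range $(A_0,A_1]$ into blocks on which the perturbed frequencies $\alpha m = am/q + m\delta/(qQ)$ (with $|\delta|\leq 1$) are pairwise well-separated modulo $1$, and then to apply a large-sieve bound -- invoking Lemma \ref{lem:ogor} to feed in prime support when that is advantageous. The basic observation is that if $m_1\neq m_2$ lie in a common block of length $L\leq q$, then (since $\gcd(a,q)=1$) the residues $am_i\bmod q$ are distinct, whence $\|\alpha m_1 - \alpha m_2\|_{\mathbb{R}/\mathbb{Z}} \geq 1/q - L/(qQ)$.

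For (\ref{eq:pokor1}) I would partition into $\lceil (A_1-A_0)/L\rceil$ intervals with $L=\min(q,\lceil Q/2\rceil)$, so the separation above is (essentially) $1/(2q)$; applying the Selberg/Montgomery--Vaughan large sieve (equivalently, Lemma \ref{lem:ogor} with the $1$ taken in the $\min$ and $\nu=1/(2q)$) in each block yields $(W-W'+2q)\sum_{W'<p\leq W}(\log p)^2$ per block, and summing yields the claim. For (\ref{eq:zerom}) the whole range is already a single block of length $\leq\varrho q$, on which the perturbations lie in an interval of length $\leq (A_1-A_0)/(qQ) \leq \varrho\rho/q$, so the frequencies are separated by at least $(1-\varrho\rho)/q$; the large sieve with $\nu=(1-\varrho\rho)/q$ then produces the bound $W-W'+q/(1-\varrho\rho)$.

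For (\ref{eq:pokor2}) I would partition into $\lceil(A_1-A_0)/q\rceil$ blocks, each of length $q$, so that the $a_i$ within a block form a complete residue system modulo $q$ (the hypothesis $a_i=a_j\Rightarrow|\upsilon_i-\upsilon_j|>\nu$ of Lemma \ref{lem:ogor} is then vacuous). The gain from prime support now comes through Lemma \ref{lem:ogor} via the factor $(2q/\phi(q))/\log((q(\nu+\upsilon))^{-1})$, with $\upsilon=1/Q$ and $\nu$ a free auxiliary parameter. The strategy is to pick $\nu$ so that $\nu^{-1}$ is of order $W$ and $\nu+\upsilon$ is bounded by roughly $2/W$, making $W-W'+\nu^{-1}\leq W$ and $q(\nu+\upsilon)\leq 2q/W$; the hypotheses $q<W/2$ and $Q\geq 3.5W$ are tailored precisely to make this possible and to keep $\log(W/(2q))$ positive.

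The main obstacle is (\ref{eq:pokor2}): the parameter $\nu$ must be calibrated so that the linear factor $W-W'+\nu^{-1}$, the logarithm $\log((q(\nu+\upsilon))^{-1})$ in the denominator, and the prime-support factor $q/\phi(q)$ combine into the clean form $(q/\phi(q))\cdot W/\log(W/(2q))$ without losing an extra constant in the denominator's log. The numerical thresholds $W/2$ and $3.5W$ appearing in the hypotheses are tuned exactly so that this balancing act succeeds; the remaining work is bookkeeping of constants rather than new ideas.
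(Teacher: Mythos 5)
Your treatment of (\ref{eq:pokor1}) and (\ref{eq:zerom}) agrees with the paper: split into blocks of length $\min(q,\lceil Q/2\rceil)$, observe that the shifted frequencies stay separated by $1/2q$, and invoke the large sieve; and for (\ref{eq:zerom}), treat the whole interval as one block with separation $(1-\varrho\rho)/q$. Both are exactly the paper's argument.

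The real issue is (\ref{eq:pokor2}). You propose to route the prime-support gain through Lemma~\ref{lem:ogor} with $\upsilon = 1/Q$ and a tunable $\nu$, claiming the rest is ``bookkeeping of constants''. This does not work: Lemma~\ref{lem:ogor} yields, per block, a factor
\[
\frac{2q}{\phi(q)}\cdot\frac{W-W'+\nu^{-1}}{\log\bigl((q(\nu+\upsilon))^{-1}\bigr)},
\]
and no admissible $\nu$ reduces this to $\frac{q}{\phi(q)}\cdot\frac{W}{\log(W/2q)}$. Taking $\nu = 2/W$ (so that $W-W'+\nu^{-1}\leq W$), the hypothesis $Q\geq 3.5W$ gives $\nu+\upsilon\leq (2+\tfrac{2}{7})/W$, hence a denominator $\log(W/(2.286q))$ which is \emph{weaker} than $\log(W/2q)$ --- and the front constant is still $2q/\phi(q)$, twice what is claimed. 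One cannot trade between the two: shrinking $\nu$ blows up $\nu^{-1}$, enlarging $\nu$ worsens the logarithm, and you must also respect $\nu+\upsilon\leq 1/q$, which near $q\approx W/2$ forbids $\nu$ larger than about $12/(7W)$. So the route through Lemma~\ref{lem:ogor} is off by a hard factor of about $2$. This loss is built into Lemma~\ref{lem:ogor} itself, whose proof uses the crude bound $\sum_{d\leq R}1/d\geq\log R$ and the unweighted large sieve.

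The paper's actual proof of (\ref{eq:pokor2}) avoids Lemma~\ref{lem:ogor} entirely. It applies Montgomery's inequality \cite[Lemma 7.15]{MR2061214} directly to the sum over each block of $q$ consecutive values of $m$, then sums over moduli $r\leq R$ coprime to $q$ with the weights $(W/2 + \tfrac32(1/qrR - 1/Q)^{-1})^{-1}$ and invokes the Montgomery--Vaughan large sieve \emph{with weights} \cite[(1.6)]{MR0374060}. The key extra ingredient is the sharp estimate
\[
\sum_{r\leq R}\bigl(1+rR^{-1}\bigr)^{-1}\frac{\mu(r)^2}{\phi(r)} > \log R + 0.25068
\]
from \cite[Lemma 8]{MR0374060}, which recovers the lost factor of $2$ precisely when $R$ is chosen as $\sqrt{\sigma W/q}$ with $\sigma = \tfrac12 e^{2\cdot 0.25068}$, and that choice is exactly what makes $\tfrac12\log(W/2q)$ appear. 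The paper then treats $q\leq W/13.5$ and $W/13.5<q\leq W/2$ (odd $q$, with $R=2$) separately; the even case in the latter range is disposed of by showing (\ref{eq:pokor1}) is already stronger there. None of this is captured by your proposal, and the thresholds $W/2$ and $3.5W$ are calibrated for this weighted-sieve argument, not for Lemma~\ref{lem:ogor}.
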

The inequality (\ref{eq:pokor2}) can be stronger than (\ref{eq:pokor2})
only when $q<W/7.2638\dotsc$ (if $q$ is odd) or $q<W/92.514\dotsc$
(if $q$ is even).
\begin{proof}
Let $k = \min(q,\lceil Q/2\rceil) \geq \lceil q/2\rceil$. We split
$(A_0,A_1\rbrack$ into $\lceil (A_1-A_0)/k\rceil$ blocks of at most $k$
consecutive integers $m_0+1, m_0+2,\dotsc$. 
For $m$, $m'$ in such a block, $\alpha m$ and
$\alpha m'$ are separated by a distance of at least 
\[|\{(a/q) (m-m')\}| - O^*(k/qQ) = 1/q - O^*(1/2q) \geq 1/2q.\]
By the large sieve 
\begin{equation}\label{eq:roussel}
\sum_{a=1}^{q} 
\left|\sum_{W'<p\leq W} (\log p) e(\alpha (m_0+a) p)\right|^2
\leq ((W-W') + 2q) \sum_{W'<p\leq W} (\log p)^2.\end{equation}
We obtain (\ref{eq:pokor1}) by summing over all
$\lceil (A_1-A_0)/k\rceil$ blocks.

If $A_1-A_0\leq |\varrho q|$ and $q\leq \rho Q$, $\varrho,\rho \in \lbrack 0,1
\rbrack$, we obtain (\ref{eq:zerom}) simply by applying the large sieve
without splitting the interval $A_0<m\leq A_1$.

Let us now prove (\ref{eq:pokor2}). We will
 use Montgomery's inequality, followed by Montgomery and Vaughan's
large sieve with weights.
An angle $a/q+a_1'/r_1$ is separated from other angles $a'/q+a_2'/r_2$
($r_1, r_2\leq R$, $\gcd(a_i,r_i)=1$) 
by at least $1/q r_1 R$, rather than just $1/q R^2$.
We will choose $R$ so that $q R^2 < Q$; this implies $1/Q < 1/q R^2\leq
1/q r_1 R$. 

By Montgomery's inequality
\cite[Lemma 7.15]{MR2061214}, applied
(for each $1\leq a\leq q$)
 to $S(\alpha) = \sum_n a_n e(\alpha n)$
with $a_n = \log(n) e(\alpha(m_0+a) n)$ if $n$ is prime and $a_n=0$ otherwise,
\begin{equation}\label{eq:cortomalt}\begin{aligned}
\frac{1}{\phi(r)} &\left|\sum_{W'<p\leq W} (\log p) e(\alpha (m_0+a)p)\right|^2 \\
&\leq
\mathop{\sum_{a' \mo r}}_{\gcd(a',r)=1} \left|
\sum_{W'<p\leq W} (\log p) e\left(\left(\alpha \left(m_0+a\right) +
\frac{a'}{r}\right) p\right)\right|^2 .\end{aligned}\end{equation}
for each square-free $r\leq W'$.
We multiply both sides of (\ref{eq:cortomalt}) by 
$(W/2 + (3/2) (1/q r R - 1/Q)^{-1})^{-1}$ and sum over all $a=0,1,\dotsc,q-1$ and
all square-free $r\leq R$ coprime to $q$;
we will later make sure that $R\leq W'$. We obtain that
\begin{equation}\label{eq:malheur}\begin{aligned}
\mathop{\sum_{r\leq R}}_{\gcd(r,q)=1}
 &\left(\frac{W}{2} + \frac{3}{2} \left(\frac{1}{q r R} - \frac{1}{Q}\right)^{-1}\right)^{-1} 
\frac{\mu(r)^2}{\phi(r)}\\ &\cdot \sum_{a=1}^q 
\left|\sum_{W'<p\leq W} (\log p) e(\alpha (m_0+a)p)\right|^2\end{aligned}
\end{equation}
is at most
\begin{equation}\label{eq:douben}\begin{aligned}
\mathop{\mathop{\sum_{r\leq R}}_{\gcd(r,q)=1}}_{\text{$r$ sq-free}}
&\left(\frac{W}{2} + \frac{3}{2} \left(\frac{1}{q r R} - \frac{1}{Q}\right)^{-1}\right)^{-1} \\
&\sum_{a=1}^q \mathop{\sum_{a' \mo r}}_{\gcd(a',r)=1} 
\left|\sum_{W'<p\leq W} (\log p) e\left(\left(\alpha \left(m_0+a\right) +
\frac{a'}{r}\right) p\right)\right|^2 
\end{aligned}\end{equation}
We now apply the large sieve with weights
\cite[(1.6)]{MR0374060}, recalling that each angle 
$\alpha (m_0+a) + a'/r$ is separated from the others by at least 
$1/q r R - 1/Q$; we obtain that (\ref{eq:douben}) is at most
$\sum_{W'<p\leq W} (\log p)^2$. It remains to estimate the sum in the
first line of (\ref{eq:malheur}).
(We are following here a procedure analogous to that used in
 \cite{MR0374060} to prove the Brun-Titchmarsh theorem.)

Assume first that $q\leq W/13.5$. Set
\begin{equation}\label{eq:gorg}
R = \left(\sigma \frac{W}{q}\right)^{1/2} ,
\end{equation}
where $\sigma = 1/2 e^{2\cdot 0.25068} = 0.30285\dotsc$.
It is clear that $q R^2<Q$,
$q<W'$ and $R\geq 2$. Moreover, for $r\leq R$,
\[\frac{1}{Q} \leq \frac{1}{3.5 W}\leq \frac{\sigma}{3.5} \frac{1}{\sigma W} =
\frac{\sigma}{3.5} \frac{1}{q R^2} \leq \frac{\sigma/3.5}{q r R}.\]
Hence
\[\begin{aligned}
\frac{W}{2} + \frac{3}{2} \left(\frac{1}{q r R} - \frac{1}{Q}\right)^{-1}
&\leq \frac{W}{2} + \frac{3}{2} \frac{ q r R}{1-\sigma/3.5} = 
\frac{W}{2} + \frac{3 r}{2 \left(1-\frac{\sigma}{3.5}\right) R} \cdot
2\sigma \frac{W}{2}\\
&= \frac{W}{2} \left(1 + \frac{3\sigma}{1-\sigma/3.5} \frac{r W}{R}\right) < 
\frac{W}{2} \left(1 + \frac{r W}{R}\right)\end{aligned}\]
and so
\[\begin{aligned}
\mathop{\sum_{r\leq R}}_{\gcd(r,q)=1} &\left(\frac{W}{2} + \frac{3}{2} \left(\frac{1}{q r R} - \frac{1}{Q}\right)^{-1}\right)^{-1} 
\frac{\mu(r)^2}{\phi(r)}\\ 
&\geq \frac{2}{W} \mathop{\sum_{r\leq R}}_{\gcd(r,q)=1} (1 + r R^{-1})^{-1} 
\frac{\mu(r)^2}{\phi(r)}
\geq \frac{2}{W} \frac{\phi(q)}{q} 
\sum_{r\leq R} (1 + r R^{-1})^{-1} 
\frac{\mu(r)^2}{\phi(r)}
.\end{aligned}\]
For $R\geq 2$, 
\[\sum_{r\leq R} (1 + r R^{-1})^{-1} 
\frac{\mu(r)^2}{\phi(r)} > \log R + 0.25068;\]
this is true for $R\geq 100$ by \cite[Lemma 8]{MR0374060} 
and easily verifiable numerically for $2\leq R<100$. (It suffices to verify
this for $R$ integer with $r<R$ instead of $r\leq R$, as that is the worst
case.) 

Now
\[\log R = \frac{1}{2} \left(\log \frac{W}{2 q} + \log 2\sigma\right)
= \frac{1}{2} \log \frac{W}{2 q} - 0.25068 .\]
Hence
\[\sum_{r\leq R} (1 + r R^{-1})^{-1} 
\frac{\mu(r)^2}{\phi(r)} > \frac{1}{2} \log \frac{W}{2 q}\] and
the statement follows. 

Now consider the case $q > W/13.5$. If  $q$ is even, then, in this range,
inequality (\ref{eq:pokor1}) is always better than
(\ref{eq:pokor2}), and so we are done.
 Assume, then, that $W/13.5 <q \leq W/2$ and $q$ is odd. We
set $R=2$; clearly $q R^2 < W\leq Q$ and $q<W/2\leq W'$, and so this
choice of $R$ is valid. It remains to check that
\[\frac{1}{\frac{W}{2} + \frac{3}{2} 
\left(\frac{1}{2q} - \frac{1}{Q}\right)^{-1}} + 
\frac{1}{\frac{W}{2} + \frac{3}{2} 
\left(\frac{1}{4q} - \frac{1}{Q}\right)^{-1}} \geq \frac{1}{W} \log
\frac{W}{2q}.\]
This follows because
\[\frac{1}{\frac{1}{2} + \frac{3}{2} 
\left(\frac{t}{2} - \frac{1}{3.5}\right)^{-1}} + 
\frac{1}{\frac{1}{2} + \frac{3}{2} 
\left(\frac{t}{4} - \frac{1}{3.5}\right)^{-1}} \geq \log \frac{t}{2}\]
for all $2\leq t\leq 13.5$.

\end{proof} 
We need a version of Lemma \ref{lem:kastor1} with $m$ restricted to the
odd numbers.
\begin{lem}\label{lem:kastor2}
Let $W\geq 1$, $W'\geq W/2$. Let 
$2\alpha = a/q + O^*(1/q Q)$, $q\leq Q$.  Then
\begin{equation}\label{eq:pokor1b}\begin{aligned}
\mathop{\sum_{A_0<m\leq A_1}}_{\text{$m$ odd}}
 &\left|\sum_{W'<p\leq W} (\log p) e(\alpha m p)\right|^2
\\ &\leq 
\left\lceil \frac{A_1-A_0}{\min(2q,Q)} \right\rceil\cdot
(W-W'+2q) \sum_{W'<p\leq W} (\log p)^2.\end{aligned}\end{equation}
If $q<W/2$ and $Q\geq 3.5 W$, the following bound also holds:
\begin{equation}\label{eq:pokor2b}\begin{aligned}
\mathop{\sum_{A_0<m\leq A_1}}_{\text{$m$ odd}}
 &\left|\sum_{W'<p\leq W} (\log p) e(\alpha m p)\right|^2\\
&\leq \left\lceil \frac{A_1-A_0}{2 q}\right\rceil\cdot
\frac{q}{\phi(q)} \frac{W}{\log(W/2q)}
\cdot \sum_{W'<p\leq W} (\log p)^2.\end{aligned}\end{equation}
If $A_1-A_0\leq 2\varrho q$ and $q\leq \rho Q$, $\varrho,\rho \in \lbrack 0,1
\rbrack$, the following bound also holds:
\begin{equation}\label{eq:zeromb}\begin{aligned}
\sum_{A_0<m\leq A_1} &\left|\sum_{W'<p\leq W} (\log p) e(\alpha m p)\right|^2\\
&\leq (W-W'+q/(1-\varrho \rho)) \sum_{W'<p\leq W} (\log p)^2.\end{aligned}
\end{equation}
\end{lem}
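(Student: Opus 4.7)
Write $m = 2k+1$. The odd integers in $(A_0, A_1]$ then correspond bijectively to consecutive integers $k$ in an interval, and
\[\alpha m p = (2\alpha k + \alpha) p,\]
so for odd $m_1, m_2$ with $m_i = 2k_i+1$ we have $\alpha(m_1-m_2) = 2\alpha(k_1-k_2)$. The given approximation $2\alpha = a/q + O^*(1/qQ)$ therefore controls the separation of the angles $\alpha m$ modulo $1$ in exactly the way $\alpha = a/q + O^*(1/qQ)$ did in Lemma \ref{lem:kastor1}: whenever $0 < |k_1-k_2| < q$ and $|k_1-k_2| \leq Q/2$, the quantity $\alpha(m_1-m_2)$ is separated from $0$ modulo $1$ by at least $1/q - 1/(2q) = 1/(2q)$. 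The plan is to run the three parts of the proof of Lemma \ref{lem:kastor1} with $k$ in place of $m$ and $2\alpha$ in place of $\alpha$, noting that a block of $L$ consecutive values of $k$ corresponds to odd $m$-values spanning an interval of length $2L$ in $(A_0, A_1]$.

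For (\ref{eq:pokor1b}), partition the odd $m$'s into blocks whose $k$-indices are at most $\min(q, \lceil Q/2\rceil)$ consecutive integers; each such block spans at most $\min(2q, 2\lceil Q/2\rceil) \geq \min(2q, Q)$ integers in $m$. Within a block the angles $\alpha m$ are $1/(2q)$-separated, so Selberg's form of the large sieve, applied to the sequence $b_p = (\log p) e(\alpha p)$ supported on $(W', W]$, produces a contribution of at most $(W-W'+2q) \sum_{W'<p\leq W} (\log p)^2$ per block. Summing over the $\lceil (A_1-A_0)/\min(2q, Q)\rceil$ blocks yields (\ref{eq:pokor1b}).

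For (\ref{eq:pokor2b}) the Montgomery-inequality-plus-weighted-large-sieve argument from the proof of (\ref{eq:pokor2}) transfers line by line, with the sole change that one takes blocks of $q$ consecutive $k$'s (spans of $\leq 2q$ integers in $m$) and therefore uses $\lceil (A_1-A_0)/(2q)\rceil$ blocks; the gain factor $(q/\phi(q))\, W/\log(W/2q)$ depends only on $q$, $W$, and $Q$ and is unchanged. For (\ref{eq:zeromb}), the hypothesis $A_1 - A_0 \leq 2\varrho q$ forces at most $\varrho q$ odd values of $m$ with $|k_1-k_2| < q$, so the separation is at least $1/q - \varrho/Q \geq (1-\varrho\rho)/q$, and one application of the large sieve gives the stated bound. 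No new analytic ingredient is needed; the only obstacle is careful bookkeeping to verify that doubling $m$-spans relative to $k$-spans yields the constants $\min(2q, Q)$, $2q$, and $2\varrho q$ exactly as in the statement.
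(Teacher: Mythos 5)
Your proof is correct and matches the paper's approach exactly: the paper's own (very terse) argument also splits $(A_0,A_1]$ into blocks containing at most $k=\min(q,\lceil Q/2\rceil)$ odd integers, notes that for odd $m,m'$ in a block the angles satisfy $\{\alpha(m-m')\}=\{2\alpha\cdot\tfrac{m-m'}{2}\}\geq 1/(2q)$, and reuses the large-sieve, Montgomery-inequality, and single-block arguments from Lemma~\ref{lem:kastor1} verbatim (with the labelling $\alpha(m_0+2a)$, $0\leq a<q$, for the second inequality). Your reparametrization $m=2k+1$ is simply a more explicit way of saying the same thing, and your bookkeeping for the block counts and the $(1-\varrho\rho)/q$ separation in the third inequality is accurate.
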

\begin{proof}
We follow the proof of Lemma \ref{lem:kastor1}, noting the differences.
Let $k=\min(q,\lceil Q/2\rceil)\geq \lceil q/2\rceil$, just as before.
We split $(A_0,A_1\rbrack$ into $\lceil (A_1-A_0)/k\rceil$ blocks of at most
$2k$ consecutive integers; any such block contains at most $k$ odd numbers.
For odd $m$, $m'$ in such a block, $\alpha m$ and $\alpha m'$ are
separated by a distance of 
\[|\{\alpha (m-m')\}| = \left|\left\{2\alpha \frac{m-m'}{2}\right\}\right|
= |\{(a/q) k\}| - O^*(k/qQ) \geq 1/2q.\]

We obtain (\ref{eq:pokor1b}) and (\ref{eq:zeromb})
 just as we obtained (\ref{eq:pokor1}) and (\ref{eq:zerom}) before.
To obtain (\ref{eq:pokor2b}), proceed again as before, noting that
the angles we are working with can be labelled as $\alpha (m_0+2a)$,
$0\leq a < q$.
\end{proof}

The idea now (for large $\delta$) is that, if $\delta$ is not negligible, then, 
as $m$ increases, $\alpha m$ loops around the circle $\mathbb{R}/\mathbb{Z}$
roughly repeats itself every $q$ steps -- but with a slight displacement.
This displacement gives rise to a configuration to which Lemma \ref{lem:ogor}
is applicable. 
\begin{prop}\label{prop:kraken}
Let $x\geq W\geq 1$, $W'\geq W/2$, $U'\geq x/2W$. Let 
$Q\geq
3.5W$.
Let $2 \alpha = a/q + 
\delta/x$, $\gcd(a,q)=1$, $|\delta/x|\leq 1/qQ$, $q\leq Q$. 
Let $S_2(U',W',W)$ be as in (\ref{eq:honi}) with $v=2$.

For $q\leq \rho Q$, where $\rho\in \lbrack 0,1\rbrack$,
\begin{equation}\label{eq:garn1b}\begin{aligned}
S_2(U',W',W) &\leq
\left(\max(1,2\rho) \left(\frac{x}{8q} + \frac{x}{2W}\right) + \frac{W}{2} + 2 q\right)
\cdot  \sum_{W'<p\leq W} (\log p)^2\\
\end{aligned}
\end{equation}

If $q< W/2$,
\begin{equation}\label{eq:garn1a}
S_2(U',W',W)\leq 
\left(
\frac{x}{4 \phi(q)} \frac{1}{\log(W/2q)}
 + \frac{q}{\phi(q)} \frac{W}{\log(W/2q)} \right)\cdot
 \sum_{W'<p\leq W} (\log p)^2 .\end{equation}


If $W> x/4 q$,
the following bound also holds:
\begin{equation}\label{eq:gargamel}
S_2(U',W',W) \leq \left(\frac{W}{2} + \frac{q}{1-x/4Wq}\right)\sum_{W'< p\leq W} (\log p)^2.
\end{equation}

If $\delta\ne 0$ and $x/4W + q \leq x/|\delta| q$, 
\begin{equation}\label{eq:procida2}
S_2(U',W',W) \leq \min\left(1,\frac{2 q/\phi(q)}{\log\left(\frac{x}{|\delta q|} 
\left(q + \frac{x}{4W}\right)^{-1}
\right)}\right) \cdot 
\left(\frac{x}{|\delta q|} + \frac{W}{2}\right)
\sum_{W'<p\leq W} (\log p)^2 .
\end{equation}

Lastly, if $\delta\ne 0$ and $q\leq \rho Q$, where $\rho\in \lbrack 0,1)$,
\begin{equation}\label{eq:procida3}
 S_2(U',W',W) \leq 
\left(\frac{x}{|\delta q|} + \frac{W}{2} + \frac{x}{8 (1-\rho) Q} + 
\frac{x}{4 (1-\rho) W}\right) \sum_{W'<p\leq W} (\log p)^2 .\end{equation}
\end{prop}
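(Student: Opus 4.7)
Throughout, set $A_0 = U'$ and $A_1 = x/W$; then $U' \geq x/(2W)$ gives $A_1 - A_0 \leq x/(2W)$, and $W' \geq W/2$ gives $W - W' \leq W/2$. The first three inequalities follow directly from Lemma \ref{lem:kastor2}. For (\ref{eq:garn1b}), apply (\ref{eq:pokor1b}) and split on whether $2q \leq Q$: in the first case $\min(2q, Q) = 2q$ and $\max(1, 2\rho) = 1$; in the second case $Q \geq q/\rho$ and $\max(1, 2\rho) = 2\rho$. The bound $\lceil Y \rceil \leq Y + 1$ applied to the number of blocks produces exactly the stated terms in both cases. Inequality (\ref{eq:garn1a}) follows from (\ref{eq:pokor2b}) by the same arithmetic. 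For (\ref{eq:gargamel}), the hypothesis $W > x/(4q)$ gives $A_1 - A_0 \leq x/(2W) < 2q$, so (\ref{eq:zeromb}) applies with $\varrho = x/(4Wq) \in [0,1)$ and the trivial choice $\rho = 1$ (valid since $q \leq Q$).

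The remaining bounds (\ref{eq:procida2}) and (\ref{eq:procida3}) rest on a cluster structure of the frequencies $\{\alpha m : m \text{ odd}, U' < m \leq x/W\}$. Writing $m = 2k+1$, one has
\[
\alpha m \equiv \frac{ak}{q} + \left(\alpha + \frac{\delta k}{x}\right) \pmod 1,
\]
so setting $a_i := a k_i \bmod q \in [0, q)$ and choosing the real representative $\upsilon_i := \alpha + \delta k_i/x$ presents each $\alpha m_i$ in the form required by Lemma \ref{lem:ogor} (the integer discrepancy $\alpha m_i - a_i/q - \upsilon_i = \lfloor a k_i/q \rfloor$ is absorbed by the $\mathbb{R}/\mathbb{Z}$ equivalence). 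The $k_i$ run through consecutive integers in an interval of length at most $(A_1 - A_0)/2 \leq x/(4W)$, so the $\upsilon_i$ all lie in a real interval of length $\upsilon \leq |\delta|/(4W)$. Since $\gcd(a, q) = 1$, the equality $a_i = a_j$ forces $k_i \equiv k_j \pmod q$, whence $|k_i - k_j| \geq q$ and $|\upsilon_i - \upsilon_j| \geq |\delta| q/x$. Hence I may take $\nu = |\delta| q/x$, so that $\nu^{-1} = x/|\delta q|$.

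For (\ref{eq:procida2}), the hypothesis $q + x/(4W) \leq x/|\delta q|$ is exactly $\nu + \upsilon \leq 1/q$, so Lemma \ref{lem:ogor} applies. The factors $W - W' + \nu^{-1} \leq W/2 + x/|\delta q|$ and $q(\nu + \upsilon) \leq |\delta q|(q + x/(4W))/x$ then reproduce (\ref{eq:procida2}). For (\ref{eq:procida3}), the constraint $\nu + \upsilon \leq 1/q$ is not automatic, so I partition $(A_0, A_1]$ into consecutive blocks whose corresponding $k$-ranges have length at most $L := x/|\delta q| - q$. The chain of inequalities $|\delta| q^2/x \leq q/Q \leq \rho < 1$ shows $L \geq (1-\rho) x /|\delta q| > 0$, and on each block the separation analysis yields $\nu + \upsilon \leq 1/q$ with the new $\upsilon = |\delta| L/x$. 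Using the trivial bound $\min(1, \cdot) \leq 1$ in Lemma \ref{lem:ogor}, each block contributes at most $(W/2 + x/|\delta q|)\sum_{W' < p \leq W}(\log p)^2$. The number of blocks is at most $\lceil x/(4WL)\rceil \leq |\delta| q/(4W(1-\rho)) + 1$. Multiplying, expanding the four resulting terms, and bounding $|\delta| q \leq x/Q$ in the cross term $|\delta| q/(8(1-\rho))$ yields precisely (\ref{eq:procida3}).

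The delicate step is the cluster setup: one must exploit the freedom to choose integer representatives of the $\upsilon_i$ so that all of them lie in an interval of length only $|\delta|/(4W)$ (rather than $O(1)$), while propagating $\gcd(a, q) = 1$ to pin the intra-class separation at $\geq |\delta| q/x$. The choice $L = x/|\delta q| - q$ in the partitioning for (\ref{eq:procida3}) is the largest value compatible with the constraint $\nu + \upsilon \leq 1/q$ of Lemma \ref{lem:ogor}, which keeps the block count as small as possible.
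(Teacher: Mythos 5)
Your proposal is correct and follows essentially the same route as the paper's proof: the first three bounds come directly from Lemma \ref{lem:kastor2} exactly as you describe, and the two $\delta$-dependent bounds come from Lemma \ref{lem:ogor} with the cluster parameters $\nu = |\delta q|/x$, $\upsilon = |\delta|/(4W)$ (resp.\ $\upsilon = |\delta| L/x$ after partitioning into blocks of $k$-length $L = x/|\delta q| - q$), matching the paper's $\upsilon = (x/4W)\cdot|\delta|/x$ and $\upsilon = (Q'-q)\cdot|\delta|/x$ with $Q' = x/|\delta q|$. The one place you are more explicit than the paper, and usefully so, is the cluster setup: you write $m = 2k+1$ and exhibit $\alpha m \equiv (ak \bmod q)/q + (\tilde\alpha + \delta k/x)$, making visible both why the $\upsilon_i$ window has length $|\delta|/(4W)$ and why $\gcd(a,q)=1$ forces $|k_i - k_j| \geq q$ when $a_i = a_j$; the paper only records the numerical inequality $|(m_1-m_2)/2\cdot\delta/x + q\delta/x| \le 1/q$ and asserts the hypotheses of Lemma \ref{lem:ogor} are met. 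Two small wording points, neither a gap: for (\ref{eq:garn1b}) you should really argue $\min(2q,Q) \geq q\min(2,\rho^{-1})$ for every admissible $\rho$ (your case split ``whether $2q\leq Q$'' effectively proves the bound at $\rho = q/Q$, from which the general case follows since $\max(1,2\rho)$ is nondecreasing); and when you ``choose the real representative $\upsilon_i := \alpha + \delta k_i/x$'' you should fix a single real lift $\tilde\alpha$ of $\alpha\in\mathbb{R}/\mathbb{Z}$ first, as you implicitly do, so that the $\upsilon_i$ genuinely live in $\mathbb{R}$ and differ only by $\delta(k_i-k_j)/x$.
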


The trivial bound would be in the order
of \[S_2(U',W',W) = (x/2\log x) \sum_{W'<p\leq W} (\log p)^2.\]
In practice, (\ref{eq:gargamel}) gets applied when $W\geq x/q$.

\begin{proof}
Let us first prove statements (\ref{eq:garn1a}) and (\ref{eq:garn1b}), which
do not involve $\delta$. Assume first $q\leq W/2$. Then, by
(\ref{eq:pokor2b}) with $A_0 = U'$, $A_1 = x/W$,
\[S_2(U',W',W)\leq \left(\frac{x/W - U'}{2 q} + 1\right)
\frac{q}{\phi(q)} \frac{W}{\log(W/2q)} \sum_{W'<p\leq W} (\log p)^2.\]
Clearly $(x/W-U') W\leq (x/2W) \cdot W = x/2$. 
 Thus (\ref{eq:garn1a}) holds.

Assume now that $q\leq \rho Q$. Apply (\ref{eq:pokor1b}) with 
$A_0 = U'$, $A_1 = x/W$. Then
\[S_2(U',W',W)\leq \left(\frac{x/W - U'}{q\cdot \min(2,\rho^{-1})} + 1\right)
(W - W' + 2q) 
\sum_{W'<p\leq W} (\log p)^2.\]
Now
\[\begin{aligned}
&\left(\frac{x/W - U'}{q\cdot \min(2,\rho^{-1})} + 1\right)\cdot 
(W - W' + 2q) \\&\leq \left(\frac{x}{W}-U'\right) 
\frac{W-W'}{q \min(2,\rho^{-1})} + 
\max(1,2\rho) \left(\frac{x}{W} - U'\right) + W/2 + 2q 
\\ &\leq \frac{x/4}{q \min(2,\rho^{-1})}
 + \max(1,2\rho) \frac{x}{2 W} + W/2 + 2 q .
\end{aligned}\]
This implies (\ref{eq:garn1b}).

If $W> x/4 q$, apply (\ref{eq:zerom}) with $\varrho = x/4Wq$, $\rho=1$.
This yields (\ref{eq:gargamel}).

Assume now that $\delta\ne 0$ and $x/4W+q\leq x/|\delta q|$.
Let $Q' = x/|\delta q|$. For any $m_1$, $m_2$ with $x/2W< m_1,m_2\leq x/W$,
we have
$|m_1-m_2|\leq x/2W \leq 2 (Q'-q)$, and so
\begin{equation}\label{eq:radclas}
\left|\frac{m_1-m_2}{2} \cdot \delta/x + q\delta/x\right|\leq Q'|\delta|/x = \frac{1}{q}.\end{equation}
The conditions of Lemma \ref{lem:ogor}
are thus 
fulfilled with $\upsilon = (x/4W)\cdot |\delta|/x$ and $\nu=|\delta q|/x$.
We obtain that $S_2(U',W',W)$ is at most
\[
 \min\left(1, \frac{2 q}{\phi(q)} \frac{1}{\log\left((q (\nu+\upsilon))^{-1}
\right)}\right)
 \left(W-W'+\nu^{-1}\right) \sum_{W'<p\leq W} (\log p)^2.\]
Here $W -W' + \nu^{-1} = W-W' + x/|q\delta|\leq W/2 + x/|q\delta|$
and \[(q (\nu+\upsilon))^{-1} = 
\left(q \frac{|\delta|}{x}\right)^{-1}
\left(q + \frac{x}{4W}\right)^{-1} .\]

Lastly, assume $\delta\ne 0$ and $q\leq \rho Q$. We let
$Q' = x/|\delta q| \geq Q$ again, and we split the range $U'<m\leq x/W$
into intervals of length $2(Q'-q)$, so that (\ref{eq:radclas}) still
holds within each interval. We apply Lemma \ref{lem:ogor}
with $\upsilon = (Q'-q)\cdot |\delta|/x$ and $\nu=|\delta q|/x$.
We obtain that 
$S_2(U',W',W)$ is at most
\[
 \left(1 + \frac{x/W-U}{2(Q'-q)}\right)
 \left(W-W'+\nu^{-1}\right) \sum_{W'<p\leq W} (\log p)^2.\]
Here $W-W'+\nu^{-1} \leq W/2 + x/q|\delta|$ as before. Moreover,
\[\begin{aligned}
\left(\frac{W}{2} + \frac{x}{q|\delta|}\right) \left(1 + \frac{x/W-U}{2(Q'-q)}\right)
&\leq \left(\frac{W}{2} + Q'\right) \left(1 + 
\frac{x/2W}{2 (1-\rho) Q'}\right)\\
&\leq \frac{W}{2} + Q' + \frac{x}{8 (1-\rho) Q'} + 
\frac{x}{4 W (1-\rho)}\\
&\leq \frac{x}{|\delta q|} + \frac{W}{2} + \frac{x}{8 (1-\rho) Q} + 
\frac{x}{4 (1-\rho) W}.
\end{aligned}\]
Hence (\ref{eq:procida3}) holds.
\end{proof}

\section{Totals}\label{sec:osval}

Let $x$ be given. We will choose $U$, $V$, $W$ later; assume from the start that
 $2\cdot 10^6 \leq V< x/4$ and $UV\leq x$. 
Starting in section \ref{subs:totcho}, we will also assume that
$x\geq x_0 = 10^{25}$.

Let $\alpha \in \mathbb{R}/\mathbb{Z}$ be given. We choose an approximation
$2 \alpha = a/q + \delta/x$, $\gcd(a,q)=1$, $q\leq Q$, $|\delta/x|\leq 1/qQ$.
We assume $Q\geq \max(16,2 \sqrt{x})$ and $Q\geq \max(2 U,x/U)$.
Let $S_{I,1}$, $S_{I,2}$, $S_{II}$, $S_0$ be as in (\ref{eq:nielsen}), with
the smoothing function $\eta=\eta_2$ as in (\ref{eq:eqeta}).

The term $S_0$ is $0$ because $V<x/4$ and $\eta_2$ is supported on
$\lbrack -1/4,1\rbrack$.  We set $v=2$.

\subsection{Contributions of different types}\label{subs:putmal}
\subsubsection{Type I terms: $S_{I,1}$.}\label{subs:renzo}
The term $S_{I,1}$ can be handled directly by Lemma \ref{lem:bostb1},
with $\rho_0=4$ and $D = U$. (Condition (\ref{eq:puella}) is 
valid thanks to (\ref{eq:cloclo}).)
Since $U\leq Q/2$,
 the contribution of $S_{I,1}$ gets bounded by (\ref{eq:cupcake2}) and
(\ref{eq:kuche2}): the absolute value of $S_{I,1}$ is at most
\begin{equation}\label{eq:cosI1}\begin{aligned}&\frac{x}{q} 
 \min\left(1,\frac{c_0/\delta^2}{(2\pi)^2}\right)
\left|\mathop{\sum_{m\leq \frac{U}{q}}}_{\gcd(m,q)=1} \frac{\mu(m)}{m} 
\log \frac{x}{m q}\right| + \frac{x}{q}
|\widehat{\log \cdot \eta}(-\delta)| \left|\mathop{\sum_{m\leq \frac{U}{q}}}_{\gcd(m,q)=1} \frac{\mu(m)}{m}\right|\\
&+ \frac{2 \sqrt{c_0 c_1}}{\pi}  \left( 
U \log \frac{e x}{U} + \sqrt{3} q \log \frac{q}{c_2} +
\frac{q}{2} \log \frac{q}{c_2} \log^+ \frac{2 U}{q}\right)
+ \frac{3 c_1}{2}
 \frac{x}{q} \log \frac{q}{c_2} \log^+ \frac{U}{\frac{c_2 x}{q}}\\
&+ \frac{3 c_1}{2} \sqrt{\frac{2 x}{c_2}} \log \frac{2 x}{c_2} 
+ 
\left(\frac{c_0}{2} - \frac{2 c_0}{\pi^2}
\right)
\left(\frac{U^2}{4 q x} \log \frac{e^{1/2} x}{
U} + \frac{1}{e} \right) 
\\ &+ \frac{2 |\eta'|_1}{\pi} q \max\left(1, \log \frac{c_0 e^3 q^2}{4 \pi |\eta'|_1 x}\right) \log x,
\end{aligned}\end{equation}
where $c_0 = 31.521$ (by Lemma \ref{lem:octet}), 
$c_1 =  1.0000028 > 1 + (8 \log 2)/V \geq 1 + (8 \log 2)/(x/U)$,
$c_2 = 6 \pi/5\sqrt{c_0}
= 0.67147\dotsc$. By (\ref{eq:malcros}),
(\ref{eq:koasl}) and Lemma \ref{lem:marengo},
\[|\widehat{\log \cdot \eta}(-\delta)| \leq
\min\left(2-\log 4,\frac{24 \log 2}{\pi^2 \delta^2}\right).\]

By (\ref{eq:grara}), (\ref{eq:ronsard}) and (\ref{eq:meproz}),
the first line of (\ref{eq:cosI1}) is at most
\[\begin{aligned}
&\frac{x}{q} \min\left(1,\frac{c_0'}{\delta^2}\right)
\left(\min\left(\frac{4}{5} \frac{q/\phi(q)}{\log^+ 
\frac{U}{q^2}}, 1\right) \log \frac{x}{U} + 1.00303 \frac{q}{\phi(q)}\right)
\\+ &\frac{x}{q} \min\left(2 - \log 4,\frac{c_0''}{\delta^2}\right)
\min\left(\frac{4}{5} \frac{q/\phi(q)}{\log^+ 
\frac{U}{q^2}}, 1\right),
\end{aligned}\]
where $c_0' = 0.798437 > c_0/(2\pi)^2$, $c_0'' = 1.685532$. Clearly 
$c_0''/c_0 > 1 > 2 - \log 4$. 

Taking derivatives, we see that $t\mapsto (t/2) \log(t/c_2) \log^+ 2 U/t$
takes its maximum (for $t\in \lbrack 1,2 U\rbrack$) 
when $\log(t/c_2) \log^+ 2 U/t =
\log t/c_2 - \log^+ 2 U /t$; since $t\to \log t/c_2 - \log^+ 2U/t$
is increasing on $\lbrack 1, 2U\rbrack$, we conclude that
\[\frac{q}{2} \log \frac{q}{c_2} \log^+ \frac{2 U}{q} \leq U \log \frac{2 U}{c_2}.\]
Similarly, $t\mapsto t \log(x/t) \log^+(U/t)$ takes its maximum at a point
$t\in \lbrack 0, U$ for which
$\log(x/t) \log^+(U/t) = \log(x/t) + \log^+(U/t)$, and so
\[\frac{x}{q} \log \frac{q}{c_2} \log^+ \frac{U}{\frac{c_2 x}{q}} \leq
\frac{U}{c_2} (\log x + \log U).\]

We conclude that
\begin{equation}\label{eq:lavapie}\begin{aligned}
|S_{I,1}| &\leq \frac{x}{q} \min\left(1, \frac{c_0'}{\delta^2}\right)
\left(\min\left(\frac{4 q/\phi(q)}{5\log^+ 
\frac{U}{q^2}}, 1\right) \left(\log \frac{x}{U}
+ c_{3,I}\right)
 + c_{4,I} \frac{q}{\phi(q)} \right)\\
&+ \left(c_{7,I}
\log \frac{q}{c_2} + c_{8,I} \log x \max\left(
1,  \log \frac{c_{11,I} q^2}{x}\right) \right) q 
+
c_{10,I} \frac{U^2}{4 q x} \log \frac{e^{1/2} x}{
U}\\
&+ \left(c_{5,I} \log \frac{2 U}{c_2} +
c_{6,I} \log x U\right) U 
+ c_{9,I} \sqrt{x} \log \frac{2 x}{c_2} 
+ \frac{c_{10,I}}{e}
,\end{aligned}\end{equation}
where $c_2$ and $c_0'$ are as above, $c_{3,I} = 2.11104 > c_0''/c_0'$, 
$c_{4,I} = 1.00303$, $c_{5,I}= 3.57422 > 2 \sqrt{c_0 c_1}/\pi$,
$c_{6,I} = 2.23389 > 3 c_1/2 c_2$, $c_{7,I} = 6.19072 > 2 \sqrt{3 c_0 c_1}/\pi$, 
$c_{8,I} = 3.53017 > 2 (8 \log 2)/\pi$, $c_{9,I} = 2.58877 > 
3 \sqrt{2} c_1/2 \sqrt{c_2}$, $c_{10,I} = 
9.37301 > c_0 (1/2 - 2/\pi^2)$ and $c_{11,I} = 9.0857
> c_0 e^3/(4 \pi \cdot 8 \log 2)$.

\subsubsection{Type I terms: $S_{I,2}$.}
{\em The case $q\leq Q/V$.}

If $q\leq Q/V$, then, for $v\leq V$,
\[2 v\alpha = \frac{v a}{q} + O^*\left(\frac{v}{Q q}\right) = 
\frac{v a}{q} + O^*\left(\frac{1}{q^2}\right) ,\]
and so $va/q$ is a valid approximation to $2 v\alpha$. (Here we are using
$v$ to label an integer variable bounded above by $v\leq V$; we no longer
need $v$ to label the quantity in (\ref{eq:joroy}), since that
has been set equal to the constant $2$.)
Moreover, for
$Q_v = Q/v$, we see that $2 v\alpha = (va/q) + O^*(1/q Q_v)$.
 If $\alpha = a/q + 
\delta/x$, then $v\alpha = v a/q + \delta/(x/v)$.
Now
\begin{equation}\label{eq:jotoco}
S_{I,2} = \mathop{\sum_{v\leq V}}_{\text{$v$ odd}} \Lambda(v) 
\mathop{\sum_{m\leq U}}_{\text{$m$ odd}} \mu(m) \mathop{\sum_n}_{
\text{$n$ odd}}
e((v\alpha) \cdot m n) \eta(mn/(x/v)).\end{equation}
 We can thus
estimate $S_{I,2}$ by applying Lemma \ref{lem:bosta2}
to each inner double sum in (\ref{eq:jotoco}).
We obtain that, if $|\delta|\leq 1/2 c_2$, where 
$c_2 = 6\pi/5\sqrt{c_0}$ and $c_0 = 31.521$, then $|S_{I,2}|$ is at most
\begin{equation}\label{eq:putbarat}
\sum_{v\leq V} \Lambda(v) \left(
\frac{x/v}{2 q_v} \min\left(1,\frac{c_0}{(\pi \delta)^2}
\right) \left|\mathop{\sum_{m\leq M_v/q}}_{\gcd(m,2 q)=1} \frac{\mu(m)}{m}\right|
+ \frac{c_{10,I} q}{4 x/v} \left(\frac{U}{q_v} + 1\right)^2
\right)
\end{equation}
plus
\begin{equation}\label{eq:douze}\begin{aligned}
&\sum_{v\leq V} \Lambda(v) 
\left( \frac{2 \sqrt{c_0 c_1}}{\pi} U + 
\frac{3 c_1}{2} \frac{x}{v q_v}
\log^+ \frac{U}{\frac{c_2 x}{v q_v}} + \frac{\sqrt{c_0 c_1}}{\pi}
q_v \log^+ \frac{U}{q_v/2}\right)\\
+ &\sum_{v\leq V} \Lambda(v) \left(c_{8,I}
\max\left(
 \log \frac{c_{11,I} q_v^2}{x/v}, 1\right) q_v 
+ \left(\frac{2 \sqrt{3 c_0 c_1}}{\pi} + \frac{3 c_1}{2 c_2} + 
\frac{55 c_0 c_2}{6 \pi^2} \right) q_v
\right),
\end{aligned}\end{equation}
where $q_v = q/\gcd(q,v)$, $M_v \in \lbrack \min(Q/2v,U),U\rbrack$ 
and $c_1 = 1.0000028$;
if $|\delta|\geq 1/2c_2$, then $|S_{I,2}|$ is at most (\ref{eq:putbarat})
plus
\begin{equation}\label{eq:cheaslu}\begin{aligned}
&\sum_{v\leq V} \Lambda(v) 
\left(\frac{\sqrt{c_0 c_1}}{\pi/2} U  +
 \frac{3 c_1}{2}
 \left(2 + \frac{(1+\epsilon)}{\epsilon} \log^+ \frac{2 U}{
\frac{x/v}{|\delta| q_v}}
\right) \frac{x/v}{Q/v} + \frac{35 c_0 c_2}{3 \pi^2} q_v\right)\\
&+\sum_{v\leq V} \Lambda(v) \frac{\sqrt{c_0 c_1}}{\pi/2} 
 (1+\epsilon) \min\left(\left\lfloor\frac{x/v}{|\delta| q_v}\right\rfloor + 1, 2 U\right)
 \left(\sqrt{3+2\epsilon} +
 \frac{\log^+ \frac{2 U}{\left\lfloor \frac{x/v}{|\delta| q_v}\right\rfloor + 
1}}{2}\right)
\end{aligned}\end{equation}

Write $S_V = \sum_{v\leq V} \Lambda(v)/(v q_v)$.
By (\ref{eq:rala}),
\begin{equation}\label{eq:avamys}\begin{aligned}
S_V &\leq \sum_{v\leq V} \frac{\Lambda(v)}{v q} +
\mathop{\sum_{v\leq V}}_{\gcd(v,q)>1} \frac{\Lambda(v)}{v} \left(\frac{\gcd(q,v)}{q} - 
\frac{1}{q}\right)\\ &\leq \frac{\log V}{q} +
\frac{1}{q} \sum_{p|q} (\log p) \left(v_p(q) + \mathop{\sum_{\alpha \geq 1}}_{
p^{\alpha+v_p(q)} \leq V} \frac{1}{p^\alpha} - \mathop{\sum_{\alpha \geq 1}}_{
p^{\alpha}\leq V} \frac{1}{p^\alpha} \right)\\ &\leq \frac{\log V}{q} + \frac{1}{q}
\sum_{p|q} (\log p) v_p(q) = \frac{\log V q}{q}.
\end{aligned}\end{equation}
This helps us to estimate (\ref{eq:putbarat}). We could also use this
to estimate the second term in the first line of (\ref{eq:douze}),
but, for that purpose, it will actually be wiser to use the simpler bound
\begin{equation}\label{eq:qotro}\sum_{v\leq V} \Lambda(v) 
\frac{x}{v q_v} \log^+ \frac{U}{\frac{c_2 x}{v q_v}}
\leq \sum_{v\leq V} \Lambda(v) \frac{U/c_2}{e} \leq
\frac{1.0004}{e c_2} UV 
\end{equation}
(by (\ref{eq:trado1}) and the fact that $t \log^+ A/t$ takes its maximum at
$t=A/e$).



We bound the sum over $m$ in (\ref{eq:putbarat}) by (\ref{eq:grara}) and
(\ref{eq:ronsard}). To bound the terms involving $(U/q_v+1)^2$, we use
\[\begin{aligned}
\sum_{v\leq V} \Lambda(v) v &\leq 0.5004 V^2 \;\;\;\;\;\;\text{(by
(\ref{eq:nicro})),}\\
\sum_{v\leq V} \Lambda(v) v \gcd(v,q)^j &\leq \sum_{v\leq V} \Lambda(v) v +
V \mathop{\sum_{v\leq V}}_{\gcd(v,q)\ne 1} \Lambda(v) \gcd(v,q)^j
\end{aligned},\]
\[\begin{aligned}
\mathop{\sum_{v\leq V}}_{\gcd(v,q)\ne 1} \Lambda(v) \gcd(v,q) &\leq
 \sum_{p|q} (\log p) \sum_{1\leq \alpha \leq \log_p V} p^{v_p(q)}
\leq \sum_{p|q} (\log p) \frac{\log V}{\log p} p^{v_p(q)} \\ &\leq (\log V)
\sum_{p|q} p^{v_p(q)} \leq q \log V
\end{aligned}\] and
\[\begin{aligned}
\mathop{\sum_{v\leq V}}_{\gcd(v,q)\ne 1} \Lambda(v) \gcd(v,q)^2 &\leq
\sum_{p|q} (\log p) \sum_{1\leq \alpha\leq \log_p V} p^{v_p(q)+\alpha}\\&\leq
\sum_{p|q} (\log p) \cdot 2 p^{v_p(q)} \cdot p^{\log_p V} \leq 2 q V \log q .
\end{aligned}\]
 Using (\ref{eq:trado1}) and
(\ref{eq:avamys}) as well, we conclude that (\ref{eq:putbarat}) is at most
\[\begin{aligned}
\frac{x}{2 q} &\min\left(1, \frac{c_0}{(\pi \delta)^2}\right)
\min\left(\frac{4}{5} \frac{q/\phi(q)}{\log^+ \frac{\min(Q/2V,U)}{2q}},1\right) \log Vq\\
&+ \frac{c_{10,I}}{4 x} \left(0.5004 V^2 q \left(\frac{U}{q}+1\right)^2 +
2 U V q \log V + 2 U^2 V \log V\right).\end{aligned}\]

Assume $Q \leq 2 U V/e$.
Using (\ref{eq:trado1}), (\ref{eq:qotro}), (\ref{eq:charol})
 and the inequality $vq \leq V q\leq Q$ (which implies $q/2\leq U/e$), 
we see that (\ref{eq:douze})
is at most
\[\begin{aligned}
&1.0004 \left(\left(\frac{2\sqrt{c_0 c_1}}{\pi} + \frac{3 c_1}{2 e c_2}
\right) UV + 
\frac{\sqrt{c_0 c_1}}{\pi} Q \log \frac{U}{q/2}\right)
\\ + &\left(c_{5,I_2} 
\max\left(
 \log \frac{c_{11,I} q^2}{x}, 2\right)
 + c_{6,I_2}\right) Q,\end{aligned}\]
where $c_{5,I_2} = 3.53312 > 1.0004 \cdot c_{8,I}$ 
and \[c_{6,I_2} = \frac{2 \sqrt{3 c_0 c_1}}{\pi} + \frac{3 c_1}{2 c_2} + 
\frac{55 c_0 c_2}{6 \pi^2} .\]

The expressions in (\ref{eq:cheaslu}) get estimated similarly. In particular,
\[\begin{aligned}
\sum_{v\leq V} &\Lambda(v) \min\left(\left\lfloor
\frac{x/v}{|\delta| q_v}\right\rfloor + 1, 2 U\right) 
\cdot \frac{1}{2}
\log^+ \frac{2 U}{\left\lfloor \frac{x/v}{|\delta| q_v}\right\rfloor + 1}\\
&\leq \sum_{v\leq V} \Lambda(v) \max_{t>0} t \log^+ \frac{U}{t}
 \leq
\sum_{v\leq V} \Lambda(v)  \frac{U}{e} = \frac{1.0004}{e} UV,
\end{aligned}\]
but
\[\begin{aligned}
\sum_{v\leq V} &\Lambda(v) \min\left(\left\lfloor
\frac{x/v}{|\delta| q_v}\right\rfloor + 1, 2 U\right) 
 \leq \sum_{v\leq \frac{x}{2 U |\delta| q}} \Lambda(v) \cdot 2 U \\ &+ 
\mathop{\sum_{\frac{x}{2 U |\delta| q} < v \leq V}}_{\gcd(v,q)=1} \Lambda(v)
\frac{x/|\delta|}{
v q} + \sum_{v\leq V} \Lambda(v) + \mathop{\sum_{v\leq V}}_{\gcd(v,q)\ne 1}
\Lambda(v) \frac{x/|\delta|}{v} \left(\frac{1}{q_v} - \frac{1}{q}\right)\\
&\leq 1.03883 \frac{x}{|\delta| q} + \frac{x}{|\delta| q} 
\max\left(\log V - \log \frac{x}{2 U |\delta| q} + \log \frac{3}{\sqrt{2}},0\right)\\
&+ V + \frac{x}{|\delta|} \frac{1}{q} \sum_{p|q} (\log p) v_p(q)\\
&\leq \frac{x}{|\delta| q} \left(1.03883+ \log q + 
 \log^+ \frac{6 U V |\delta| q}{\sqrt{2} x}\right) + 1.0004 V 
\end{aligned}\]
by (\ref{eq:rala}), (\ref{eq:ralobio}), (\ref{eq:trado1})
 and (\ref{eq:trado2}); we are proceeding
much as in (\ref{eq:avamys}).

If $|\delta|\leq 1/2 c_2$, then, assuming $Q \leq 2 U V/e$,
we conclude that
$|S_{I,2}|$ is at most
\begin{equation}\label{eq:chusan}\begin{aligned}
&\frac{x}{2 \phi(q)} \min\left(1, \frac{c_0}{(\pi \delta)^2}\right)
\min\left(\frac{4/5}{\log^+ \frac{Q}{4 V q^2}},1\right) \log V q \\ &+
c_{8,I_2} \frac{x}{q} \left(\frac{U V}{x}\right)^2
\left(1 + \frac{q}{U}\right)^2 + \frac{c_{10,I}}{2} \left(\frac{UV}{x}
q \log V + \frac{U^2 V}{x} \log V\right)
\end{aligned}\end{equation}
plus
\begin{equation}\label{eq:fausto}
(c_{4,I_2} + c_{9,I_2}) UV + (c_{10,I_2} \log \frac{U}{q} +
c_{5,I_2} \max\left(
 \log \frac{c_{11,I} q^2}{x}, 2\right) 
 + c_{12,I_2})\cdot  Q,
\end{equation}
where \[\begin{aligned}
c_{4,I_2} &= 3.57422 >
2 \sqrt{c_0 c_1}/\pi,\\ 
c_{5,I_2} &= 3.53312 > 1.0004 \cdot c_{8,I},\\
c_{8,I_2} &= 1.17257 > \frac{c_{10,I}}{4}\cdot 0.5004,\\
c_{9,I_2} &= 0.82214 > 3 c_1 \cdot 1.0004/2 e c_2,\\
c_{10,I_2} &= 1.78783 > 1.0004 \sqrt{c_0 c_1}/\pi,\\ 
c_{12,I_2} &= 28.26771 > c_{6,I_2} + c_{10,I_2} \log 2.\end{aligned}\]
If $|\delta|\geq 1/2 c_2$, then
$|S_{I,2}|$ is at most
(\ref{eq:chusan}) plus 
\begin{equation}\label{eq:magus}\begin{aligned}
&(c_{4,I_2} + (1+\epsilon) c_{13,I_2}) U V +  c_\epsilon \left(
c_{14,I_2} \left(\log q + \log^+ \frac{6 UV |\delta| q}{\sqrt{2} x}\right)
+ c_{15,I_2}\right) \frac{x}{|\delta| q} \\ &+ 
c_{16,I_2} \left(2 + \frac{1+\epsilon}{\epsilon} \log^+ \frac{2 UV
|\delta| q}{x}\right) \frac{x}{Q/V} + c_{17,I_2} Q + 
c_\epsilon\cdot 
c_{18,I_2} V,
\end{aligned}\end{equation}
where 
\[\begin{aligned}
c_{13,I_2} &= 1.31541 > \frac{2\sqrt{c_0 c_1}}{\pi} \cdot \frac{1.0004}{e},\\
c_{14,I_2} &= 3.57422 > \frac{2 \sqrt{c_0 c_1}}{\pi} ,\\
c_{15,I_2} &= 3.71301 > \frac{2 \sqrt{c_0 c_1}}{\pi} \cdot
1.03883 ,\\ c_{16,I_2} &= 
1.50061 > 1.0004\cdot 3 c_1/2\\
c_{17,I_2} &= 25.0295 > 1.0004\cdot \frac{35 c_0 c_2}{3 \pi^2},\\
c_{18,I_2} &= 3.57565 > \frac{2 \sqrt{c_0 c_1}}{\pi} \cdot 1.0004,
\end{aligned}\]
and $c_\epsilon = (1+\epsilon) \sqrt{3+2\epsilon}$.
We recall that $c_2 = 6\pi/5\sqrt{c_0} = 0.67147\dotsc$.
We will choose $\epsilon \in (0,1)$ later.

{\em The case $q>Q/V$.}
We use Lemma \ref{lem:bogus} in this case.

\subsubsection{Type II terms.} As we showed in 
(\ref{eq:adoucit})--(\ref{eq:negli}), $S_{II}$ (given in (\ref{eq:adoucit}))
is at most
\begin{equation}\label{eq:secint}
4 \int_V^{x/U} \sqrt{S_1(U,W) \cdot S_2(U,V,W)} \frac{d W}{W} +
4 \int_V^{x/U} \sqrt{S_1(U,W) \cdot S_3(W)} \frac{d W}{W},\end{equation}
where $S_1$, $S_2$ and $S_3$ are as in (\ref{eq:costo}) and (\ref{eq:negli}).
We bounded $S_1$ in (\ref{eq:menson1}) and (\ref{eq:menson2}), $S_2$ in Prop. \ref{prop:kraken}
and $S_3$ in (\ref{eq:negli}).

We first recall our estimate for $S_1$. In the whole range 
$\lbrack V,x/U\rbrack$ for $W$, we know from (\ref{eq:menson1})
and (\ref{eq:menson2}) that
$S_1(U,W)$ is at most
\begin{equation}\label{eq:nehru2}\frac{2}{\pi^2} \frac{x}{W} + 
\kappa_{0} \zeta(3/2)^3 \frac{x}{W} \sqrt{\frac{x/WU}{U}} ,
\end{equation} where
\[\kappa_{0} = 1.27 .\]
(We recall we are working with $v=2$.)

We have better estimates for the constant in front in some parts of the
range; in what is usually the main part, 
 (\ref{eq:menson2}) and (\ref{eq:velib}) give us a constant of $0.15107$ 
instead of $2/\pi^2$. Note that $1.27 \zeta(3/2)^3 =
22.6417\dotsc$. We should choose $U$, $V$ so that
the first term dominates. For the while being, assume only
\begin{equation}\label{eq:curious}
U\geq 5\cdot 10^5 \frac{x}{VU};
\end{equation} then (\ref{eq:nehru2}) gives
\begin{equation}\label{eq:crudo}
S_1(U,W)\leq \kappa_{1} \frac{x}{W},\end{equation}
where \[\kappa_{1} =  
\frac{2}{\pi^2} + \frac{22.6418}{\sqrt{10^6/2}} \leq  0.2347.\]
This will suffice for our cruder estimates.

The second integral in (\ref{eq:secint}) is now easy to bound.
By (\ref{eq:negli}),
\[S_3(W)\leq 1.0171 x + 2.0341 W \leq 1.0172 x,\]
since $W \leq x/U \leq x/5 \cdot 10^5$. Hence
\[\begin{aligned}
4 \int_V^{x/U} \sqrt{S_1(U,W) \cdot S_3(W)}\; \frac{d W}{W}
&\leq 
4\int_V^{x/U} \sqrt{\kappa_{w,1} \frac{x}{W}\cdot 1.0172 x}\; \frac{dW}{W}\\
&\leq \kappa_{9} \frac{x}{\sqrt{V}},
\end{aligned}\]
where \[\kappa_{9} = 8 \cdot \sqrt{1.0172\cdot \kappa_{1}}
\leq 3.9086 .\]
(We are using the easy bound $\sqrt{a+b+c}\leq \sqrt{a}+\sqrt{b}+
\sqrt{c}$.)

Let us now examine $S_2$, which was bounded in Prop. \ref{prop:kraken}. 
Recall $W' = \max(V,W/2)$, $U' = \max(U,x/2W)$. 
Since $W'\geq W/2$ and $W\geq V\geq 117$, we can always bound
\begin{equation}\label{eq:immer}
\sum_{W'<p\leq W} (\log p)^2 \leq \frac{1}{2} W (\log W).\end{equation}
by (\ref{eq:kast}).

{\em Bounding $S_2$ for $\delta$ arbitrary.}
We set
\[W_0 = \min(\max(2 \theta q,V),x/U),\]
where $\theta\geq e$ is a parameter that will be set later.

For $V\leq W<W_0$, we use the bound (\ref{eq:garn1b}):
\[\begin{aligned}
S_2(U',W',W) &\leq 
\left(\max(1,2\rho) 
\left(\frac{x}{8 q} + \frac{x}{2 W}\right) + \frac{W}{2} + 2 q\right)
\cdot \frac{1}{2} W (\log W) \\
&\leq 
\max\left(\frac{1}{2},\rho\right) 
\left(\frac{W}{8 q} + \frac{1}{2}\right) x \log W + \frac{W^2 \log W}{4} + q W \log W
,\end{aligned}\]
where 
$\rho = q/Q$. 

If $W_0>V$, the contribution of the terms with $V\leq W<W_0$ 
to (\ref{eq:secint}) is 
(by \ref{eq:crudo}) bounded by
\begin{equation}\label{eq:rook}\begin{aligned}
4 &\int_V^{W_0} \sqrt{\kappa_{1}
\frac{x}{W} \left(\frac{\rho_0}{4}
\left(\frac{W}{4q} + 1\right) x \log W + \frac{W^2 \log W}{4} + q W \log W
\right)}\; \frac{dW}{W} \\ 
&\leq \frac{\kappa_{2}}{2} \sqrt{\rho_0}
x \int_V^{W_0} \frac{\sqrt{\log W}}{W^{3/2}} dW +
\frac{\kappa_{2}}{2} \sqrt{x} \int_V^{W_0}  \frac{\sqrt{\log W}}{W^{1/2}} dW \\ &+
\kappa_{2} \sqrt{\frac{\rho_0 x^2}{16 q} + q x}
 \int_V^{W_0}\frac{\sqrt{\log W}}{W} dW\\
&\leq \left(\kappa_{2} \sqrt{\rho_0}
\frac{x}{\sqrt{V}} + \kappa_{2} \sqrt{x W_0}\right) \sqrt{\log W_0}\\
 &+ \frac{2 \kappa_{2}}{3} \sqrt{\frac{\rho_0 x^2}{16 q} + q x}
 \left((\log W_0)^{3/2} - (\log V)^{3/2} \right) 
,\end{aligned}\end{equation}
where $\rho_0 = \max(1,2\rho)$
and
\[\kappa_{2} = 4 \sqrt{\kappa_{1}} \leq 
1.93768 .
\]

We now examine the terms with $W\geq W_0$. 
(If $\theta q > x/U$, then $W_0=U/x$, the contribution of the case is nil,
and the computations below can be ignored.)

We use (\ref{eq:garn1a}): 
\[
S_2(U',W',W)\leq 
\left(\frac{x}{4 \phi(q)} \frac{1}{\log(W/2q)}
 + \frac{q}{\phi(q)} \frac{W}{\log(W/2q)} \right)\cdot \frac{1}{2} W \log W.\]
By $\sqrt{a+b}\leq \sqrt{a}+\sqrt{b}$, we can take out the 
$q/\phi(q) \cdot W/\log(W/2q)$
term and estimate its contribution on its own; it is at most
\begin{equation}\label{eq:vivaldi}\begin{aligned}
4 \int_{W_0}^{x/U} &\sqrt{\kappa_{1} \frac{x}{W} \cdot \frac{q}{\phi(q)}
\cdot \frac{1}{2} W^2 \frac{\log W}{\log W/2q}}\; \frac{dW}{W}\\
&= \frac{\kappa_{2}}{\sqrt{2}} \sqrt{\frac{q}{\phi(q)}}
\int_{W_0}^{x/U} \sqrt{\frac{x \log W}{W \log W/2q}} dW\\
&\leq \frac{\kappa_{2}}{\sqrt{2}} \sqrt{\frac{q x}{\phi(q)}}
\int_{W_0}^{x/U} \frac{1}{\sqrt{W}} \left(1 + \sqrt{\frac{\log 2q}{\log W/2q}}
\right) dW
\end{aligned}\end{equation}
\\ 

Now
\[
\int_{W_0}^{x/U} 
 \frac{1}{\sqrt{W}} \sqrt{\frac{\log 2q}{\log W/2q}} dW \leq
\sqrt{2q\log 2q} \int_{\max(\theta,V/2q)}^{x/2Uq} \frac{1}{\sqrt{t \log t}} dt.\]
We bound this last integral somewhat crudely: for $T\geq e$,
\begin{equation}\label{eq:notung}
\int_e^T \frac{1}{\sqrt{t \log t}} dt \leq
2.3 \sqrt{\frac{T}{\log T}}
 \end{equation}
(by numerical work for $e\leq T\leq T_0$ and by comparison of derivatives
for $T>T_0$, where $T_0=e^{(1-2/2.3)^{-1}} = 2135.94\dotsc$). Since $\theta\geq e$,
this gives us that
\[\begin{aligned}
\int_{W_0}^{x/U} 
&\frac{1}{\sqrt{W}} \left(1+ 
 \sqrt{\frac{\log 2q}{\log W/2q}}\right) dW 
\\ &\leq 2 \sqrt{\frac{x}{U}} 
+ 2.3 \sqrt{2 q\log 2q}
 \cdot \sqrt{\frac{x/2Uq}{
\log x/2Uq}} ,
\end{aligned}\]
and so (\ref{eq:vivaldi}) is at most
\[\sqrt{2} \kappa_{2} 
\sqrt{\frac{q}{\phi(q)}} \left(1 + 1.15 \sqrt{\frac{\log 2q}{\log x/2Uq}}
\right) \frac{x}{\sqrt{U}}.\]

We are left with what will usually be the main term, viz.,
\begin{equation}\label{eq:soledad}
4\int_{W_0}^{x/U} \sqrt{S_1(U,W)\cdot \left(\frac{x}{8 \phi(q)}
\frac{\log W}{\log W/2q}\right) W} \frac{dW}{W},\end{equation}
which, by (\ref{eq:menson2}),  is at most $x/\sqrt{\phi(q)}$ times the 
integral of
\[
\frac{1}{W}
\sqrt{\left(2 H_2\left(\frac{x}{WU}\right) + \frac{\kappa_{4}}{2}
\sqrt{\frac{x/WU}{U}}\right) \frac{\log W}{\log W/2q}}
\]
for $W$ going from $W_0$ to $x/U$, where $H_2$ is as in (\ref{eq:palmiped})
and \[\kappa_{4} = 4 \kappa_{0} \zeta(3/2)^3 \leq 90.5671 .\]
By the arithmetic/geometric mean inequality, the integrand is at most
$1/W$ times
\begin{equation}\label{eq:dane}
\frac{\beta + \beta^{-1}\cdot 2 H_2(x/WU)}{2} + \frac{\beta^{-1}}{2} 
\frac{\kappa_{4}}{2} \sqrt{\frac{x/WU}{U}} 
+  \frac{\beta}{2} \frac{\log 2q}{\log W/2q}\end{equation}
for any $\beta>0$. We will choose $\beta$ later.

The first summand in (\ref{eq:dane})
gives what we can think of as the main or worst term
in the whole paper; let us compute it first. The integral is
\begin{equation}\label{eq:quartma}\begin{aligned}
\int_{W_0}^{x/U} \frac{\beta+\beta^{-1}\cdot 2 H_2(x/WU)}{2} \frac{dW}{W} &=
\int_1^{x/U W_0} \frac{\beta+\beta^{-1}\cdot 2 H_2(s)}{2} \frac{ds}{s}
\\ &\leq \left(\frac{\beta}{2} + \frac{\kappa_{6}}{4\beta}\right)
 \log \frac{x}{U W_0} 
\end{aligned}\end{equation}
by (\ref{eq:velib}),
where \[\kappa_{6} = 0.60428.\] 

Thus the main term is simply
\begin{equation}\label{eq:gogolo}
\left(\frac{\beta}{2} + \frac{\kappa_{6}}{4 \beta}\right) 
\frac{x}{\sqrt{\phi(q)}} \log \frac{x}{U W_0}.
\end{equation}

The integral of the second summand is at most
\[\begin{aligned}
\beta^{-1} \cdot \frac{\kappa_{4}}{4} \frac{\sqrt{x}}{U} 
\int_V^{x/U} \frac{d W}{W^{3/2}} 
&\leq \beta^{-1} \cdot \frac{\kappa_{4}}{2} \sqrt{\frac{x/UV}{U}} .
\end{aligned}\]
By (\ref{eq:curious}), this is at most 
\[\frac{\beta^{-1}}{\sqrt{2}}\cdot  10^{-3} \cdot \kappa_{4} \leq 
\beta^{-1} \kappa_{7}/2,\]
where \[\kappa_{7}= \frac{\sqrt{2} \kappa_{4}}{1000} \leq
 0.1281 .\]
Thus the contribution
of the second summand is at most
\[\frac{\beta^{-1} \kappa_{7}}{2}\cdot \frac{x}{\sqrt{\phi(q)}} .\]
The integral of the third summand in (\ref{eq:dane}) is
\begin{equation}\label{eq:resist}
\frac{\beta}{2} \int_{W_0}^{x/U} \frac{\log 2q}{\log W/2q} \frac{dW}{W}.
\end{equation}
If $V < 2 \theta q\leq x/U$, this is
\[\begin{aligned}
\frac{\beta}{2} \int_{2 \theta q}^{x/U} \frac{\log 2q}{\log W/2q} 
\frac{dW}{W} &=
\frac{\beta}{2} \log 2q \cdot \int_{\theta}^{x/2Uq} \frac{1}{\log t} \frac{dt}{t}\\
&= \frac{\beta}{2} \log 2q \cdot \left(\log \log \frac{x}{2Uq} - \log \log 
\theta\right).
\end{aligned}\]
If $2\theta q > x/U$, 
the integral is over an empty range and its 
contribution is hence $0$.

If $2\theta q\leq V$, (\ref{eq:resist}) is 
\[\begin{aligned}
\frac{\beta}{2} \int_V^{x/U} \frac{\log 2q}{\log W/2q} \frac{dW}{W} &=
\frac{\beta \log 2q}{2} \int_{V/2q}^{x/2Uq} \frac{1}{\log t} \frac{dt}{t} \\&=
\frac{\beta \log 2q}{2} \cdot (\log \log \frac{x}{2Uq} - \log \log V/2q)\\ &=
\frac{\beta \log 2q}{2} \cdot \log \left(1+ \frac{\log x/UV}{\log V/2q}\right).
\end{aligned}\]
(Of course, $\log (1+(\log x/UV)/(\log V/2q))\leq (\log x/UV)/(\log V/2q)$;
this is smaller than $(\log x/UV)/\log 2q$ when $V/2q>2q$.)

The total bound for (\ref{eq:soledad}) is thus
\begin{equation}\label{eq:egmont}
\frac{x}{\sqrt{\phi(q)}} \cdot \left(
\beta \cdot \left(\frac{1}{2} \log \frac{x}{U W_0} + \frac{\Phi}{2}\right) +
\beta^{-1} \left(\frac{1}{4} \kappa_{6} \log \frac{x}{U W_0} + 
\frac{\kappa_{7}}{2}
\right)\right),\end{equation}
where
\begin{equation}\label{eq:cocot}
\Phi = 
\begin{cases}
\log 2q
\left(\log \log \frac{x}{2Uq} - \log \log \theta\right) 
&\text{if $V/2\theta<q<x/(2\theta U)$.}\\
\log 2q
\log \left(1+ \frac{\log x/UV}{\log V/2q}\right) 
&\text{if $q\leq V/2\theta$.}
\end{cases}
\end{equation}
Choosing $\beta$ optimally, we obtain that (\ref{eq:soledad}) is at most
\begin{equation}\label{eq:valmont}
  \frac{x}{\sqrt{2 \phi(q)}} \sqrt{\left(\log \frac{x}{U W_0} + \Phi\right)
\left(\kappa_{6} \log \frac{x}{U W_0} + 2 \kappa_{7}\right)},
\end{equation}
where $\Phi$ is as in (\ref{eq:cocot}).

{\em Bounding $S_2$ for $|\delta|\geq 8$.} 
Let us see how much a non-zero $\delta$ can help us. It makes
sense to apply (\ref{eq:procida2}) only when $|\delta|\geq 4$; otherwise
(\ref{eq:garn1a}) is almost certainly better. Now, by definition,
$|\delta|/x\leq 1/qQ$, and so $|\delta|\geq 8$ can happen only when 
$q \leq x/8Q$. 

With this in mind, let us apply (\ref{eq:procida2}).
Note first that
\[\begin{aligned}
\frac{x}{|\delta q|} \left(q + \frac{x}{4 W}\right)^{-1} &\geq
\frac{1/|\delta q|}{\frac{q}{x} + \frac{1}{4 W}} \geq
\frac{4/|\delta q|}{\frac{1}{2 Q} + \frac{1}{W}}\\ &\geq
\frac{4 W}{|\delta| q} \cdot \frac{1}{1 + \frac{W}{2 Q}}
\geq \frac{4 W}{|\delta| q} \cdot \frac{1}{1 + \frac{x/U}{2 Q}}
.\end{aligned}\]
This is at least $2 \min(2 Q,W)/|\delta q|$.
Thus we may apply (\ref{eq:procida2})--(\ref{eq:procida3}) 
when $|\delta q|\leq 2 \min(2Q,W)$.
Since $Q\geq x/U$, we know that $\min(2Q,W)=W$ for all $W\leq x/U$, and
so it is enough to assume that $|\delta q|\leq 2 W$. 

Recalling also (\ref{eq:immer}), we see that (\ref{eq:procida2}) gives us
\begin{equation}\label{eq:thislife}\begin{aligned}
S_2(U',W',W) &\leq \min\left(1,\frac{2 q/\phi(q)}{
\log \left(\frac{4 W}{|\delta| q} \cdot \frac{1}{1 + \frac{x/U}{2 Q}}
\right)}\right)
 \left(\frac{x}{|\delta q|} + \frac{W}{2}\right)
\cdot \frac{1}{2} W (\log W).\end{aligned}\end{equation}

Similarly to before, we define $W_0 = \max(V, \theta |\delta q|)$, where
$\theta\geq 1$ will be set later. For $W\geq W_0$, we certainly have
$|\delta q|\leq 2 W$. Hence the part of (\ref{eq:secint}) coming from
the range $W_0\leq W < x/U$ is
\begin{equation}\label{eq:tort}
\begin{aligned}4 &\int_{W_0}^{x/U} \sqrt{S_1(U,W) \cdot S_2(U,V,W)} \frac{dW}{W} 
\\ &\leq 4 \sqrt{\frac{q}{\phi(q)}} \int_{W_0}^{x/U} 
 \sqrt{S_1(U,W) \cdot \frac{\log W}{
\log \left(\frac{4 W}{|\delta| q} \cdot \frac{1}{1 + \frac{x/U}{2 Q}}\right)}
   \left(\frac{W x}{|\delta q|} + \frac{W^2}{2}\right)} \frac{dW}{W}.
\end{aligned}\end{equation}
By (\ref{eq:menson2}), the contribution of the term $W x/|\delta q|$ to
(\ref{eq:tort}) is at most
\[\frac{4 x}{\sqrt{|\delta| \phi(q)}} 
\int_{W_0}^{x/U} \sqrt{\left(H_2\left(\frac{x}{WU}\right) + 
\frac{\kappa_4}{4} \sqrt{\frac{x/WU}{U}}\right) 
\frac{\log W}{\log \left(
\frac{4 W}{|\delta| q} \cdot \frac{1}{1 + \frac{x/U}{2 Q}}\right)}}
 \frac{dW}{W}\]
Note that $1+(x/U)/2Q\leq 3/2$.
Proceeding as in (\ref{eq:soledad})--(\ref{eq:valmont}), we obtain that this is
at most 
\[\frac{2 x}{\sqrt{|\delta| \phi(q)}}
\sqrt{\left(\log \frac{x}{U W_0} + \Phi\right) \left(\kappa_{6}
\log \frac{x}{U W_0} + 2 \kappa_{7}\right)},\]
where \begin{equation}\label{eq:regxo1}
\Phi = \begin{cases}
\log \frac{(1+\epsilon_1) |\delta q|}{4} \log \left(1 + \frac{\log x/UV}{
\log 4 V/|\delta| (1+ \epsilon_1) q}\right) &\text{if $|\delta q|\leq V/\theta$,}\\
\log \frac{3 |\delta q|}{8} 
\left(\log \log \frac{8 x}{3 U |\delta q|} - \log \log \frac{8 \theta}{3}\right)
&\text{if $V/\theta < |\delta q| \leq x/\theta U$,}
\end{cases}\end{equation}
where $\epsilon_1 = x/2 U Q$. This is what we think of as the main term.

By (\ref{eq:crudo}), the contribution of the term 
$W^2/2$ to (\ref{eq:tort}) is at most
\begin{equation}\label{eq:heeren}
4 \sqrt{\frac{q}{\phi(q)}} \int_V^{x/U} \sqrt{\frac{\kappa_1}{2} x} \frac{
dW}{\sqrt{W}}
\cdot \max_{V\leq W\leq \frac{x}{U}} \sqrt{\frac{\log W}{\max\left(\log \frac{2 W}{|\delta
q|},2\right)}}.\end{equation}
Since $t\to (\log t)/(\log t/c)$ is decreasing for $t>c$, (\ref{eq:heeren})
 is at most
\[ 4 \sqrt{2 \kappa_1} \sqrt{\frac{q}{\phi(q)}} \frac{x}{\sqrt{U}}
\sqrt{\frac{\log W_0}{\max\left(\log \frac{8 W_0}{3 |\delta q|},\frac{8}{3}\right)}}.\]

If $W_0 > V$, we also have to consider the range $V\leq W < W_0$.
The part of (\ref{eq:secint}) coming from this is
\[
4 \int_{V}^{\theta |\delta q|} \sqrt{S_1(U,W) \cdot
(\log W) \left(\frac{W x}{2 |\delta q|} + \frac{W^2}{4} + 
\frac{W x}{16 (1-\rho) Q} +
\frac{x}{8 (1-\rho)}\right)} \frac{dW}{W}. 
\]
We have already counted the contribution of $W^2/4$ in the above.
The terms $W x/2 |\delta| q$ and $W x/(16 (1-\rho) Q)$  contribute at most
\[\begin{aligned} 4 \sqrt{\kappa_1} \int_{V}^{\theta |\delta q|} 
&\sqrt{\frac{x}{W} \cdot (\log W) W \left(
\frac{x}{2 |\delta q|} + \frac{x}{16 (1-\rho) Q}\right)
} \frac{dW}{W}\\
&= 4 \sqrt{\kappa_1} x \left(\frac{1}{\sqrt{2 |\delta| q}} +
\frac{1}{4 \sqrt{(1-\rho) Q}}\right) 
 \int_{V}^{\theta |\delta q|}  
\sqrt{\log W}\; \frac{dW}{W}\\
&\leq \frac{2 \kappa_2}{3} x \left(\frac{1}{\sqrt{2 |\delta| q}} +
\frac{1}{4 \sqrt{(1-\rho) Q}}\right) \left((\log \theta |\delta| q)^{3/2} -
(\log V)^{3/2}\right).\end{aligned}
\]
The term $x/8(1-\rho)$ contributes
\[\begin{aligned}
\sqrt{\kappa_1} x \int_{V}^{\theta |\delta q|} \sqrt{\frac{\log W}{W (1-\rho)}}
\frac{dW}{W} &\leq \frac{\sqrt{\kappa_1} x}{\sqrt{1-\rho}} 
\int_V^\infty \frac{\sqrt{\log W}}{W^{3/2}} dW\\ &\leq
\frac{\kappa_2 x}{2 \sqrt{(1-\rho) V}} (\sqrt{\log V} + \sqrt{1/\log V}), 
\end{aligned}\]

where we use the estimate 
\[\begin{aligned}
\int_V^{\infty} \frac{\sqrt{\log W}}{W^{3/2}} dW &= 
\frac{1}{\sqrt{V}} \int_1^{\infty} \frac{\sqrt{\log u + \log V}}{u^{3/2}} du\\
&\leq \frac{1}{\sqrt{V}} \int_1^{\infty} \frac{\sqrt{\log V}}{u^{3/2}} du +
\frac{1}{\sqrt{V}} \int_1^{\infty} \frac{1}{2\sqrt{\log V}} 
\frac{\log u}{u^{3/2}} du \\
&= 2 \frac{\sqrt{\log V}}{\sqrt{V}} + \frac{1}{2\sqrt{ V\log V}}\cdot 4
\leq \frac{2}{\sqrt{V}} \left(\sqrt{\log V} + \sqrt{1/\log V}\right) .
\end{aligned}\]

\begin{center} * * * \end{center}

It is time to collect all type II terms. Let us start with the case of
general $\delta$. We will set $\theta\geq e$ later. If $q\leq V/2\theta$,
then $|S_{II}|$ is at most
\begin{equation}\label{eq:vinland1}\begin{aligned}
&\frac{x}{\sqrt{2 \phi(q)}} \cdot 
\sqrt{\left(\log \frac{x}{U V} + \log 2q \log \left(1 + \frac{\log x/UV}{\log V/2q}\right)\right) \left(\kappa_{6} \log
\frac{x}{U V} + 2 \kappa_{7}\right)}\\
&+ \sqrt{2} \kappa_{2} \sqrt{\frac{q}{\phi(q)}} \left(1 + 1.15
\sqrt{\frac{\log 2q}{\log x/2Uq}}
\right) \frac{x}{\sqrt{U}} +
 \kappa_{9} \frac{x}{\sqrt{V}} .
\end{aligned}
\end{equation}
If $V/2\theta <q \leq x/2\theta U$, then $|S_{II}|$ is at most
\begin{equation}\label{eq:vinland2}\begin{aligned}
&\frac{x}{\sqrt{2 \phi(q)}} \cdot 
\sqrt{\left(\log \frac{x}{U\cdot 2\theta q} + \log 2q \log \frac{\log x/2Uq}{\log \theta}\right) 
\left(\kappa_{6} \log
\frac{x}{U \cdot 2 \theta q} + 2 \kappa_{7}\right)}\\
&+ \sqrt{2} \kappa_{2} 
\sqrt{\frac{q}{\phi(q)}} \left(1 + 1.15\sqrt{\frac{\log 2q}{\log x/2Uq}}
\right) \frac{x}{\sqrt{U}} +
(\kappa_{2} \sqrt{\log 2\theta q} + \kappa_{9}) \frac{x}{\sqrt{V}}\\
&+
\frac{\kappa_{2}}{6}
\left((\log 2\theta q)^{3/2} - (\log V)^{3/2}\right) \frac{x}{\sqrt{q}} \\
&+ \kappa_{2} \left(\sqrt{2\theta \cdot \log 2\theta q} + \frac{2}{3}
((\log 2\theta q)^{3/2} - (\log V)^{3/2}) \right) \sqrt{q x} ,
\end{aligned}\end{equation}
where we use the fact that $Q\geq x/U$ (implying that
$\rho_0 = \max(1,2q/Q)$ equals $1$ for $q\leq x/2 U$).
Finally, if $q>x/2\theta U$,
\begin{equation}\label{eq:vinland3}\begin{aligned}|S_{II}|&\leq 
(\kappa_{2} \sqrt{2 \log x/U} + \kappa_{9}) \frac{x}{\sqrt{V}}
+ \kappa_{2} \sqrt{\log x/U} \frac{x}{\sqrt{U}}\\
&+ \frac{2 \kappa_{2}}{3} ((\log x/U)^{3/2} - (\log V)^{3/2})
\left(\frac{x}{2 \sqrt{2 q}} + \sqrt{q x}\right).
\end{aligned}\end{equation}

Now let us examine the alternative bounds for $|\delta|\geq 8$.
If $|\delta q|\leq V/\theta$, then $|S_{II}|$ is at most 
\begin{equation}\label{eq:eriksaga}
\begin{aligned}
&\frac{2 x}{\sqrt{|\delta| \phi(q)}}
\sqrt{
\log \frac{x}{U V} + \log \frac{|\delta q| (1+\epsilon_1)}{4} \log 
\left(1 + \frac{\log x/UV}{\log \frac{4 V}{
|\delta| (1+\epsilon_1) q}}\right)}\\
&\cdot
\sqrt{\kappa_{6}
\log \frac{x}{U V} + 2 \kappa_{7}}
\\ &+ \kappa_{2} \sqrt{\frac{2 q}{\phi(q)}} 
\cdot \sqrt{\frac{\log V}{\log 2 V/|\delta q|}} \cdot \frac{x}{\sqrt{U}}
+ \kappa_9 \frac{x}{\sqrt{V}},
\end{aligned}
\end{equation}
where $\epsilon_1 = x/2UQ$.
If $|\delta q|> V/\theta$, then $|S_{II}|$ is at most 
\begin{equation}\label{eq:vinlandsaga}
\begin{aligned}
&\frac{2 x}{\sqrt{|\delta| \phi(q)}}
\sqrt{\left(\log \frac{x}{U\cdot \theta |\delta| q} + 
\log \frac{3 |\delta q|}{8} 
\log \frac{\log \frac{8 x}{ 3 U |\delta q|}}{\log 8\theta/3}\right) 
\left(\kappa_{6}
\log \frac{x}{U\cdot \theta |\delta q|} + 2 \kappa_{7}\right)}\\
&+ \frac{2 \kappa_{2}}{3} 
\left(\frac{x}{\sqrt{2 |\delta q|}} +
\frac{x}{4 \sqrt{Q-q}}\right) 
\left((\log \theta |\delta q|)^{3/2} - (\log V)^{3/2}\right)
\\ &+
\left(\frac{\kappa_2}{\sqrt{2 (1-\rho)}} \left(\sqrt{\log V} + 
\sqrt{1/\log V}\right) + \kappa_{9}\right) \frac{x}{\sqrt{V}} 
\\ &+
\kappa_2 \sqrt{\frac{q}{\phi(q)}} \cdot \sqrt{\log \theta |\delta q|}
\cdot \frac{x}{\sqrt{U}},
\end{aligned}
\end{equation}
where $\rho = q/Q$.
 (Note that $|\delta|\leq x/Q q$ implies
$\rho\leq x/4 Q^2$, and so $\rho$ will be very small
and $Q-q$ will be very close to $Q$.)
\subsection{Adjusting parameters. Calculations.}\label{subs:totcho}
We must bound the exponential sum $\sum_n \Lambda(n) e(\alpha n)
\eta(n/x)$. By (\ref{eq:bob}), it is enough to sum the bounds
we obtained in \S \ref{subs:putmal}.
We will now see how it will be best to set $U$, $V$ and other parameters.

Usually, the largest terms will be
\begin{equation}\label{eq:llama}
C_0 UV,\end{equation}
where
\begin{equation}\label{eq:guanaco}
C_0 = \begin{cases}
c_{4,I_2} + c_{9,I_2} = 4.39636 &\text{if $|\delta|\leq 1/2c_2 \sim 0.74463$,}\\
c_{4,I_2} + (1+\epsilon) c_{13,I_2} = 4.88963+ 1.31541 \epsilon
&\text{if $|\delta|> 1/2 c_2$}\end{cases}
\end{equation}
(from (\ref{eq:fausto}) and (\ref{eq:magus}), type I;
 $\epsilon\in (0,1)$ will be set later) and 
\begin{equation}\label{eq:codo1}\begin{aligned}
\frac{x}{\sqrt{\delta_0 \phi(q)}}
\sqrt{
\log \frac{x}{U V} + (\log \delta_0 (1+\epsilon_1) q) \log 
\left(1 + \frac{\log \frac{x}{UV}}{\log \frac{V}{\delta_0 (1+\epsilon_1) q}}\right)} 
\sqrt{\kappa_{6}
\log \frac{x}{U V} + 2 \kappa_{7}}
\end{aligned}\end{equation}
(from (\ref{eq:vinland1}) and (\ref{eq:eriksaga}), type II; here
 $\delta_0 = \max(2,|\delta|/4)$, while $\epsilon_1 = x/2 U Q$ for 
$|\delta|>8$ and $\epsilon_1 = 0$ for $|\delta|<8$.


We set $UV = \varkappa x/\sqrt{q \delta_0}$; we must choose $\varkappa>0$.

Let us first optimise $\varkappa$ in the case $|\delta|\leq 4$, so that
$\delta_0 = 2$ and $\epsilon_1=0$. For the purpose of choosing $\varkappa$,
we replace $\sqrt{\phi(q)}$ by 
$\sqrt{q}/C_1$, where $C_1 = 2.3536 \sim 510510/\phi(510510)$,
and also replace $V$ by $q^2/c$, $c$ a constant.
We use the approximation
\[\begin{aligned}
\log \left(1 + \frac{\log \frac{x}{U V}}{\log \frac{V}{|2 q|}}\right)
&= \log \left(1 + \frac{\log(\sqrt{2 q}/\varkappa)}{\log(q/2c)}\right) =
\log \left(\frac{3}{2} + \frac{\log 2 \sqrt{c}/\varkappa}{\log q/2c}\right)\\ 
&\sim
\log \frac{3}{2} + \frac{2 \log 2 \sqrt{c}/\varkappa}{
3 \log q/2c}.
\end{aligned}\]
What we must minimize, then, is
\begin{equation}\label{eq:cojono}\begin{aligned}
&\frac{C_0 \varkappa}{\sqrt{2 q}} + \frac{C_1}{\sqrt{2 q}}
\sqrt{\left(\log \frac{\sqrt{2 q}}{\varkappa} + \log 2q \left(\log \frac{3}{2}
+ \frac{2 \log \frac{2 \sqrt{c}}{\varkappa}}{
3 \log \frac{q}{2c}}
\right)\right) \left(\kappa_{6}
\log \frac{\sqrt{2 q}}{\varkappa} + 
2 \kappa_{7}\right)}\\
 &\leq \frac{C_0 \varkappa}{\sqrt{2 q}}  + \frac{C_1}{2 \sqrt{q}}
\frac{\sqrt{\kappa_{6}}}{\sqrt{\kappa_1'}}
\sqrt{\kappa_1' \log q 
- \left(\frac{5}{3} + \frac{2}{3} \frac{\log 4c }{\log \frac{q}{2c}}\right) \log \varkappa + \kappa_2'}
\\ &\cdot \sqrt{
\kappa_1' \log q - 2 \kappa_1'
 \log \varkappa + \frac{4 \kappa_1' \kappa_{7}}{\kappa_{6}} +
\kappa_1' \log 2} \\
&\leq
\frac{C_0}{\sqrt{2 q}} \left(\varkappa  + \kappa_{4}' 
\left(\kappa_1' \log q - \left(\left(\frac{5}{6} + \kappa_1'\right) 
+ \frac{1}{3} \frac{\log 4c}{\log \frac{q}{2c}}\right)
\log \varkappa + \kappa_{3}'\right)\right),\end{aligned}\end{equation}
where 
\[\begin{aligned}
\kappa_1' &= \frac{1}{2} + \log \frac{3}{2},\;\;\;\;
\kappa_2' = \log \sqrt{2} + \log 2 \log \frac{3}{2} + \frac{\log 4 c \log 2 q}{3 \log q/2c},\\
\kappa_{3}' &= \frac{1}{2} \left(\kappa_2' + \frac{4 \kappa_1'
\kappa_{7}}{\kappa_{6}} + \kappa_1' \log 2 \right) = 
\frac{\log 4 c}{6} + \frac{(\log 4c)^2}{6 \log \frac{q}{2 c}} + 
\kappa_5',\\
\kappa_{4}' &= 
\frac{C_1}{C_0} \sqrt{\frac{\kappa_{6}}{2 \kappa_1'}} \sim
\begin{cases}
0.30925 & \text{if $|\delta|\leq 4$}\\
\frac{0.27805}{1 + 0.26902 \epsilon} & \text{if $|\delta|>4$},\end{cases}\\
\kappa_5' &= \frac{1}{2} (\log \sqrt{2} + \log 2 \log \frac{3}{2} +
\frac{4 \kappa_1' \kappa_7}{\kappa_6} + \kappa_1' \log 2) \sim
1.01152 . 
\end{aligned}\]
Taking derivatives, we see that the minimum is attained when
\begin{equation}\label{eq:jotoka}
\varkappa = \left(\frac{5}{6}+\kappa_1'+
\frac{1}{3} \frac{\log 4c}{\log \frac{q}{2c}}\right) 
\kappa_{4}' \sim \left(1.7388 + \frac{\log 4c}{3 \log \frac{q}{2c}}\right) 
\cdot 0.30925 
\end{equation}
provided that $|\delta|\leq 4$.
(What we obtain for $|\delta|>4$ is essentially the same, only with 
$\log \delta_0 q = \log |\delta| q/4$ instead of $\log q$,
and $0.27805/(1+0.26902 \epsilon)$ in place of $0.30925$.) 
For $q=5\cdot 10^5$, $c=2.5$ and $|\delta|\leq 4$ 
(typical values in the most delicate range), we get that
 $\varkappa$ should be $0.55834\dotsc$, and
the last line
of (\ref{eq:cojono}) is then
$0.02204\dotsc$; for $q=10^6$, $c=10$, $|\delta|\leq 4$, we get that
$\varkappa$ should be $0.57286\dotsc$, 
and the last line of (\ref{eq:cojono}) is then
 $0.01656\dotsc$. If $|\delta|>4$, $|\delta| q = 5\cdot 10^5$, $c=2.5$
and $\epsilon=0.2$ 
(say), then $\varkappa = 0.47637\dotsc$, and the last line of (\ref{eq:cojono}) is
$0.02243\dotsc$; if $|\delta|>4$, $|\delta| q = 10^6$, $c=10$ and $\epsilon=0.2$, then
$\varkappa = 0.48877\dotsc$, 
and the last line of (\ref{eq:cojono}) is $0.01684\dotsc$.

(A back-of-the-envelope calculation suggests that choosing $w=1$
instead of $w=2$ would have given bounds worse by about $15$ percent.)

We make the choices
\[
\varkappa = 1/2,\;\;\;\; \text{and so}\;\;\;\;\;\; UV = \frac{1}{2 \sqrt{q \delta_0}}
\]
for the sake of simplicity. (Unsurprisingly, (\ref{eq:cojono}) changes
very slowly around its minimum.)


Now we must decide how to choose $U$, $V$ and $Q$, given our choice of 
$UV$. We will actually make two sets of choices. First, we will use the
$S_{I,2}$ estimates for $q\leq Q/V$ to treat all $\alpha$ of the form
$\alpha = a/q + O^*(1/qQ)$, $q\leq y$. (Here $y$ is a parameter satisfying
$y\leq Q/V$.) The remaining $\alpha$ then
get treated with the (coarser) $S_{I,2}$ estimate for $q>Q/V$, with 
 $Q$ reset to a lower value (call it $Q'$). If $\alpha$ was
not treated in the first go (so that it must be dealt with the coarser
estimate) then $\alpha = a'/q'+\delta'/x$, where either $q'>y$ or
$\delta' q' > x/Q$. (Otherwise, $\alpha = a'/q'+O^*(1/q'Q)$ would be a valid 
estimate with $q'\leq y$.)

The value of $Q'$ is set to be smaller than $Q$
both because this is helpful (it diminishes error terms that would be large
for large $q$) and because this is now harmless (since we are no longer
assuming that $q\leq Q/V$). 

\subsubsection{First choice of parameters: $q\leq y$}\label{subs:cojor}

The largest items affected strongly by our choices at this point are
\begin{equation}\label{eq:ned1}\begin{aligned}
c_{16,I_2} \left(2 + \frac{1+\epsilon}{\epsilon} \log^+ \frac{2 U V |\delta| q}{
x}\right) \frac{x}{Q/V} + c_{17,I_2} Q
 \;\;\;\;\;\;\text{(from $S_{I,2}$, $|\delta|> 1/2c_2$)},\\
\left(c_{10,I_2} \log \frac{U}{q} + 2 c_{5,I_2} + c_{12,I_2}\right) Q
 \;\;\;\;\;\;\text{(from $S_{I,2}$, $|\delta|\leq 1/2c_2$)},
\end{aligned}\end{equation}
and
\begin{equation}\label{eq:ned2}
\kappa_{2} \sqrt{\frac{2 q}{\phi(q)}} \left(1 + 1.15\sqrt{\frac{\log 2q}{\log x/2Uq}}
\right) \frac{x}{\sqrt{U}} +
 \kappa_{9} \frac{x}{\sqrt{V}} 
\;\;\;\; \text{(from $S_{II}$)}.
\end{equation}
In addition, we have a relatively mild but important dependence on $V$
in the main term (\ref{eq:codo1}). We must also respect the 
condition $q\leq Q/V$, the lower bound on
$U$ given by (\ref{eq:curious}) and the assumptions made at the
beginning of section \ref{sec:osval} (e.g. $Q\geq x/U$, $V\geq 2\cdot 10^6$). 
Recall that $UV = x/\sqrt{q \delta}$.

We set 
\[Q = \frac{x}{8 y},\]
since we will then have not just $q\leq y$ but also $q |\delta| \leq
x/Q = 8y$, and so $q \delta_0 \leq 4y$.
We want $q\leq Q/V$ to be true whenever $q\leq y$; this means that
\[q\leq \frac{Q}{V} = \frac{Q U}{U V} = \frac{Q U}{x/2 \sqrt{q \delta_0}}
= \frac{U \sqrt{q \delta_0}}{4 y}\]
must be true when $q\leq y$, and so it is enough to set
$U = 4 y^2/\sqrt{q \delta_0}$.
The following choices make sense: we will work with the
parameters
\begin{equation}\label{eq:humid}\begin{aligned}
y &= \frac{x^{1/3}}{6},\;\;\;\;\;\; 
Q=\frac{x}{8 y} = \frac{3}{4} x^{2/3},\;\;\;\;\;\; x/UV = 2 \sqrt{q \delta_0}\leq 2 \sqrt{2 y},\\
U &= \frac{4 y^2}{\sqrt{q \delta_0}}
= \frac{x^{2/3}}{9 \sqrt{q \delta_0}},
\;\;\;\;\;\; 
V = \frac{x}{(x/UV)\cdot U} 
= 
\frac{x}{8 y^2} = \frac{9 x^{1/3}}{2},
\end{aligned}\end{equation}
where, as before, $\delta_0 = \max(2,|\delta|/4)$. 
Thus $\epsilon_1 \leq x/2 U Q \leq 2 \sqrt{6}/x^{1/6}$. Assuming
\begin{equation}\label{eq:voil}
x \geq 2.16 \cdot 10^{20},\end{equation}
we obtain that 
$U/(x/UV) \geq (x^{3/2}/9\sqrt{q\delta_0})/(2 \sqrt{q \delta_0}) = x^{2/3}/18
q \delta_0\geq x^{1/3}/6 \geq 5\cdot 10^5$,
and so (\ref{eq:curious}) holds. We also get that $\epsilon_1\leq 0.002$.

Since $V=x/8 y^2 = (9/2) x^{1/3}$, (\ref{eq:voil}) also implies that
$V\geq 2\cdot 10^6$ (in fact, $V\geq 27\cdot 10^6$). It is easy to check that
\begin{equation}\label{eq:herring}
V<x/4,\;\;\; UV\leq x,\;\;\;\;
Q\geq \sqrt{ex},\;\;\;\;Q\geq \max(U,x/U),\end{equation}
as stated at the beginning of section \ref{sec:osval}.
Let $\theta = (3/2)^3 = 27/8$. Then
\begin{equation}\label{eq:werto}\begin{aligned}
\frac{V}{2 \theta q} &= \frac{x/8 y^2}{2 \theta q} \geq \frac{x}{16 \theta y^3
} = \frac{x}{54 y^3} = 4 > 1,\\
\frac{V}{\theta |\delta q|} &= \frac{x/8 y^2}{8 \theta y} 
\geq \frac{x}{64 \theta y^3} = \frac{x}{216 y^3} = 1.\end{aligned}
\end{equation}



The first type I bound is
\begin{equation}\label{eq:dikaiopolis}\begin{aligned}
|S_{I,1}| &\leq 
\frac{x}{q} \min\left(1, \frac{c_0'}{\delta^2}\right)
\left(\min\left(\frac{\frac{4}{5} \frac{q}{\phi(q)}}{\log^+ 
\frac{x^{2/3}}{9 q^{\frac{5}{2}} \delta_0^\frac{1}{2}}}, 1\right) \left(\log 9 x^{\frac{1}{3}} \sqrt{q \delta_0}
+ c_{3,I}\right)
 + \frac{c_{4,I} q}{\phi(q)} \right)\\
&+ \left(c_{7,I}
\log \frac{y}{c_2} + c_{8,I} \log x \right) y 
+ \frac{c_{10,I} x^{1/3}}{3^4 2^2 q^{3/2} \delta_0^{\frac{1}{2}}} (\log 9 x^{1/3}
\sqrt{e q \delta_0})\\
&+ \left(c_{5,I} \log \frac{2 x^{2/3}}{9 c_2 \sqrt{q \delta_0}} +
c_{6,I} \log \frac{x^{5/3}}{9 \sqrt{q \delta_0}}\right) \frac{x^{2/3}}{9 \sqrt{q
\delta_0}} 
+ c_{9,I} \sqrt{x} \log \frac{2 x}{c_2} 
+ \frac{c_{10,I}}{e}
,\end{aligned}\end{equation}
where the constants are as in \S \ref{subs:renzo}.
The function $x\to (\log cx)/(\log x/R)$, $c,R\geq 1$, attains its maximum on
$\lbrack R',\infty\rbrack$, $R'>R$, at $x=R'$. Hence, for
$q \delta_0$ fixed,
\begin{equation}\label{eq:peergynt}
\min\left(\frac{4/5}{\log^+ 
\frac{4 x^{2/3}}{9 (\delta_0 q)^{\frac{5}{2}}}}, 1\right) \left(\log 9 x^{\frac{1}{3}} 
\sqrt{q \delta_0} + c_{3,I}\right)
\end{equation}
attains its maximum at $x = (27/8) e^{6/5} (q \delta_0)^{15/4}$,
and so 
\[\begin{aligned}
&\min\left(\frac{4/5}{\log^+ 
\frac{4 x^{2/3}}{9 (\delta_0 q)^{\frac{5}{2}}}}, 1\right) \left(\log 9 x^{\frac{1}{3}} 
\sqrt{q \delta_0} + c_{3,I}\right) + c_{4,I}
\\ &\leq \log \frac{27}{2} e^{2/5} (\delta_0 q)^{7/4} + c_{3,I} + c_{4,I} \leq 
\frac{7}{4} \log \delta_0 q + 6.11676 .
\end{aligned}\]
Examining the other terms in (\ref{eq:dikaiopolis})
and using (\ref{eq:voil}), we conclude that
\begin{equation}\label{eq:therwald}\begin{aligned}
|S_{I,1}| &\leq \frac{x}{q} \min\left(1, \frac{c_0'}{\delta^2}\right)
\cdot \min\left(\frac{q}{\phi(q)} \left(\frac{7}{4} \log \delta_0 q + 6.11676
\right), \frac{1}{2} \log x + 5.65787
\right) \\ &+
\frac{x^{2/3}}{\sqrt{q \delta_0}} (0.67845 \log x - 1.20818) + 
0.0507 x^{2/3},\end{aligned}\end{equation}
where we are using (\ref{eq:voil}) to simplify the smaller error terms.
(The bound $(1/2) \log x + 5.65787$ comes from a trivial bound on 
(\ref{eq:peergynt}).) We recall that $c_0' = 0.798437 > c_0/(2\pi)^2$.

Let us now consider $S_{I,2}$. The terms that appear both for $|\delta|$ small
and $|\delta|$ large are given in (\ref{eq:chusan}). The second line
in (\ref{eq:chusan}) equals 
\[\begin{aligned}
c_{8,I_2} &\left(\frac{x}{4 q^2 \delta_0} + \frac{2 U V^2}{x} + 
\frac{q V^2}{x}\right) +
\frac{c_{10,I_2}}{2} \left( \frac{q}{2 \sqrt{q \delta_0}} +
\frac{x^{2/3}}{18 q \delta_0}\right) \log \frac{9 x^{1/3}}{2}\\
&\leq c_{8,I_2} \left(\frac{x}{4 q^2 \delta_0} + 
\frac{9 x^{1/3}}{2 \sqrt{2}} + \frac{27}{8}\right) +
 \frac{c_{10,I_2}}{2} \left( \frac{y^{1/6}}{2^{3/2}} +
 \frac{x^{2/3}}{18 q \delta_0}\right) \left(\frac{1}{3}\log x + \log \frac{9}{2}\right)\\
&\leq 0.29315 \frac{x}{q^2 \delta_0} + (0.00828 \log x + 0.03735) \frac{x^{2/3}}{\sqrt{q \delta_0}} +
 0.00153 \sqrt{x},
\end{aligned}\]
where we are using (\ref{eq:voil}) to simplify. Now
\begin{equation}\label{eq:gorachy}
\min\left(\frac{4/5}{\log^+ \frac{Q}{4 V q^2}},1\right) \log V q =
\min\left(\frac{4/5}{\log^+ \frac{y}{4 q^2}},1\right) \log 
\frac{9 x^{1/3} q}{2} 
\end{equation} can be bounded
trivially by $\log(9 x^{1/3} q/2)\leq 
(2/3) \log x + \log 3/4$. We can also bound (\ref{eq:gorachy})
as we bounded (\ref{eq:peergynt}) before, namely, by fixing $q$
and finding the maximum for $x$ variable. In this way, we obtain that
(\ref{eq:gorachy}) is maximal for $y = 4 e^{4/5} q^2$; since, by
definition, $x^{1/3}/6=y$,
(\ref{eq:gorachy}) then equals
\[\log \frac{9 (6\cdot 4 e^{4/5} q^2) q}{2} = 3 \log q + \log 108 + \frac{4}{5}
\leq 3 \log q + 5.48214.\]

If $|\delta|\leq 1/2c_2$, we must consider (\ref{eq:fausto}). This is at most
\[\begin{aligned}
&(c_{4,I_2}+c_{9,I_2}) \frac{x}{2 \sqrt{q \delta_0}} +
(c_{10,I_2} \log \frac{x^{2/3}}{9 q^{3/2} \sqrt{\delta_0}} + 2 c_{5,I_2} + c_{12,I_2}) \cdot \frac{3}{4}
x^{2/3}\\
&\leq \frac{2.19818 x}{\sqrt{q \delta_0}} + (0.89392 \log x + 23.0896) x^{2/3}.
\end{aligned}\]
If $|\delta|>1/2c_2$, we must consider (\ref{eq:magus}) instead. For
$\epsilon = 0.07$, that is at most
\[\begin{aligned}
&(c_{4,I_2}+(1+\epsilon) c_{13,I_2}) \frac{x}{2 \sqrt{q \delta_0}} +
(3.30386 \log \delta q^3 + 16.4137) \frac{x}{|\delta| q}\\
&+ (68.8137 \log |\delta| q + 36.7795) x^{2/3} + 29.7467 x^{1/3}\\
&= 2.49086 \frac{x}{\sqrt{q \delta_0}} +
(3.30386 \log \delta q^3 + 16.4137) \frac{x}{|\delta| q} +
(22.9379 \log x + 56.576) x^{\frac{2}{3}}.
\end{aligned}\]
Hence
\begin{equation}\label{eq:cleson}\begin{aligned}
|S_{I,2}|&\leq 2.49086 \frac{x}{\sqrt{q \delta_0}} \\ &+ 
x\cdot \min\left(1,\frac{4 c_0'}{\delta^2}\right)
 \min\left(\frac{\frac{3}{2} \log q + 2.74107}{\phi(q)}, 
\frac{\frac{1}{3} \log x + \frac{1}{2} \log
\frac{3}{4}}{q} \right)
\\ &+ 0.29315 \frac{x}{q^2 \delta_0} + 
(22.9462 \log x + 56.6134) x^{2/3} \end{aligned}\end{equation}
plus a term $(3.30386 \log \delta q^2 + 16.4137) \cdot 
(x/|\delta| q)$
that appears if and only if $|\delta|\geq 1/2c_2$.

 For type II, we have to consider two cases: (a) 
$|\delta|<8$, and (b)  $|\delta|\geq 8$. Consider first $|\delta|<8$. 
Then $\delta_0 = 2$.
Recall that $\theta =27/8$.
We have $q \leq V/2\theta$ and $|\delta q| \leq V/\theta$ thanks to (\ref{eq:werto}).
We apply (\ref{eq:vinland1}), and obtain that, for $|\delta|<8$,
\begin{equation}\label{eq:pell}\begin{aligned}
|S_{II}|&\leq
\frac{x}{\sqrt{2 \phi(q)}} \cdot 
\sqrt{\frac{1}{2} \log 4q\delta_0 + \log 2q \log \left(1+ 
\frac{\frac{1}{2} \log 4q\delta_0}{\log \frac{V}{2q}}\right)}\\ &\cdot 
\sqrt{0.30214 \log 4 q \delta_0 + 0.2562}\\
&+ 8.22088 \sqrt{\frac{q}{\phi(q)}} \left(1 + 1.15 
\sqrt{\frac{\log 2q}{\log \frac{9 x^{1/3} \sqrt{\delta_0}}{2 \sqrt{q}}}}\right) (q \delta_0)^{1/4} x^{2/3}  +  1.84251 x^{5/6}\\
&\leq \frac{x}{\sqrt{2 \phi(q)}} \cdot 
\sqrt{C_{x,2 q} \log 2q + \frac{\log q}{2}} \cdot 
\sqrt{0.30214 \log 2q  + 0.67506}\\
&+ 16.404 \sqrt{\frac{q}{\phi(q)}} x^{3/4} +  1.84251 x^{5/6}
\end{aligned}\end{equation}
where we define
\[C_{x,t} := 
\log \left(1 + \frac{\log 4 t}{2 \log \frac{9 x^{1/3}}{2.004 t}}\right)\]
for $0 < t < 9 x^{1/3}/2$. (We have $2.004$ here instead of $2$ because
we want a constant $\geq 2 (1+\epsilon_1)$ in later occurences of
$C_{x,t}$, for reasons that will soon become clear.)

For purposes of later comparison, we remark
that $16.404\leq 1.5785 x^{3/4-4/5}$ for $x\geq 2.16\cdot 10^{20}$.

Consider now case (b), namely, $|\delta|\geq 8$. Then $\delta_0  = |\delta|/4$.
By (\ref{eq:werto}), $|\delta q|\leq V/\theta$. Hence,
 (\ref{eq:eriksaga}) gives us that
\begin{equation}\label{eq:meli}\begin{aligned}
|S_{II}|&\leq
\frac{2 x}{\sqrt{|\delta| \phi(q)}} \cdot 
\sqrt{\frac{1}{2} \log |\delta q|  + \log \frac{|\delta q| (1+\epsilon_1)}{4} 
\log \left(1+ \frac{\log |\delta| q}{2 \log \frac{18 x^{1/3}}{|\delta|
(1+ \epsilon_1) q}}
\right)}\\
&\cdot \sqrt{0.30214 \log |\delta| q + 0.2562} \\ &+ 8.22088
\sqrt{\frac{q}{\phi(q)}}\sqrt{\frac{\log \frac{9 x^{1/3}}{2}}{\log
\frac{12 x^{1/3}}{|\delta q|}}}\cdot (q \delta_0)^{1/4} x^{2/3}  
 +  1.84251 x^{5/6}\\
&\leq \frac{x}{\sqrt{\delta_0 \phi(q)}} 
\sqrt{C_{x, \delta_0 q} \log \delta_0 (1+\epsilon_1) q + \frac{\log 4 \delta_0 q}{2}}
\sqrt{0.30214 \log \delta_0 q + 0.67506} \\ 
&+ 1.68038 \sqrt{\frac{q}{\phi(q)}} x^{4/5} + 
1.84251 x^{5/6} ,
\end{aligned}\end{equation}
since
\[8.22088 \sqrt{\frac{\log \frac{9 x^{1/3}}{2}}{\log
\frac{12 x^{1/3}}{|\delta q|}}}\cdot (q \delta_0)^{1/4} \leq
8.22088 \sqrt{\frac{\log \frac{9 x^{1/3}}{2}}{\log 9}}
\cdot (x^{1/3}/3)^{1/4} \leq 1.68038 x^{4/5-2/3}\]
for $x\geq 2.16\cdot 10^{20}$. Clearly
\[\log \delta_0 (1+\epsilon_1) q \leq \log \delta_0 q + \log(1+\epsilon_1)
\leq \log \delta_0 q + \epsilon_1.\]
Now note the fact (\cite[Thm. 15]{MR0137689}) that 
$q/\phi(q) < \digamma(q)$, where
\begin{equation}\label{eq:locos}\digamma(q) = 
e^{\gamma} \log \log q + 
\frac{2.50637}{\log \log q}.\end{equation}
Moreover, $q/\phi(q)\leq 3$ for $q<30$. Since $\digamma(30)>3$ and
$\digamma(t)$ is increasing for
$t\geq 30$, we conclude that, for any $q$ and for any 
$r\geq \max(q,30)$, $q/\phi(q) < \digamma(r)$. In particular,
$q/\phi(q)\leq \digamma(y) = \digamma(x^{1/3}/6)$ (since, by
(\ref{eq:voil}), $x\geq 180^3$).
It is easy to check that $x\to \sqrt{\digamma(x^{1/3}/6)} x^{4/5-5/6}$
is decreasing for $x\geq 180^3$. Using (\ref{eq:voil}), we conclude that
 $1.67718 \sqrt{q/\phi(q)} x^{4/5} \leq 0.83574 x^{5/6}$.
This allows us to simplify the last lines of (\ref{eq:pell}) and 
(\ref{eq:meli}).

It is time to sum up $S_{I,1}$, $S_{I,2}$ and $S_{II}$. The main terms
come from the first lines of (\ref{eq:pell}) and (\ref{eq:meli}) and
the first term of (\ref{eq:cleson}). Lesser-order terms can be dealt with
roughly: we bound $\min(1,c_0'/\delta^2)$ and $\min(1,4 c_0'/\delta^2)$
from above by $2/\delta_0$ (somewhat brutally) and $1/q^2 \delta_0$ by
$1/q \delta_0$ (again, coarsely). For $|\delta|\geq 1/2 c_2$,
\[\frac{1}{|\delta|}\leq \frac{4 c_2}{\delta_0},\;\;\;\;\;\;\;
\frac{\log |\delta|}{|\delta|}\leq \frac{2}{e \log 2}\cdot \frac{\log \delta_0}{\delta_0};\]
we use this to bound the term in the comment after (\ref{eq:cleson}).
The terms inversely proportional to $q$, $\phi(q)$ or $q^2$ thus add up to 
at most
\[\begin{aligned}
&\frac{2 x}{\delta_0} \cdot \min\left(\frac{\frac{7}{4} \log \delta_0 q
+ 6.11676}{\phi(q)}, \frac{\frac{1}{2} \log x + 5.65787}{q}\right)\\
&+ \frac{2 x}{\delta_0} \cdot \min\left(\frac{\frac{3}{2} \log q + 
2.74107}{\phi(q)}, \frac{\frac{1}{3} \log x + \frac{1}{2} \log \frac{3}{4}}{q}
\right)\\
&+ 0.29315 \frac{x}{q \delta_0} + \frac{4 c_2 x}{q \delta_0}
(3.30386 \log q^2 + 16.4137) + \frac{2 x}{(e \log 2) q \delta_0} \cdot
3.30386 \log \delta_0\\
&\leq \frac{2 x}{\delta_0}
\min\left(\frac{\log \delta_0^{7/4} q^{13/4} + 8.858}{\phi(q)},
\frac{\frac{5}{6} \log x + 5.515}{q}\right)\\
&+ \frac{2 x}{\delta_0 q} (8.874 \log q + 1.7535 \log \delta_0 + 22.19)\\
&\leq \frac{2 x}{\delta_0} \left(
\min\left(\frac{\log \delta_0^{7/4} q^{13/4} + 8.858}{\phi(q)},
\frac{\frac{5}{6} \log x + 5.515}{q}\right)
+ \frac{\log q^{\frac{80}{9}} \delta_0^{\frac{16}{9}} + 22.19}{q}\right)
.\end{aligned}\]

As for the other terms -- we use (\ref{eq:voil}) to bound $x^{2/3}$ and
$x^{2/3} \log x$ by a small constant times $x^{5/6}$. We bound $x^{2/3}/\sqrt{
q\delta_0}$ by $x^{2/3}/\sqrt{2}$ (in (\ref{eq:therwald})).

The sums $S_{0,\infty}$ and $S_{0,w}$
in (\ref{eq:sofot}) are $0$ (by (\ref{eq:voil})). 
We conclude that, for $q\leq y = x^{1/3}/6$, $x\geq 2.16\cdot 10^{20}$
and $\eta=\eta_2$ as in (\ref{eq:eqeta}),
\begin{equation}\label{eq:duaro}\begin{aligned}
&|S_\eta(x,\alpha)| \leq |S_{I,1}| + |S_{I,2}| + |S_{II}|\\
 &\leq \frac{x}{\sqrt{\phi(q) \delta_0}} 
\sqrt{C_{x, \delta_0 q} (\log \delta_0 q + 0.002)+ 
\frac{\log 4 \delta_0 q}{2}}
\sqrt{0.30214 \log \delta_0 q + 0.67506} 
\\ &+ \frac{2.49086 x}{\sqrt{q \delta_0}}
+ 
\frac{2 x}{\delta_0} 
\min\left(\frac{\log \delta_0^{\frac{7}{4}} q^{\frac{13}{4}} + \frac{80}{9}}{\phi(q)},
\frac{\frac{5}{6} \log x + \frac{50}{9}}{q}\right)
+ \frac{2 x}{\delta_0} 
\frac{\log q^{\frac{80}{9}} \delta_0^{\frac{16}{9}} + \frac{111}{5}}{q}
\\ &+ 3.14624 x^{5/6},
\end{aligned}\end{equation}
where
\begin{equation}\label{eq:raisin}\begin{aligned}
\delta_0 &= \max(2,|\delta|/4),\;\;\;\;\;\;
C_{x,t} =
\log \left(1 + \frac{\log 4 t}{2 \log \frac{9 x^{1/3}}{2.004 t}}\right).
\end{aligned}\end{equation}
Since $C_{x,t}$ is an increasing function as a function of
$t$ (for $x$ fixed and $t\leq 9 x^{1/3}/2.004$) 
and $\delta_0 q\leq 2y$, we see that
$C_{x,t}\leq C_{x,2y}$. It is clear that
$x\mapsto C_{x,t}$ (fixed $t$) is decreasing function of $x$. 
For $x = 2.16 \cdot 10^{20}$, $C_{x,2y} = 1.39942\dotsc$.
Also, compare the value $C_{3.1\cdot 10^{28},2\cdot 10^6}= 0.64020\dotsc$ 
given by (\ref{eq:raisin}) to the value
 of $1.196\dotsc-0.5 = 0.696\dotsc$ for  
$C_{3.1\cdot 10^{28},2\cdot 10^6}$ 
in a previous version \cite{Helf} of the present paper. (The largest
gains are elsewhere.)

\subsubsection{Second choice of parameters}\label{subs:espan}
If, with the original choice of parameters, we obtained $q>y= x^{1/3}/6$, 
we now reset our parameters ($Q$, $U$ and $V$).
Recall that, while the value of $q$ may now change (due to the change in 
$Q$),
we will be able to assume that either $q>y$ or $|\delta q|>x/(x/8y)
= 8y$.

We want $U/(x/UV)\geq 5\cdot 10^5$ (this is (\ref{eq:curious})).
We also want $UV$ small. With this in mind, we let
\[
V = \frac{x^{1/3}}{3},\;\;\;\;\;\;\; U = 500 \sqrt{6} x^{1/3},\;\;\;\;\;\;\;\;
Q = \frac{x}{U} = \frac{x^{2/3}}{500 \sqrt{6}}.
\]
Then
(\ref{eq:curious}) holds (as an equality). 
Since we are assuming (\ref{eq:voil}), we have
$V\geq 2\cdot 10^6$.
It is easy to check that (\ref{eq:voil}) also
implies that $U<\sqrt{x}$ and $Q> \sqrt{e x}$, and so
the inequalities in (\ref{eq:herring}) hold.

Write $2\alpha = a/q + \delta/x$ for the new approximation; we must have 
either $q>y$ or $|\delta| > 8 y/q$, since otherwise $a/q$ would already be
a valid approximation under the first choice of parameters. 
Thus, either (a) $q>y$, or both (b1) $|\delta|>8$ and (b2) $|\delta| q > 
8y$. Since now $V=2y$, we have $q>V/2\theta$ in case (a) and 
$|\delta q|> V/\theta$ in case (b) 
for any $\theta\geq 1$. We set $\theta = e^2$.

By (\ref{eq:lavapie}),
\[\begin{aligned}
|S_{I,1}|&\leq \frac{x}{q} \min\left(1,\frac{c_0'}{\delta^2}\right)
\left(\log x^{2/3} - \log 500 \sqrt{6} + c_{3,I} + c_{4,I} \frac{q}{\phi(q)}
\right)\\
&+ \left(c_{7,I} \log \frac{Q}{c_2} + c_{8,I} \log x \log c_{11,I} \frac{Q^2}{x}
\right) Q + c_{10,I} \frac{U^2}{4 x} \log \frac{e^{1/2} x^{2/3}}{500 \sqrt{6}}
 + \frac{c_{10,I}}{e}\\
&+ \left(c_{5,I} \log \frac{1000 \sqrt{6} x^{1/3}}{c_2} + 
c_{6,I} \log 500 \sqrt{6} x^{4/3}\right) \cdot 500 \sqrt{6} x^{1/3} +
c_{9,I} \sqrt{x} \log \frac{2 x}{c_2}\\
&\leq \frac{x}{q} \min\left(1,\frac{c_0'}{\delta^2}\right)
\left(\frac{2}{3} \log x - 4.99944 + 1.00303 \frac{q}{\phi(q)}\right) + 
\frac{1.063}{10000} x^{2/3} (\log x)^2,
\end{aligned}\]
where we are bound $\log c_{11,I} Q^2/x$ by $\log x^{1/3}$. Just as 
before, we use the assumption (\ref{eq:voil}) when we have to bound
a lower-order term (such as $x^{1/2} \log x$) by a multiple of a higher-order
term (such as $x^{2/3} (\log x)^2$).

We have $q/\phi(q)\leq \digamma(Q)$ (where $\digamma$ is as in
 (\ref{eq:locos})) and we can check that 
\[1.00303 \digamma(Q)\leq 0.0327 \log x + 4.99944\]
for all $x\geq 10^6$.
We have either $q>y$ or $q |\delta| > 8 y$; if $q |\delta|> 8 y$ but 
$q\leq y$, then $|\delta|\geq 8$, and so $c_0'/\delta^2 q < 1/8 |\delta| q
< 1/64 y < 1/y$. Hence 
\[\begin{aligned}
|S_{I,1}|&\leq 4.1962 x^{2/3} \log x + 0.090843 x^{2/3} +
0.001063 x^{2/3} (\log x)^2\\
&\leq  4.1982 x^{2/3} \log x +
0.001063 x^{2/3} (\log x)^2. 
\end{aligned}\]

We bound $|S_{I,2}|$ using Lemma \ref{lem:bogus}. First we bound
(\ref{eq:cupcake3}): this is at most
\[\begin{aligned}
&\frac{x}{2 q} \min\left(1,\frac{4 c_0'}{\delta^2}\right)
\log \frac{x^{1/3} q}{3}\\&+
c_0 \left(\frac{1}{4}-\frac{1}{\pi^2}\right) 
\left(\frac{(UV)^2 \log \frac{x^{1/3}}{3}}{2 x} + \frac{3 c_4}{2}
 \frac{500 \sqrt{6}}{9} + \frac{(500 \sqrt{6} x^{1/3} + 1)^2 x^{1/3}}{3 x}
\right),
\end{aligned}\]
where $c_4 = 1.03884$.
We bound the second line of this using (\ref{eq:voil}). As for the first
line, we have either $q\geq y$ (and so the first line is at most
$(x/2y) (\log x^{1/3} y/3)$) or $q<y$ and $4 c_0'/\delta^2 q < 1/16 y < 1/y$
(and so the same bound applies). Hence (\ref{eq:cupcake3}) is at most
\[\frac{3}{2} x^{2/3} \left(\frac{2}{3} \log x - \log 9\right) + 
0.02017 x^{2/3} \log x.\]

Now we bound (\ref{eq:piececake}), which comes up when $|\delta|\leq 1/2 c_2$,
where $c_2 = 6\pi/5\sqrt{c_0}$, $c_0 =31.521$ (and so $c_2 = 0.6714769\dotsc$).
Since $1/2c_2 <8$, it follows that $q>y$ (the alternative $q\leq y$,
$|\delta q|>2y$ is impossible). Then (\ref{eq:piececake}) is at most
\begin{equation}\label{eq:octet}\begin{aligned}
&\frac{2 \sqrt{c_0 c_1}}{\pi} \left(UV \log \frac{UV}{\sqrt{e}} + 
Q \left(\sqrt{3} \log \frac{c_2 x}{Q} + \frac{\log UV}{2} \log \frac{UV}{Q/2}
\right)\right)\\
&+ \frac{3 c_1}{2} \frac{x}{y} \log UV \log \frac{UV}{c_2 x/y} + 
\frac{16 \log 2}{\pi} Q \log \frac{c_0 e^3 Q^2}{4\pi \cdot 8 \log 2 \cdot x}
\log \frac{Q}{2}\\&+ \frac{3 c_1}{2 \sqrt{2 c_2}} \sqrt{x} \log \frac{c_2 x}{2}
+ \frac{25 c_0}{4\pi^2} (3 c_2)^{1/2} \sqrt{x} \log x,\end{aligned}\end{equation}
where $c_1 = 1.0000028 > 1+ (8 \log 2)/V$. 
Here $\log(c_0 e^3 Q^2/(4\pi \cdot 8\log 2 \cdot x)) \log Q/2$ is at most
$\log x^{1/3} \log x^{2/3}$.
Using this and (\ref{eq:voil}),
we get that (\ref{eq:octet}) is at most
\[\begin{aligned}
1177.617 x^{2/3} \log x &+ 0.0006406 x^{2/3} (\log x)^2 + 29.5949 x^{1/2}
\log x\\ &\leq 1177.64 x^{2/3} \log x + 0.0006406 x^{2/3} (\log x)^2.\end{aligned}\]

If $|\delta|> 1/2 c_2$, then we know that $|\delta q| > \max(y/2 c_2, 2y) = 
y/2 c_2$. Thus (\ref{eq:tvorog}) (with $\epsilon=0.01$) is at most
\[\begin{aligned}
&\frac{2 \sqrt{c_0 c_1}}{\pi} UV \log \frac{UV}{\sqrt{e}} \\ &+ 
\frac{2.02 \sqrt{c_0 c_1}}{\pi} \left(\frac{x}{y/2c_2}+1\right)
\left( (\sqrt{3.02}-1) \log \frac{\frac{x}{y/2c_2} + 1}{\sqrt{2}}
+ \frac{1}{2} \log UV \log \frac{e^2 UV}{\frac{x}{y/2c_2}}\right)\\
&+ \left(\frac{3 c_1}{2} \left(\frac{1}{2} + \frac{3.03}{0.16} \log x\right)
+ \frac{20 c_0}{3 \pi^2} (2 c_2)^{3/2}\right) \sqrt{x} \log x .\end{aligned}\]
Again by (\ref{eq:voil}), this simplifies to
\[\leq 1212.591 x^{2/3} \log x + 29.131 x^{1/2} \log x \leq
1213.15 x^{2/3} (\log x)^2.\]

Hence, in total and for any $|\delta|$,
\[|S_{I,2}| \leq 1213.15 x^{2/3} (\log x) + 0.0006406 x^{2/3} (\log x)^2.\]

Now we must estimate $S_{II}$. As we said before, either (a) $q>y/4$, or both
(b1) $|\delta|>8$ and (b2) $|\delta| q>8 y$. Recall that $\theta = e^2$.
In case (a), we use 
(\ref{eq:vinland2}), and obtain that, if $y/4 < q\leq x/2 e^2 U$,
$|S_{II}|$ is at most
\begin{equation}\label{eq:hust}\begin{aligned}
&\frac{x \sqrt{\digamma(q)}}{\sqrt{2 q}}
\sqrt{\left(\log \frac{x}{U \cdot 2 e^2 q} + \log 2q 
\log \frac{\log x/(2 U q)}{\log e^2}\right) \left(\kappa_6 \log \frac{x}{
U\cdot 2 e^2 q} + 2 \kappa_7\right)}\\
&+ \sqrt{2} \kappa_2 \sqrt{\digamma\left(\frac{x}{2 e^2 U}\right)}
\left(1 + 1.15 \sqrt{\frac{\log x/e^2 U}{2}}\right) \frac{x}{\sqrt{U}} + 
(\kappa_2 \sqrt{\log x/U} + \kappa_9) \frac{x}{\sqrt{V}}\\
&+ \frac{\kappa_2}{6} \left((\log (e^2 y/2))^{3/2} - (\log y)^{3/2}\right) \frac{x}{\sqrt{y}} \\ &+ \kappa_2 \left(\sqrt{2 e^2 \cdot \log x/U} + 
\frac{2}{3} ((\log x/U)^{3/2} - (\log V)^{3/2})\right) \frac{x}{\sqrt{2 e^2 U}}
,\end{aligned}\end{equation}
where $\digamma$ is as in (\ref{eq:locos}). 
It is easy to check that $q\to (\log 2q) (\log \log q)/q$ is decreasing for
$q\geq y$ (indeed for $q\geq 9$), and so the first line of
(\ref{eq:hust}) is minimal for $q=y$. Asymptotically, the largest term
in (\ref{eq:hust}) comes from the last line (of order $x^{5/6} (\log
x)^{3/2}$), even if the first line is larger in practice (while being of
order $x^{5/6} (\log x) \log \log x$). The ratio
of (\ref{eq:hust}) (for $q=y=x^{1/3}/6$) to 
 $x^{5/6} (\log x)^{3/2}$ is descending for $x\geq x_0 = 2.16\cdot 10^{20}$;
its value at $x=x_0$ gives
\begin{equation}\label{eq:hosto}
|S_{II}|\leq 0.272652 x^{5/6} (\log x)^{3/2}\end{equation}
in case (a), for $q\leq x/2 e^2 U$.

If $x/2 e^2 U < q \leq Q$, we use (\ref{eq:vinland3}). In this range,
$x/2 \sqrt{2q} +\sqrt{qx}$ adopts its maximum at $q=Q$ (because 
$x/2\sqrt{2q}$ for $q=x/2 e^2 U$ is smaller than $\sqrt{qx}$ for
$q=Q$, by (\ref{eq:voil})). A brief calculation starting from
(\ref{eq:vinland3}) then gives that
\[|S_{II}|\leq 0.10198 x^{5/6} (\log x)^{3/2},\]
where we use (\ref{eq:voil}) yet again to simplify.

Finally, let us treat case (b), that is, $|\delta|>8$ and $|\delta| q>8y$;
we can also assume $q\leq y$, as otherwise we are in case (a),
which has already been treated. Since $|\delta/x|\leq x/Q$, we know that
$|\delta q|\leq x/Q = U$. From (\ref{eq:vinlandsaga}), we obtain
that $|S_{II}|$ is at most
\[\begin{aligned}
&\frac{2 x \sqrt{\digamma(y)}}{\sqrt{8 y}}
\sqrt{\left(\log \frac{x}{U\cdot e^2 \cdot 8y} + \log 3 y 
\log \frac{\log x/3 U y}{\log 8 e^2/3}\right) \left(\kappa_6 
\log \frac{x}{U\cdot e^2 \cdot 2y} + 2 \kappa_7\right)}\\
&+ \frac{2\kappa_2}{3} \left(\frac{x}{\sqrt{16 y}} 
((\log 8 e^2 y)^{3/2} - (\log y)^{3/2}) + 
\frac{x/4}{\sqrt{Q-y}}
((\log e^2 U)^{3/2} - (\log y)^{3/2} )\right) \\
&+ \left(\frac{\kappa_2}{\sqrt{2 (1-y/Q)}} \left(\sqrt{\log V} + 
\sqrt{1/\log V}\right) + \kappa_{9}\right) \frac{x}{\sqrt{V}} 
\\ &+
\kappa_2 \sqrt{2\digamma(y)} \cdot \sqrt{\frac{\log e^2 U}{\log 8 e^2/3}}
\cdot \frac{x}{\sqrt{U}},
\end{aligned}\]
We take the maximum of the ratio of this to $x^{5/6} (\log x)^{3/2}$, and
obtain
\[|S_{II}|\leq 0.24956 x^{5/6} (\log x)^{3/2}.\]
Thus (\ref{eq:hosto}) gives the worst case.

We now take totals, and obtain
\begin{equation}\label{eq:bague}\begin{aligned}
S_{\eta}(x,\alpha) &\leq |S_{I,1}|+ |S_{I,2}|+ |S_{II}| \\ &\leq
(4.1982+1213.15) x^{2/3} \log x +
(0.001063 + 0.0006406) x^{2/3} (\log x)^2\\ &+ 0.272652 x^{5/6} (\log x)^{3/2}\\
&\leq 0.27266 x^{5/6} (\log x)^{3/2} + 1217.35 x^{2/3} \log x,\end{aligned}\end{equation}
where we use (\ref{eq:voil}) yet again.

\subsection{Conclusion}
\begin{proof}[Proof of main theorem]
We have shown that $|S_{\eta}(\alpha,x)|$ is at most (\ref{eq:duaro}) for 
$q\leq x^{1/3}/6$ and at most (\ref{eq:bague}) for $q> x^{1/3}/6$.
It remains to simplify (\ref{eq:duaro}) slightly. Let
\[\rho = \frac{C_{x_1,2 q_0} (\log 2 q_0  +0.002) + 
\frac{\log 8 q_0}{2}}{0.30214 \log 2 q_0 + 0.67506} = 3.61407\dotsc,\]
where $x_1 = 10^{25}$, $q_0 = 2\cdot 10^5$. 
(We will be optimizing matters for $x=x_1$,
$\delta_0 q = 2 q_0$, with very slight losses in nearby ranges.) By the 
geometric mean/arithmetic mean inequality,
\[\sqrt{C_{x_1,\delta_0 q} (\log \delta_0 q +0.002) + \frac{\log 4 \delta_0 q}{2}}
\sqrt{0.30214 \log \delta_0 q + 0.67506}.\]
is at most
\[\begin{aligned}
\frac{1}{2} &\left(\frac{1}{\sqrt{\rho}} 
\left(C_{x_1,\delta_0 q} (\log \delta_0 q +0.002) + \frac{\log 4 \delta_0 q}{2}
\right) + \sqrt{\rho} (0.30214 \log \delta_0 q + 0.67506)\right)\\
&\leq 
\frac{C_{x,\delta_0 q}}{2\sqrt{\rho}} (\log \delta_0 q +0.002) +
\left(\frac{1}{4\sqrt{\rho}} +  \frac{\sqrt{\rho} \cdot 0.30214}{2}\right) \log
\delta_0 q  \\ &+ 
\frac{1}{2} \left(\frac{\log 2}{\sqrt{\rho}} + \frac{\sqrt{\rho}}{2} \cdot
0.67506\right)
\\
&\leq 0.27125 \log \left(1 + \frac{\log 4 t}{
2 \log \frac{9 x^{1/3}}{2.004 t}}\right) 
 (\log \delta_0 q + 0.002) + 
0.4141 \log \delta_0 q + 0.49911.\end{aligned}\]
Now, for $x\geq x_0 = 2.16\cdot 10^{20}$,
\[\frac{C_{x,t}}{\log t} \leq \frac{C_{x_0,t}}{\log t} \leq 0.08659\]
for $t\leq 10^6$, and
\[\frac{C_{x,t}}{\log t} \leq \frac{C_{6 t^3,t}}{\log t} \leq
\frac{1}{\log t} \log \left(1 + \frac{\log 4 t}{2 \log \frac{27}{1.002}}
\right)\leq 0.08659\]
if $10^6 < t \leq x^{1/3}/6$. Hence
\[0.27125 \cdot C_{x,\delta_0 q} \cdot 0.002 \leq 0.000047 \log \delta_0 q.\]

We conclude that, for $q\leq x^{1/3}/6$,
\[\begin{aligned}&|S_{\eta}(\alpha,x)| \leq 
 \frac{R_{x,\delta_0 q} \log \delta_0 q + 0.49911}{\sqrt{\phi(q) \delta_0}} \cdot x
+ \frac{2.491 x}{\sqrt{q \delta_0}}
\\ &+ 
\frac{2 x}{\delta_0} 
\min\left(\frac{\log \delta_0^{\frac{7}{4}} q^{\frac{13}{4}} + \frac{80}{9}}{\phi(q)},
\frac{\frac{5}{6} \log x + \frac{50}{9}}{q}\right)
+ \frac{2 x}{\delta_0} 
\frac{\log q^{\frac{80}{9}} \delta_0^{\frac{16}{9}} + \frac{111}{5}}{q}
+ 3.2 x^{5/6},\end{aligned}\]
where \[R_{x,t} = 0.27125 \log \left(1 + \frac{\log 4t}{2 \log \frac{9 x^{1/3}}{2.004 t}}\right) + 0.41415.\]
\end{proof}
\appendix

\section{Norms of Fourier transforms}\label{sec:norms}

Our aim here is to give upper bounds on $|\widehat{\eta_2''}|_\infty$,
where $\eta_2$ is as in (\ref{eq:eqeta}). We will do considerably
better than the trivial bound $|\widehat{\eta''}|_\infty \leq |\eta''|_1$.
\begin{lem}\label{lem:wollust}
For every $t\in \mathbb{R}$,
\begin{equation}
|4 e(-t/4) - 4 e(-t/2) + e(-t)| \leq 7.87052. 
\end{equation}
\end{lem}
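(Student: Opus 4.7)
The plan is to reduce the inequality to a one-variable trigonometric optimization by exploiting that the three exponentials are powers of a common base on the unit circle. Setting $u := e(-t/4)$, we have $e(-t/2) = u^2$ and $e(-t) = u^4$, so the quantity to be bounded equals $|P(u)|$ with
\begin{equation*}
P(u) = 4u - 4u^2 + u^4, \qquad |u| = 1.
\end{equation*}
Writing $u = e^{i\theta}$ and expanding $|P(u)|^2 = P(u)\overline{P(u)}$ term by term gives the identity
\begin{equation*}
|P(u)|^2 = 33 - 32\cos\theta - 8\cos 2\theta + 8\cos 3\theta,
\end{equation*}
so the claim is equivalent to bounding this real trigonometric polynomial above by $(7.87052)^2$.

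I would then find the global maximum by calculus. Differentiating with respect to $\theta$ and using $\sin 2\theta = 2\sin\theta\cos\theta$, $\sin 3\theta = \sin\theta(4\cos^2\theta - 1)$, the critical-point equation factors as
\begin{equation*}
8\sin\theta \cdot (12\cos^2\theta - 4\cos\theta - 7) = 0.
\end{equation*}
The critical points with $\sin\theta = 0$ yield $|P|^2 \in \{1,49\}$ by direct evaluation. The remaining critical points satisfy the quadratic $12c^2 - 4c - 7 = 0$ in $c = \cos\theta$, giving the two real roots $c_{\pm} = (1 \pm \sqrt{22})/6$ (both in $[-1,1]$).

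Substituting each $c_{\pm}$ back, using $\cos 2\theta = 2c^2 - 1$ and $\cos 3\theta = 4c^3 - 3c$, a direct computation reduces to
\begin{equation*}
|P|^2\bigr|_{c = c_{\pm}} = \frac{847 \pm 176\sqrt{22}}{27}.
\end{equation*}
The larger value, $(847 + 176\sqrt{22})/27$, attained at $c = (1-\sqrt{22})/6$, is therefore the global maximum. The final step is the numerical check $(847 + 176\sqrt{22})/27 \leq (7.87052)^2$: using $\sqrt{22} < 4.690416$, the left side is less than $61.94494$, while $(7.87052)^2 > 61.94508$, which closes the argument.

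The only real obstacle is precision in this last comparison, since the true maximum $\approx 7.87053^{-}$ is extremely close to the stated bound $7.87052$; one must carry a few extra digits of $\sqrt{22}$ to confirm the inequality rigorously. Everything else is routine trigonometric algebra.
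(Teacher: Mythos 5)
Your proof is correct, and it takes a genuinely different route from the paper's. You reduce $|4e(-t/4)-4e(-t/2)+e(-t)|^2$ to the trigonometric polynomial $33-32\cos\theta-8\cos 2\theta+8\cos 3\theta$, solve the critical-point equation $\sin\theta\,(12\cos^2\theta-4\cos\theta-7)=0$ in closed form, and identify the exact maximum $(847+176\sqrt{22})/27$ at $\cos\theta=(1-\sqrt{22})/6$; I verified the algebra (e.g.\ via $c^2=(4c+7)/12$, one gets $|P|^2=(341-352c)/9$, which reproduces your value). The paper, by contrast, proves this by a certified-numerics argument: it bounds $|g''|_\infty\leq 9\pi^2$, deduces from a Taylor-type estimate that sampling $g$ at a mesh of spacing $\sqrt{8\epsilon/9\pi^2}$ determines $\max|g|$ to within $\epsilon$, and evaluates on a fine grid. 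Your approach is cleaner and self-contained, and it exposes how narrow the margin actually is: the true maximum is $\sqrt{(847+176\sqrt{22})/27}\approx 7.870510$, only about $10^{-5}$ below the stated $7.87052$, so (as you correctly flag) the final inequality $(847+176\sqrt{22})/27<(7.87052)^2$ must be checked with several digits of $\sqrt{22}$ -- your bound $\sqrt{22}<4.690416$ suffices, giving $61.94494<61.94508$. The paper's method buys generality: the same interval-subdivision template is reused in Lemma \ref{lem:camelo} and Lemma \ref{lem:octet} for $\widehat{f}$ and $\widehat{h}$, which are given by integrals rather than trigonometric polynomials and do not admit an exact critical-point analysis, whereas your method depends on the critical-point equation factoring into a quadratic in $\cos\theta$.
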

We will describe an extremely simple, but rigorous,
 procedure to find the maximum. Since 
$|g(t)|^2$ is $C^2$ (in fact smooth), there are several more
efficient and equally rigourous algorithms -- for starters, the
bisection method with error bounded in terms of $|(|g|^2)''|_\infty$.
\begin{proof}
Let \begin{equation}\label{eq:ellib}
g(t) = 4 e(-t/4) - 4 e(-t/2) + e(-t).\end{equation} For $a\leq t\leq b$,
\begin{equation}\label{eq:maran}
g(t) = g(a) + \frac{t-a}{b-a} (g(b)-g(a)) + \frac{1}{8} (b-a)^2 \cdot
O^*(\max_{v\in \lbrack a,b\rbrack} |g''(v)|).\end{equation}
(This formula, in all likelihood well-known, is easy to derive. First,
we can assume without loss of generality that $a=0$, $b=1$ and $g(a)=g(b)=0$.
Dividing by $g$ by $g(t)$, we see that we can also assume that $g(t)$ is real
(and in fact $1$). We can also assume that $g$ is real-valued, in that 
it will be enough to prove (\ref{eq:maran}) for the
real-valued function $\Re g$, as this will give us the bound
$g(t) = \Re g(t) \leq (1/8) \max_v |(\Re g)''(v)| \leq \max_v |g''(v)|$ that
we wish for.
Lastly, we can assume (by symmetry) that $0\leq t\leq 1/2$, and that 
$g$ has a local maximum or minimum at $t$.
Writing $M = \max_{u\in \lbrack 0,1\rbrack} |g''(u)|$, we then have:
\[\begin{aligned}
g(t) &= \int_0^t g'(v) dv = \int_0^t \int_t^v g''(u) du dv =
O^*\left(\int_0^t \left|\int_t^v M du\right| dv\right)\\ &= O^*\left(
\int_0^t (v-t) M dv\right) = O^*\left(\frac{1}{2} t^2 M\right) = O^*\left(\frac{1}{8} M\right),\end{aligned}\]
as desired.)

We obtain immediately from (\ref{eq:maran}) that
\begin{equation}\label{eq:elek1}
\max_{t\in \lbrack a,b\rbrack} |g(t)| \leq \max(|g(a)|,|g(b)|) + 
\frac{1}{8} (b-a)^2 \cdot \max_{v\in \lbrack a,b\rbrack} |g''(v)| .\end{equation}

For any $v\in \mathbb{R}$,
\begin{equation}\label{eq:elek2}
|g''(v)| \leq \left(\frac{\pi}{2}\right)^2 \cdot 4 + \pi^2 \cdot 4 + (2\pi)^2
= 9 \pi^2 .
\end{equation}
Clearly $g(t)$ depends only on $t \mo 4 \pi$. Hence, by (\ref{eq:elek1})
and (\ref{eq:elek2}), to estimate $\max_{t\in \mathbb{R}} |g(t)|$
 with an error of at most $\epsilon$, 
it is enough to subdivide $\lbrack 0, 4\pi\rbrack$ into intervals of length
$\leq \sqrt{8\epsilon/9\pi^2}$ each. We set $\epsilon = 10^{-6}$ and compute. 
\end{proof}

\begin{lem}\label{lem:camelo}
Let $\eta_2:\mathbb{R}^+\to \mathbb{R}$ be as in (\ref{eq:eqeta}).
Then
\begin{equation}\label{eq:elundot}
|\widehat{\eta_2''}|_\infty \leq 31.521.
\end{equation}
\end{lem}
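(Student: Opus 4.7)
I would decompose $\eta_2''$ as a distribution into a purely atomic piece (from the three kinks of $\eta_2$) plus a regular piece $\rho$, so that the Fourier transform of the atomic piece is exactly four times the function bounded in Lemma~\ref{lem:wollust}. On $(1/4,1/2)$, $\eta_2(t) = 4\log 4t$ and on $(1/2,1)$, $\eta_2(t) = -4\log t$; its derivative $\eta_2'$ jumps by $+16, -16, +4$ at $t=1/4,1/2,1$ respectively, so that $\eta_2'' = 16\delta_{1/4} - 16\delta_{1/2} + 4\delta_1 + \rho$, with $\rho(t) = -4/t^2$ on $(1/4,1/2)$ and $\rho(t)=4/t^2$ on $(1/2,1)$. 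Hence
$$\widehat{\eta_2''}(u) = 4\bigl(4e(-u/4) - 4e(-u/2) + e(-u)\bigr) + \widehat{\rho}(u),$$
and Lemma~\ref{lem:wollust} bounds the first summand in modulus by $4\cdot 7.87052 = 31.48208$. It therefore remains to establish $|\widehat{\eta_2''}(u)| \leq 31.521$ for every $u\in\mathbb{R}$, which I would do by splitting into three ranges of $|u|$.

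For $|u|\leq \sqrt{31.521}/(2\pi) < 0.895$, the identity $\widehat{\eta_2''}(u) = -(2\pi u)^2\widehat{\eta_2}(u)$ together with $|\widehat{\eta_2}|_\infty \leq |\eta_2|_1 = 1$ gives $|\widehat{\eta_2''}(u)|\leq 4\pi^2 u^2 \leq 31.521$ directly. For $|u| \geq u_1 := 80/(0.0389\pi) < 655$, a single integration by parts applied to each of the two integrals defining $\widehat{\rho}(u)$ yields
$$\widehat{\rho}(u) = \frac{-32\,e(-u/4) + 16\,e(-u/2) - 2\,e(-u)}{\pi i u} + \frac{4}{\pi i u}\left(\int_{1/4}^{1/2} - \int_{1/2}^{1}\right)\frac{e(-tu)}{t^3}\,dt,$$
so $|\widehat{\rho}(u)| \leq (50 + 4\int_{1/4}^{1} t^{-3}\,dt)/(\pi|u|) = 80/(\pi|u|) \leq 0.0389$, which combined with the atomic bound yields $|\widehat{\eta_2''}(u)| < 31.521$.

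For the intermediate range $0.895 \leq |u| \leq 655$, I would verify the bound by rigorous numerical computation, exactly in the spirit of the proof of Lemma~\ref{lem:wollust}. Applying (\ref{eq:elek1}) to the complex-valued function $\widehat{\eta_2''}$ on subintervals $[a,b]$ gives $|\widehat{\eta_2''}(u)| \leq \max(|\widehat{\eta_2''}(a)|,|\widehat{\eta_2''}(b)|) + \tfrac18 (b-a)^2 \sup_{v\in[a,b]} |(\widehat{\eta_2''})''(v)|$. Since $(\widehat{\eta_2''})''(u) = -4\pi^2\int t^2\,\eta_2''(t)\,e(-tu)\,dt$, the second-derivative bound is $|(\widehat{\eta_2''})''|_\infty \leq 4\pi^2\int t^2\,d|\eta_2''|(t) = 4\pi^2\cdot 12 = 48\pi^2$, where $12 = (16(1/4)^2 + 16(1/2)^2 + 4) + \int_{1/4}^1 t^2 \cdot (4/t^2)\,dt = 9 + 3$. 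For explicit evaluation of $\widehat{\eta_2''}(u)$ at prescribed sample points I would use the closed form $\widehat{\eta_2''}(u) = 8\pi u\bigl(B(u) + iA(u)\bigr)$ with $A(u) = 2\Ci(\pi u) - \Ci(\pi u/2) - \Ci(2\pi u)$ and $B(u) = 2\Si(\pi u) - \Si(\pi u/2) - \Si(2\pi u)$, derived from $\widehat{\eta_2}(u) = 4\int_{1/2}^{1}\int_{1/2}^{1} e(-s\tau u)\,ds\,d\tau$ (a consequence of $\eta_2 = \eta_1 \star \eta_1$) via the change of variable $v = s u$. Subdividing $(0.895, 655)$ into intervals of length at most $\sqrt{8\cdot 0.02/(48\pi^2)} \approx 0.018$ makes the interpolation error at most $0.02$ and reduces the verification to a routine interval-arithmetic computation at roughly $3.5\cdot 10^4$ sample points. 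The principal obstacle is simply carrying out that computation rigorously; conceptually it is identical to the subdivision argument already used in Lemma~\ref{lem:wollust}.
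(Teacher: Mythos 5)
Your proposal is correct and follows essentially the same route as the paper: the identical decomposition of $\eta_2''$ into the atomic part (whose transform is $4g(t)$, bounded by Lemma~\ref{lem:wollust}) plus a regular piece $\rho=f$, the same bound $|\widehat{\rho}(u)|\leq 80/(\pi|u|)$ to dispose of $|u|\geq 655$, and the same $|(\widehat{\eta_2''})''|_\infty\leq 48\pi^2$ interpolation scheme for the numerical check of the middle range. Your analytic treatment of small $|u|$ via $|\widehat{\eta_2''}(u)|\leq 4\pi^2 u^2$ and the closed form in terms of $\Si$ and $\Ci$ are pleasant refinements, though note the cutoffs $|u|\leq\sqrt{31.521}/(2\pi)\approx 0.8935$ and $|u|\geq 0.895$ leave a sliver uncovered; starting the numerical check at $0.89$ fixes this.
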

This should be compared with $|\eta_2''|_1 = 48$.
\begin{proof}
We can write
\begin{equation}\label{eq:petruchka}
\eta_2''(x) = 4 (4 \delta_{1/4}(x) - 4 \delta_{1/2}(x) + \delta_1(x)) + f(x),
\end{equation}
where $\delta_{x_0}$ is the point measure at $x_0$ of mass $1$ (Dirac
delta function) and
\[f(x) = \begin{cases} 0 &\text{if $x< 1/4$ or $x\geq 1$,}\\
-4 x^{-2} &\text{if $1/4 \leq x < 1/2$,}\\
4 x^{-2} &\text{if $1/2 \leq x < 1$.} \end{cases}\]
Thus $\widehat{\eta_2''}(t) = 4 g(t) + \widehat{f}(t)$, where $g$ is as in
(\ref{eq:ellib}). It is easy to see that $|f'|_1 = 2 \max_x f(x) - 
2 \min_x f(x) = 160$. Therefore,
\begin{equation}\label{eq:fedsan}
\left|\widehat{f}(t)\right| = \left|\widehat{f'}(t)/(2\pi i t)\right|
\leq \frac{|f'|_1}{2\pi |t|} = \frac{80}{\pi |t|} .
\end{equation}
Since $31.521-4\cdot 7.87052 = 0.03892$, we conclude that
(\ref{eq:elundot}) follows from Lemma \ref{lem:wollust}
and (\ref{eq:fedsan}) for $|t|\geq 655 > 80/(\pi\cdot 0.03892)$.

It remains to check the range $t\in (-655,655)$; since
$4 g(-t) + \widehat{f}(-t)$ is the complex conjugate of $4 g(t) + \widehat{f}(t)$,
 it suffices to consider $t$ non-negative. We use 
(\ref{eq:elek1}) (with $4 g+\widehat{f}$ instead of $g$) and obtain that, to 
estimate $\max_{t\in \mathbb{R}} |4 g+\widehat{f}(t)|$ with an error of at most
$\epsilon$, it is enough to subdivide $\lbrack 0,655)$
into intervals of length $\leq \sqrt{2 \epsilon/|(4 g+\widehat{f})''|_\infty}$ each
 and check $|4 g+\widehat{f}(t)|$ at the endpoints. 
Now, for every $t\in \mathbb{R}$,
\[\left|\left(\widehat{f}\right)''(t)\right| = \left| (-2 \pi i)^2
\widehat{x^2 f}(t)\right| = (2\pi)^2 \cdot O^*\left(|x^2 f|_1\right) =
12 \pi^2.\]
By this and (\ref{eq:elek2}), $|(4 g + \widehat{f})''|_\infty \leq 48 \pi^2$.
Thus, intervals of length $\delta_1$ give an error term of size at most
$24 \pi^2 \delta_1^2$. We choose $\delta_1 = 0.001$ and obtain an error
term less than $0.000237$ for this stage.

To evaluate $\widehat{f}(t)$ (and hence $4 g(t)+\widehat{f}(t)$) 
at a point, we use Simpson's rule on subdivisions of the intervals
$\lbrack 1/4,1/2\rbrack$, $\lbrack 1/2,1\rbrack$ 
into $200\cdot \max(1,\lfloor \sqrt{|t|}\rfloor)$ sub-intervals 
each.\footnote{The author's code uses
D. Platt's implementation \cite{Platt} of double-precision interval arithmetic (based
on Lambov's \cite{Lamb} ideas).}
The largest value of $\widehat{f}(t)$ we find is $31.52065\dotsc$,
with an error term of at most $4.5\cdot 10^{-5}$.
\end{proof}

\begin{lem}
Let $\eta_2:\mathbb{R}^+\to \mathbb{R}$ be as in (\ref{eq:eqeta}).
Let $\eta_y(t) = \log(y t) \eta_2(t)$, where $y\geq 4$. Then
\begin{equation}
|\eta_y'|_1 < (\log y) |\eta'_2|_1.
\end{equation}
\end{lem}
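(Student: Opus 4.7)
The plan is to exploit the shape of $\eta_2$ very directly: since $\eta_2$ is continuous, piecewise linear on a log scale, supported on $[1/4,1]$, and unimodal with a single peak at $t=1/2$, the same should be true of $\eta_y$ provided $y$ is large enough to keep the factor $\log(yt)$ nonnegative throughout the support. Given unimodality plus vanishing at the endpoints, the $L^1$ norm of the derivative collapses to twice the peak value, and the claimed bound becomes transparent.

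Concretely, I would first unpack $\eta_2$ piecewise: $\eta_2(t)=4\log(4t)$ on $[1/4,1/2]$ and $\eta_2(t)=-4\log t$ on $[1/2,1]$, with $\eta_2(1/4)=\eta_2(1)=0$ and $\eta_2(1/2)=4\log 2$. Since $y\geq 4$, we have $yt\geq 1$ on $[1/4,1]$, so $\log(yt)\geq 0$ there, and consequently $\eta_y\geq 0$ on its support, with $\eta_y(1/4)=\eta_y(1)=0$. Next I would compute
\[
\eta_y'(t)=\frac{\eta_2(t)}{t}+\log(yt)\,\eta_2'(t)
=\begin{cases}\dfrac{4}{t}\log(4yt^2),&t\in(1/4,1/2),\\[1mm] -\dfrac{4}{t}\log(yt^2),&t\in(1/2,1).\end{cases}
\]
For $y\geq 4$ and $t\in[1/4,1/2]$, $4yt^2\geq 1$, so $\eta_y'\geq 0$; for $y\geq 4$ and $t\in[1/2,1]$, $yt^2\geq 1$, so $\eta_y'\leq 0$. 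Hence $\eta_y$ is unimodal with peak at $t=1/2$.

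From unimodality and vanishing at the endpoints,
\[
|\eta_y'|_1=\int_{1/4}^{1/2}\eta_y'\,dt-\int_{1/2}^{1}\eta_y'\,dt=2\,\eta_y(1/2)=2\log(y/2)\cdot\eta_2(1/2)=8\log 2\cdot\log(y/2).
\]
Combining with $|\eta_2'|_1=8\log 2$ from (\ref{eq:muggle}) and the strict inequality $\log(y/2)<\log y$, we obtain $|\eta_y'|_1<(\log y)|\eta_2'|_1$, as desired.

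There is no real obstacle here; the only point that deserves care is the sign analysis of $\eta_y'$ at the two pieces, which is where the hypothesis $y\geq 4$ gets used (precisely to ensure $4yt^2\geq 1$ on the left piece and $yt^2\geq 1$ on the right piece, so that no additional interior critical point appears). Note this gives the first inequality of (\ref{eq:cloclo}) with an exact constant rather than a bound, so the lemma is essentially a sharpening/justification of the computation already sketched there.
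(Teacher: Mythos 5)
Your proof is correct and takes essentially the same route as the paper: establish unimodality of $\eta_y$ with peak at $t=1/2$ via the sign of $\eta_y'$ on each piece (using $y\geq 4$ to control the logs), deduce $|\eta_y'|_1 = 2\eta_y(1/2) = 8\log 2 \cdot \log(y/2)$, and conclude from $\log(y/2)<\log y$. The paper differentiates the products $4\log(yt)\log 4t$ and $-4\log(yt)\log t$ directly rather than via the product rule on $\log(yt)\,\eta_2(t)$, but that is a cosmetic difference; the logical structure and all key observations coincide.
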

This was sketched in \cite[(2.4)]{Helf}.
\begin{proof}
Recall that $\supp(\eta_2) = (1/4,1)$. For $t\in (1/4,1/2)$,
\[\eta_y'(t) = (4 \log(y t) \log 4 t)' = \frac{4 \log 4 t}{t} + \frac{4 \log y t}{t}
\geq \frac{8 \log 4 t}{t} > 0,\] whereas, for $t\in (1/2,1)$,
\[\eta_y'(t) = (- 4 \log(y t) \log t)' = -\frac{4 \log y t}{t}-\frac{4 \log t}{t}
= - \frac{4 \log y t^2}{t} < 0,
\] 
where we are using the fact that $y\geq 4$. Hence $\eta_y(t)$ is increasing on 
$(1/4,1/2)$ and decreasing on $(1/2,1)$; it is also continuous at $t=1/2$.
Hence $|\eta_y'|_1 = 2 |\eta_y(1/2)|$. We are done by
\[2 |\eta_y(1/2)| = 2 \log \frac{y}{2} \cdot \eta_2(1/2) = \log \frac{y}{2} \cdot
8 \log 2 < \log y \cdot 8 \log 2 = (\log y) |\eta'_2|_1.\]
\end{proof}

\begin{lem}\label{lem:lujur}
Let $y\geq 4$. Let $g(t) = 
4 e(-t/4) - 4 e(-t/2) + e(-t)$ and $k(t) = 
2 e(-t/4)-e(-t/2)$. Then,
for every $t\in \mathbb{R}$,
\begin{equation}\label{eq:gotora}
|g(t) \cdot \log y  - k(t) \cdot 4 \log 2| \leq 7.87052 \log y. 
\end{equation}
\end{lem}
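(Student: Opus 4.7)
The strategy is to reduce the statement to Lemma \ref{lem:wollust} via a one-line algebraic decomposition that isolates the $\log y$ dependence, then apply the triangle inequality. The key identity I would verify by direct expansion is
\[
g(t) \log y - 4 k(t) \log 2 = \log(y/4) \cdot g(t) + 2 \log 2 \cdot \bigl(g(t) - 2 k(t)\bigr).
\]
This rearrangement is the crux of the proof, and the reason it works is that the coefficient $\log(y/4)$ in front of $g(t)$ is precisely what is needed for the constant $7.87052$ from Lemma \ref{lem:wollust} to propagate through cleanly, while the leftover piece $g(t) - 2k(t)$ is forced to be $y$-independent.

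Next I would compute the residual term explicitly. Substituting the definitions of $g$ and $k$ gives
\[
g(t) - 2 k(t) = \bigl(4 e(-t/4) - 4 e(-t/2) + e(-t)\bigr) - \bigl(4 e(-t/4) - 2 e(-t/2)\bigr) = e(-t) - 2 e(-t/2),
\]
whose absolute value is bounded trivially by $3$ (with equality attained at $t = 1$, where $e(-1) = 1$ and $-2 e(-1/2) = 2$).

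Since $y \geq 4$ ensures $\log(y/4) \geq 0$, the triangle inequality combined with Lemma \ref{lem:wollust} then gives
\[
|g(t)\log y - 4 k(t) \log 2| \leq 7.87052 \cdot \log(y/4) + 2 \log 2 \cdot 3 = 7.87052 \log y - (2 \cdot 7.87052 - 6)\log 2.
\]
Since $2 \cdot 7.87052 - 6 > 0$, the right-hand side is strictly less than $7.87052 \log y$, yielding the claimed bound (with a bit of slack).

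There is no real obstacle here beyond hitting on the decomposition; once written down, everything reduces to previously established estimates and an elementary triangle inequality for the three-term exponential sum $e(-t) - 2 e(-t/2)$. No numerical subdivision argument along the lines of Lemma \ref{lem:wollust} is needed, since all $y$-dependence has been pulled into the $\log(y/4) g(t)$ term.
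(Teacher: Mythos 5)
Your decomposition is correct, and it gives a cleaner proof than the one in the paper. You verify the algebraic identity
\[
g(t)\log y - 4k(t)\log 2 = \log(y/4)\,g(t) + 2\log 2\,\bigl(g(t) - 2k(t)\bigr),
\]
observe that $g(t) - 2k(t) = e(-t) - 2e(-t/2)$ has absolute value at most $3$, and apply the triangle inequality together with Lemma \ref{lem:wollust} to get $7.87052\log(y/4) + 6\log 2 = 7.87052\log y - 9.74104\log 2 < 7.87052\log y$, with $\log(y/4)\geq 0$ guaranteed by $y\geq 4$. The paper instead proves the bound by a phase argument: both $g(t)\log y$ and $4\log 2\cdot k(t)$ have modulus at most $7.87052\log y$ (the latter because $|k|\leq 3$ and $\log y\geq 2\log 2$), and the paper checks numerically (sampling over one period $[-2,2]$) that $|\arg(g(t)/k(t))| < 0.7 < \pi/3$, then invokes the elementary geometric fact that two complex numbers of modulus $\leq\ell$ whose arguments differ by less than $\pi/3$ have difference $\leq\ell$. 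Your route avoids any numerical verification and any discussion of phase alignment entirely, reducing the lemma to pure algebra plus Lemma \ref{lem:wollust}; it is the preferable argument. The paper's route is the one that is actually reused in Lemma \ref{lem:octet} (for the harder bound involving $\widehat f$ and $\widehat h$), which may explain why the author used the phase comparison here as a warm-up, but for this lemma on its own your decomposition is both shorter and more robust.
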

\begin{proof}
By Lemma \ref{lem:wollust}, $|g(t)|\leq 7.87052$. Since $y\geq 4$, 
$k(t)\cdot (4 \log 2)/\log y \leq 6$. 
For any complex numbers $z_1$, $z_2$ with $|z_1|, |z_2|\leq \ell$,
 we can have $|z_1 - z_2|> \ell$ only if $|\arg(z_1/z_2)|> \pi/3$.
It is easy to check that, for all $t\in \lbrack -2,2\rbrack$,
\[\left|\arg\left(\frac{g(t) \cdot \log y}{4 \log 2 \cdot k(t)}\right)\right|
= \left|\arg\left(\frac{g(t)}{k(t)}\right)\right| < 0.7 < \frac{\pi}{3} .\]
(It is possible to bound maxima rigorously as in (\ref{eq:elek1}).)
Hence (\ref{eq:gotora}) holds.
\end{proof}

\begin{lem}\label{lem:octet}
Let $\eta_2:\mathbb{R}^+\to \mathbb{R}$ be as in (\ref{eq:eqeta}).
Let $\eta_{(y)}(t) = (\log y t) \eta_2(t)$, where $y\geq 4$. Then
\begin{equation}\label{eq:yotoman}
|\widehat{\eta_{(y)}''}|_\infty < 31.521 \cdot \log y .
\end{equation}
\end{lem}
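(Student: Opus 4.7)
The plan is to imitate the proof of Lemma \ref{lem:camelo}, decomposing $\eta_{(y)}''$ into a singular part supported at $1/4$, $1/2$, $1$ plus a smooth remainder. Since $\eta_{(y)}(t) = 4(\log yt)(\log 4t)$ on $(1/4, 1/2)$, $\eta_{(y)}(t) = -4(\log yt)(\log t)$ on $(1/2, 1)$, and $\eta_{(y)}$ is continuous on $\mathbb{R}$ (vanishing at $1/4$ and $1$), a direct computation of the one-sided derivatives at each kink yields
\[\eta_{(y)}'' \;=\; 16\log(y/4)\,\delta_{1/4} \,-\, 16\log(y/2)\,\delta_{1/2} \,+\, 4\log y\,\delta_1 \,+\, f_y(t),\]
where $f_y(t) = (4/t^2)(2 - \log(4 y t^2))$ on $(1/4, 1/2)$, $f_y(t) = -(4/t^2)(2 - \log(y t^2))$ on $(1/2, 1)$, and $f_y$ vanishes elsewhere.

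Taking Fourier transforms and collecting the terms proportional to $\log y$ versus constant yields the key identity
\[\widehat{\eta_{(y)}''}(t) \;=\; 4\bigl[g(t)\log y \,-\, 4\log 2 \cdot k(t)\bigr] \,+\, \widehat{f_y}(t),\]
with $g, k$ exactly as in Lemma \ref{lem:lujur}. By that lemma, the first term is bounded in absolute value by $4\cdot 7.87052\log y = 31.48208\log y$, so it remains to show $|\widehat{f_y}(t)| \leq (31.521 - 31.48208)\log y = 0.03892\log y$ uniformly in $t$.

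For the smooth part, I would decompose $f_y = (\log y)\,f + \tilde f$, where $f$ is the $y$-independent function from the proof of Lemma \ref{lem:camelo} and $\tilde f(t) = (8 - 4\log 4 - 8\log t)/t^2$ on $(1/4, 1/2)$, $\tilde f(t) = (8\log t - 8)/t^2$ on $(1/2, 1)$ is the explicit residue coming from the $\log t$ factor in the identity $\eta_{(y)} = (\log y)\eta_2 + (\log t)\eta_2$. Because $\widehat{f_y}(t) = (\log y)\widehat f(t) + \widehat{\tilde f}(t)$ depends linearly on $\log y$, it suffices to verify
\[\bigl|(\log y)\widehat f(t) + \widehat{\tilde f}(t)\bigr| \;\leq\; 0.03892\log y\]
for every $y\geq 4$ and $t\in\mathbb{R}$. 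For $|t|$ large, integration by parts (as in the derivation of (\ref{eq:fedsan})) gives $|\widehat f(t)|\leq 80/(\pi|t|)$ and $|\widehat{\tilde f}(t)|\leq |\tilde f'|_1/(2\pi|t|)$, with $|\tilde f'|_1$ explicitly computable in closed form; choosing a threshold $T_0$ suitably forces the combined estimate below $0.03892\log 4$, settling the worst case $y=4$ and hence all larger $y$. For $|t|\leq T_0$, I would perform a rigorous grid computation using the subdivision scheme from Lemma \ref{lem:camelo} and interval-arithmetic Simpson's rule, tabulating $\widehat f(t)$ and $\widehat{\tilde f}(t)$ separately so that the parameter $y$ is handled uniformly.

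The main obstacle is that the residual margin $0.03892\log y$ is quite tight at $y=4$, leaving only about $0.054$: essentially all the slack in $|\widehat{\eta_2''}|_\infty\leq 31.521$ has already been consumed by the singular contribution, which is itself sharp in the sense of Lemma \ref{lem:lujur}. What saves the day is the exact linearity of $\widehat{f_y}$ in $\log y$, reducing the verification to two $y$-independent Fourier integrals on a compact range of $t$, much as in Lemma \ref{lem:camelo}.
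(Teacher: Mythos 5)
Your decomposition of $\eta_{(y)}''$ into Dirac masses at $1/4, 1/2, 1$ plus a smooth remainder $f_y$ is algebraically correct, and your $\tilde f$ coincides with the function $h$ appearing in the paper. The problem is the reduction you then make: you bound the Dirac contribution via Lemma \ref{lem:lujur} and $|\widehat{f_y}|$ separately by the triangle inequality, and so you need the uniform estimate $|\widehat{f_y}(t)| \leq 0.03892\log y$. That estimate is false. Evaluating at $t=0$,
\[
\widehat{f_y}(0) \;=\; (\log y)\int f \;+\; \int \tilde f \;=\; -4\log y + 16\log 2,
\]
so at $y=4$ this equals $8\log 2 \approx 5.55$, whereas $0.03892\log 4 \approx 0.054$; the required bound fails by a factor of about $100$, and it also fails as $y\to\infty$ since $\widehat{f_y}(0)\sim -4\log y$. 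The Dirac part is correspondingly large at $t=0$: its Fourier transform there is $4(\log y - 4\log 2)$, which equals $-8\log 2$ at $y=4$. The two halves cancel exactly, as they must, since $\widehat{\eta_{(y)}''}(0)=0$. The triangle inequality discards precisely this cancellation, and once it is discarded the margin you were counting on does not exist.

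The paper groups terms differently and uses a cancellation argument instead of the triangle inequality. It splits along the factorization $\log(yt)=\log y+\log t$ rather than along singular versus smooth, writing $\widehat{\eta_{(y)}''} = (4g+\widehat{f})\log y + (-16\log 2\cdot k + \widehat{h})$, where the first piece is $(\log y)\,\widehat{\eta_2''}$ and the second is $\widehat{(\log\cdot\,\eta_2)''}$. Each piece is the Fourier transform of the second derivative of a compactly supported function, hence vanishes at $t=0$, so the blow-up your split suffers at the origin never appears. The triangle inequality is then replaced by the angular estimate used in the proof of Lemma \ref{lem:lujur} (two complex numbers of modulus at most $\ell$ whose ratio avoids a $2\pi/3$-sector have difference of modulus at most $\ell$); the angle condition is verified numerically on a compact $t$-range, and the large-$|t|$ range is handled by the $O(1/|t|)$ decay of $\widehat{f}$ and $\widehat{h}$. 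Repairing your proof requires both of these changes: the $\log y$/$\log t$ split in place of the singular/smooth split, and the angle argument in place of the triangle inequality.
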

\begin{proof}
Clearly
\[\begin{aligned}
\eta_{(y)}''(x) &= \eta''_2(x) (\log y) + \left((\log x) \eta''_2(x) + \frac{2}{x}
\eta'_2(x) - \frac{1}{x^2} \eta_2(x)\right)\\
&= \eta''_2(x) (\log y) + 4 (\log x) (4 \delta_{1/4}(x) - 4 \delta_{1/2}(x)
+ \delta_1(x)) +  h(x),
\end{aligned}\]
where \[h(x) = \begin{cases} 0 &\text{if $x<1/4$ or $x>1$,}\\
\frac{4}{x^2} (2- 2 \log 2 x)  &\text{if $1/4\leq x < 1/2$,}\\
\frac{4}{x^2} (-2 + 2 \log x) &\text{if $1/2 \leq x < 1$.}
\end{cases}\]
(Here we are using the expression 
(\ref{eq:petruchka}) for $\eta''_2(x)$.)
Hence \begin{equation}\label{eq:kokoso}
\widehat{\eta_{(y)}''}(t) = (4 g(t) + \widehat{f}(t)) (\log y) + 
(-16 \log 2 \cdot k(t) + \widehat{h}(t)),\end{equation}
where $k(t) = 2 e(-t/4) - e(-t/2)$. Just as in the proof of Lemma
\ref{lem:camelo},
\begin{equation}\label{eq:pasaremos}
|\widehat{f}(t)| \leq \frac{|f'|_1}{2\pi |t|} \leq 
\frac{80}{\pi |t|},\;\;\;\;\;
|\widehat{h}(t)| \leq \frac{160 (1 + \log 2)}{\pi |t|}.
\end{equation}
Again as before, this implies that (\ref{eq:yotoman}) holds for
\[|t| \geq \frac{1}{\pi\cdot 0.03892} \left(80 
+ \frac{160 (1 + \log 2)}{(\log 4)}\right) = 2252.51 .\]
Note also that it is enough to check (\ref{eq:yotoman}) for $t\geq 0$,
by symmetry. Our remaining task is to
 prove (\ref{eq:yotoman}) for $0\leq t\leq 2252.21$.

Let $I = \lbrack 0.3, 2252.21\rbrack \setminus \lbrack 3.25, 3.65\rbrack$.
For $t\in I$, we will have 
\begin{equation}\label{eq:hotor}
\arg\left(\frac{4 g(t) + \widehat{f}(t)}{-16 \log 2 \cdot k(t) + 
\widehat{h}(t)}\right)  \subset \left( -\frac{\pi}{3},
\frac{\pi}{3}\right) .
\end{equation}
(This is actually true for $0\leq t\leq 0.3$ as well, but we will use a
different strategy in that range in order to better control error terms.)
Consequently, by Lemma \ref{lem:camelo} and
$\log y \geq \log 4$, 
\[\begin{aligned}
|\widehat{\eta_{(y)}''}(t)| &< \max(|4 g(t)+\widehat{f}(t)| \cdot (\log y),
|16 \log 2 \cdot k(t) - \widehat{h}(t)|)\\ &<  \max(31.521 (\log y),
|48 \log 2 + 25|) = 31.521 \log y,
\end{aligned}\]
where we bound $\widehat{h}(t)$ by
(\ref{eq:pasaremos}) and by a numerical 
computation of the maximum of $|\widehat{h}(t)|$
for $0\leq t \leq 4$ as in the proof of Lemma
\ref{lem:camelo}. 

It remains to check (\ref{eq:hotor}). Here, as in the proof of Lemma 
\ref{lem:lujur}, the allowable error is relatively large (the expression
on the left of (\ref{eq:hotor}) is actually contained in $(-1,1)$
for $t\in I$).
We decide to evaluate the argument in (\ref{eq:hotor}) at all
$t\in 0.005\mathbb{Z} \cap I$,
computing $\widehat{f}(t)$ and $\widehat{h}(t)$ by numerical integration
(Simpson's rule) with a subdivision of $\lbrack -1/4,1\rbrack$ into $5000$
intervals. Proceeding as in the proof of Lemma \ref{lem:wollust}, we see
that the sampling induces an error of at most
\begin{equation}\label{eq:aros}
\frac{1}{2} 0.005^2 \max_{v\in I} ((4 |g''(v)| + |(\widehat{f})''(t)|)
\leq \frac{0.0001}{8} 48 \pi^2 < 0.00593\end{equation}
in the evaluation of $4 g(t)+\widehat{f}(t)$, and an error of at most
\begin{equation}\label{eq:oros}\begin{aligned}
\frac{1}{2} &0.005^2 \max_{v\in I} ((16 \log 2\cdot |k''(v)| + 
|(\widehat{h})''(t)|) \\ &\leq
\frac{0.0001}{8} (16 \log 2 \cdot 6 \pi^2 + 
24 \pi^2 \cdot (2-\log 2)) < 0.0121\end{aligned}\end{equation}
in the evaluation of $16 \log 2 \cdot |k''(v)| + |(\widehat{h})''(t)|$.

Running the numerical evaluation just described for $t\in I$, the estimates
for the left side of (\ref{eq:hotor}) at the sample points are at most
 $0.99134$ in absolute value; the absolute values of the estimates for
$4 g(t) + \widehat{f}(t)$ are all at least $2.7783$, 
and the absolute values of the estimates for
$|-16 \log 2 \cdot \log k(t) + \widehat{h}(t)|$ are all at least $2.1166$.
Numerical integration by Simpson's rule gives errors bounded by
$0.17575$ percent.
Hence
the absolute value of the left side of (\ref{eq:hotor}) is at most
\[\begin{aligned}
0.99134 &+ \arcsin \left(\frac{0.00593}{2.7783}
+0.0017575\right) + \arcsin\left(\frac{0.0121}{2.1166} 
 + 0.0017575\right)\\
&\leq 1.00271 < \frac{\pi}{3}\end{aligned}\]
for $t\in I$.

Lastly, for $t\in \lbrack 0,0.3\rbrack \cup \lbrack 3.25,3.65\rbrack$, 
a numerical computation (samples at 
$0.001\mathbb{Z}$; interpolation as in Lemma
\ref{lem:camelo};
integrals computed by Simpson's rule with a subdivision into $1000$ intervals)
gives
\[\max_{t\in \lbrack 0,0.3\rbrack \cup \lbrack 3.25,3.65\rbrack} \left(|(4g(t) + \widehat{f}(t))| + \frac{|-16 \log 2 \cdot k(t)
+ \widehat{h}(t)|}{\log 4}\right) < 29.08,\]
and so $\max_{t\in \lbrack 0,0.3\rbrack \cup \lbrack 3.25,3.65\rbrack} |\widehat{\eta_{(y)}''}|_\infty < 29.1 \log y < 31.521
\log y$.
\end{proof}

An easy integral gives us that the function $\log \cdot \eta_2$ satisfies 
\begin{equation}\label{eq:koasl}
|\log \cdot \eta_2|_1 = 2 - \log 4
\end{equation}
The following function will appear only in a lower-order term; thus,
an $\ell_1$ estimate will do.

\begin{lem}\label{lem:marengo}
Let $\eta_2:\mathbb{R}^+\to \mathbb{R}$ be as in (\ref{eq:eqeta}).
Then
\begin{equation}\label{eq:carengo}
|(\log \cdot \eta_2)''|_1 = 96 \log 2.
\end{equation}
\end{lem}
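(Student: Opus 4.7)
I will evaluate the distributional second derivative of $\log \cdot \eta_2$ on the support $(1/4,1)$ of $\eta_2$, and then sum the $L^1$ mass coming from the smooth part with that coming from the jumps of the first derivative (each of which contributes a Dirac mass to the second derivative).

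First I would unfold $\eta_2$ on the two pieces of its support: on $(1/4,1/2)$ one has $\eta_2(t)=4\log(4t)$, and on $(1/2,1)$ one has $\eta_2(t)=-4\log t$. Multiplying by $\log t$, set
\begin{equation*}
F(t)=(\log t)\eta_2(t)=
\begin{cases}
4(\log t)^2+4(\log 4)(\log t) & \text{on }(1/4,1/2),\\
-4(\log t)^2 & \text{on }(1/2,1).
\end{cases}
\end{equation*}
A straightforward differentiation gives $F'(t)=\frac{8\log(2t)}{t}$ on $(1/4,1/2)$ and $F'(t)=-\frac{8\log t}{t}$ on $(1/2,1)$, and then
\begin{equation*}
F''(t)=\frac{8(1-\log(2t))}{t^{2}}\text{ on }(1/4,1/2),\qquad F''(t)=\frac{8(\log t-1)}{t^{2}}\text{ on }(1/2,1),
\end{equation*}
with $F''\geq 0$ on the first interval and $F''\leq 0$ on the second (since $\log 2t\le 0$ and $\log t\le 0$ respectively).

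Next I would integrate. Since the antiderivatives of $F''$ on the two pieces are just $F'$, the smooth $L^1$ contribution is
\begin{equation*}
\int_{1/4}^{1/2}F''\,dt+\int_{1/2}^{1}(-F'')\,dt
=\bigl(F'(1/2^-)-F'(1/4^+)\bigr)-\bigl(F'(1^-)-F'(1/2^+)\bigr)=32\log 2+16\log 2,
\end{equation*}
using $F'(1/2^-)=0$, $F'(1/4^+)=-32\log 2$, $F'(1^-)=0$, $F'(1/2^+)=16\log 2$. Then I would read off the jumps of $F'$ at the three boundary/break points: at $t=1/4$ the jump is $-32\log 2$ (from $0$ outside the support), at $t=1/2$ the jump is $+16\log 2$, and at $t=1$ the jump is $0$. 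These contribute Dirac masses to $F''$ of total mass $32\log 2+16\log 2=48\log 2$.

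Adding the smooth and singular contributions yields $|(\log\cdot\eta_2)''|_1=48\log 2+48\log 2=96\log 2$, as claimed. The only real point of care is bookkeeping at $t=1/2$, where both pieces of $F'$ must be evaluated as one-sided limits in order not to miss (or double-count) the $16\log 2$ jump; everything else is mechanical.
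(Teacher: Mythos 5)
Your proof is correct, and the bookkeeping of both the absolutely continuous part and the Dirac masses at $t=1/4,\,1/2,\,1$ checks out. Conceptually you are doing the same thing the paper does---computing $|(\log\cdot\eta_2)''|_1$ as the total variation of $(\log\cdot\eta_2)'$---but you make the split into the smooth piece and the jump/atomic piece explicit, which is more robust. In fact your calculation quietly corrects a slip in the paper's one-line proof: $(\log t)\,\eta_2(t)$ is \emph{decreasing and nonpositive} on $(1/4,1/2)$ and \emph{increasing and nonpositive} on $(1/2,1)$ (not ``increasing and negative'' then ``decreasing and positive'' as stated there), as your closed forms $4(\log t)^2+4(\log 4)\log t$ and $-4(\log t)^2$ make clear. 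The paper's one-sided values $(\log\cdot\eta_2)'(1/4^+)=-32\log 2$ and $(\log\cdot\eta_2)'(1/2^+)=16\log 2$ are nevertheless correct, and its displayed formula $2\bigl(16\log 2-(-32\log 2)\bigr)$ agrees with your tally $48\log 2+48\log 2=96\log 2$.
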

\begin{proof}
The function $\log \cdot \eta(t)$ is $0$ for $t\notin \lbrack 1/4,1\rbrack$,
is increasing and negative for $t\in (1/4,1/2)$ and is decreasing and positive
for $t\in (1/2,1)$. Hence
\[\begin{aligned}
|(\log \cdot \eta_2)''|_\infty &=
2 \left(  (\log \cdot \eta_2)'\left(\frac{1}{2}\right) - 
 (\log \cdot \eta_2)'\left(\frac{1}{4}\right)\right) \\ &= 
2 (16 \log 2 - (-32 \log 2)) = 96 \log 2.\end{aligned}\]
\end{proof}
\bibliographystyle{alpha}
\bibliography{arcs}
\end{document}